\def\m{\mkern 1mu}
\def\mm{\mkern 2mu}
\def\mn{\mkern- 1mu}
\def\mmn{\mkern- 2mu}
\newcommand{\argmin}{\mathop{\rm argmin}}
\newtheorem{thm}{Theorem}[section]
\newtheorem{lem}{Lemma}[section]
\newtheorem{cor}{Corollary}[section]
\newtheorem{prop}{Proposition}[section]
\newtheorem{defn}{Definition}[section]
\newtheorem{remark}{Remark}[section]
\def\approxcorrect{\checkmark\kern-1.1ex\raisebox{.89ex}{$\times$}}
\def\eqref#1{equation~\ref{#1}}
\def\1{\bm{1}}
\DeclareMathAlphabet{\mathsfit}{\encodingdefault}{\sfdefault}{m}{sl}
\SetMathAlphabet{\mathsfit}{bold}{\encodingdefault}{\sfdefault}{bx}{n}
\title{DRM Revisited: A Complete Error Analysis}
\author{
  Yuling Jiao$^{1,2}$,
  Ruoxuan Li$^{1}$,
  Peiying Wu$^{1}$,
  Jerry Zhijian Yang$^{1,2}$\thanks{Corresponding author.},
  and Pingwen Zhang$^{1,2,3}$ \\
  \\
  {\small $^1$School of Mathematics and Statistics, Wuhan University, Wuhan 430072, China} \\
  {\small $^2$Hubei Key Laboratory of Computational Science, Wuhan University, Wuhan 430072, China} \\
  {\small $^3$School of Mathematical Sciences, Peking University, Beijing 100871, China}
}
\date{}
\begin{document}
\onehalfspacing
\maketitle

\begin{abstract}
 In this work, we address a foundational question in the theoretical analysis of the Deep Ritz Method (DRM) under the over-parameteriztion regime: Given a target precision level, how can one determine the appropriate number of training samples, the key architectural parameters of the neural networks, the step size for the projected gradient descent optimization procedure, and the requisite number of iterations, such that the output of the gradient descent process closely approximates the true solution of the underlying partial differential equation to the specified precision ?

%In this paper, we present a comprehensive analytical framework  and provide concrete theoretical guidelines  for setting these key parameters.
\end{abstract}

\noindent%
{\it Keywords:}  Deep Ritz method,  projected gradient descent, over-parameterization,  complete error analysis, approximation, generalization, optimization.

\section{Introduction}
\label{intro}
Classical numerical methods, such as finite element methods \cite{brenner2007mathematical,ciarlet2002finite}, face difficulties  when solving high-dimensional PDEs.  The success of deep learning methods in  high-dimensional data analysis has led to the development of  promising approaches for solving high-dimensional PDEs using deep neural networks, which have attracted much attention  \citep{Cosmin2019Artificial,Justin2018DGM,DeepXDE,raissi2019physics,Weinan2017The,Yaohua2020weak,Berner2020Numerically,han2018solving}. Due to the excellent approximation power of deep neural networks, several numerical schemes have been proposed for solving PDEs, including physics-informed neural networks (PINNs)  \citep{raissi2019physics}, weak adversarial networks (WAN) \citep{Yaohua2020weak} and the deep Ritz method (DRM) \citep{Weinan2017The}. PINNs is based on residual minimization, while WAN is inspired by Galerkin method. Based on classical Ritz method, the deep Ritz method is proposed to solve variational problems corresponding to a class of PDEs, which has become one of the most renowned approaches in the field of elliptic equations.

The success of deep learning methods in solving high dimensional PDEs has propelled the advancement of its theoretical research. It is now widely recognized that, as a non-parametric estimation method, error analysis in deep learning for PDEs includes approximation error, statistical error (also called generalization error), and optimization error \cite{grohs2022mathematical,telgarsky2021deep,weinan2020machine,bach2023learning}. To date, the existing convergence analysis for these deep solvers   has predominantly focused on characterizing the trade-offs between the approximation error and the statistical error \citep{weinan2019priori,hong2021rademacher,lu2021priori,hutzenthaler2020overcoming,shin2020convergence,lanthaler2022error,muller2021error,mishra2021enhancing,kutyniok2022theoretical,son2021sobolev,wang2022and,weinan2020comparative,jiao2022rate,duan2021convergence,lu2021machine,mishra2022estimates,ji2024deep,yang2024deeper,hu2023solving1, hu2023solving2, dai2023solving}. Meanwhile,  these results are conducted in scenarios where the number of neural network parameters is smaller than the number of training samples. However, in practical applications, over-parameterized networks, where the number of parameters far exceeds the number of samples, are more commonly used since empirical evidence suggests that over-parameterization  makes  the training computationally more efficient. Moreover, recent theoretical studies have indicated  that  the training loss will  converges to zero linearly  if one properly initialized the (stochastic) gradient descent  specialized   in over-parameterized regimes, even though the optimization problem  is highly non-convex \cite{jacot2018neural,allen2019convergence,du2019gradient,zou2019improved,liu2022loss,chizat2019lazy}.

The fundamental drivers behind the empirical success of over-parameterized deep learning models continue to elude full understanding, especially when  simultaneously accounting for the complex interplay between approximation, generalization, and optimization \citep{belkin2021fit,bartlett2021deep,berner2022modern}.
Extensive research efforts have been dedicated to elucidating the role of over-parameterization in linear and kernel models, particularly from the perspective of the double descent phenomenon
\citep{belkin2018understand,belkin2019reconciling,hastie2022surprises,
belkin2019does,liang2020just,nakkiran2020optimal,bartlett2020benign,
tsigler2023benign,belkin2021fit,bartlett2021deep,tsigler2023benign}.
However, a crucial gap remains in providing a comprehensive error analysis that jointly accounts for all three key error components: approximation, generalization, and optimization. This challenge persists even for the empirical risk minimization estimator in over-parameterized deep learning settings, which has been shown to potentially yield inconsistent results \cite{kohler2021over}.

%Regarding the optimization error, the prevailing analytical tool at present is the Neural Tangent Kernel (NTK) method \citep{jacot2018neural}. It delineates the dynamical evolution of neural network output via a linear system when the gradient descent step size approaches zero, which is determined by a so-called neural tangent kernel. Nevertheless, the original NTK theory requires networks to be infinitely wide, which is unrealistic. Furthermore, some research has obtained non-asymptotic approximation results to the original NTK in the finite-width network setting \citep{arora2019exact, hanin2019finite, huang2020dynamics}. However, such analyses typically necessitate that the entire training dynamics remain confined within a vanishingly small neighborhood of the initial value, an assumption that is evidently far from realistic. 
%Recently, there have been some convergence results based on over-parameterized norm-controlled neural networks for both regression \citep{jiao2023approximation,yang2024optimal} and PDE settings  {\color{red} cite our cicp paper}.  In all the aforementioned analyses, researchers assume that the optimization algorithm—typically gradient descent (GD) or stochastic gradient descent (SGD)—outputs the empirical risk minimization (ERM) estimator, which is the theoretically optimal solution, thus excluding the consideration of  optimization error introduced by the training  algorithms.

\subsection{Contributions}
\begin{itemize}
\item 
In this work, we have established the first comprehensive error analysis for the deep
Ritz method in the over-parameterized setting. This analysis jointly accounts for all three key error components: approximation error, statistical (generalization) error, and optimization error. 
\item Technically, we have derived a novel error decomposition, where the optimization error term we employ is distinct from and tighter than those used in prior literature. This error decomposition is of independent theoretical interest and holds value for the analysis of other deep learning tasks.
%Thanks to  our new error decomposition， we establish an complete
\item 
Unlike previous analyses of optimization error, a key feature of our main results is that they do not require the entire training dynamics to remain confined within an infinitesimally small neighborhood of the initial parameter values. This reduces the gap between theory and practical training.
\end{itemize}

\subsection{Organizations}
The paper is organized as follows. In Section \ref{pre}, we first introduce the notation, the parallel neural network architecture $\mathcal{PNN}$, and the projected gradient descent (PGD) algorithm used for optimization.  Then, following these preliminaries, we present the main theorem of this paper.
% In Section \ref{proof}, we provide the proof of the main theorem, divided into four subsections: subsection \ref{errde} proposes a novel error decomposition method that facilitates a comprehensive analysis of all three types of errors; subsection \ref{app} presents an approximation result in the Sobolev space using the $\mathcal{PNN}$ structure; subsection \ref{sta} provides an upper bound estimate for the statistical error of over-parameterized $\mathcal{PNN}$ neural networks with bounded weight; and subsection \ref{opt} demonstrates how to leverage the properties of over-parameterization to control the optimization error of the PGD algorithm's output.
In Section \ref{proof}, we present the proof of the main theorem, divided into five subsections covering a novel error decomposition method, approximation error bounds, statistical error estimates, optimization error control, and the combination of all the separate analysis.
In Section \ref{rw}, we discuss related work in detail and highlight our contributions. Finally, in Section \ref{conclu}, we provide a summary of the paper and outline our planned future work. All proof details are provided in the appendix.

% The paper is organized as follows. In Section \ref{pre}, we introduce the notations, the parallel neural network architecture $\mathcal{PNN}$, and the projected gradient descent (PGD) algorithm used for optimization. In Section \ref{errde}, we propose a novel error decomposition method that facilitates a comprehensive analysis of all three types of errors. In Section \ref{app}, we present an approximation result in the Sobolev space using the $\mathcal{PNN}$ structure. In Section \ref{sta}, we provide an upper bound estimate for the statistical error of over-parameterized $\mathcal{PNN}$ neural networks with bounded weight. In Section \ref{opt}, we demonstrate how to leverage the properties of over-parameterization to control the optimization error of the PGD algorithm's output. In Section \ref{main}, we present the main theorem of this paper, providing a comprehensive end-to-end error analysis for solving elliptic equations using the deep Ritz method (DRM). In Section \ref{rw}, we discuss related work in detail and highlight our contributions. Finally, in Section \ref{conclu}, we provide a summary of the paper and outline our planned future work. All proof details are provided in the appendix.

\section{Main Result}
\label{pre}
In this section, we present the main theoretical result of this paper, which establishes the first comprehensive error analysis for the deep Ritz method in the over-parameterized setting. To achieve this, we first need some groundwork: In Sections \ref{nota} and \ref{subsec:topo}, we introduce necessary notations and the parallel neural network class $\mathcal{PNN}$ used in this paper. In Section \ref{drm}, we review the deep Ritz method. In Section \ref{pgd}, we provide a detailed exposition of the employed optimization algorithm: the projected gradient descent (PGD) algorithm. Finally, in Section \ref{main}, we formally propose our main result.
\subsection{Notation} \label{nota}
In this section, we provide all the notations needed in this paper. We use bold-faced letters to denote vectors and capital letters to denote matrices or fixed parameters. Unless otherwise specified, $C$ represents a constant, and $C(a,b)$ or $C_i(a,b)$ represents functions that only depend on $a$ and $b$. For two positive functions $f(x)$ and $g(x)$, the asymptotic notation $f(x) = \mathcal{O}(g(x))$ denotes $f(x) \leq C g(x)$ for some constant $C > 0$. The notation $\widetilde{\mathcal{O}}(\cdot)$ is used to ignore logarithmic terms. 

Let $\mathbb{N}$ denotes the set of natural numbers. We define $\mathbb{N}^{+}:=\{x\in \mathbb{N} \, | \,  x>0\}$. If $x\in \mathbb{R}$, $\lfloor x \rfloor := \max\{k\in \mathbb{N}: k\leq x\}$ denotes the largest integer strictly smaller than $x$ and $\lceil x \rceil := \min\{k\in \mathbb{N}: k\geq x\}$ denotes the smallest integer strictly larger than $x$. If $N \in \mathbb{N}^{+}$, we define $[N]:=\{1,2,\ldots,N\}$ to be set of all positive integers less than or equal to $N$. We use the usual \textit{multi-index} notation, i.e. for $\boldsymbol{\alpha}\in \mathbb{N}^{d}$ we write $\|\boldsymbol{\alpha}\|_1:=\alpha_1+\ldots+\alpha_d$ and $\boldsymbol{\alpha}! := \alpha_1 !\cdot \ldots \cdot \alpha_d !$. 

Let $\Omega \subset \mathbb{R}^d$ be an open set. For a function $f: \Omega \to \mathbb{R}$, we denote its (weak or classical) derivative of order $\boldsymbol{\alpha}$ by
$$
D^{\boldsymbol{\alpha}}f :=\frac{\partial^{\|\boldsymbol{\alpha}\|_1} f}{{\partial x_1}^{\alpha_1}{\partial x_2}^{\alpha_2}\cdots{\partial x_d}^{\alpha_d}}\, .
$$

For $s \in \mathbb{N} \cup {\infty}$, we denote by $C^s(\Omega)$ the set of $s$-times continuously differentiable functions on $\Omega$. Additionally, if $\overline{\Omega}$ is compact, we set, for $f \in C^s(\Omega)$,
$$
\|f\|_{C^s(\overline{\Omega})}:= \max_{0\leq \|\boldsymbol{\alpha}\|_{1}\leq s} \sup_{x\in \Omega} |D^{\boldsymbol{\alpha}}f(x)|\, .
$$
For any $s \in \mathbb{N}$ and $1\leq p < \infty$, we define the \textit{Sobolev space} $W^{s,p}(\Omega)$ by
$$
W^{s,p}(\Omega):=\{f\in L^p(\Omega): D^{\boldsymbol{\alpha}}f\in L^p(\Omega), \ \forall \boldsymbol{\alpha}\in \mathbb{N}^{d} \text \ with \  \|\boldsymbol{\alpha}\|_1 \leq s\}\, .
$$
In particular, when $p=2$, we define $H^s(\Omega):=W^{s,2}(\Omega)$ for any $s\in\mathbb{N}$. Moreover, for any $f\in W^{s,p}(\Omega)$ with $1\leq p <\infty$, we define the Sobolev norm by
$$
\|f\|_{W^{s,p}(\Omega)}:=\Bigg(\sum_{0\leq \|\boldsymbol{\alpha}\|_1\leq s} \|D^{\boldsymbol{\alpha}}f\|^p_{L^p(\Omega)}\Bigg)^{1/p}\, .
$$
When $p=\infty$, we have
$$
\|f\|_{W^{s,\infty}(\Omega)} := \max_{0\leq \|\boldsymbol{\alpha}\|_1\leq s}\|D^{\boldsymbol{\alpha}}f\|_{L^{\infty}(\Omega)}\, . 
$$

\subsection{Topology of the deep networks} \label{subsec:topo}
Let $L, d, N_{0}, \ldots, N_{L} \in \mathbb{N}$. We consider the function $\phi_{\boldsymbol{\theta}}: \mathbb{R}^{d} \rightarrow \mathbb{R}$ that can be parameterized by a $\rho$-activated neural network of the form
\begin{equation}
\label{eq1}
\begin{aligned}
\phi_{0}(\boldsymbol{x}) &=\boldsymbol{x}\, , \\
\phi_{\ell}(\boldsymbol{x}) &=\rho (\boldsymbol{A}_{\ell-1} \phi_{\ell-1}(\boldsymbol{x}) + \boldsymbol{b}_{\ell-1}), \quad \ell=1, \ldots, L-1 \, , \\
\phi_{L}(\boldsymbol{x}) &=\boldsymbol{A}_{L-1} \phi_{L-1}(\boldsymbol{x}) + \boldsymbol{b}_{L-1}\, ,
\end{aligned}
\end{equation}
where $\boldsymbol{A}_{\ell} =(a_{i,j}^{(\ell)}) \in \mathbb{R}^{N_{\ell+1} \times N_{\ell}}$, $\boldsymbol{b}_{\ell} = (b_{i}^{(\ell)}) \in \mathbb{R}^{N_{\ell+1}}$ with $N_{0}=d$ and $N_{L}=1$. The number $W:=\max \{N_{1}, \ldots, N_{L}\}$ is called the width of the network, and $L$ is called the depth of the network. For convenience, we denote $\mathfrak{n}_{\ell}$, $\ell = 1, \ldots, L$, as the number of nonzero weights in the first $\ell$ layers of the network, with $\mathfrak{n}_{L} \leq \mathfrak{D}(W, L, d)$. Here, $\mathfrak{D}(W, L, d)$ is defined as
\begin{align} \label{eq:align_dim}
\mathfrak{D}(W, L, d):=(W+1)[(L-2)W+d+1] \, ,
\end{align}
Meanwhile, $W$ is generally greater than $d$ in the following context. Therefore, we will also use the following estimate $\mathfrak{n}_{L} \leq W(W+1)L$ without loss of generality.

Denote $\boldsymbol{\theta} = (a_{\scriptscriptstyle {1,1}}^{\scriptscriptstyle (0)}, \ldots, a_{\scriptscriptstyle N_L, N_{L-1}}^{\scriptscriptstyle (L-1)}, b_{\scriptscriptstyle 1}^{\scriptscriptstyle (0)}, \ldots, b_{\scriptscriptstyle N_L}^{\scriptscriptstyle (L-1)}) $ as the weight vector of the neural network and $\Theta$ as the set of all weight vectors $\boldsymbol{\theta}$. When the activation function $\rho$ is clear, we use the notation $\mathcal{NN}(W, L, B_{\boldsymbol{\theta}})$ to refer to the collection of functions $\phi_{\boldsymbol{\theta}}$ implemented by a $\rho$-activated neural network with width $W$, depth $L$, and the weight vector $\boldsymbol{\theta}$ satisfying $\|\boldsymbol{\theta}\|_{\infty} \leq B_{\boldsymbol{\theta}}$. 

Note that for any weight vector $\boldsymbol{\theta} = (a_{\scriptscriptstyle {1,1}}^{\scriptscriptstyle (0)}, \ldots, a_{\scriptscriptstyle N_L, N_{L-1}}^{\scriptscriptstyle (L-1)}, b_{\scriptscriptstyle 1}^{\scriptscriptstyle (0)}, \ldots, b_{\scriptscriptstyle N_L}^{\scriptscriptstyle (L-1)})$, we can always elevate it to a $\mathfrak{D}(W, L, d)\m$-dimensional vector 
$$
\boldsymbol{\theta}^{\prime} = (a_{\scriptscriptstyle {1,1}}^{\scriptscriptstyle (0)}, \ldots, a_{\scriptscriptstyle W, d}^{\scriptscriptstyle (0)}, a_{\scriptscriptstyle {1,1}}^{\scriptscriptstyle (1)}, \ldots, a_{\scriptscriptstyle W, W}^{\scriptscriptstyle (L-2)}, a_{\scriptscriptstyle {1,1}}^{\scriptscriptstyle (L-1)}, \ldots, a_{\scriptscriptstyle 1, W}^{\scriptscriptstyle (L-1)}, b_{\scriptscriptstyle 1}^{\scriptscriptstyle (0)}, \ldots, b_{\scriptscriptstyle 1}^{\scriptscriptstyle (L-1)})
$$
by padding zeros, while both $\phi_{\boldsymbol{\theta}}
$ 
and $\phi_{\boldsymbol{\theta}^{\prime}}$ represents the same neural network in $\mathcal{NN}(W, L, B_{\boldsymbol{\theta}})$. Therefore, for any $\phi_{\boldsymbol{\theta}_1}, \phi_{\boldsymbol{\theta}_2} \in \mathcal{NN}(W, L, B_{\boldsymbol{\theta}})$, we can align $\boldsymbol{\theta}_1$ and $\boldsymbol{\theta}_2$ to two $\mathfrak{D}(W, L, d)\m$-dimensional vectors, and then add, subtract, or compare them.
% In the following, we will use this property multiple times without further specification.

In addition, we introduce a Parallel Neural Network class denoted as $\mathcal{PNN}(\mathfrak{m}, M, \{W, L, B_{\boldsymbol{\theta}}\})$, which represents a linear combination of $\mathfrak{m}$ sub-networks $\mathcal{NN}(W^k, L^k, B_{\boldsymbol{\theta}}^k)$ for $k=1, \ldots, \mathfrak{m}$. Specifically, for any $\Phi_{\mathfrak{m}, \boldsymbol{\theta}}(\boldsymbol{x}) \in \mathcal{PNN}(\mathfrak{m}, M, \{W, L, B_{\boldsymbol{\theta}}\})$, it can be expressed as
$$
\Phi_{\mathfrak{m}, \boldsymbol{\theta}}(\boldsymbol{x}) = \sum_{k=1}^{\mathfrak{m}} c_k \phi^{k}_{\boldsymbol{\theta}}(\boldsymbol{x})\, , \quad c_k \in \mathbb{R}\, ,
$$
where $\phi^{k}_{\boldsymbol{\theta}}(\boldsymbol{x}) \in \mathcal{NN}(W^k, L^k, B_{\boldsymbol{\theta}}^k)$ and
$$
\max_{k=1, \ldots, \mathfrak{m}} W^k \leq W \, , \quad \max_{k=1, \ldots, \mathfrak{m}} L^k \leq L \, , \quad \max_{k=1, \ldots, \mathfrak{m}} B_{\boldsymbol{\theta}}^k \leq B_{\boldsymbol{\theta}} \, , \quad \sum_{k=1}^{\mathfrak{m}} |c_k| \leq M\, .
$$

\begin{figure}
\centering
\includegraphics[width=0.9 \linewidth]{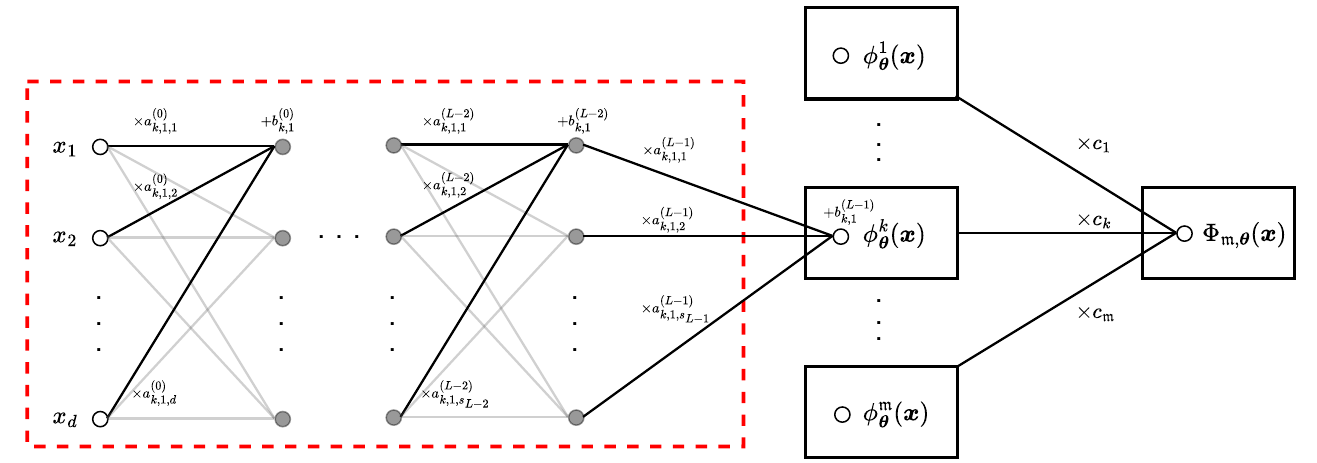}
\caption{This figure illustrates the structure of the Parallel Neural Network. The structure within the red box represents the sub-neural networks, which are fully connected networks, where the dark-colored nodes signify activation functions.}
\end{figure}

Define $\boldsymbol{\theta}_{\rm in}^{\mathfrak{m}} := (\boldsymbol{\theta}_1, \ldots, \boldsymbol{\theta}_{\mathfrak{m}})$ where $\boldsymbol{\theta}_{k} =  (a_{\scriptscriptstyle {k,1,1}}^{\scriptscriptstyle (0)}, \ldots, a_{\scriptscriptstyle k, N_L, N_{L-1}}^{\scriptscriptstyle (L-1)}, b_{\scriptscriptstyle k,1}^{\scriptscriptstyle (0)}, \ldots, b_{\scriptscriptstyle k, N_L}^{\scriptscriptstyle (L-1)})$. Define $\boldsymbol{\theta}_{\rm out}^{\mathfrak{m}} := (c_1, \ldots, c_{\mathfrak{m}})$. Define $\Theta^{\mathfrak{m}}$ as the set of all weight vectors $\boldsymbol{\theta}_{\rm total}^{\mathfrak{m}} := (\boldsymbol{\theta}_{\rm in}^{\mathfrak{m}}, \boldsymbol{\theta}_{\rm out}^{\mathfrak{m}})$ that parameterize $\Phi_{\mathfrak{m}, \boldsymbol{\theta}}$. Where it does not cause ambiguity, the symbol $\mathcal{PNN}$ will be used both as an abbreviation for some specific $\mathcal{PNN}(\mathfrak{m}, M, \{W, L, B_{\boldsymbol{\theta}}\})$, and to refer to a general parallel neural network class composed of multiple sub-networks. 
% When there is no ambiguity, we will abbreviate $\mathcal{PNN}(\mathfrak{m}, M, \{W, L, B_{\boldsymbol{\theta}}\})$ as $\mathcal{PNN}$. 
The notation specific to this paper is shown in the following table:
\begin{table}[ht]
\caption{Notation Specific to This Paper}
\centering
\def\temptablewidth{0.9\textwidth}
{\rule{\temptablewidth}{1pt}}
\begin{tabular*}{\temptablewidth}{@{\extracolsep{\fill}}cccc}
\hline
&$\mathfrak{m}$& the number of the sub-networks\\
\hline
&$\boldsymbol{\theta}_{k}$&$ (a_{{\scriptscriptstyle k,1,1}}^{\scriptscriptstyle {(0)}}, \ldots, a_{\scriptscriptstyle k, N_{\mkern- 2mu L}, N_{\mkern- 2mu L-1}}^{\scriptscriptstyle (L-1)}, b_{\scriptscriptstyle k,1}^{\scriptscriptstyle (0)}, \ldots, b_{\scriptscriptstyle k, N_{\mkern- 2mu L}}^{\scriptscriptstyle (L-1)})$\\
\hline
&$\boldsymbol{\theta}_{\rm in}^{\mathfrak{m}}$&$(\boldsymbol{\theta}_1, \ldots,\boldsymbol{\theta}_{\mathfrak{m}})$\\
\hline
&$\boldsymbol{\theta}_{\rm out}^{\mathfrak{m}}$&$(c_1, \ldots, c_{\mathfrak{m}})$\\
\hline
&$ \boldsymbol{\theta}_{\rm total}^{\mathfrak{m}}$ &$(\boldsymbol{\theta}_{\rm in}^{\mathfrak{m}}, \boldsymbol{\theta}_{\rm out}^{\mathfrak{m}})$\\ 
\hline
&$M$&$\|\boldsymbol{\theta}_{\rm out}^{\mathfrak{m}}\|_{1}\leq M$\\
\hline
\end{tabular*}
{\rule{\temptablewidth}{1pt}}
\end{table}

\subsection{Deep Ritz method} \label{drm}
Now, we recall the deep Ritz method (DRM) proposed in \cite{Weinan2017The}. 
Let $[0,1]^d$ be the unit hypercube on $\mathbb{R}^d$, $\Omega\subset [0,1]^d$ be a bounded open set and $\partial\Omega$ be the boundary of $\Omega$. Consider the elliptic equation on $\Omega$ equipped with Neumann boundary condition: 
\begin{equation}
\label{eq4}
    -\Delta u +\omega u = h \ \ on \ \,  \Omega \, , \qquad \frac{\partial u}{\partial \boldsymbol{n}} = g \ \ on \ \,  \partial\Omega.
\end{equation}
With the following assumptions on the known terms: 
$$
\partial\Omega \in C^{2+\alpha} \, , \quad h\in L^{2}(\Omega)\, , \quad g\in H^{1/2}(\partial\Omega)\, , \quad \omega(x)\in C^{\alpha}(\bar{\Omega}) \, , \quad \omega(x)\geq c_0 >0\, ,
$$
where $0<\alpha<1$,
equation (\ref{eq4}) has a unique weak solution $u_{0} \in H^{2}(\Omega)$ \citep{agmon1959estimates}. Let $B_0 = \max\{\|h\|_{ L^{\infty}(\Omega)}, \|g\|_{L^{\infty}(\partial\Omega)}, \|\omega\|_{ L^{\infty}(\Omega)}\}$ and  define the energy functional $\mathcal{L}$ as follows:
\begin{equation}
\label{eq10}
    \mathcal{L}(u)=\int_{\Omega} \bigg(\frac{1}{2}\|\nabla u\|_2^2+ \frac{1}{2} \omega|u|^2 - hu \bigg) \mathrm{d}x - \int_{\partial\Omega} (g Tu) \, \mathrm{d}s \, ,
\end{equation}
% \begin{equation}
%     \mathcal{L}(u)=\frac{1}{2}\|\nabla u\|^2_{L^2(\Omega)}+ \frac{1}{2} \|u\|^2_{L^2(\Omega,\omega)}-\langle f,u \rangle_{L^2(\Omega)}- \langle Tu, g\rangle_{L^2(\partial\Omega)}
% \end{equation}
where  $T$ is the trace operator. Proposition \ref{prop4.1} demonstrates that minimizing $\mathcal{L}(u)$ in (\ref{eq10}) is equivalent to reducing the distance between $u$ and $u_0$ in the $H^1$ norm.

\begin{prop}
\label{prop4.1}
For any $u \in H^1(\Omega)$, it holds that
$$
\frac{c_0 \wedge 1}{2}\|u-u_{0}\|_{H^1(\Omega)}^2 \leq \mathcal{L}(u)-\mathcal{L}(u_{0}) \leq \frac{B_0 \vee 1}{2}\|u-u_{0}\|_{H^1(\Omega)}^2\, .
$$
\end{prop}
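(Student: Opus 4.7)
The plan is to set $v := u - u_0$ and expand the quadratic functional $\mathcal{L}(u_0 + v)$ around $u_0$. Writing out $\mathcal{L}(u_0+v)$ and grouping terms by degree in $v$ gives
\[
\mathcal{L}(u_0 + v) = \mathcal{L}(u_0) + \mathcal{L}'(u_0)[v] + \tfrac{1}{2}\int_\Omega \bigl(\|\nabla v\|_2^2 + \omega v^2\bigr)\,\mathrm{d}x,
\]
where the linear term is
\[
\mathcal{L}'(u_0)[v] = \int_\Omega \bigl(\nabla u_0 \cdot \nabla v + \omega u_0 v - h v\bigr)\,\mathrm{d}x - \int_{\partial\Omega} g\, Tv \,\mathrm{d}s.
\]

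The next step is to note that this linear term is exactly the weak formulation of \eqref{eq4} tested against $v \in H^1(\Omega)$. Since $u_0$ is the (unique) weak solution, this expression vanishes for every $v \in H^1(\Omega)$. Hence the identity
\[
\mathcal{L}(u) - \mathcal{L}(u_0) = \frac{1}{2}\int_\Omega \bigl(\|\nabla v\|_2^2 + \omega v^2\bigr)\,\mathrm{d}x
\]
holds, which reduces the proposition to a pointwise bound on $\omega$.

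Finally, the two inequalities follow by sandwiching the coefficient $\omega$. Using $c_0 \le \omega(x) \le B_0$ on $\Omega$,
\[
\tfrac{c_0 \wedge 1}{2}\bigl(\|\nabla v\|_{L^2}^2 + \|v\|_{L^2}^2\bigr) \le \tfrac{1}{2}\int_\Omega \bigl(\|\nabla v\|_2^2 + \omega v^2\bigr)\,\mathrm{d}x \le \tfrac{B_0 \vee 1}{2}\bigl(\|\nabla v\|_{L^2}^2 + \|v\|_{L^2}^2\bigr),
\]
which is exactly $\tfrac{c_0\wedge 1}{2}\|v\|_{H^1(\Omega)}^2$ and $\tfrac{B_0\vee 1}{2}\|v\|_{H^1(\Omega)}^2$ respectively, giving the claimed two-sided bound.

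There is no real obstacle here; the only point that requires care is justifying that the first-variation term vanishes, which is simply the weak formulation of the Neumann problem \eqref{eq4} (and is well-defined under the stated regularity, since $Tv \in H^{1/2}(\partial\Omega)$ pairs with $g \in H^{1/2}(\partial\Omega) \subset L^2(\partial\Omega)$). Once that identity is in hand, the bounds are immediate from $c_0 \le \omega \le B_0$ and the definition of the $H^1$ norm.
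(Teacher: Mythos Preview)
Your proposal is correct and follows essentially the same approach as the paper: set $v = u - u_0$, expand $\mathcal{L}(u_0 + v)$, observe that the linear term is the weak formulation of \eqref{eq4} and hence vanishes, and then bound the remaining quadratic form using $c_0 \le \omega \le B_0$. The only cosmetic difference is that the paper writes out the expansion term by term rather than naming the linear part $\mathcal{L}'(u_0)[v]$.
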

\begin{proof}
For any $u \in H^1(\Omega)$, set $v=u-u_{0}$, then
\begin{align*}
\mathcal{L} & (u)=\mathcal{L}(u_0+v) \\
= & \int_{\Omega} \Big(\frac{1}{2}\| \nabla(u_0+v)\|_2^2+\frac{1}{2}\omega|u_0+v|^2-h(u_0+v)\Big) \m \mathrm{d}x  - \int_{\partial\Omega}  g(T u_0+T v)\m\mathrm{d}s\\
= & \int_{\Omega}\Big(\frac{1}{2}\|\nabla u_0\|_2^2 +\frac{1}{2}\omega |u_0|^2  - u_0\m h \Big) \mathrm{d}x - \int_{\partial \Omega} g \mm T u_0 \, \mathrm{d}s + \int_{\Omega} \Big(\frac{1}{2}\|\nabla v\|_2^2 + \frac{1}{2} \omega |v|^2 \Big) \mathrm{d}x \\
& +\Big[\int_{\Omega}\nabla u_0 \nabla v \, \mathrm{d}x + \int_{\Omega} u_0\m v \, \mathrm{d}x - \int_{\Omega} h\m v \, \mathrm{d}x - \int_{\partial \Omega} g \mm T v \,\mathrm{d}s\Big] \\
= & \mathcal{L}(u_0)+ \int_{\Omega} \Big(\frac{1}{2}\|\nabla v\|_2^2 + \frac{1}{2} \omega |v|^2 \Big) \mathrm{d}x\, ,
\end{align*}
where the last equality is due to the fact that $u_{0}$ is the weak solution of equation (\ref{eq4}). Hence
\begin{align*}
\frac{c_0 \wedge 1}{2}\|v\|_{H^1(\Omega)}^2 \leq \mathcal{L}(u)-\mathcal{L}(u_{0}) & =\int_{\Omega} \Big(\frac{1}{2}\|\nabla v\|_2^2 + \frac{1}{2} \omega |v|^2 \Big) \mathrm{d}x  \leq \frac{\|\omega\|_{L^{\infty}(\Omega)} \vee 1}{2}\|v\|_{H^1(\Omega)}^2\, ,
\end{align*}
that is
$$
\frac{c_0 \wedge 1}{2}\|u-u_{0}\|_{H^1(\Omega)}^2 \leq \mathcal{L}(u)-\mathcal{L}(u_{0}) \leq \frac{B_0 \vee 1}{2}\|u-u_{0}\|_{H^1(\Omega)}^2\, .
$$
\end{proof}

To facilitate the implementation of deep learning algorithms, we use Monte Carlo method to discretize the energy functional $\mathcal{L}$. First, (\ref{eq10}) is rewritten as 
\begin{equation}
\label{eq11}
    \mathcal{L}(u)=|\Omega|\underset{X\sim U(\Omega)}{\mathbb{E}}\bigg[\frac{\| \nabla u(X)\|_2^2}{2}+ \frac{\omega(X)u^2(X)}{2} -u(X)h(X)\bigg]-|\partial\Omega|\underset{Y\sim U(\partial\Omega)}{\mathbb{E}}[Tu(Y)g(Y)] \, ,
\end{equation}
where $U(\Omega)$, $U(\partial\Omega)$ are the uniform distribution on $\Omega$ and $\partial\Omega$. Based on (\ref{eq11}), we introduce the discrete version $\widehat{\mathcal{L}}(u)$:
\begin{equation}
\label{eq12.}
    \widehat{\mathcal{L}}(u)=\frac{|\Omega|}{N_{\rm in}}\sum_{p=1}^{N_{\rm in}}\bigg[\frac{\| \nabla u(X_p)\|_2^2}{2} +\frac{\omega(X_p)u^2(X_p)}{2}- u(X_p)h(X_p)\bigg]-\frac{|\partial\Omega|}{N_b}\sum_{p=1}^{N_b}[u(Y_p)g(Y_p)] \, ,
\end{equation}
where $\{X_p\}_{p=1}^{N_{\rm in}} \sim_{\text{i.i.d.}} U(\Omega) \m$, $ \{Y_p\}_{p=1}^{N_b} \sim_{\text{i.i.d.}} U(\partial\Omega)$. Then, we select a deep neural network class $\mathcal{F}_{\boldsymbol{\theta}}$, within which we will minimize $\widehat{\mathcal{L}}(u_{\boldsymbol{\theta}})$ for $u_{\boldsymbol{\theta}} \in$ $\mathcal{F}_{\boldsymbol{\theta}}$. 

In this paper, our choice is the $\mathcal{PNN}$ parallel neural network $u_{\mathfrak{m}, \boldsymbol{\theta}}=\sum_{k=1}^{\mathfrak{m}}c_k \phi_{\boldsymbol{\theta}}^k$ introduced in Section \ref{subsec:topo}, which comprises $\mathfrak{m}$ sub-networks. For convenience, we set the width of all sub-networks in $u_{\mathfrak{m}, \boldsymbol{\theta}}$ to 
$W$, and the depth of all sub-networks in 
$u_{\mathfrak{m}, \boldsymbol{\theta}}$ to 
$L$. Now, (\ref{eq12.}) turns into:
\begin{align}
\label{eq12}
    \widehat{\mathcal{L}}(u_{\mathfrak{m}, \boldsymbol{\theta}})= \, & \frac{|\Omega|}{N_{\rm in}}\sum_{p=1}^{N_{\rm in}}\bigg[\frac{\| \nabla u_{\mathfrak{m}, \boldsymbol{\theta}}(X_p)\|_2^2}{2} +\frac{\omega(X_p)u_{\mathfrak{m}, \boldsymbol{\theta}}^2(X_p)}{2}- u_{\mathfrak{m}, \boldsymbol{\theta}}(X_p)h(X_p)\bigg] \notag \\
    &-\frac{|\partial\Omega|}{N_b}\sum_{p=1}^{N_b}[u_{\mathfrak{m}, \boldsymbol{\theta}}(Y_p)g(Y_p)] \, .
\end{align}

\subsection{Projected gradient descent} \label{pgd}
% We use the projected gradient descent   algorithm (Denote this algorithm by $\mathcal{A}$ ) to minimize $\widehat{\mathcal{L}}$. And we use   $u_{\mathcal{A}}$ to denote the output of $\mathcal{A}$ which serves as the final solution, see Section \ref{pgdrecall} and \ref{pgddetail} for detail.

Specifically, we use the projected gradient descent (PGD) algorithm to minimize $\widehat{\mathcal{L}}(u_{\mathfrak{m}, \boldsymbol{\theta}})$ in (\ref{eq12}), which is an iterative optimization method suitable for constrained optimization problems.

As shown in Section \ref{subsec:topo}, the weights of $u_{\mathfrak{m}, \boldsymbol{\theta}}$ are $\boldsymbol{\theta}^{\mathfrak{m}}_{ \rm total}=(\boldsymbol{\theta}^{\mathfrak{m}}_{ \rm in}, \boldsymbol{\theta}^{\mathfrak{m}}_{ \rm out})$. Since the Monte Carlo samples $\{X_p\}_{p=1}^{N_{\rm in}}$, $\{Y_p\}_{p=1}^{N_{b}}$ are fixed during the optimization process, $\widehat{\mathcal{L}}(u_{\mathfrak{m}, \boldsymbol{\theta}})$ becomes a function solely dependent on $\boldsymbol{\theta}_{\rm total}^{\mathfrak{m}}$. Hence, we denote it as $\widehat{F}(\boldsymbol{\theta}_{\rm total}^{\mathfrak{m}})=\widehat{F}(\boldsymbol{\theta}_{\rm in}^{\mathfrak{m}}, \boldsymbol{\theta}_{\rm out}^{\mathfrak{m}})$. Now, we formally introduce the PGD algorithm used in this paper, which consists of the following three steps:
\vskip 3mm
\noindent \textbf{Initialization.} We start with an initial guess \( (\boldsymbol{\theta}_{\rm total}^{\mathfrak{m}})^{\scriptscriptstyle [0]} = (\boldsymbol{\theta}_{\rm in}^{\mathfrak{m}}, \boldsymbol{\theta}_{\rm out}^{\mathfrak{m}})^{\scriptscriptstyle [0]} \) as follows:
\begin{itemize}
    \item[(i)] For the linear coefficients $\boldsymbol{\theta}_{\rm out}^{\mathfrak{m}}$, set
    \begin{align} \label{eq:out0}     
(\boldsymbol{\theta}_{\rm out}^{\mathfrak{m}})^{\scriptscriptstyle [0]} = \boldsymbol{0}\, , \quad  i.e. \quad (c_{k})^{\scriptscriptstyle [0]}=0 \quad (k=1, \ldots, \mathfrak{m})\, .
    \end{align}
\item[(ii)] For the sub-network parameters
$\boldsymbol{\theta}_{\rm in}^{\mathfrak{m}}$, initialize each element in $(\boldsymbol{\theta}_{\rm in}^{ \mathfrak{m}})^{\scriptscriptstyle [0]}$ to follow the same uniform distribution $\mathcal{U}[-B, B]$ independently, i.e.,    
\begin{align} \label{eq:init}
\big(a_{k, i, j}^{(\ell)}\big)^{\scriptscriptstyle [0]} \sim_{\text{i.i.d.}} \mathcal{U}[-B, B]\, , \quad \big(b_{k, i}^{(\ell)}\big)^{\scriptscriptstyle [0]} \sim_{\text{i.i.d.}} \mathcal{U}[-B, B] \, ,
\end{align}
where $k = 1,\ldots,\mathfrak{m}$, $\ell = 0,\ldots,{L-1}$. When $\ell=0$,  $i=1,\ldots W$, $j=1,\ldots,d$; When $\ell=1,\ldots,{L-2}$, $i,j=1,\ldots,{W}$; When $\ell=L-1$, $i=1$, $j=1,\ldots,W$.
\end{itemize}

\noindent  \textbf{Constraint set.} Then, we choose $\eta,\zeta>0$, and determine the constraint set  as follows:
\begin{itemize}
    \item[(i)] Let $A_{\eta}(\omega)$ be the (random) set of all weight vectors $\boldsymbol{\theta}^{\mathfrak{m}}_{\rm in}$ which satisfy
\begin{align} \label{eq:eta}
\big\|\boldsymbol{\theta}^{\mathfrak{m}}_{\rm in}-(\boldsymbol{\theta}^{\mathfrak{m}}_{\rm in})^{\scriptscriptstyle [0]}\big\|_2 \leq \eta \, .    
\end{align}

\item[(ii)] Let $B_{\mkern- 1mu \zeta}$ be the set of all weight vectors $\boldsymbol{\theta}^{\mathfrak{m}}_{\rm out}$ which satisfy
\begin{align} \label{eq:M}
\big\|\boldsymbol{\theta}^{\mathfrak{m}}_{\rm out}\big\|_1=\sum_{k=1}^{\mathfrak{m}}|c_k| \leq \zeta \, .
\end{align}
\end{itemize}

\noindent  \textbf{Iterative Update.} Finally, let $T \in \mathbb{N}^{+}$, $\lambda>0$. For each iteration $t = 0, \ldots, T-1$, do:
\begin{itemize}
       \item[(i)] Compute the gradient of the objective function at the current point: 
       \[
       \boldsymbol{g}^{[t]} = \nabla_{\boldsymbol{\theta}^{\mathfrak{m}}_{ \rm total}} \widehat{F}\big((\boldsymbol{\theta}^{\mathfrak{m}}_{\rm in})^{\scriptscriptstyle[t]}, (\boldsymbol{\theta}^{\mathfrak{m}}_{\rm out})^{\scriptscriptstyle[t]}\big) \, .
       \]
       \item[(ii)] Update the weight vector by  first performing a gradient descent step with a step size of $\lambda$ and then projecting the result onto the feasible set:
       \begin{align} \label{eq:pgd}
\big(\boldsymbol{\theta}^{\mathfrak{m}}_{\rm in}, \boldsymbol{\theta}^{\mathfrak{m}}_{\rm out}\big)^{\scriptscriptstyle[t+1]}=\operatorname{Proj}_{A_{\eta} \times B_{\mkern- 1mu \zeta}} \! \big\{(\boldsymbol{\theta}^{\mathfrak{m}}_{\rm in}, \boldsymbol{\theta}^{\mathfrak{m}}_{\rm out})^{\scriptscriptstyle[t]}-\lambda \, \boldsymbol{g}^{[t]} \, \big\} \, ,
\end{align}
       where \( \operatorname{Proj}_{\mathcal{C}} \) denotes the projection operator onto the set \( \mathcal{C} \) .
   \end{itemize}
\begin{remark}
    The projection onto the $\ell_2$ ball $A_\eta$ can be expressed in closed form, while the projection onto the $\ell_1$ ball $B_\zeta$ can be implemented exactly with a linear dependence on the dimension \cite{duchi2008efficient}.
\end{remark}
 
In the following, we will use $\mathcal{A}$ to represent the PGD algorithm, and use $u_{\mathcal{A}}$ to denote the output of $\mathcal{A}$ which serves as the final solution. It is evident that $u_{\mathcal{A}}$ is exactly $u_{\mathfrak{m}, \boldsymbol{\theta}}$ parameterized with $(\boldsymbol{\theta}^{\mathfrak{m}}_{\rm in}, \boldsymbol{\theta}^{\mathfrak{m}}_{\rm out})^{\scriptscriptstyle[T]}$.

\subsection{Main result} \label{main}
After the aforementioned preparation, we now formally state the main theorem of this work, which provids a comprehensive end-to-end error analysis for solving elliptic equations via the deep Ritz method under the over-parameterized setting.
\begin{thm}\label{mainth}
When applying the deep Ritz method to solve (\ref{eq4}), we utilize the $\mathcal{PNN}$ architecture, which comprises $\mathfrak{m}$ sub-networks with  width  $W$ and depth $L$. We initialize the network parameters through (\ref{eq:out0}) and (\ref{eq:init}), setting the linear coefficients connecting the sub-networks to $\boldsymbol{0}$, and making each sub-network weight following the uniform distribution $\mathcal{U}[-B, B]$ independently. Let $\eta$ and $\zeta$ be the projection radius described in (\ref{eq:eta}) and (\ref{eq:M}), respectively. 
% Then $u_{\mathcal{A}} =(u_{\mathfrak{m}})^{\scriptscriptstyle[T]} \in  \mathcal{PNN}({\mathfrak{m}}, M, \{W, L, B_{\boldsymbol{\theta}}\})$. 
Let $N_{\rm in} = N_{b} = N_{s}$ be the sample size of the Monte Carlo method in (\ref{eq12}). Now, let $u_{\mathcal{A}}$ be the output of the PGD algorithm in (\ref{eq:pgd}) with iteration steps $T$ and step size $\lambda$. For any $0< \epsilon \ll 1$, set
\begin{alignat*}{3}
{\mathfrak{m}}& = \lceil C \cdot \epsilon^{-C_1(\mu, d, \beta, \beta_0, n)} \rceil \,,  &\qquad  W &= 2^{\lceil \log_2 (d+1) \rceil+1}\, , &\qquad  L&= \lceil \log_2 (d+1) \rceil+2 \,   , \\
\qquad B & = C \cdot \epsilon^{-2-\frac{2d}{n-\mu-1}} \, , &\eta &= \epsilon^{-\beta}\, , &\qquad   \zeta & = C \cdot \epsilon^{-\frac{3d}{2(n-\mu-1)}}\,,\\
T& = C \cdot \epsilon^{-C_2(\mu, d, \beta, \beta_0, n)}\, , &\qquad  \lambda &= C \cdot \epsilon^{C_2(\mu, d, \beta, \beta_0, n)} \, , &\qquad  N_{s} &= \lceil C \cdot \epsilon^{-C_3(\mu, d, \beta_0, n)} \rceil \, .
\end{alignat*}
Suppose that $u_0\in H^{n}(\Omega)$ for $n \geq 2$ is the target solution of the elliptic partial differential equation (\ref{eq4}). Then, with probability at least $1- 2\cdot \epsilon^{C_3(\mu, d, \beta_0, n)}$, the total error
$$\|u_{\mathcal{A}}-u_{0}\|_{H^1(\Omega)}^2 \leq C \m \epsilon \log^{1/2} (C\m\epsilon^{-1}) = \tilde{\mathcal{O}}(\epsilon)\, ,$$
 where
\begin{align*}
& \beta_0 = \max\{\beta, \,  2+2 \m d \m (n-\mu-1)^{-1}\}\, , \\
& C_1(\mu, d, \beta, \beta_0, n) = \frac{C_0\cdot d^3\log^2(d+1)}{n-\mu-1} + 11\beta_0\log(d+1)+33\beta_0 + 3\beta \, ,\\
& C_2(\mu, d, \beta, \beta_0, n) = \frac{C_0^{\prime}\cdot d^3\log^2(d+1)}{n-\mu-1} + 15\beta_0\log(d+1)+45\beta_0 + 3\beta\, ,\\
& C_3(\mu, d, \beta_0, n) = 4\beta_0\log(d+1)+\frac{6d}{n-\mu-1}+12\beta_0 + 2\, .
\end{align*}
Meanwhile, $C$ denotes a universal constant which is defined place by place and only depends on $\Omega, W, L, d, n$; $B_0$, $C_0$ and $C_0^{\prime}$ are positive constants; $0<\mu<1$ is an arbitrarily small positive number.
\end{thm}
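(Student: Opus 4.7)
The plan is to reduce the theorem, via Proposition \ref{prop4.1}, to a bound of the form $\mathcal{L}(u_{\mathcal{A}}) - \mathcal{L}(u_{0}) = \tilde{\mathcal{O}}(\epsilon)$ holding with probability at least $1 - 2\epsilon^{C_{3}}$, and then to obtain that bound from a tailored four-term decomposition. I will pick a benchmark network $u^{*} \in \mathcal{PNN}$ whose hidden-layer weights coincide exactly with the random initialization $(\boldsymbol{\theta}_{\rm in}^{\mathfrak{m}})^{\scriptscriptstyle [0]}$ and whose output coefficients $(c_{k}^{*})$ satisfy $\sum_{k}|c_{k}^{*}| \le \zeta$; such a $u^{*}$ is automatically an element of the constraint set $A_{\eta} \times B_{\zeta}$ for every $\eta > 0$. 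Writing
\[\mathcal{L}(u_{\mathcal{A}}) - \mathcal{L}(u_{0}) = \bigl[\mathcal{L}(u_{\mathcal{A}}) - \widehat{\mathcal{L}}(u_{\mathcal{A}})\bigr] + \bigl[\widehat{\mathcal{L}}(u_{\mathcal{A}}) - \widehat{\mathcal{L}}(u^{*})\bigr] + \bigl[\widehat{\mathcal{L}}(u^{*}) - \mathcal{L}(u^{*})\bigr] + \bigl[\mathcal{L}(u^{*}) - \mathcal{L}(u_{0})\bigr],\]
the four pieces become respectively generalization, optimization, generalization, and approximation errors, and the tightening advertised in the contributions should consist in comparing PGD only to the initialization-anchored $u^{*}$, thereby avoiding any appeal to a global minimizer of $\widehat{\mathcal{L}}$ over the entire parameter space.

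For the approximation term I would invoke a random-feature style expressivity estimate: with the prescribed $\mathfrak{m}, B, \zeta$ and $u_{0} \in H^{n}(\Omega)$, independence of the initial hidden weights under $\mathcal{U}[-B, B]$ combined with Monte Carlo quadrature of an integral/Barron-type representation of $u_{0}$ yields, with failure probability at most $\epsilon^{C_{3}}$, coefficients realising $\|u^{*} - u_{0}\|_{H^{1}(\Omega)}^{2} \le \tilde{\mathcal{O}}(\epsilon)$, whence Proposition \ref{prop4.1} controls the approximation piece; the $\mu$-dependence in the exponents reflects the usual Sobolev embedding cost in passing from $H^{n-\mu-1}$-type control back to $H^{1}$. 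For the two generalization pieces I would bound $\|u\|_{W^{1,\infty}(\Omega)}$ on $A_{\eta} \times B_{\zeta}$ by a polynomial in $B, \eta, \zeta, W, L$, build a sup-norm $\epsilon$-net for both $u$ and $\nabla u$, and apply Hoeffding concentration to the three Monte Carlo averages in \eqref{eq12} followed by a union bound over the cover; $N_{s} = \epsilon^{-C_{3}}$ is exactly what is required to drive this uniform deviation below $\tilde{\mathcal{O}}(\epsilon)$. For the optimization piece I would exploit that $\widehat{F}(\boldsymbol{\theta}_{\rm in}^{\mathfrak{m}}, \boldsymbol{\theta}_{\rm out}^{\mathfrak{m}})$ is a quadratic form in $u_{\mathfrak{m},\boldsymbol{\theta}}$ and linear in $\boldsymbol{\theta}_{\rm out}^{\mathfrak{m}}$, so in particular convex in $\boldsymbol{\theta}_{\rm out}^{\mathfrak{m}}$ for every fixed $\boldsymbol{\theta}_{\rm in}^{\mathfrak{m}}$, and derive a Lipschitz-gradient constant on $A_{\eta} \times B_{\zeta}$ polynomial in $B, \eta, \zeta, W, L$; a standard projected-descent contraction with $\lambda$ of order $\epsilon^{C_{2}}$ and $T = \epsilon^{-C_{2}}$ iterations should then give $\widehat{\mathcal{L}}(u_{\mathcal{A}}) - \widehat{\mathcal{L}}(u^{*}) \le \tilde{\mathcal{O}}(\epsilon)$.

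The main obstacle is the optimization step. Unlike NTK-type analyses, $\eta = \epsilon^{-\beta}$ is allowed to diverge as $\epsilon \to 0$, so the PGD iterates need not remain in an infinitesimal neighborhood of the initialization, and the Lipschitz and smoothness constants of $\widehat{F}$ on $A_{\eta} \times B_{\zeta}$ blow up polynomially in $B, \eta, \zeta$. Absorbing this blow-up into the step size $\lambda$ and iteration count $T$ is precisely what dictates the elaborate exponent $C_{2}(\mu, d, \beta, \beta_{0}, n)$; the key technical lemma will be a descent inequality that uses convexity only in the $\boldsymbol{\theta}_{\rm out}^{\mathfrak{m}}$ direction while controlling the non-convex $\boldsymbol{\theta}_{\rm in}^{\mathfrak{m}}$ direction purely through the projection radius $\eta$ and smoothness. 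Closely related, and bookkeeping-heavy rather than conceptually hard, is balancing $C_{1}, C_{2}, C_{3}$ against the approximation-rate exponent $(n-\mu-1)^{-1}$ so that all four pieces simultaneously contribute $\tilde{\mathcal{O}}(\epsilon)$ and the failure probabilities consolidate to $2\epsilon^{C_{3}}$; once the three individual bounds are established, this final balancing is routine.
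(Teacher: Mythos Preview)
Your decomposition and your treatment of the optimization and statistical pieces are broadly in the right spirit, but the approximation step conceals a genuine gap, and it is the one step where the paper's mechanism is essentially different from what you propose. You want to build $u^{*}$ directly from the random initialization via a ``random-feature / Barron-type integral representation of $u_{0}$ plus Monte Carlo quadrature''. For that to yield $\|u^{*}-u_{0}\|_{H^{1}}^{2}=\tilde{\mathcal{O}}(\epsilon)$ you would need (i) an integral representation of a generic $u_{0}\in H^{n}(\Omega)$ over \emph{deep} $\tanh$ sub-networks of the very specific width $W=2^{\lceil\log_{2}(d+1)\rceil+1}$ and depth $L=\lceil\log_{2}(d+1)\rceil+2$, and (ii) the sampling measure in that representation to be exactly $\mathcal{U}[-B,B]^{\mathfrak{D}(W,L,d)}$ with $B=C\epsilon^{-2-2d/(n-\mu-1)}$. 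No such representation is available; Barron-type arguments are for shallow networks and for target functions in a Barron space, not $H^{n}$. The paper does something quite different: it first builds a \emph{deterministic} approximator $\bar{u}=\sum_{k=1}^{\bar{\mathfrak{m}}}\bar{c}_{k}\phi_{\bar{\boldsymbol{\theta}}}^{k}$ from localized Taylor polynomials and a $\tanh$ partition of unity (Corollary~\ref{cor4.1}), with explicit bounds on $\bar{\mathfrak{m}},\bar{M},B_{\bar{\boldsymbol{\theta}}}$, and only then uses randomness: with $\mathfrak{m}=\bar{\mathfrak{m}}\cdot R\cdot Q$ i.i.d.\ sub-networks, a coupon-collector estimate (Lemma~\ref{lem:prob of G}) shows that with high probability each target $\bar{\boldsymbol{\theta}}_{k}$ has at least $R$ initializations within $\delta$ of it. The benchmark $u_{\mathfrak{m}}^{*}$ then assigns $c^{*}_{s_{k,v}}=\bar{c}_{k}/R$ to those $R$ close copies and zero elsewhere. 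This two-stage construction, not a random-feature law of large numbers, is what makes the approximation piece go through.

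The same construction is also what rescues your optimization argument. The PGD descent lemma the paper uses (Lemma~\ref{opt_lem}) exploits convexity only in $\boldsymbol{\theta}_{\rm out}^{\mathfrak{m}}$, exactly as you suggest, but its bound contains the terms $U\cdot\|\boldsymbol{y}^{*}\|_{2}\cdot\operatorname{diam}(A_{\eta})$ and $\tfrac{1}{2}\|\boldsymbol{y}^{*}-\boldsymbol{y}_{0}\|_{2}^{2}=\tfrac{1}{2}\|\boldsymbol{\theta}_{\rm out}^{\mathfrak{m},*}\|_{2}^{2}$. If all you know is $\|\boldsymbol{\theta}_{\rm out}^{\mathfrak{m},*}\|_{1}\le\zeta=\bar{M}=C\epsilon^{-3d/(2(n-\mu-1))}$, these terms blow up and cannot be absorbed by taking $T$ large. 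The paper's $R$-replication makes $\|\boldsymbol{\theta}_{\rm out}^{\mathfrak{m},*}\|_{2}\le\bar{M}/\sqrt{R}$ (equation~\eqref{eq:norm*}), and it is precisely sending $R\to\infty$ that drives both terms to $\tilde{\mathcal{O}}(\epsilon)$; this is the concrete role of over-parameterization in the optimization analysis, and your proposal does not identify it. Once you have the deterministic $\bar{u}$, the event $G_{\mathfrak{m},\bar{\mathfrak{m}},R,\delta}$, and the replicated $u_{\mathfrak{m}}^{*}$, the rest of your outline (Lipschitz/Hessian bounds on $A_{\eta}\times B_{\zeta}$, $\mathfrak{m}$-independent statistical error via the $\ell_{1}$ constraint, and the final exponent bookkeeping) matches the paper's route.
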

\begin{proof}
See Appendix \ref{proof of main result}.
\end{proof}

\begin{remark}
The assumption $u_0\in H^{n}(\Omega)$ for $n > 2$ can be achieved by increasing the regularity of the coefficient $\omega$ and the right-hand side functions $h, g$ in equation (\ref{eq4}). For instance, with $\partial \Omega \in C^{\m n}$, such assumption would be realized if we get $h \in H^{n-2}(\Omega)$, $g \in H^{n-3/2}(\partial\Omega)$ and $ \omega \in C^{\m n-2}(\bar{\Omega})$. See \cite{agmon1959estimates} for proof.  
\end{remark}
\begin{remark}
It can be observed that our analytical results do not necessitate the neural network parameters to have close initial values during the iterative algorithm, which is a restrictive technique requirement commonly employed in previous analyses of optimization error \citep{jacot2018neural,allen2019convergence,du2019gradient,zou2019improved,liu2022loss,chizat2019lazy,nguyen2021proof,lu2020mean,mahankali2024beyond}. However, when the projection radius of the sub-network weights, $\eta$, becomes too large, exceeding their initialization range ($\beta_0=\beta$), the algorithm will require more Monte Carlo sampling points, a higher level of over-parameterization, a smaller iteration step size, and more iterations to achieve the same precision. This theoretical finding emphasizes the significance of appropriately applying gradient clipping during the optimization process.
\end{remark}

\section{Proofs}
\label{proof}
To prove our main result Theorem \ref{mainth}, we need to show that the algorithm output \( u_{\mathcal{A}} \) can approximate the target solution \( u_0 \) of the elliptic partial differential equation (\ref{eq4}) with a specified precision \(\epsilon\). We achieve this objective through the following five steps. 

\textbf{Step 1: Decomposing the total error.}
   The total error between \( u_{\mathcal{A}} \) and \( u_0 \) can be decomposed into three main components: approximation error,  statistical error and a novel and tighter optimization error,   details can be found in Section \ref{errde}.

\textbf{Step 2: Constructing a parallel neural network with explicit weight bound and weight norm  to  approximate  in Sobolev space.}  Building on methods from \cite{yarotsky2017error}, \cite{jiao2023approximation}, and \cite{guhring2021approximation}, we derive an approximation error bound for neural networks in the
%\( W^{k,p} \) norm (\( n \geq k+1 \)) for functions in the \( W^{n,p} \)
Sobolev space.  This bound is achieved by explicitly constructing  \( \tanh \)-activated  neural networks that approximate local Taylor polynomials.
 % , whose details can be found in Section \ref{app}.
 Notably, the constructed network has a parallel architecture, meaning the final neural network is a linear combination of many structurally similar fully connected sub-networks,  see  Section \ref{subsec:topo} for detail. 
 By construction, we explicitly control the weight bound and weight norm of the deep network. This is critical for the generalization error analysis in the over-parameterized setting, as demonstrated in Step 4. Furthermore, we utilize this constructed over-parameterized network to define and analyze a novel and tighter optimization error term in Step 3.

\textbf{Step 3: Utilizing the property of over-parameterization to analyze the new optimization error.}
We choose the projection gradient descent (PGD) algorithm for optimization.
By definition, the new optimization error is further bounded by sum of the initialization error and iteration error, as shown in  (\ref{eq:opt decomp}). 
With the help of  over-parameterization, the initialization value  of the neural network will likely capture enough information about the best approximation network function constructed in Step 2, and thus we can control the initialization error. Additionally, the iteration error of the PGD algorithm will be effectively controlled choosing the  total iteration steps large enough and 
%\( T = \mathcal{O}(\epsilon^{-C  d^3\log^2(d+1)}) \), 
step size smaller enough. 
%\( \lambda = \mathcal{O}(\epsilon^{C  d^3\log^2(d+1)}) \) and 
The key feature of our analysis of the new optimization error term is that the radius of the projection regions can diverge at a certain rate, thereby avoiding the stagnation of training dilemma encountered in previous optimization error analyses.
%grow with $\mathcal{O}(\epsilon^{-Cd})$.
%By leveraging these two aspects, we can achieve precise control of the optimization error.
   % \begin{itemize}
   %     \item \textbf{Iteration Steps:} We set the total number of iteration steps to \( T = \mathcal{O}(\epsilon^{-C \cdot d^3\log^2(d+1)}) \).
   %     \item \textbf{Step Size:} The step size is chosen as \( \lambda = \mathcal{O}(\epsilon^{C \cdot d^3\log^2(d+1)}) \).
   %     \item \textbf{Projection Regions:} Appropriate projection regions are selected to ensure that the iterates remain within a feasible and controlled set, which is critical for convergence.
   % \end{itemize}

% \textbf{Step 5: Probabilistic Analysis:}
%    Using probabilistic techniques, we analyze the likelihood that the total error remains within the desired bound. We show that with probability at least \( 1 - 2 \cdot \epsilon^{Cd\log(d+1)} \), the error \( \|u_{\mathcal{A}} - u_0\|_{H^1(\Omega)}^2 \) is bounded by \( \tilde{\mathcal{O}}(\epsilon) \).

\textbf{Step 4: Obtaining an upper bound on the statistical error for over-parameterized deep neural network class.}
% Classical methods in empirical process theory employ tools such as symmetrization and Lipschitz contraction to transform the study of statistical error into bounding the complexity of neural network classes, such as the Rademacher complexity, covering number, or VC-dimension. For detailed analysis, see \cite{van1996weak,van2000empirical,gine2021mathematical}. 
The PGD optimization process in Step 3 will place the  output $u_{\mathcal{A}}$ within an over-parameterized neural network class. In this case, we cannot directly utilize tools from empirical process theory \cite{VanJhon, van2000empirical, gine2021mathematical} to bound the statistical error, as it would yield an upper bound that becomes uncontrollably large in the over-parameterized setting.
Thanks to the explicit  upper bounds on the  weight constructed in  constraints in Step 1   and the projection in Step 3,  
 we derive size independent statistical error by bounding the Rademacher complexity of the parallel structured neural network class directly  through definition and induction. Thus, By setting the Monte Carlo sample sizes for the boundary and interior points  properly 
 %as
 %\( N_b = N_{\text{in}} = \mathcal{O}(\epsilon^{-Cd \log(d+1)}) \), 
 we ensure the statistical error stays within acceptable bounds, see Section \ref{sta} for detail.

\textbf{Step 5: Synthesizing the error analysis from each component.} By synthesizing the analysis of approximation, optimization, and generalization error from Step 2 to Step 4, we could control the total error between \( u_{\mathcal{A}} \) and \( u_0 \) 
% in estimating the solution to the elliptic PDE using the deep Ritz method with PGD is 
within the desired precision under appropriate parameter settings, thus proving our main result Theorem \ref{mainth}.

\subsection{New error decomposition}
\label{errde}

To conduct an end-to-end error analysis between $u_{\mathcal{A}}$ and $u_0$, we propose the following error decomposition theorem. A crucial factor for making such analysis feasible is the introduction of a novel `optimization error', which is denoted as $$\mathcal{E}_{\text {opt }}^{-}:=\widehat{\mathcal{L}}(u_{\mathcal{A}})-\widehat{\mathcal{L}}(\bar{u})\, ,$$ where $\bar{u}$ is the best approximation element in some parallel neural network class $\mathcal{PNN}^{\m \prime}$, defined by (without loss of generality) 
\begin{equation}\label{bapp}
\bar{u}\in\argmin_{u\in \mathcal{PNN}^{\m \prime}}\|u-u_0\|^{2}_{H^1(\Omega)}\, .
\end{equation}
\begin{thm}
\label{thm_err_decom}
Let $u_{\mathcal{A}}$ be the PGD algorithm output when using DRM to solve (\ref{eq4}) and $\bar{u} \in \mathcal{PNN}^{\m \prime}$ defined in (\ref{bapp}), the $H^1$ distance between $u_{\mathcal{A}}$ and the true solution $u_0$ can be decomposed into   
\begin{equation*}
\begin{split}
   ||u_{\mathcal{A}}-u_{0}||^2_{H^1{(\Omega)}} \leq \frac{2}{c_0 \wedge 1}  \bigg\{\underbrace{\Big[\frac{B_0 \vee 1}{2}\|\bar{u}-u_{0}\|_{H^1(\Omega)}^2\Big]}_{\mathcal{E}_{app}} + \underbrace{\Big[\widehat{\mathcal{L}}(u_{\mathcal{A}})-\widehat{\mathcal{L}}(\bar{u})\Big] }_{\mathcal{E}^{-}_{opt}} + \underbrace{2\sup_{u\in\mathcal{PNN}}|\mathcal{L}(u)-\widehat{\mathcal{L}}(u)| }_{\mathcal{E}_{sta}}\bigg\} \, .
\end{split}
\end{equation*}
\end{thm}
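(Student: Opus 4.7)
The plan is to obtain the decomposition by a standard add/subtract chain, but anchored at the fixed reference element $\bar{u}$ rather than at an infimum over the hypothesis class, since this is what gives the novel tighter optimization term $\mathcal{E}^-_{\text{opt}}$. Concretely, I would start from Proposition~\ref{prop4.1}, which gives the two-sided quadratic sandwich
$$
\frac{c_0\wedge 1}{2}\|u_{\mathcal{A}}-u_0\|_{H^1(\Omega)}^2 \;\le\; \mathcal{L}(u_{\mathcal{A}})-\mathcal{L}(u_0),
\qquad \mathcal{L}(\bar{u})-\mathcal{L}(u_0)\;\le\;\frac{B_0\vee 1}{2}\|\bar{u}-u_0\|_{H^1(\Omega)}^2,
$$
so the entire problem reduces to bounding the energy gap $\mathcal{L}(u_{\mathcal{A}})-\mathcal{L}(u_0)$ from above.

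Next I would insert $\widehat{\mathcal{L}}(u_{\mathcal{A}})$, $\widehat{\mathcal{L}}(\bar u)$, and $\mathcal{L}(\bar u)$ by a telescoping identity:
$$
\mathcal{L}(u_{\mathcal{A}})-\mathcal{L}(u_0)
= \bigl[\mathcal{L}(u_{\mathcal{A}})-\widehat{\mathcal{L}}(u_{\mathcal{A}})\bigr]
+\bigl[\widehat{\mathcal{L}}(u_{\mathcal{A}})-\widehat{\mathcal{L}}(\bar u)\bigr]
+\bigl[\widehat{\mathcal{L}}(\bar u)-\mathcal{L}(\bar u)\bigr]
+\bigl[\mathcal{L}(\bar u)-\mathcal{L}(u_0)\bigr].
$$
The second bracket is exactly $\mathcal{E}^-_{\text{opt}}$ by definition. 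The fourth bracket is $\le \mathcal{E}_{\text{app}}$ by the upper half of Proposition~\ref{prop4.1} applied to $\bar u$. For the first and third brackets, since both $u_{\mathcal{A}}$ and $\bar u$ lie in the parallel-network class $\mathcal{PNN}$ (one as the PGD iterate, the other as a constructed approximant, which we may assume placed in the same class $\mathcal{PNN}$ without loss of generality), each is bounded in absolute value by $\sup_{u\in\mathcal{PNN}}|\mathcal{L}(u)-\widehat{\mathcal{L}}(u)|$, and the two together contribute $\mathcal{E}_{\text{sta}}$. Chaining these bounds and dividing through by $(c_0\wedge 1)/2$ yields the stated inequality.

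Almost nothing in this chain is a technical obstacle; the proof is essentially one inequality per line. The one conceptual point that warrants care — and is really the whole novelty — is the choice of $\bar{u}$ as the anchor of the optimization term. A more classical decomposition would use $\widehat{\mathcal{L}}(u_{\mathcal{A}})-\inf_{u\in\mathcal{PNN}}\widehat{\mathcal{L}}(u)$, which forces the subsequent optimization analysis to show that PGD drives the empirical loss near its global minimum over a highly non-convex landscape. Using $\bar u$ instead means that in Step~3 one only needs the iterates to reach \emph{a} good approximant (the constructed one from Step~2), not the global minimizer; this is what lets the analysis tolerate a non-vanishing projection radius $\eta$. I would therefore flag, as part of the plan, the need to ensure in later sections that (i) $\bar u$ and $u_{\mathcal{A}}$ are comparable elements of the same $\mathcal{PNN}$ so that the $\sup$ bound applies to both, and (ii) the Monte Carlo samples used to define $\widehat{\mathcal{L}}$ are the same in every bracket, which is automatic once one fixes the random design before the decomposition.
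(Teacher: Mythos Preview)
Your proposal is correct and follows essentially the same approach as the paper: apply the lower sandwich inequality of Proposition~\ref{prop4.1} to $u_{\mathcal{A}}$, telescope $\mathcal{L}(u_{\mathcal{A}})-\mathcal{L}(u_0)$ through $\widehat{\mathcal{L}}(u_{\mathcal{A}})$, $\widehat{\mathcal{L}}(\bar u)$, and $\mathcal{L}(\bar u)$, bound the two population--empirical brackets by the uniform deviation over $\mathcal{PNN}$, and finish with the upper sandwich inequality applied to $\bar u$. Your side remarks about ensuring $\bar u$ and $u_{\mathcal{A}}$ both belong to the class over which the supremum is taken, and about the conceptual gain of anchoring at $\bar u$ rather than at the empirical minimizer, are also in line with the paper's own commentary.
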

\begin{proof}
By Proposition \ref{prop4.1} we have,
\begin{align*}
   &\|u_{\mathcal{A}}-u_{0}\|^2_{H^1{(\Omega)}} \\
   & \quad \leq \frac{2}{c_0 \wedge 1}  \bigg\{ \Big[{\mathcal{L}}(u_{\mathcal{A}})-\widehat{\mathcal{L}}(u_{\mathcal{A}})\Big]+  \Big[\widehat{\mathcal{L}}(u_{\mathcal{A}})-\widehat{\mathcal{L}}(\bar{u}) \Big] + \Big[\widehat{\mathcal{L}}(\bar{u})- \mathcal{L}(\bar{u}) \Big] + \Big[\mathcal{L}(\bar{u}) - \mathcal{L}(u_{0}) \Big] \bigg\}\\
   & \quad \leq \frac{2}{c_0 \wedge 1}  \bigg\{ \underbrace{\Big[\mathcal{L}(\bar{u})-\mathcal{L}(u_{0})\Big]}_{\mathcal{E}_{app}} +\underbrace{\Big[\widehat{\mathcal{L}}(u_{\mathcal{A}})-\widehat{\mathcal{L}}(\bar{u})\Big] }_{\mathcal{E}^{-}_{opt}} + \underbrace{\Big[2\sup_{u\in\mathcal{PNN}}|\mathcal{L}(u)-\widehat{\mathcal{L}}(u)| \Big]}_{\mathcal{E}_{sta}} \bigg\}\\
   &\quad \leq \frac{2}{c_0 \wedge 1}  \bigg\{\underbrace{\Big[\frac{B_0 \vee 1}{2}\|\bar{u}-u_{0}\|_{H^1(\Omega)}^2\Big]}_{\mathcal{E}_{app}} + \underbrace{\Big[\widehat{\mathcal{L}}(u_{\mathcal{A}})-\widehat{\mathcal{L}}(\bar{u})\Big] }_{\mathcal{E}^{-}_{opt}} + \underbrace{2\sup_{u\in\mathcal{PNN}}|\mathcal{L}(u)-\widehat{\mathcal{L}}(u)| }_{\mathcal{E}_{sta}}\bigg\}\, .
\end{align*}   
\end{proof}

\begin{remark}
    In traditional error decomposition, the optimization error is defined as $\mathcal{E}_{opt}=\widehat{\mathcal{L}}(u_{\mathcal{A}})-\widehat{\mathcal{L}}(\hat{u})$, where $\hat{u}$ denotes the ERM estimator of (\ref{eq12}), i.e.,
    \begin{align*}
        \hat{u} = \argmin_{u_{\mathfrak{m}, \boldsymbol{\theta}} \in \mathcal{PNN}} \widehat{\mathcal{L}}(u_{\mathfrak{m}, \boldsymbol{\theta}}) \, .
    \end{align*}
    Since we have
      \begin{align*}
          \mathcal{E}_{opt}^{-}=\widehat{\mathcal{L}}(u_{\mathcal{A}})-\widehat{\mathcal{L}}(\hat{u})+\widehat{\mathcal{L}}(\hat{u})-\widehat{\mathcal{L}}(\bar{u}) \leq \widehat{\mathcal{L}}(u_{\mathcal{A}})-\widehat{\mathcal{L}}(\hat{u}) + 0 =  \mathcal{E}_{opt}\, ,
      \end{align*}
    the newly defined $\mathcal{E}_{opt}^{-}$ is clearly tighter than $\mathcal{E}_{opt}$. Additionally, due to the highly non-convex training objective of deep neural networks, it is challenging to obtain detailed information about $\hat{u}$, making analysis of $\mathcal{E}_{opt}$ difficult. In contrast, the best approximation element  $\bar{u}$ is explicitly constructed  according to the target solution, as shown in  Theorem \ref{thm_err_decom}, so its information can be fully grasped, greatly facilitating the analysis of $\mathcal{E}_{opt}^{-}$. In Section \ref{opt}, we will prove that when the over-parameterization level is sufficiently high, the initialization parameters of neural networks will contain sufficient information about $\bar{u}$ with high probability, and meanwhile, the iteration error of the PGD algorithm can be well controlled. Combining these two points, we can control $\mathcal{E}_{opt}^{-}$ with arbitrary precision, thus achieving a complete end-to-end error analysis between $u_{\mathcal{A}}$ and $u_0$.
\end{remark}

\subsection{Approximation error}
\label{app}
In this section, we provide an upper bound for the approximation error which  characterizes   the ability of the specifically constructed neural network $\bar{u}$ to approximate the true solution $u_0$. The detailed proof can be found in the Appendix \ref{proof of app}. Recall that 
$$
\mathcal{E}_{app} := \|\bar{u}-u_{0}\|_{H^1(\Omega)}^2\, ,
$$
where $\bar{u}$ is  defined in (\ref{bapp}). 
 Following the results in \cite{guhring2021approximation}, we show that given arbitrary accuracy $\epsilon>0$, any $f \in \mathcal{F}_{n, d, p}$
can be $\epsilon$-approximated in weaker Sobolev norms $W^{k,p}$ (with $n,k,p\in \mathbb{N}, n\geq k+1$ and $1\leq p \leq \infty$) within some $tanh$-based parallel neural network class $\mathcal{PNN}^{\m \prime}$. Here, $\mathcal{F}_{n, d, p}$ is defined as
$$\mathcal{F}_{n, d, p}:=\big\{f \in W^{n, p}([0,1]^{d}):\|f\|_{W^{n, p}([0,1]^{d})} \leq 1\big\} \, .$$

\begin{thm}
\label{thm4.1}
Let $n, k, d, \bar{\mathfrak{m}} \in \mathbb{N}$, $n\geq k+1$, $1\leq p\leq \infty$ and $|\boldsymbol{\alpha}|_1 \leq n-1$, $C$ be a positive number and $C(n,d)$ be a polynomial that depends on $n$ and $d$. Let $f \in \mathcal{F}_{n, d, p}$. For some sufficiently small $\epsilon^{*}>0$ and any $0<\epsilon < \epsilon^{*}$, there exists a neural network $\Phi_{\bar{\mathfrak{m}}, \bar{\boldsymbol{\theta}}} \in \mathcal{PNN}(\bar{\mathfrak{m}}, \bar{M}, \{\bar{W}, \bar{L}, B_{\bar{\boldsymbol{\theta}}}\})$ with 
\begin{align*}
&\bar{\mathfrak{m}}=C_1(n, d, k)\epsilon^{-\frac{d}{n-k-\mu k}} \, , \quad \bar{M} = C_2(n, d, k)\epsilon^{-\frac{d(p+1)}{(n-k-\mu k)p}}\, , \quad \bar{W} = 2^{\lceil \log_2 (d+|\boldsymbol{\alpha}|_1) \rceil+1} \, , \\
&~~~~~~~~~~~~~~~~ \bar{L}= \lceil \log_2 (d+|\boldsymbol{\alpha}|_1) \rceil+2 \, , \quad B_{\bar{\boldsymbol{\theta}}} = C_3(n,d,k)\epsilon^{-2-\frac{2d}{n-k-\mu k}} \, ,
\end{align*}
such that
$$
\big\|f-\Phi_{\bar{\mathfrak{m}}, \bar{\boldsymbol{\theta}}}\big\|_{W^{k, p}([0,1]^{d})} \leq \epsilon\, ,
$$
where $0<\mu<1$ is an arbitrarily small positive number.
\end{thm}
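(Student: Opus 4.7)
The plan is to follow the classical strategy of localize–approximate–combine, in the spirit of Yarotsky (2017) and Gühring--Raslan (2021), while carefully controlling the parallel architecture and the explicit weight bounds that will be needed downstream. First I would fix a resolution parameter $N \sim \epsilon^{-d/(n-k-\mu k)}$ and build a smooth partition of unity $\{\phi_{\mathbf{m}}\}_{\mathbf{m}\in\{0,\ldots,N\}^d}$ on $[0,1]^d$ whose bumps are localized in cubes of side $\sim 1/N$ around grid points $\mathbf{x}_{\mathbf{m}} = \mathbf{m}/N$. Then I would expand $f$ by its Taylor polynomial of order $n-1$ at each $\mathbf{x}_{\mathbf{m}}$,
\[
P_{\mathbf{m}}(\boldsymbol{x}) \;=\; \sum_{\|\boldsymbol{\alpha}\|_1 \le n-1} \frac{D^{\boldsymbol{\alpha}} f(\mathbf{x}_{\mathbf{m}})}{\boldsymbol{\alpha}!}\,(\boldsymbol{x}-\mathbf{x}_{\mathbf{m}})^{\boldsymbol{\alpha}},
\]
and form the localized ansatz $\tilde f := \sum_{\mathbf{m}} \phi_{\mathbf{m}} P_{\mathbf{m}}$. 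A standard Bramble--Hilbert / Sobolev localization argument (which is where the $n-k$ gap is consumed) yields $\|f - \tilde f\|_{W^{k,p}} \le C(n,d)\,N^{-(n-k)}$, and a small $\mu k$ safety margin is built in to absorb the loss from replacing the exact product by its neural approximation in the next step.

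Next I would realize each product $\phi_{\mathbf{m}} P_{\mathbf{m}}$ as a $\tanh$-based fully connected sub-network. The key building block is a tanh network that implements multiplication of $d+|\boldsymbol{\alpha}|_1$ inputs to arbitrary $W^{k,p}$ accuracy; this is where I would invoke the existing construction (as in \cite{guhring2021approximation,jiao2023approximation}) for an explicit approximate multiplier of width $2^{\lceil \log_2(d+|\boldsymbol{\alpha}|_1)\rceil+1}$ and depth $\lceil \log_2(d+|\boldsymbol{\alpha}|_1)\rceil+2$, matching the target $\bar W, \bar L$. By precomposing with affine maps that encode $\boldsymbol{x}-\mathbf{x}_{\mathbf{m}}$ and the bump $\phi_{\mathbf{m}}$, each monomial term $\frac{D^{\boldsymbol{\alpha}} f(\mathbf{x}_{\mathbf{m}})}{\boldsymbol{\alpha}!}\,\phi_{\mathbf{m}}(\boldsymbol{x})\,(\boldsymbol{x}-\mathbf{x}_{\mathbf{m}})^{\boldsymbol{\alpha}}$ becomes a single sub-network $\phi^{k}_{\boldsymbol{\theta}}$, multiplied by a scalar coefficient $c_k$. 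Taking the union over all $\mathbf{m}$ and all multi-indices $\boldsymbol{\alpha}$ gives the parallel network $\Phi_{\bar{\mathfrak m},\bar{\boldsymbol{\theta}}} = \sum_k c_k \phi^k_{\boldsymbol{\theta}}$, with $\bar{\mathfrak m} \sim N^d \sim \epsilon^{-d/(n-k-\mu k)}$ sub-networks, the prescribed $\bar W,\bar L$, and $\sum_k |c_k| \le \bar M$.

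The main obstacle, and the step I would spend the most care on, is choosing the internal accuracy $\epsilon_{\mathrm{mult}}$ of each approximate multiplier so that the \emph{$W^{k,p}$} error of the sum $\sum_k c_k \phi^k_{\boldsymbol{\theta}}$ stays $\le \epsilon$ even after summing $\bar{\mathfrak m}$ terms and taking up to $k$ weak derivatives. Because derivatives of the bump $\phi_{\mathbf{m}}$ scale like $N^{|\boldsymbol{\alpha}|_1}$ and the Taylor coefficients are bounded by $\|f\|_{W^{n,\infty}}$ only after Sobolev embedding, the required per-sub-network accuracy must be polynomially small in $N$, which is what forces the explicit weight bound $B_{\bar{\boldsymbol{\theta}}} \sim \epsilon^{-2-2d/(n-k-\mu k)}$ and the coefficient bound $\bar M \sim \epsilon^{-d(p+1)/((n-k-\mu k)p)}$ after tracking the Lipschitz constants of $\tanh$ through the network. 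Once these bookkeeping steps are done, the triangle inequality
\[
\|f-\Phi_{\bar{\mathfrak m},\bar{\boldsymbol{\theta}}}\|_{W^{k,p}} \;\le\; \|f-\tilde f\|_{W^{k,p}} + \Big\|\tilde f - \sum_k c_k \phi^k_{\boldsymbol{\theta}}\Big\|_{W^{k,p}} \;\le\; \tfrac{\epsilon}{2} + \tfrac{\epsilon}{2} \;=\; \epsilon
\]
closes the argument, and substituting the choice of $N$ back into the counts of sub-networks, width, depth, and weight sizes produces exactly the stated $C_1(n,d,k), C_2(n,d,k), C_3(n,d,k)$ rates. The detailed derivative bookkeeping for the tanh approximate multiplier — in particular the dependence of its $W^{k,p}$ error on the internal weight magnitudes — is the technically heaviest ingredient and is where I would lean on the explicit constructions already available in the cited literature.
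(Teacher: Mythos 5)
Your plan follows essentially the same route as the paper's: a tanh-based approximate partition of unity, a localized polynomial approximation, an explicit tanh multiplier sub-network of the stated width and depth for each term $\Psi_{\boldsymbol{m}}^s\,\boldsymbol{x}^{\boldsymbol{\alpha}}$, and a parallel sum over the $\sim N^d$ grid points with $N\sim\epsilon^{-1/(n-k-\mu k)}$. The final parameter counts you state are the right ones, and the triangle-inequality split is exactly how the paper closes. That said, three of your attributions would cause trouble if carried into a detailed write-up.

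First, the pointwise Taylor polynomial $P_{\boldsymbol{m}}(\boldsymbol{x}) = \sum_{\|\boldsymbol{\alpha}\|_1\le n-1}\frac{D^{\boldsymbol{\alpha}}f(\mathbf{x}_{\boldsymbol{m}})}{\boldsymbol{\alpha}!}(\boldsymbol{x}-\mathbf{x}_{\boldsymbol{m}})^{\boldsymbol{\alpha}}$ is not well defined for general $f\in W^{n,p}$ with finite $p$ (no pointwise derivatives without an embedding). You invoke Bramble--Hilbert by name, but the formula you wrote is the classical one; the paper uses the averaged Taylor polynomials of Gühring--Raslan, whose coefficients satisfy $|c_{f,\boldsymbol{m},\boldsymbol{\alpha}}|\le C\,\|\tilde f\|_{W^{n,p}(\Omega_{\boldsymbol{m},N})}\,N^{d/p}$. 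Second, and more importantly, this $N^{d/p}$ is precisely where the $p$-dependent exponent in $\bar M$ comes from: $\bar M = \sum_{\boldsymbol{m},\boldsymbol{\alpha}}|c_{f,\boldsymbol{m},\boldsymbol{\alpha}}| \lesssim (N+1)^d\cdot N^{d/p}\sim \epsilon^{-d(p+1)/((n-k-\mu k)p)}$, a statement about the exact polynomial coefficients that has nothing to do with ``tracking the Lipschitz constants of $\tanh$ through the network'' or with Sobolev embedding into $W^{n,\infty}$. Third, the $\mu k$ slack is consumed already at the localization stage, not at the multiplier-approximation stage: the tanh bump is an exponential PU of order $0$ with scaling $s=N^\mu$, so its $k$-th derivatives scale like $N^k s^k=N^{k(1+\mu)}$, and this is what turns the Bramble--Hilbert exponent from $N^{-(n-k)}$ into $N^{-(n-k-\mu k)}$. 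Only $B_{\bar{\boldsymbol{\theta}}}\sim\epsilon^{-2-2d/(n-k-\mu k)}$ is driven by the per-multiplier accuracy argument you describe (set $\epsilon_{\mathrm{mult}}\sim\epsilon^{1+d/(n-k-\mu k)}$ so that the $N^d$-fold sum of multiplier errors stays $\le\epsilon/2$, and the tanh multiplier weight magnitude scales like $\epsilon_{\mathrm{mult}}^{-2}$). None of this changes the final theorem, but if you wrote the argument as sketched the $\bar M$ and the $\mu$ bookkeeping would not come out.
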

\begin{proof}
    See Appendix \ref{app4}.
\end{proof}

Since we have assumed $\Omega \subset [0, 1]^d$ without loss of generality, we need the following extension result.

\begin{prop}
\label{prop4.2}
Let $k\in \mathbb{N}^{+}$, $1\leq p < \infty$. There exists a linear operator $E$ from $W^{k,p}(\Omega)$ to $W^{k,p}_0([0, 1]^d)$ and $Eu=u$ in $\Omega$.
\end{prop}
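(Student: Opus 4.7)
The plan is to build $E$ in two stages: first use a standard Sobolev extension theorem to carry $u\in W^{k,p}(\Omega)$ to a function in $W^{k,p}(\mathbb{R}^d)$, and then multiply by a fixed smooth cutoff supported in $(0,1)^d$ so that the result has zero trace on $\partial[0,1]^d$ and therefore lies in $W^{k,p}_0([0,1]^d)$. Linearity will be automatic because both ingredients are linear in $u$.

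For the first stage, I would invoke the classical Stein extension theorem. The paper's standing assumption $\partial\Omega\in C^{2+\alpha}$ (from the setup of equation~(\ref{eq4})) makes $\Omega$ a bounded Lipschitz domain, so Stein's construction furnishes a bounded linear operator $\widetilde{E}:W^{k,p}(\Omega)\to W^{k,p}(\mathbb{R}^d)$ with $\widetilde{E}u=u$ almost everywhere in $\Omega$, simultaneously for every $k\in\mathbb{N}^{+}$ and every $1\leq p<\infty$.

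For the second stage, I reduce to the case $\bar{\Omega}\subset(0,1)^d$ by a harmless affine rescaling (this does not affect any subsequent construction on the unit cube because all constants in the approximation results downstream are allowed to depend on the geometry). Fix once and for all a cutoff $\chi\in C_c^\infty((0,1)^d)$ with $0\leq\chi\leq 1$ and $\chi\equiv 1$ on a neighborhood of $\bar{\Omega}$, and set
\[
Eu:=\chi\cdot\widetilde{E}u\,.
\]
Linearity of $E$ is immediate since $\widetilde{E}$ is linear and $\chi$ is a fixed multiplier. The identity $Eu=u$ on $\Omega$ follows because $\chi\equiv 1$ on $\Omega$. Finally, $Eu$ lies in $W^{k,p}_0([0,1]^d)$: it belongs to $W^{k,p}([0,1]^d)$ by the Leibniz rule applied to $\chi\in C_c^\infty$ and $\widetilde{E}u\in W^{k,p}(\mathbb{R}^d)$, and it has compact support inside the open cube $(0,1)^d$, so a standard mollification produces a sequence in $C_c^\infty((0,1)^d)$ converging to it in $W^{k,p}$-norm, which is exactly the defining property of $W^{k,p}_0([0,1]^d)$.

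No step of this plan is genuinely difficult; the substance is assembling the right ingredients---the Stein extension plus a fixed smooth cutoff. The only point requiring a brief sanity check is the geometric reduction $\bar{\Omega}\subset(0,1)^d$ from the given $\Omega\subset[0,1]^d$, which is routine, and the verification that multiplication by $\chi\in C_c^\infty$ followed by mollification lands inside $W^{k,p}_0$; both are textbook facts with no hidden obstacle.
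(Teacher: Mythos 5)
Your construction — global extension to $W^{k,p}(\mathbb{R}^d)$ followed by multiplication by a fixed cutoff $\chi\in C_c^\infty((0,1)^d)$ and a mollification to land in $W^{k,p}_0([0,1]^d)$ — is exactly the standard argument behind Theorem~7.25 of Gilbarg--Trudinger, which is all the paper cites, so you have reconstructed the proof of the theorem the paper invokes rather than found a different route. One caveat worth being honest about: both your proof and the cited theorem require the compact containment $\bar{\Omega}\subset(0,1)^d$, and your remark that this can be arranged by ``a harmless affine rescaling'' glosses over the fact that rescaling replaces $\Omega$ by a different domain (so it does not literally produce the claimed operator on $W^{k,p}(\Omega)$ when $\bar{\Omega}$ touches $\partial[0,1]^d$); the paper implicitly assumes this containment as well, so it is a shared hypothesis rather than an error specific to your argument, but it should be stated as an assumption rather than dismissed as removable by rescaling.
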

\begin{proof}
    See Theorem 7.25 in \cite{gilbarg1977elliptic}.
\end{proof}

\begin{cor}
\label{cor4.1}
Given any $u_{0} \in \mathcal{F}_{n, d, 2}$, for some sufficiently small $\epsilon^{*}>0$ and any $0<\epsilon < \epsilon^{*}$, there exists a neural network $\bar{u}=u_{\bar{\mathfrak{m}}, \bar{\boldsymbol{\theta}}}\in \mathcal{PNN}(\bar{\mathfrak{m}}, \bar{M}, \{\bar{W}, \bar{L}, B_{\bar{\boldsymbol{\theta}}}\})$ with
\begin{align*}
&\bar{\mathfrak{m}}=C_1(n, d)\epsilon^{-\frac{d}{n-\mu-1}}\,, \quad \bar{M} = C_2(n, d)\epsilon^{-\frac{3d}{2(n-\mu-1)}}\, , \quad \bar{W} = 2^{\lceil \log_2 (d+1) \rceil+1}\, , \\
&~~~~~~~~~\bar{L}= \lceil \log_2 (d+1) \rceil+2 \, , \quad B_{\bar{\boldsymbol{\theta}}} = C_3(n, d)\epsilon^{-2-\frac{2d}{n-\mu-1}} \, ,
\end{align*}
such that
$$
\|u_{0}-u_{\bar{\mathfrak{m}}, \bar{\boldsymbol{\theta}}}\|_{H^{1}(\Omega)} \leq \epsilon\, ,
$$
where $0<\mu<1$ is an arbitrarily small positive number.
\end{cor}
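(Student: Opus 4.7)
The plan is to deduce Corollary 4.1 as a direct specialization of Theorem 4.1 to the case $p=2$, $k=1$, combined with the Sobolev extension provided by Proposition 4.2. Since both ingredients are already available, no new analytic device is required; the work is in matching parameter scalings and in reducing the approximation problem on $\Omega$ to one on $[0,1]^d$.

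First, I would reduce the problem to approximation on the cube. Given that the target solution lives in $H^n(\Omega)$, I would apply the extension operator $E$ from Proposition 4.2 with $k=n$ and $p=2$ to obtain $Eu_0\in W^{n,2}_0([0,1]^d)$ satisfying $Eu_0=u_0$ on $\Omega$ together with $\|Eu_0\|_{W^{n,2}([0,1]^d)}\le c(\Omega,n)\,\|u_0\|_{W^{n,2}(\Omega)}$. After normalizing by this constant (and absorbing the scaling factor into the generic prefactors $C_i(n,d)$), we may assume without loss of generality that $Eu_0\in\mathcal{F}_{n,d,2}$, which is exactly the class covered by Theorem 4.1.

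Next, I would invoke Theorem 4.1 with $p=2$ and $k=1$. With this choice, the $W^{k,p}$ error becomes the $H^1$ error (so we only need derivatives up to order one, giving $|\boldsymbol{\alpha}|_1\le 1$ and hence $\bar W=2^{\lceil\log_2(d+1)\rceil+1}$ and $\bar L=\lceil\log_2(d+1)\rceil+2$), and the exponent $n-k-\mu k$ in Theorem 4.1 specializes to $n-1-\mu$. Substituting into the quantitative bounds yields $\bar{\mathfrak{m}}\asymp\epsilon^{-d/(n-1-\mu)}$, $\bar M\asymp\epsilon^{-d(p+1)/((n-k-\mu k)p)}=\epsilon^{-3d/(2(n-1-\mu))}$, and $B_{\bar\theta}\asymp\epsilon^{-2-2d/(n-1-\mu)}$, which reproduces exactly the parameter choices displayed in the corollary. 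Theorem 4.1 then produces a parallel network $\bar u=u_{\bar{\mathfrak{m}},\bar{\boldsymbol{\theta}}}\in\mathcal{PNN}(\bar{\mathfrak{m}},\bar M,\{\bar W,\bar L,B_{\bar{\boldsymbol{\theta}}}\})$ with
\[
\|Eu_0-\bar u\|_{W^{1,2}([0,1]^d)}\le\epsilon.
\]

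Finally, I would restrict back to $\Omega$. Because $Eu_0\equiv u_0$ on $\Omega$ and $\Omega\subset[0,1]^d$, integration of $|u_0-\bar u|^2$ and $|\nabla(u_0-\bar u)|^2$ over the smaller domain gives
\[
\|u_0-\bar u\|_{H^{1}(\Omega)}\le\|Eu_0-\bar u\|_{H^{1}([0,1]^d)}\le\epsilon,
\]
which is precisely the claimed bound. The only obstacle worth mentioning is bookkeeping: one must verify that the extension constant $c(\Omega,n)$ and the normalization step only modify the prefactors $C_i(n,d)$ in the parameter scalings but leave the $\epsilon$-exponents intact, and that the rescaled target still lies in the unit ball of $W^{n,2}([0,1]^d)$ so that Theorem 4.1 applies verbatim with the same $C_i$.
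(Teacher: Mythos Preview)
Your proposal is correct and follows essentially the same route as the paper: extend $u_0$ to $[0,1]^d$ via Proposition~4.2, apply Theorem~4.1 with $k=1$ and $p=2$ (which yields exactly the stated parameter scalings since $n-k-\mu k=n-1-\mu$ and $d(p+1)/((n-k-\mu k)p)=3d/(2(n-1-\mu))$), and then restrict the $H^1$ error from $[0,1]^d$ to $\Omega$. The paper's proof is a one-sentence sketch of precisely these steps; your write-up simply makes the bookkeeping (extension constant, normalization, exponent matching) explicit.
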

\begin{proof}
    Plugging in $k = 1$ and $p=2$ into Theorem \ref{thm4.1} and using the fact that $\|u_{0} - u_{\bar{\mathfrak{m}}, \bar{\boldsymbol{\theta}}}\|_{W^{1,2}(\Omega)}\leq \|Eu^* - u_{\bar{\mathfrak{m}}, \bar{\boldsymbol{\theta}}}\|_{W^{1,2}([0,1]^d)}$,  where $E$ is the extension operator in Proposition \ref{prop4.2}, we obtain the desired result.
\end{proof}

\subsection{Optimization error}
\label{opt}
In this section, we provide a complete analysis of the optimization error $\mathcal{E}^{-}_{opt}$.
Recall that for $u_{\mathcal{A}} \in \mathcal{PNN}$ as the output of the PGD algorithm in Section \ref{pgd} and $\bar{u} \in \mathcal{PNN}^{\m \prime}$ as the best approximation element defined in (\ref{bapp}), $\mathcal{E}^{-}_{opt}$ is defined as
$$
\mathcal{E}^{-}_{opt} := \widehat{\mathcal{L}}(u_{\mathcal{A}})-\widehat{\mathcal{L}}(\bar{u})\, .
$$
Specifically, let $\bar{u} = u_{\bar{\mathfrak{m}}, \bar{\boldsymbol{\theta}}} \in \mathcal{PNN}(\bar{\mathfrak{m}}, \bar{M}, \{\bar{W}, \bar{L}, B_{\bar{\boldsymbol{\theta}}}\})$ in Corollary \ref{cor4.1}. Now, $\mathcal{E}^{-}_{opt}$ is  expressed as
\begin{align} \label{eq:opt-}
\mathcal{E}^{-}_{opt} = \widehat{\mathcal{L}}(u_{\mathcal{A}})-\widehat{\mathcal{L}}(u_{\bar{\mathfrak{m}}, \bar{\boldsymbol{\theta}}})\, .
\end{align}

As indicated in (\ref{eq:opt-}), the weights of $u_{\bar{\mathfrak{m}}, \bar{\boldsymbol{\theta}}}\in \mathcal{PNN}(\bar{\mathfrak{m}}, \bar{M}, \{\bar{W}, \bar{L}, B_{\bar{\boldsymbol{\theta}}}\})$ are treated as `target parameters' during optimization process, namely, the sub-network parameters $(\bar{\boldsymbol{\theta}}_1, \ldots, \bar{\boldsymbol{\theta}}_{\bar{\mathfrak{m}}}) $ of $u_{\bar{\mathfrak{m}}, \bar{\boldsymbol{\theta}}}$, and the linear coefficients $(\bar{c}_1, \ldots, \bar{c}_{\bar{\mathfrak{m}}})$. Driven by this, we set the sub-network width $W$ in our implemented $u_{\mathfrak{m}, \boldsymbol{\theta}} \in \mathcal{PNN}$ to $\bar{W}$, the sub-network depth $L$ to $\bar{L}$, and the uniform distribution range $B$ in (\ref{eq:init}) to $B_{\bar{\boldsymbol{\theta}}}$. For random sub-network initialization $(\boldsymbol{\theta}_{\rm in}^{\mathfrak{m}})^{\scriptscriptstyle [0]} = (\boldsymbol{\theta}_1, \ldots, \boldsymbol{\theta}_{\mathfrak{m}})^{\scriptscriptstyle [0]}$ in (\ref{eq:init}) with $B=B_{\bar{\boldsymbol{\theta}}}$, we aim to define an event which contains all the `sufficiently good' initialization with respect to the target $(\bar{\boldsymbol{\theta}}_1, \ldots, \bar{\boldsymbol{\theta}}_{\bar{\mathfrak{m}}})$. For this endeavor, we propose the following definition.

\begin{defn} \label{def:G}
    Let ${G}_{\mathfrak{m}, \bar{\mathfrak{m}}, R, \delta}$ be the event where, for each  $\bar{\boldsymbol{\theta}}_k$, $k=1,\ldots,\bar{\mathfrak{m}}$, there exists at least $R$ sub-network weight vectors $(\boldsymbol{\theta}_{i_{k, v}})^{\scriptscriptstyle [0]}$, $v=1,\ldots,R$ in the random initialization $(\boldsymbol{\theta}_{\rm in}^{\mathfrak{m}})^{\scriptscriptstyle [0]}$,  s.t. 
    $$
    \|(\boldsymbol{\theta}_{i  _{k, v}})^{\scriptscriptstyle [0]}-\bar{\boldsymbol{\theta}}_k\|_{\infty} \leq \delta \, , \quad k=1,\ldots,\bar{\mathfrak{m}} \, , \quad v=1,\ldots,R \, .
    $$
    We also require that $i_{k, v}\neq i_{k^{\prime}, v^{\prime}}$ when either $k \neq k^{\prime}$ or $v \neq v^{\prime}$.
\end{defn}
\begin{remark}
    Simply put, $G_{\mathfrak{m}, \bar{\mathfrak{m}}, R, \delta}$ ensures that for each target $\bar{\boldsymbol{\theta}}_k$, at least $R$ sub-networks have already sufficiently approximated it during parameter initialization phase.
\end{remark}

% We now propose the following lemma to estimate the probability of $G_{\mathfrak{m}, \bar{\mathfrak{m}}, R, \delta}$.
% \begin{lem} \label{lem:prob of G}
%     For $u_{\bar{\mathfrak{m}}, \bar{\boldsymbol{\theta}}} \in \mathcal{PNN}(\bar{\mathfrak{m}}, \bar{M}, \{\bar{W}, \bar{L}, B_{\bar{\boldsymbol{\theta}}}\})$ in Corollary \ref{cor4.1} with sub-network parameters $(\bar{\boldsymbol{\theta}}_1, \ldots, \bar{\boldsymbol{\theta}}_{\bar{\mathfrak{m}}})$, if we have $\mathfrak{m} \in \mathbb{N}$ satisfying 
%     $$
%     \mathfrak{m} = \bar{\mathfrak{m}} \cdot R \cdot Q\, , \quad R, Q \in \mathbb{N} \,\, \text{and} \,\, Q  \text{ is sufficiently large} \, .
%     $$
%     Then, it holds that
%     $$
%     \mathbb{P}\big(G_{\mathfrak{m}, \bar{\mathfrak{m}}, R, \delta}\big) \ge 1 - \bar{\mathfrak{m}}R\Big[1-\delta^{\bar{W}(\bar{W}+1)\bar{L}}(2B_{\bar{\boldsymbol{\theta}}})^{-\bar{W}(\bar{W}+1)\bar{L}}\Big]^{Q} \, .
%     $$
% \end{lem}

In the rest of this section, we will always let $\mathfrak{m}=\bar{\mathfrak{m}} \cdot R \cdot Q$, where $R, Q \in \mathbb{N}$. We further formalize the random indices $i_{k,v}$ involved in Definition \ref{def:G}, and introduce a series of integer-valued random variables:
\begin{align} \label{eq:skv}
    s_{k, v}(\omega)\, , \quad k=1,\ldots,\bar{\mathfrak{m}} \, , \quad v=1,\ldots,R \, .
\end{align}

First, arrange the $\mathfrak{m}$ sub-networks of $u_{\mathfrak{m}, \boldsymbol{\theta}}$ in a given order. If $\omega \in G_{\mathfrak{m}, \bar{\mathfrak{m}}, R, \delta}$, when $k=1$, let $s_{1, v} \in [\mathfrak{m}] $ be the index of the $v$-th sub-network satisfying $\|(\boldsymbol{\theta}_{\cdot})^{\scriptscriptstyle[0]} - \bar{\boldsymbol{\theta}}_1\|_{\infty} \leq \delta$; when $k>1$, let $s_{k, v} \in [\mathfrak{m}] \setminus \{s_{l, v}: l<k, v \le R\} $ be the index of the $v$-th sub-network among the rest $\mathfrak{m}-(k-1)R$ sub-networks satisfying $\|(\boldsymbol{\theta}_{\cdot})^{\scriptscriptstyle[0]} - \bar{\boldsymbol{\theta}}_k\|_{\infty} \leq \delta$. If $\omega \notin G_{\mathfrak{m}, \bar{\mathfrak{m}}, R, \delta}$, we simply set all the $s_{k, v}=(k-1)R+v$.

After these, a crucial idea involves defining a set of random `transition parameters', utilizing $s_{k, v}$ and $(\bar{c}_1,\ldots,\bar{c}_{\bar{\mathfrak{m}}})$, to bridge the parameters of $u_{\mathcal{A}}$ and those of the target $u_{\bar{\mathfrak{m}}, \bar{\boldsymbol{\theta}}}$. Specifically, we define the random weight vectors as follows
\begin{align} \label{eq:transpara}
\boldsymbol{\theta}_{\rm total}^{\mathfrak{m}, *} := (\boldsymbol{\theta}_{\rm in}^{\mathfrak{m}, *}, \boldsymbol{\theta}_{\rm out}^{\mathfrak{m}, *}) \, , \quad \text{where} \,\,\, \boldsymbol{\theta}_{\rm in}^{\mathfrak{m}, *} := (\boldsymbol{\theta}_{\rm in}^{\mathfrak{m}})^{\scriptscriptstyle [0]} \, .
\end{align}
For $\boldsymbol{\theta}_{\rm out}^{\mathfrak{m}, *} := (c_1^*, \ldots, c_{\mathfrak{m}}^*)$, when the indices of $c^{*}_i$ coincide with $s_{k, v}$, we set
\begin{align*}
c^{*}_{s_{k, v}} := \frac{\bar{c}_{k}}{R}\, , \quad   k=1, \ldots, \bar{\mathfrak{m}}\, , \quad v=1, \ldots, R \, .
\end{align*}
 Otherwise, we set
$$
c^{*}_{q} := 0 \, , \quad q \notin \{s_{k, v}: k=1, \ldots, \bar{\mathfrak{m}}\, , \  v=1, \ldots, R\} \, .
$$

When $u_{\mathfrak{m}, \boldsymbol{\theta}}$ is parameterized with $\boldsymbol{\theta}^{\mathfrak{m}, *}_{ {\rm total}}$, we denote it as $u^{*}_{\mathfrak{m}}$ and expand it as follows
\begin{align} \label{eq:u*} 
u_{\mathfrak{m}}^{*}(\boldsymbol{x})  = \sum_{s=1}^{\mathfrak{m}} c^{*}_{s}\cdot \big(\phi_{\boldsymbol{\theta}}^{s}\big)^{\scriptscriptstyle [0]}(\boldsymbol{x}) = \sum_{k=1}^{ \bar{\mathfrak{m}}} \sum_{v=1}^{R}c^{*}_{s_{k, v}}\cdot\big(\phi_{\boldsymbol{\theta}}^{s_{k,v}}\big)^{\scriptscriptstyle [0]}(\boldsymbol{x}) = \sum_{k=1}^{ \bar{\mathfrak{m}}}\sum_{v=1}^{R} \frac{\bar{c}_k}{R}\cdot\big(\phi_{\boldsymbol{\theta}}^{s_{k,v}}\big)^{\scriptscriptstyle [0]}(\boldsymbol{x}) \, ,
\end{align}
where $(\phi_{\boldsymbol{\theta}}^{s})^{\scriptscriptstyle [0]}$ represents the $s$-th sub-network $\phi^s_{\boldsymbol{\theta}}$ in $u_{\mathfrak{m}, \boldsymbol{\theta}}$ parameterized with $(\boldsymbol{\theta}_s)^{\scriptscriptstyle [0]}$. To elucidate the relationship between  $u^{*}_{\mathfrak{m}}$ and $u_{\bar{\mathfrak{m}}, \bar{\boldsymbol{\theta}}}$ more clearly, we expand $u_{\bar{\mathfrak{m}}, \bar{\boldsymbol{\theta}}}$ as follows
\begin{align} \label{eq:umbar}
    u_{\bar{\mathfrak{m}}, \bar{\boldsymbol{\theta}}}(\boldsymbol{x})  = \sum_{k=1}^{\bar{\mathfrak{m}}} \bar{c}_k\cdot \phi_{\bar{\boldsymbol{\theta}}}^{k}(\boldsymbol{x}) = \sum_{k=1}^{ \bar{\mathfrak{m}}}\sum_{v=1}^{R} \frac{\bar{c}_k}{R}\cdot \phi_{\bar{\boldsymbol{\theta}}}^{k}(\boldsymbol{x}) \, ,
\end{align}
where $\phi_{\bar{\boldsymbol{\theta}}}^{k}$ denotes the $k$-th sub-network in $u_{\bar{\mathfrak{m}}, \bar{\boldsymbol{\theta}}}$, whose weights are $\bar{\boldsymbol{\theta}}_k$. Hence, if there is only a small difference between  $\bar{\boldsymbol{\theta}}_k$ and $(\boldsymbol{\theta}_{s_{k, v}})^{\scriptscriptstyle[0]}$, we can expect $u^{*}_{\mathfrak{m}}$ and $u_{\bar{\mathfrak{m}}, \bar{\boldsymbol{\theta}}}$ to be also close. As we have known, event $G_{\mathfrak{m}, \bar{\mathfrak{m}}, R, \delta}$ ensures this.
% sometimes we denote $(\boldsymbol{\theta}_{\rm out}^{\mathfrak{m}, *})^{\scriptscriptstyle [t]}$ as $\boldsymbol{\theta}_{\rm out}^{\mathfrak{m}, *}$.

%  Moreover, we have the following estimate of $\|\boldsymbol{\theta}_{\rm out}^{\mathfrak{m}, *}\|_2$
% \begin{align} \label{eq:norm*}
%     \|\boldsymbol{\theta}_{\rm out}^{\mathfrak{m}, *}\|_2 = \sqrt{\sum_{s=1}^{\mathfrak{m}}|c_s^{*}|^2} = \frac{1}{\sqrt{R}} \sqrt{\sum_{k=1}^{\bar{\mathfrak{m}}}|\bar{c}_k|^2} \leq \frac{1}{\sqrt{R}} \sum_{k=1}^{\bar{\mathfrak{m}}}|\bar{c}_k| = \frac{1}{\sqrt{R}} \|\boldsymbol{\theta}_{\rm out}^{\mathfrak{m}, *}\|_1  \leq \frac{\bar{M}}{\sqrt{R}} \, .
% \end{align}
% Hence, as $R$, which in some sense controls the over-parameterization degree of $u_{\mathfrak{m}, \boldsymbol{\theta}}$, increases, the upper bound of $\|\boldsymbol{\theta}_{\rm out}^{\mathfrak{m}, *}\|_2$ decays polynomially. In subsequent analysis, this fact will be used to illustrate the significant role of over-parameterization in bounding $\mathcal{E}^{-}_{opt}$.

Utilizing $u^{*}_{\mathfrak{m}}$, $\mathcal{E}^{-}_{opt}$ can be further decomposed into the following two terms:  
\begin{align} \label{eq:opt decomp}
\mathcal{E}^{-}_{opt} &= \widehat{\mathcal{L}}(u_{\mathcal{A}})-\widehat{\mathcal{L}}(u_{\bar{\mathfrak{m}}, \bar{\boldsymbol{\theta}}}) = \underbrace{\widehat{\mathcal{L}}(u_{\mathcal{A}})-\widehat{\mathcal{L}}(u_{\mathfrak{m}}^{*})}_{\text{iteration error}} + \underbrace{ \widehat{\mathcal{L}}(u_{\mathfrak{m}}^{*}) -\widehat{\mathcal{L}}(u_{\bar{\mathfrak{m}}, \bar{\boldsymbol{\theta}}})}_{\text{initialization error}}\, . \end{align}

\noindent {\bf Iteration error: }According to Section \ref{pgd}, $u_{\mathcal{A}}$ is exactly $u_{\mathfrak{m}, \boldsymbol{\theta}}$ parameterized with $(\boldsymbol{\theta}^{\mathfrak{m}}_{ \rm total})^{\scriptscriptstyle [T]}$, the final output of iterative equation (\ref{eq:pgd}), and $\widehat{\mathcal{L}}(u_{\mathcal{A}})$ is denoted as $\widehat{F}((\boldsymbol{\theta}^{\mathfrak{m}}_{ \rm in})^{\scriptscriptstyle [T]}, (\boldsymbol{\theta}^{\mathfrak{m}}_{ \rm out})^{\scriptscriptstyle [T]})$.  In the same manner, $\widehat{\mathcal{L}}(u_{\mathfrak{m}}^{*})$ can be expressed as $\widehat{F}((\boldsymbol{\theta}^{\mathfrak{m}}_{ \rm in})^{\scriptscriptstyle [0]}, \boldsymbol{\theta}^{\mathfrak{m}, *}_{ \rm out})$. Hecne, the iteration error describes how $(\boldsymbol{\theta}^{\mathfrak{m}}_{ \rm in}, \boldsymbol{\theta}^{\mathfrak{m}}_{ \rm out})^{\scriptscriptstyle [T]}$ as output of the iterative PGD algorithm are controlled by the transition parameters $\boldsymbol{\theta}^{\mathfrak{m}, *}_{\rm total}=((\boldsymbol{\theta}^{\mathfrak{m}}_{ \rm in})^{\scriptscriptstyle [0]}, \boldsymbol{\theta}^{\mathfrak{m}, *}_{ \rm out})$, which is also the reason for its naming.

Before conducting a detailed analysis, we need to specify the particular $\mathcal{PNN}$ class to which $u_{\mathcal{A}}$ belongs, as this is crucial for the generalization error analysis in the subsequent Section \ref{sta}. Firstly, by (\ref{eq:M}), we have $\|(\boldsymbol{\theta}^{\mathfrak{m}}_{\rm out})^{\scriptscriptstyle [t]}\|_{1} \leq \zeta$. Additionally, since 
$$\|(\boldsymbol{\theta}^{\mathfrak{m}}_{\rm in})^{\scriptscriptstyle [t]}\|_{\infty} \leq \|(\boldsymbol{\theta}^{\mathfrak{m}}_{\rm in})^{\scriptscriptstyle [0]}\|_{\infty}  + \|(\boldsymbol{\theta}^{\mathfrak{m}}_{\rm in})^{\scriptscriptstyle [t]}-(\boldsymbol{\theta}^{\mathfrak{m}}_{\rm in})^{\scriptscriptstyle [0]}\|_{\infty} \leq \|(\boldsymbol{\theta}^{\mathfrak{m}}_{\rm in})^{\scriptscriptstyle [0]}\|_{\infty}  + \|(\boldsymbol{\theta}^{\mathfrak{m}}_{\rm in})^{\scriptscriptstyle [t]}-(\boldsymbol{\theta}^{\mathfrak{m}}_{\rm in})^{\scriptscriptstyle [0]}\|_{2} \, ,
$$
we have $\|(\boldsymbol{\theta}^{\mathfrak{m}}_{\rm in})^{\scriptscriptstyle [t]}\|_{\infty} \leq B_{\bar{\boldsymbol{\theta}}} + \eta$ according to (\ref{eq:eta}). Thus, $u_{\mathcal{A}} \in \mathcal{PNN}(\mathfrak{m}, \zeta, \{\bar{W}, \bar{L}, B_{\bar{\boldsymbol{\theta}}} + \eta\})$.

Meanwhile, it is evident that $$\|\boldsymbol{\theta}_{\rm out}^{\mathfrak{m}, *}\|_1=\sum_{s=1}^{\mathfrak{m}}|c_s^{*}|=\sum_{k=1}^{\bar{\mathfrak{m}}}|\bar{c}_k|=\bar{M} \, , \quad \|\boldsymbol{\theta}_{\rm in}^{\mathfrak{m}, *}\|_{\infty}=\|(\boldsymbol{\theta}^{\mathfrak{m}}_{\rm in})^{\scriptscriptstyle [0]}\|_{\infty} \leq B_{\bar{\boldsymbol{\theta}}} < B_{\bar{\boldsymbol{\theta}}} + \eta \, ,$$
so we have $u^{*}_{\mathfrak{m}} \in \mathcal{PNN}(\mathfrak{m}, \bar{M}, \{\bar{W}, \bar{L}, B_{\bar{\boldsymbol{\theta}}}+ \eta\})$. Proposition \ref{prop:iteration}
demonstrates that by also 
setting $\zeta = \bar{M}$ in (\ref{eq:M}), and by properly selecting the parameter $R$, the number of iterations $T$, and the step size $\lambda$, we can control the upper bound of the iteration error to any desired precision.

\begin{prop} \label{prop:iteration}
    Let $\zeta=\bar{M}$ in (\ref{eq:M}). Then, we get $u_{\mathcal{A}}$, the output of the PGD algorithm through (\ref{eq:pgd}), belonging to $\mathcal{PNN}(\mathfrak{m}, \bar{M}, \{\bar{W}, \bar{L}, B_{\bar{\boldsymbol{\theta}}} + \eta\})$. Also, we run the algorithm with step size $\lambda$ satisfying
    \begin{align*}
        \lambda = T^{-1} \wedge 2\mm C_2^{-1} \m \mm \mathfrak{m}^{-1} \mm \bar{M}^{-2} \mm (B_{\bar{\boldsymbol{\theta}}}+\eta)^{-4\bar{L}} \, ,
    \end{align*}
    where $T$ is the total number of iterations, and $\eta$ is the projection radius of sub-network weights in (\ref{eq:eta}). Then, with $u^{*}_{\mathfrak{m}} \in \mathcal{PNN}(\mathfrak{m}, \bar{M}, \{\bar{W}, \bar{L}, B_{\bar{\boldsymbol{\theta}}}\})$ defined in (\ref{eq:u*}), the iteration error in (\ref{eq:opt decomp}) is bounded by
\begin{align*} \widehat{\mathcal{L}}(u_{\mathcal{A}})-\widehat{\mathcal{L}}(u_{\mathfrak{m}}^{*}) &  \leq  C_3 \cdot \bar{M}\cdot \eta \cdot (B_{\bar{\boldsymbol{\theta}}}+\eta)^{3\bar{L}}  \cdot \|\boldsymbol{\theta}_{\rm out}^{\mathfrak{m}, *}\|_2+\frac{1}{2}\|\boldsymbol{\theta}_{\rm out}^{\mathfrak{m}, *}\|_2^2+\frac{C_1 \cdot \mathfrak{m} \cdot \bar{M}^2 \cdot (B_{\bar{\boldsymbol{\theta}}}+\eta)^{4\bar{L}}}{2\mm T}  \\
& \leq \frac{C_3 \cdot \bar{M}^2\cdot (B_{\bar{\boldsymbol{\theta}}}+\eta)^{3\bar{L}} \cdot \eta}{\sqrt{R}}+\frac{\bar{M}^2}{2 R}+\frac{C_1 \cdot \mathfrak{m} \cdot \bar{M}^2 \cdot (B_{\bar{\boldsymbol{\theta}}}+\eta)^{4\bar{L}}}{2\mm T} \, .
\end{align*}
Here, $C_1, C_2$ and $ C_3$ are universal constants which only depend on $\Omega, \bar{W}, \bar{L}, d$ and $B_0$.
\end{prop}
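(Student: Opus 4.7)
The plan is to exploit three structural observations: (i) the comparison point $\boldsymbol{\theta}^{\mathfrak{m},*}_{\rm total}$ is feasible, because its inner component coincides with the initialization and $\|\boldsymbol{\theta}^{\mathfrak{m},*}_{\rm out}\|_1=\sum_k|\bar{c}_k|=\bar{M}=\zeta$; (ii) $\widehat{F}$ is a positive semidefinite quadratic in $\boldsymbol{\theta}_{\rm out}^{\mathfrak{m}}$ for each fixed $\boldsymbol{\theta}_{\rm in}^{\mathfrak{m}}$, hence convex along $\boldsymbol{\theta}_{\rm out}^{\mathfrak{m}}$; (iii) $\widehat{F}$ has bounded Hessian on the constraint set $A_\eta\times B_\zeta$ since $\tanh$ and its derivatives are uniformly bounded. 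The class membership $u_{\mathcal{A}}\in\mathcal{PNN}(\mathfrak{m},\bar{M},\{\bar{W},\bar{L},B_{\bar{\boldsymbol{\theta}}}+\eta\})$ is then immediate: $\|(\boldsymbol{\theta}^{\mathfrak{m}}_{\rm out})^{[t]}\|_1\leq\zeta=\bar{M}$ by the projection step, and $\|(\boldsymbol{\theta}^{\mathfrak{m}}_{\rm in})^{[t]}\|_\infty\leq\|(\boldsymbol{\theta}^{\mathfrak{m}}_{\rm in})^{[0]}\|_\infty+\eta\leq B_{\bar{\boldsymbol{\theta}}}+\eta$ by the triangle inequality highlighted in the excerpt.

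The central device is the per-iterate splitting
\begin{align*}
\widehat{F}\bigl((\boldsymbol{\theta}^{\mathfrak{m}}_{\rm in})^{[t]},(\boldsymbol{\theta}^{\mathfrak{m}}_{\rm out})^{[t]}\bigr)-\widehat{F}\bigl(\boldsymbol{\theta}^{\mathfrak{m},*}_{\rm total}\bigr)=I_t+II_t,
\end{align*}
where $I_t:=\widehat{F}((\boldsymbol{\theta}^{\mathfrak{m}}_{\rm in})^{[t]},(\boldsymbol{\theta}^{\mathfrak{m}}_{\rm out})^{[t]})-\widehat{F}((\boldsymbol{\theta}^{\mathfrak{m}}_{\rm in})^{[t]},\boldsymbol{\theta}^{\mathfrak{m},*}_{\rm out})$ and $II_t:=\widehat{F}((\boldsymbol{\theta}^{\mathfrak{m}}_{\rm in})^{[t]},\boldsymbol{\theta}^{\mathfrak{m},*}_{\rm out})-\widehat{F}((\boldsymbol{\theta}^{\mathfrak{m}}_{\rm in})^{[0]},\boldsymbol{\theta}^{\mathfrak{m},*}_{\rm out})$. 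Convexity in $\boldsymbol{\theta}_{\rm out}^{\mathfrak{m}}$ yields $I_t\leq\langle\nabla_{\boldsymbol{\theta}_{\rm out}^{\mathfrak{m}}}\widehat{F}(\boldsymbol{\theta}^{[t]}),(\boldsymbol{\theta}^{\mathfrak{m}}_{\rm out})^{[t]}-\boldsymbol{\theta}^{\mathfrak{m},*}_{\rm out}\rangle$, while the mean value theorem bounds $|II_t|\leq\eta\cdot\sup\|\nabla_{\boldsymbol{\theta}_{\rm in}^{\mathfrak{m}}}\widehat{F}(\,\cdot\,,\boldsymbol{\theta}^{\mathfrak{m},*}_{\rm out})\|_2$. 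Since each block $\partial_{\boldsymbol{\theta}_k}\widehat{F}$ is proportional to the coefficient $c_k^*$, summing in $\ell_2$ factors the inner gradient as $\|\boldsymbol{\theta}^{\mathfrak{m},*}_{\rm out}\|_2$ times a chain-rule bound of order $\bar{M}(B_{\bar{\boldsymbol{\theta}}}+\eta)^{3\bar{L}}$, producing the first summand $C_3\bar{M}\eta(B_{\bar{\boldsymbol{\theta}}}+\eta)^{3\bar{L}}\|\boldsymbol{\theta}^{\mathfrak{m},*}_{\rm out}\|_2$ of the stated bound.

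To handle $\sum_t I_t$, observe that the projection onto $A_\eta\times B_\zeta$ is separable, so $(\boldsymbol{\theta}^{\mathfrak{m}}_{\rm out})^{[t+1]}=P_{B_\zeta}\bigl((\boldsymbol{\theta}^{\mathfrak{m}}_{\rm out})^{[t]}-\lambda\nabla_{\boldsymbol{\theta}_{\rm out}^{\mathfrak{m}}}\widehat{F}(\boldsymbol{\theta}^{[t]})\bigr)$. Non-expansiveness applied to $\boldsymbol{\theta}^{\mathfrak{m},*}_{\rm out}\in B_\zeta$ telescopes into the standard bound
\begin{align*}
\sum_{t=0}^{T-1}\langle\nabla_{\boldsymbol{\theta}_{\rm out}^{\mathfrak{m}}}\widehat{F}(\boldsymbol{\theta}^{[t]}),(\boldsymbol{\theta}^{\mathfrak{m}}_{\rm out})^{[t]}-\boldsymbol{\theta}^{\mathfrak{m},*}_{\rm out}\rangle\leq\frac{\|\boldsymbol{\theta}^{\mathfrak{m},*}_{\rm out}\|_2^2}{2\lambda}+\frac{\lambda}{2}\sum_{t=0}^{T-1}\|\nabla_{\boldsymbol{\theta}_{\rm out}^{\mathfrak{m}}}\widehat{F}(\boldsymbol{\theta}^{[t]})\|_2^2,
\end{align*}
using $(\boldsymbol{\theta}^{\mathfrak{m}}_{\rm out})^{[0]}=\boldsymbol{0}$. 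A uniform estimate $|\partial_{c_k}\widehat{F}|\lesssim\bar{M}(B_{\bar{\boldsymbol{\theta}}}+\eta)^{2\bar{L}}$ gives $\|\nabla_{\boldsymbol{\theta}_{\rm out}^{\mathfrak{m}}}\widehat{F}\|_2^2\leq C_1\mathfrak{m}\bar{M}^2(B_{\bar{\boldsymbol{\theta}}}+\eta)^{4\bar{L}}$, and the choice $\lambda=T^{-1}$ reduces the right-hand side, after dividing by $T$, to exactly $\tfrac{1}{2}\|\boldsymbol{\theta}^{\mathfrak{m},*}_{\rm out}\|_2^2+C_1\mathfrak{m}\bar{M}^2(B_{\bar{\boldsymbol{\theta}}}+\eta)^{4\bar{L}}/(2T)$. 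Because a direct Hessian estimate supplies smoothness constant $L\lesssim C_2\mathfrak{m}\bar{M}^2(B_{\bar{\boldsymbol{\theta}}}+\eta)^{4\bar{L}}$ on the constraint set, PGD with the chosen $\lambda\leq 2/L$ is monotonically descending, so $\widehat{F}(\boldsymbol{\theta}^{[T]})$ is controlled by the arithmetic mean $T^{-1}\sum_t\widehat{F}(\boldsymbol{\theta}^{[t]})$. Summing the three contributions yields the first displayed inequality, and substituting the Cauchy--Schwarz estimate $\|\boldsymbol{\theta}^{\mathfrak{m},*}_{\rm out}\|_2\leq\bar{M}/\sqrt{R}$, immediate from the sparse pattern $c^{*}_{s_{k,v}}=\bar{c}_k/R$, delivers the second.

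The hardest technical work will be the chain-rule bookkeeping hidden inside $C_1,C_2,C_3$: one must confirm that each $\tanh$ sub-network $\phi^k$, its spatial gradient $\nabla_{\boldsymbol{x}}\phi^k$, and their first and second derivatives in the inner parameters all scale like $(B_{\bar{\boldsymbol{\theta}}}+\eta)^{r\bar{L}}$ for the appropriate exponent $r$, and then recombine these through the Dirichlet energy, the potential term, and the boundary trace so that the inner-gradient bound retains the sharper $\|\boldsymbol{\theta}^{\mathfrak{m},*}_{\rm out}\|_2$ dependence in $II_t$ (rather than the looser $\|\boldsymbol{\theta}^{\mathfrak{m},*}_{\rm out}\|_1$) while the Hessian bound hits precisely the $(B_{\bar{\boldsymbol{\theta}}}+\eta)^{4\bar{L}}$ factor appearing in the step-size constraint.
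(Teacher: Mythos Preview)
Your proposal is correct and follows essentially the same route as the paper. The paper packages the argument into an abstract PGD lemma (their Lemma~\ref{opt_lem}) whose proof is exactly your splitting $I_t+II_t$: convexity in $\boldsymbol{\theta}_{\rm out}^{\mathfrak{m}}$ plus the telescoping/non-expansiveness identity handles $I_t$, the Lipschitz estimate in $\boldsymbol{\theta}_{\rm in}^{\mathfrak{m}}$ with the $\|\boldsymbol{\theta}_{\rm out}^{\mathfrak{m},*}\|_2$ factor (their Lemma~\ref{lip_lem}) handles $II_t$, and the Hessian bound (their Lemmas~\ref{hessian_lem}--\ref{hessian}) supplies the smoothness needed for monotone descent so that the last iterate inherits the averaged bound; the constants $C_1,C_2,C_3$ are then filled in by precisely the chain-rule bookkeeping you anticipate.
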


The proof of Proposition \ref{prop:iteration} can be found in Appendix \ref{iter}.
\begin{remark}
    In Proposition \ref{prop:iteration}, we have utilized the following property of $\|\boldsymbol{\theta}_{\rm out}^{\mathfrak{m}, *}\|_2$:
\begin{align} \label{eq:norm*}
    \|\boldsymbol{\theta}_{\rm out}^{\mathfrak{m}, *}\|_2 = \sqrt{\sum_{s=1}^{\mathfrak{m}}|c_s^{*}|^2} = \frac{1}{\sqrt{R}} \sqrt{\sum_{k=1}^{\bar{\mathfrak{m}}}|\bar{c}_k|^2} \leq \frac{1}{\sqrt{R}} \sum_{k=1}^{\bar{\mathfrak{m}}}|\bar{c}_k| = \frac{1}{\sqrt{R}} \|\boldsymbol{\theta}_{\rm out}^{\mathfrak{m}, *}\|_1  \leq \frac{\bar{M}}{\sqrt{R}} \, .
\end{align}
That is, as $R$, which in some sense controls the over-parameterization degree of $u_{\mathfrak{m}, \boldsymbol{\theta}}$, increases, the upper bound of $\|\boldsymbol{\theta}_{\rm out}^{\mathfrak{m}, *}\|_2$ decays polynomially. As we have seen, this property allows us to control the iteration error to any given precision by letting $R \rightarrow \infty$ with $\bar{M}$ fixed, which underscores the importance of over-parameterization in our analysis.
\end{remark} 
\vskip 5mm
\noindent {\bf Initialization error: }Then, we turn to the initialization error. By (\ref{eq:u*}) and (\ref{eq:umbar}), it would be well controlled if there exists only a slight perturbation between the target weights $\bar{\boldsymbol{\theta}}_k$ and random initialization $(\boldsymbol{\theta}_{s_{k, v}})^{\scriptscriptstyle[0]}$, which also explains its name. More specifically, to precisely bound this term, $\mathbb{P}({G}_{\mathfrak{m}, \bar{\mathfrak{m}}, R, \delta})$ in Definition \ref{def:G} needs to sufficiently approach $1$ as $\delta \rightarrow 0$, which intuitively imposes a requirement on the size of $\mathfrak{m}$.

The following Proposition \ref{prop:pertur} concretizes such intuition, which sates that by adjusting the number of sub-networks in the implemented $u_{\mathfrak{m}, \boldsymbol{\theta}}$, namely, the degree of over-parameterization, we can bound initialization error with arbitrary high probability and precision.
\begin{prop} \label{prop:pertur}
    Choose $u_{\bar{\mathfrak{m}}, \bar{\boldsymbol{\theta}}} \in \mathcal{PNN}(\bar{\mathfrak{m}}, \bar{M}, \{\bar{W}, \bar{L}, B_{\bar{\boldsymbol{\theta}}}\})$ in Corollary \ref{cor4.1}. Let $\delta>0$, $R, Q\in\mathbb{N}$ while $Q$ is sufficiently large. If we set the number of sub-networks  $\mathfrak{m} = \bar{\mathfrak{m}} \cdot R \cdot Q$, then with probability at least 
    \begin{align*}
        1 - \bar{\mathfrak{m}}R\Big[1-\delta^{\bar{W}(\bar{W}+1)\bar{L}}(2B_{\bar{\boldsymbol{\theta}}})^{-\bar{W}(\bar{W}+1)\bar{L}}\Big]^{Q} \, ,
    \end{align*}
    the initialization error in (\ref{eq:opt decomp}) is bounded by
    \begin{align*}
        \widehat{\mathcal{L}}(u_{\mathfrak{m}}^{*}) -\widehat{\mathcal{L}}(u_{\bar{\mathfrak{m}}, \bar{\boldsymbol{\theta}}}) \leq C_4 \cdot \bar{M}^2 \cdot B^ {\raisebox{0.2ex}{$\scriptscriptstyle 3\bar{L}$}}_{\bar{\boldsymbol{\theta}}} \cdot \delta \, ,
    \end{align*}
    with $u^{*}_{\mathfrak{m}} \in  \mathcal{PNN}({\mathfrak{m}}, \bar{M}, \{\bar{W}, \bar{L}, B_{\bar{\boldsymbol{\theta}}}+ \eta\})$ defined in (\ref{eq:u*}). $C_4$ is a universal constant which only depend on $\Omega, \bar{W}, \bar{L}, d$ and $B_0$..
\end{prop}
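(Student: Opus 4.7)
The plan is to decouple the argument into a probabilistic step establishing $G_{\mathfrak{m}, \bar{\mathfrak{m}}, R, \delta}$ with the stated probability, followed by a deterministic perturbation estimate that converts this initialization event into a bound on the energy discrepancy.

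For the probability bound, I would partition the $\mathfrak{m} = \bar{\mathfrak{m}} \cdot R \cdot Q$ sub-networks into $\bar{\mathfrak{m}} R$ disjoint blocks of size $Q$, allocating one block to each pair $(k, v)$ with $k \in [\bar{\mathfrak{m}}]$ and $v \in [R]$. Since each scalar weight in $(\boldsymbol{\theta}_{\rm in}^{\mathfrak{m}})^{\scriptscriptstyle[0]}$ is uniform on $[-B_{\bar{\boldsymbol{\theta}}}, B_{\bar{\boldsymbol{\theta}}}]$ and each target coordinate of $\bar{\boldsymbol{\theta}}_k$ also lies in $[-B_{\bar{\boldsymbol{\theta}}}, B_{\bar{\boldsymbol{\theta}}}]$ by Corollary \ref{cor4.1}, the probability that a single scalar weight falls within $\delta$ of the corresponding target coordinate is at least $\delta / (2 B_{\bar{\boldsymbol{\theta}}})$ in the worst case (when the target coordinate is at the boundary). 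Independence across the at most $\bar{W}(\bar{W}+1)\bar{L}$ coordinates in one sub-network gives a per-sub-network success probability $p \geq [\delta/(2B_{\bar{\boldsymbol{\theta}}})]^{\bar{W}(\bar{W}+1)\bar{L}}$. Within a single block, the probability that all $Q$ sub-networks fail is at most $(1-p)^Q$, and a union bound across the $\bar{\mathfrak{m}} R$ blocks yields exactly the claimed failure probability. Defining the indices $s_{k,v}$ as in \eqref{eq:skv} on the successful event completes this step.

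For the deterministic error bound, working on the event $G_{\mathfrak{m}, \bar{\mathfrak{m}}, R, \delta}$, I would combine \eqref{eq:u*} and \eqref{eq:umbar} to write
\begin{equation*}
u_{\mathfrak{m}}^{*}(\boldsymbol{x}) - u_{\bar{\mathfrak{m}}, \bar{\boldsymbol{\theta}}}(\boldsymbol{x}) = \sum_{k=1}^{\bar{\mathfrak{m}}}\sum_{v=1}^{R} \frac{\bar{c}_k}{R}\Big[\big(\phi_{\boldsymbol{\theta}}^{s_{k,v}}\big)^{\scriptscriptstyle[0]}(\boldsymbol{x}) - \phi_{\bar{\boldsymbol{\theta}}}^{k}(\boldsymbol{x})\Big],
\end{equation*}
and similarly for their gradients. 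The central technical ingredient is a weight-Lipschitz estimate for $\tanh$-activated sub-networks of width $\bar{W}$ and depth $\bar{L}$: if two weight vectors agree within $\delta$ in $\ell_\infty$ and both lie in $[-B_{\bar{\boldsymbol{\theta}}}, B_{\bar{\boldsymbol{\theta}}}]^{\bar{W}(\bar{W}+1)\bar{L}}$, then the induced networks differ by at most $C \cdot B_{\bar{\boldsymbol{\theta}}}^{\bar{L}} \cdot \delta$ in $L^{\infty}([0,1]^d)$ and their $x$-gradients by at most $C \cdot B_{\bar{\boldsymbol{\theta}}}^{2\bar{L}} \cdot \delta$ in $L^{\infty}$. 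These are proved by layerwise induction, using boundedness and Lipschitz continuity of $\tanh$ and $\tanh'$ and tracking the accumulation of factors of $B_{\bar{\boldsymbol{\theta}}}$ from both the forward and backward sensitivity of the network. Together with $\sum_{v} |\bar{c}_k|/R = |\bar{c}_k|$ and $\sum_{k}|\bar{c}_k| \leq \bar{M}$, this yields $\|u_{\mathfrak{m}}^{*} - u_{\bar{\mathfrak{m}},\bar{\boldsymbol{\theta}}}\|_{W^{1,\infty}(\Omega)} \leq C \cdot \bar{M} \cdot B_{\bar{\boldsymbol{\theta}}}^{2\bar{L}} \cdot \delta$.

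To pass from this perturbation bound to the energy gap, I would expand $\widehat{\mathcal{L}}(u_{\mathfrak{m}}^{*}) - \widehat{\mathcal{L}}(u_{\bar{\mathfrak{m}},\bar{\boldsymbol{\theta}}})$ and apply the polarization identities $\|\nabla u_1\|^2 - \|\nabla u_2\|^2 = \langle \nabla u_1 - \nabla u_2,\,\nabla u_1 + \nabla u_2\rangle$ and the analogous identity for $u^2$. Since each of $u_{\mathfrak{m}}^{*}$ and $u_{\bar{\mathfrak{m}},\bar{\boldsymbol{\theta}}}$ has $W^{1,\infty}$-norm of order $\bar{M} \cdot B_{\bar{\boldsymbol{\theta}}}^{\bar{L}}$ (a standard forward bound, again by induction and $|\tanh| \leq 1$, $|\tanh'| \leq 1$), and the coefficient functions $\omega, h, g$ are bounded by $B_0$, the Monte Carlo empirical energy is Lipschitz in $u$ in the $W^{1,\infty}$ topology with constant of order $\bar{M} \cdot B_{\bar{\boldsymbol{\theta}}}^{\bar{L}}$. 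Multiplying by the $W^{1,\infty}$ perturbation of order $\bar{M} \cdot B_{\bar{\boldsymbol{\theta}}}^{2\bar{L}} \cdot \delta$ produces the announced $C_4 \cdot \bar{M}^2 \cdot B_{\bar{\boldsymbol{\theta}}}^{3\bar{L}} \cdot \delta$ bound, with the constant $C_4$ depending only on $\Omega, \bar{W}, \bar{L}, d, B_0$. The main obstacle will be the careful bookkeeping in the gradient-Lipschitz lemma for $\tanh$ networks: getting the sharp exponent $2\bar{L}$ on $B_{\bar{\boldsymbol{\theta}}}$ (so that the final product matches $3\bar{L}$ rather than a larger power) requires exploiting at each layer that the forward activations and the backward Jacobian products both contribute exactly one factor of $B_{\bar{\boldsymbol{\theta}}}$ per layer, and combining these contributions without double-counting.
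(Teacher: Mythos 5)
Your proposal is correct and follows essentially the same two-step structure as the paper: the probability bound is obtained by partitioning the $\mathfrak{m} = \bar{\mathfrak{m}} R Q$ sub-networks into $\bar{\mathfrak{m}} R$ disjoint blocks of size $Q$ and applying a union bound over the per-block failure probabilities (this is the paper's Lemma on $\mathbb{P}(G_{\mathfrak{m},\bar{\mathfrak{m}},R,\delta})$), and the deterministic energy gap is controlled via weight-Lipschitz estimates for the $\tanh$ sub-network and its spatial gradient combined with forward $W^{1,\infty}$ bounds and the polarization identity (this is the paper's Lemma \ref{lip_lem2}, relying on Lemmas \ref{f-lip}, \ref{df-bound}, \ref{df-lip}). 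Your exponent bookkeeping matches: the gradient Lipschitz constant contributes $B_{\bar{\boldsymbol{\theta}}}^{2\bar{L}}$, the forward gradient bound contributes $B_{\bar{\boldsymbol{\theta}}}^{\bar{L}}$, and their product with two factors of $\bar{M}$ gives $C_4\, \bar{M}^2\, B_{\bar{\boldsymbol{\theta}}}^{3\bar{L}}\,\delta$ (the $\ell_\infty$-to-$\ell_2$ conversion on the weight perturbation, which introduces a factor of $\sqrt{\bar{W}(\bar{W}+1)\bar{L}}$, is absorbed into the constant, as the paper also does).
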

The proof of Proposition \ref{prop:pertur} can be found in Appendix \ref{init}. Combining Propositions \ref{prop:iteration} and \ref{prop:pertur}, we obtain the following estimate of the optimization error $\mathcal{E}^{-}_{opt}$ in (\ref{eq:opt-}).
\begin{thm} \label{thm:opterror}
   Choose $u_{\bar{\mathfrak{m}}, \bar{\boldsymbol{\theta}}} \in \mathcal{PNN}(\bar{\mathfrak{m}}, \bar{M}, \{\bar{W}, \bar{L}, B_{\bar{\boldsymbol{\theta}}}\})$ in Corollary \ref{cor4.1}.  Let $\delta>0$, $R, Q\in\mathbb{N}$ while $Q$ is sufficiently large. Let $u_{\mathcal{A}} \in  \mathcal{PNN}(\mathfrak{m}, \bar{M}, \{\bar{W}, \bar{L}, B_{\bar{\boldsymbol{\theta}}} + \eta\})$ be the output of the PGD algorithm in (\ref{eq:pgd}) with $\zeta=\bar{M}$ in (\ref{eq:M}) and step size $\lambda$. If we set the number of sub-networks  $\mathfrak{m} = \bar{\mathfrak{m}} \cdot R \cdot Q$, and make $\lambda$ satisfying
    \begin{align*}
            \lambda = T^{-1} \wedge 2\mm C_2^{-1} \m \mm \bar{\mathfrak{m}}^{-1} \mm R^{-1} \mm Q^{-1} \mm \bar{M}^{-2} \mm (B_{\bar{\boldsymbol{\theta}}}+\eta)^{-4\bar{L}} \, ,
    \end{align*}
    where $T$ is the total number of iterations, then with probability at least 
    \begin{align*}
        1 - \bar{\mathfrak{m}}R\Big[1-\delta^{\bar{W}(\bar{W}+1)\bar{L}}(2B_{\bar{\boldsymbol{\theta}}})^{-\bar{W}(\bar{W}+1)\bar{L}}\Big]^{Q} \, ,
    \end{align*}
     the optimization error $\mathcal{E}^{-}_{opt}$ in (\ref{eq:opt-}) is upper bounded by
     \begin{align*}      
     \mathcal{E}^{-}_{opt} \leq \frac{C_3 \cdot \bar{M}^2\cdot (B_{\bar{\boldsymbol{\theta}}}+\eta)^{3\bar{L}} \cdot \eta}{\sqrt{R}}+\frac{\bar{M}^2}{2 R}+\frac{C_1 \cdot \mathfrak{m} \cdot \bar{M}^2 \cdot (B_{\bar{\boldsymbol{\theta}}}+\eta)^{4\bar{L}}}{2\mm T} + C_4 \cdot \bar{M}^2 \cdot B^ {\raisebox{0.2ex}{$\scriptscriptstyle 3\bar{L}$}}_{\bar{\boldsymbol{\theta}}} \cdot \delta \, ,
     \end{align*}
     where $\eta$ is the projection radius of sub-network weights in (\ref{eq:eta}). $C_1, C_2, C_3$ and $ C_4$ are universal constants which only depend on $\Omega, \bar{W}, \bar{L}, d$ and $B_0$.
\end{thm}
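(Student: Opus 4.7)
The plan is to derive Theorem \ref{thm:opterror} as a direct synthesis of Propositions \ref{prop:iteration} and \ref{prop:pertur}, joined via the two-term decomposition of $\mathcal{E}^{-}_{opt}$ in (\ref{eq:opt decomp}). The only real content beyond those two propositions is bookkeeping: verifying that the hypotheses of both results hold simultaneously under the single set of choices $\mathfrak{m}=\bar{\mathfrak{m}}\cdot R\cdot Q$, $\zeta=\bar{M}$, and the prescribed step size $\lambda$, and then adding the two bounds.

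First I would recall from (\ref{eq:opt decomp}) that
\[
\mathcal{E}^{-}_{opt} = \big[\widehat{\mathcal{L}}(u_{\mathcal{A}})-\widehat{\mathcal{L}}(u_{\mathfrak{m}}^{*})\big] + \big[\widehat{\mathcal{L}}(u_{\mathfrak{m}}^{*}) -\widehat{\mathcal{L}}(u_{\bar{\mathfrak{m}}, \bar{\boldsymbol{\theta}}})\big],
\]
with $u_{\mathfrak{m}}^{*}\in\mathcal{PNN}(\mathfrak{m},\bar{M},\{\bar{W},\bar{L},B_{\bar{\boldsymbol{\theta}}}+\eta\})$ defined in (\ref{eq:u*}). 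Then I would check that, under $\zeta=\bar{M}$ in (\ref{eq:M}), the analysis preceding Proposition \ref{prop:iteration} already placed $u_{\mathcal{A}}$ in $\mathcal{PNN}(\mathfrak{m},\bar{M},\{\bar{W},\bar{L},B_{\bar{\boldsymbol{\theta}}}+\eta\})$, so the hypotheses of Proposition \ref{prop:iteration} are met with $\mathfrak{m}$ replaced by $\bar{\mathfrak{m}}\cdot R\cdot Q$. Substituting this into the step-size condition of Proposition \ref{prop:iteration} recovers exactly the condition $\lambda = T^{-1} \wedge 2 C_2^{-1}\bar{\mathfrak{m}}^{-1}R^{-1}Q^{-1}\bar{M}^{-2}(B_{\bar{\boldsymbol{\theta}}}+\eta)^{-4\bar{L}}$ stated in the theorem, and the iteration error is therefore controlled deterministically by
\[
\widehat{\mathcal{L}}(u_{\mathcal{A}})-\widehat{\mathcal{L}}(u_{\mathfrak{m}}^{*}) \leq \frac{C_3\cdot \bar{M}^2\cdot (B_{\bar{\boldsymbol{\theta}}}+\eta)^{3\bar{L}}\cdot \eta}{\sqrt{R}}+\frac{\bar{M}^2}{2R}+\frac{C_1\cdot \mathfrak{m}\cdot \bar{M}^2\cdot (B_{\bar{\boldsymbol{\theta}}}+\eta)^{4\bar{L}}}{2T}.
\]

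Next I would invoke Proposition \ref{prop:pertur} verbatim: with the same choice $\mathfrak{m}=\bar{\mathfrak{m}}\cdot R\cdot Q$, for the chosen $\delta,R$ and sufficiently large $Q$, the event controlling the sub-network initialization occurs with probability at least $1-\bar{\mathfrak{m}}R\big[1-\delta^{\bar{W}(\bar{W}+1)\bar{L}}(2B_{\bar{\boldsymbol{\theta}}})^{-\bar{W}(\bar{W}+1)\bar{L}}\big]^{Q}$, and on this event
\[
\widehat{\mathcal{L}}(u_{\mathfrak{m}}^{*}) -\widehat{\mathcal{L}}(u_{\bar{\mathfrak{m}}, \bar{\boldsymbol{\theta}}}) \leq C_4\cdot \bar{M}^2\cdot B_{\bar{\boldsymbol{\theta}}}^{3\bar{L}}\cdot \delta.
\]
Because the iteration-error bound holds deterministically (it uses only the projection sets, the step size, and the transition parameters $\boldsymbol{\theta}_{\rm total}^{\mathfrak{m},*}$, whose weight-coefficient norm is already controlled via (\ref{eq:norm*}) independently of which sub-network indices the $s_{k,v}$ pick out), the two bounds combine without any further probabilistic union bound beyond the one already carried by Proposition \ref{prop:pertur}. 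Adding them yields the claimed estimate on $\mathcal{E}^{-}_{opt}$ and preserves the probability.

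I do not expect a genuine obstacle here, since both propositions do the heavy lifting; the only care needed is to confirm that the same choice of $\mathfrak{m}$, $\zeta$, the definition of $u_{\mathfrak{m}}^{*}$, and the step-size normalization are used consistently in both propositions, and that the initialization bound does not degrade the deterministic step-size choice once we restrict to the high-probability event. A minor subtlety worth remarking on in the write-up is that the constants $C_1,C_2,C_3,C_4$ in the two propositions are declared under the same dependence ($\Omega,\bar{W},\bar{L},d,B_0$), so no re-indexing is necessary in the combined statement.
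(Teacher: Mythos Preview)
Your proposal is correct and matches the paper's approach exactly: the paper states Theorem \ref{thm:opterror} immediately after Propositions \ref{prop:iteration} and \ref{prop:pertur} with the single remark ``Combining Propositions \ref{prop:iteration} and \ref{prop:pertur}, we obtain the following estimate of the optimization error $\mathcal{E}^{-}_{opt}$,'' and offers no further argument. Your additional observation that the iteration-error bound is deterministic (so no extra union bound is needed beyond the one in Proposition \ref{prop:pertur}) is a helpful clarification that the paper leaves implicit.
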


\subsection{Statistical error} \label{sta}
In this section, we present the upper bound of the statistical error, while detailed proofs can be found in the Appendix \ref{proof of sta}. Notice that the statistical error
$$
\mathcal{E}_{sta} := \sup_{u\in\mathcal{PNN}}\big|\mathcal{L}(u)-\widehat{\mathcal{L}}(u)\big|
$$
is a random variable, since it is a function of the Monte Carlo sample points $\{X_p\}_{p=1}^{N_{\rm in}}$, $\{Y_p\}_{p=1}^{N_{b}}$. Our task is to control $\mathcal{E}_{sta}$ with high probability. 

\begin{thm}
\label{thm5.2}
Let $\mathcal{PNN} = \mathcal{PNN}(\mathfrak{m}, M, \{W, L, B_{\boldsymbol{\theta}}\})$. Let $N_{\rm in} = N_{b} = N_{s}$ in the Monte Carlo sampling. Let $0<\xi<1$. Then, with probability at least $1-\xi$, it holds that
\begin{align*}
    \mathcal{E}_{sta}&=\sup_{u_{\mathfrak{m}, \boldsymbol{\theta}}\in\mathcal{PNN}}\big|\mathcal{L}(u_{\mathfrak{m}, \boldsymbol{\theta}})-\widehat{\mathcal{L}}(u_{\mathfrak{m}, \boldsymbol{\theta}})\big| \\
    &\leq C(\Omega, B_0, d, W, L)\cdot M^2  B_{\boldsymbol{\theta}}^{2L} N_{s}^{-\frac{1}{2}}  \big(\sqrt{\log(B_{\boldsymbol{\theta}} WLN_{s})}+\sqrt{\log\xi^{-1}}\big) \, ,
\end{align*}
where $C(\Omega, B_0, d, W, L)$ is a universal constant which only depends on $\Omega, B_0, d, W$ and $L$.
\end{thm}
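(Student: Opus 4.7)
The plan is to decompose the supremum $\mathcal{E}_{sta}$ by splitting the energy functional $\mathcal{L}(u)$ into its four constituent pieces: the Dirichlet energy $\tfrac{1}{2}\|\nabla u\|_2^2$, the mass term $\tfrac{1}{2}\omega u^2$, the linear source term $hu$, and the boundary term $gTu$. By the triangle inequality, it suffices to bound $\sup_{u\in\mathcal{PNN}}|\mathcal{L}_j(u)-\widehat{\mathcal{L}}_j(u)|$ for each component $j=1,\dots,4$ separately and take a union bound at the end. For each of these four suprema I would run a two-step scheme: first establish a uniform $L^\infty$ bound on the integrand over $\mathcal{PNN}$ and apply McDiarmid's bounded-differences inequality to concentrate the supremum around its expectation, contributing a term of order $C(\cdot)\,N_s^{-1/2}\sqrt{\log\xi^{-1}}$; second, bound the expected supremum by a symmetrization argument, reducing it to the Rademacher complexity of the corresponding class.

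To produce the uniform pointwise bounds, I would exploit that $\tanh$ and $\tanh'$ are bounded by $1$: a $\tanh$-activated sub-network $\phi^k$ of depth $L$, width $W$ and weights of $\ell_\infty$-norm at most $B_{\boldsymbol{\theta}}$ satisfies $\|\phi^k\|_\infty$ and $\|\nabla\phi^k\|_\infty$ both bounded by a multiple of $W B_{\boldsymbol{\theta}}^L$, shown by layer-by-layer induction using the chain rule. Since the outer linear layer of the parallel network has $\ell_1$-norm bounded by $M$, every $u\in\mathcal{PNN}$ obeys $\|u\|_\infty+\|\nabla u\|_\infty \le C\,M\,B_{\boldsymbol{\theta}}^L$. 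Squaring this bound produces the $M^2 B_{\boldsymbol{\theta}}^{2L}$ prefactor that appears in the conclusion, and it also quantifies the bounded differences needed for McDiarmid.

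For the Rademacher-complexity side, I would proceed by induction on depth. One first bounds the Rademacher complexity of a single sub-network class $\mathcal{NN}(W,L,B_{\boldsymbol{\theta}})$ layer by layer using Talagrand's contraction lemma (the $1$-Lipschitz property of $\tanh$) and the linearity of expectation at affine layers; this contributes factors of $B_{\boldsymbol{\theta}}$ per layer and an $N_s^{-1/2}$ factor from the base case. The parallel structure is then absorbed by the observation that $u/M$ is a convex combination of sub-networks, so the Rademacher complexity of $\mathcal{PNN}$ is at most $M$ times that of a single sub-network. The class of gradients is handled analogously: by the chain rule $\partial_{x_i}u$ expands into a sum of products of weights and $\tanh'$ factors, and the same layerwise contraction argument, performed on this backpropagation expression, gives a bound scaling like $M B_{\boldsymbol{\theta}}^L N_s^{-1/2}$, with a $\sqrt{\log(B_{\boldsymbol{\theta}} WLN_s)}$ factor emerging from the covering-number/entropy step needed to discretize the weight cube. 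Finally, $u\mapsto u^2$ and $\boldsymbol{v}\mapsto\|\boldsymbol{v}\|_2^2$ are Lipschitz on the uniformly bounded ranges established above, with Lipschitz constants proportional to $M B_{\boldsymbol{\theta}}^L$, so one more contraction application promotes the linear bounds to quadratic ones, generating the final $M^2 B_{\boldsymbol{\theta}}^{2L}$ dependence for the $u^2$ and $\|\nabla u\|^2$ terms.

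The main obstacle is the gradient class: unlike $u$ itself, $\nabla u$ is not obtained by feed-forward propagation through a standard $\tanh$ network but is a sum of products of weight matrices interleaved with diagonal matrices of $\tanh'$ values arising from backpropagation. The key technical step will be either to rewrite this computation as an auxiliary network with controllable weight norms and apply the sub-network induction to it, or to peel off the chain-rule expansion one layer at a time and apply Talagrand's contraction at each step; both routes need careful book-keeping to avoid an exponential blow-up beyond $B_{\boldsymbol{\theta}}^L$. Once the four component bounds are in hand, combining them via a union bound (with each failure event allotted probability $\xi/4$) and absorbing numerical constants and $|\Omega|$, $|\partial\Omega|$, $B_0$ into $C(\Omega,B_0,d,W,L)$ yields the stated inequality.
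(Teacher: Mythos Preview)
Your high-level plan matches the paper's proof closely: the same four-term decomposition, symmetrization to Rademacher complexity, Talagrand contraction to linearize the quadratic terms, the $\ell_1$-to-$\ell_\infty$ duality to pass from $\mathcal{PNN}$ to a single sub-network (picking up the factor $M$), and McDiarmid for the high-probability statement. The one substantive difference is how the Rademacher complexity of the sub-network class and its gradient class is controlled. You propose a layer-by-layer peeling via Talagrand contraction, with a covering/entropy step grafted on to produce the $\sqrt{\log(B_{\boldsymbol{\theta}}WLN_s)}$ factor; the paper instead works entirely through the parameterization: it proves by induction on depth that $\boldsymbol{\theta}\mapsto\phi_{\boldsymbol{\theta}}(\boldsymbol{x})$ and $\boldsymbol{\theta}\mapsto\partial_{x_m}\phi_{\boldsymbol{\theta}}(\boldsymbol{x})$ are Lipschitz with explicit constants (roughly $W^L B_{\boldsymbol{\theta}}^{L-1}$ and $W^{2L}L B_{\boldsymbol{\theta}}^{2L}$ respectively), then pulls back the $L^\infty$ covering number of the function class to the Euclidean covering number of the parameter cube and applies Dudley's entropy integral. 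This route handles the gradient class by exactly the same mechanism as the value class---no need to recast $\nabla\phi$ as an auxiliary network or to peel the backpropagation expansion---so the ``main obstacle'' you flag essentially dissolves once you commit to the Lipschitz-parameterization viewpoint. Your peeling idea would likely also succeed (and can avoid the log factor for the value class), but mixing it with a separate covering argument for the gradient is less clean than the paper's uniform treatment.
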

 Theorem \ref{thm5.2} above analyzes the statistical error of a general $\mathcal{PNN}$ class.
It is worth noting that the upper bound achieved in Theorem \ref{thm5.2} is not affected by the number of sub-networks $\mathfrak{m}$, aiding us in managing the statistical error within the over-parameterized setting, where $\mathfrak{m}$ can grow arbitrarily large.

As shown in Theorem \ref{thm:opterror}, in our practical analysis, the PGD algorithm output $u_{\mathcal{A}}$ belongs to $\mathcal{PNN}(\mathfrak{m}, \bar{M}, {\bar{W}, \bar{L}, B_{\bar{\boldsymbol{\theta}}} + \eta})$. Combining this with Theorem \ref{thm5.2}, we obtain more specific upper bounds of the statistical error $\mathcal{E}_{sta}$.

\begin{cor}
\label{col5.2}
Choose $\mathcal{PNN} = \mathcal{PNN}(\mathfrak{m}, \bar{M}, \{\bar{W}, \bar{L}, B_{\bar{\boldsymbol{\theta}}} + \eta\})$. Let $N_{\rm in} = N_{b} = N_{s}$ in the Monte Carlo sampling. Let $0<\xi<1$. Then, with probability at least $1-\xi$, it holds that
\begin{align*}
    \mathcal{E}_{sta}\leq C(\Omega, B_0, d, \bar{W}, \bar{L})\cdot \bar{M}^2   (B_{\bar{\boldsymbol{\theta}}}+\eta)^{2\bar{L}} N_{s}^{-\frac{1}{2}}  \big\{\log^{1/2}[(B_{\bar{\boldsymbol{\theta}}}+\eta) \bar{W} \bar{L} N_{s}]+ \sqrt{\log\xi^{-1}}\big\} \, ,
\end{align*}
where $C(\Omega, B_0, d, W, L)$ is a universal constant which only depends on $\Omega, B_0, d, W$ and $L$.
\end{cor}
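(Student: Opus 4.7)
The plan is to obtain Corollary~\ref{col5.2} as a direct specialization of Theorem~\ref{thm5.2} to the concrete parallel network class $\mathcal{PNN}(\mathfrak{m}, \bar{M}, \{\bar{W}, \bar{L}, B_{\bar{\boldsymbol{\theta}}} + \eta\})$ that provably contains the PGD output $u_{\mathcal{A}}$. No new probabilistic or combinatorial argument is required beyond what already sits inside Theorem~\ref{thm5.2}; the corollary is a bookkeeping step.

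First I would confirm the membership $u_{\mathcal{A}} \in \mathcal{PNN}(\mathfrak{m}, \bar{M}, \{\bar{W}, \bar{L}, B_{\bar{\boldsymbol{\theta}}} + \eta\})$. This is already established in the setup of Section~\ref{opt}: with the algorithmic choices $W = \bar{W}$, $L = \bar{L}$, and initialization range $B = B_{\bar{\boldsymbol{\theta}}}$ in~\eqref{eq:init}, together with the projection radius $\zeta = \bar{M}$ in~\eqref{eq:M}, the output satisfies $\|(\boldsymbol{\theta}^{\mathfrak{m}}_{\rm out})^{\scriptscriptstyle[T]}\|_1 \leq \bar{M}$. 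For the inner weights, the triangle inequality together with $\|(\boldsymbol{\theta}^{\mathfrak{m}}_{\rm in})^{\scriptscriptstyle[T]}-(\boldsymbol{\theta}^{\mathfrak{m}}_{\rm in})^{\scriptscriptstyle[0]}\|_2 \leq \eta$ from~\eqref{eq:eta} and the deterministic initialization bound $\|(\boldsymbol{\theta}^{\mathfrak{m}}_{\rm in})^{\scriptscriptstyle[0]}\|_\infty \leq B_{\bar{\boldsymbol{\theta}}}$ gives $\|(\boldsymbol{\theta}^{\mathfrak{m}}_{\rm in})^{\scriptscriptstyle[T]}\|_\infty \leq B_{\bar{\boldsymbol{\theta}}} + \eta$.

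Second, I would substitute $M \mapsto \bar{M}$, $W \mapsto \bar{W}$, $L \mapsto \bar{L}$, and $B_{\boldsymbol{\theta}} \mapsto B_{\bar{\boldsymbol{\theta}}}+\eta$ into the bound of Theorem~\ref{thm5.2}. Since that bound is independent of the number of sub-networks $\mathfrak{m}$, the resulting estimate immediately takes the stated form and remains valid in the over-parameterized regime where $\mathfrak{m}$ is allowed to be arbitrarily large. The failure probability $\xi$ is carried through verbatim so that it can later be coupled with the probabilistic guarantee of Theorem~\ref{thm:opterror} by a union bound when the three error components are synthesized in the proof of the main result.

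The main obstacle does not live inside the corollary itself but in the underlying Theorem~\ref{thm5.2}, which must bound the Rademacher complexity of the $\mathcal{PNN}$ class in an $\mathfrak{m}$-independent way, exploiting the $\ell_1$-constraint on the outer coefficients and the $\ell_\infty$-bound on the inner weights, combined with induction over depth and a standard symmetrization/concentration step. Granted that ingredient, Corollary~\ref{col5.2} follows by the one-line substitution described above.
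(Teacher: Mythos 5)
Your proposal is correct and matches the paper's approach exactly: the paper likewise treats Corollary~\ref{col5.2} as a direct substitution of $M\mapsto\bar M$, $W\mapsto\bar W$, $L\mapsto\bar L$, $B_{\boldsymbol{\theta}}\mapsto B_{\bar{\boldsymbol{\theta}}}+\eta$ into Theorem~\ref{thm5.2}, after first noting (as you do, via the projection constraint and the triangle inequality) that the PGD output lies in $\mathcal{PNN}(\mathfrak{m},\bar M,\{\bar W,\bar L,B_{\bar{\boldsymbol{\theta}}}+\eta\})$. Your remark that the $\mathfrak{m}$-independence of the bound is the essential ingredient is also the point the paper emphasizes.
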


\subsection{Proof of main result}
\label{proof of main result}
Based on all the analyses above, we will now present the detailed proof of our main result, Theorem \ref{mainth}. To the best of our knowledge, we are the first to provide a comprehensive error analysis that integrates approximation error, generalization error, and optimization error for using deep over-parameterized networks to solve PDE problems.

Firstly, according to Corollary \ref{cor4.1}, we know that for any $\epsilon>0$, there exists a neural network $$u_{\bar{\mathfrak{m}}, \bar{\boldsymbol{\theta}}}\in \mathcal{PNN}(\bar{\mathfrak{m}}, \bar{M}, \{\bar{W}, \bar{L}, B_{\bar{\boldsymbol{\theta}}}\})$$ with
\begin{align*}
&\bar{\mathfrak{m}}=C(n, d)\epsilon^{-\frac{d}{n-\mu-1}}\, , \quad \bar{M} = C(n, d)\epsilon^{-\frac{3d}{2(n-\mu-1)}} \, , \quad \bar{W} = 2^{\lceil \log_2 (d+1) \rceil+1} \, , \\
&~~~~~~~~~~~\bar{L}= \lceil \log_2 (d+1) \rceil+2 \, , \quad B_{\bar{\boldsymbol{\theta}}} = C(n, d)\epsilon^{-2-\frac{2d}{n-\mu-1}} \, ,
\end{align*}
such that the approximation error $\mathcal{E}_{app} \leq \epsilon$.

Secondly, by Corollary \ref{col5.2}, we have:
\begin{align*}
    \mathcal{E}_{sta}\leq C(\Omega, B_0, d, \bar{W}, \bar{L})\cdot \bar{M}^2   (B_{\bar{\boldsymbol{\theta}}}+\eta)^{2\bar{L}} N_{s}^{-\frac{1}{2}}  \big\{\log^{1/2}[(B_{\bar{\boldsymbol{\theta}}}+\eta) \bar{W} \bar{L} N_{s}]+\log^{1/2}(\xi^{-1})\big\} \, .
\end{align*}
Setting the Monte Carlo sample size $N_{s} = C \cdot \epsilon^{-C_3(\mu, d, \beta_0, n)}$ and $\xi = C \cdot \epsilon^{C_3(\mu, d, \beta_0, n)}$ with
$$
C_3(\mu, d, \beta_0, n) = 4\beta_0\log(d+1)+\frac{6d}{n-\mu-1}+12\beta_0 + 2\, , \quad \beta_0 = \max\{\beta, \,  2+2 \m d \m (n-\mu-1)^{-1}\} ,$$ 
the statistical error is controlled by $\mathcal{E}_{sta} \leq C \m \epsilon \log^{1/2} (C\m \epsilon^{-1}) = \tilde{\mathcal{O}}(\epsilon)$.

Finally, in order to bound the optimization error $\mathcal{E}^{-}_{opt} \le C\epsilon$ with probability at least $1- \xi$, we need to determine parameters $Q, R, \delta, T, \eta$ such that the following inequalities hold:
\[1 - \bar{\mathfrak{m}}R\Big[1-\delta^{\bar{W}(\bar{W}+1)\bar{L}}(2B_{\bar{\boldsymbol{\theta}}})^{-\bar{W}(\bar{W}+1)\bar{L}}\Big]^{Q} \geq 1 - \xi \, ,\\\]
   \[\frac{C_3 \cdot \bar{M}^2\cdot (B_{\bar{\boldsymbol{\theta}}}+\eta)^{3\bar{L}} \cdot \eta}{\sqrt{R}}+\frac{\bar{M}^2}{2 R}+\frac{C_1 \cdot \mathfrak{m} \cdot \bar{M}^2 \cdot (B_{\bar{\boldsymbol{\theta}}}+\eta)^{4\bar{L}}}{2\mm T} + C_4 \cdot \bar{M}^2 \cdot B^ {\raisebox{0.2ex}{$\scriptscriptstyle 3\bar{L}$}}_{\bar{\boldsymbol{\theta}}} \cdot \delta \leq C\epsilon\, .\]
By some calculation, we obtain that:
\begin{align*}
\delta = C \cdot \epsilon^{\frac{6d\log(d+1)+21d}{n-\mu-1}+6\log(d+1)+19} \, , \quad R = C \cdot \epsilon^{-6\log(d+1)\beta_0-18\beta_0-\frac{6d}{n-\mu-1}-2\beta-2} \, .
\end{align*}
Using the properties $1-x\leq \exp(-x)$ and $\exp(-t)\leq t^{-2}$ when $t\geq 0$, we solve for $Q$ as follows:
\[ Q = \sqrt{\frac{\bar{\mathfrak{m}}R}{\xi}}\left(\frac{\delta}{2B_{\bar{\boldsymbol{\theta}}}}\right)^{-\bar{W}(\bar{W}+1)\bar{L}} = C \cdot \epsilon^{-\frac{C_0d^3\log^2(d+1)}{n-\mu-1}-5\beta_0\log(d+1)-15\beta_0-\beta}\, . \]
Then, $\mathfrak{m}= \bar{\mathfrak{m}} \cdot R \cdot Q =C \cdot \epsilon^{-C_1(\mu, d, \beta, \beta_0, n)}$. In order to satisfy the following inequality:
   \[\frac{C_1 \cdot \mathfrak{m} \cdot \bar{M}^2 \cdot (B_{\bar{\boldsymbol{\theta}}}+\eta)^{4\bar{L}}}{2\mm T}  \leq C\epsilon\, ,\]
we can calculate that
\begin{align*}
T =C \cdot \epsilon^{-C_2(\mu, d, \beta, \beta_0, n)} \, .
\end{align*}
Thus, we complete the proof of our main theorem. $\hfill\Box$

\section{Related work} \label{rw}
% 目前，深度学习误差分析通常会将总误差分为逼近，统计，优化三个部分。
\subsection{Approximation error}
Approximation error in the context of deep neural networks  refers to the difference between the target  function and neural network function.
The theoretical analysis of the approximation power of shallow sigmoidal networks dates back to the 1980s \cite{cybenko1989approximation,hornik1989multilayer,hornik1991approximation}, see the review paper \cite{pinkus1999approximation} and the reference therein for  shallow network approximations. In recent years, attention has shifted to ReLU networks due to their superior empirical performance in modern learning tasks. Yarotsky \cite{yarotsky2017error} was the first to demonstrate how to construct a ReLU network that achieves any desired approximation accuracy using the Taylor expansion. Inspired by this, modern approximation techniques for deep neural networks have emerged, utilizing network architecture parameters like depth, width, and size  to control approximation errors \cite{yarotsky2017error,yarotsky2018optimal,petersen2018optimal,zhou2020universality,shen2020deep,siegel2020approximation,shen2022optimal,lu2021deep}. For more information, see \cite{petersen2020neural,devore2021neural}. Neural networks with supper expressive power that can break the curse of dimensionality   has also been constructed  in \cite{yarotsky2020phase,yarotsky2021elementary,shen2021neural,shen2021deepnn,jiao2023deep}. Recently, such approximation theories have been extended to Sobolev spaces in the context of using deep learning models to solve PDEs. For instance, using techniques such as approximate partition of unity and averaged Taylor expansion, Gühring et al. extended Yarotsky's proof to  Sobolev spaces \cite{guhring2020error,guhring2021approximation}. 
One can also derive the approximation in Sobolev norm for deep networks with ReLU$^k$ as activations by observing the connection between deep neural networks and B-splines \cite{duan2021convergence,jiao2023improved}.

%Hon et al., utilizing a constructive proof similar to that in \cite{lu2021deep}, derived approximation results in the Sobolev norm for smooth functions \cite{hon2021simultaneous}.
\subsection{Statistical (generalization) error}
%In addition to approximation error, there is extensive research on statistical (generalization) error.
Statistical (generalization) error in learning theory is described via the uniform law of large numbers over the network class. 
Classical methods in empirical process theory employ tools such as symmetrization and Lipschitz contraction to transform the study of generalization error into bounding the complexity of neural network classes, such as the Rademacher complexity, covering number, or VC-dimension. For detailed analysis, see \cite{VanJhon,van2000empirical,gine2021mathematical,cucker2002mathematical}. However, generalization analysis from the perspective of the uniform law of large numbers may lead to suboptimal error bounds \cite{bartlettspectrally}. Localized techniques that utilize the local structure of the hypothesis function class can reach sharp error bounds in scenarios where the Bernstein condition or off-set condition hold, see \cite{bartlett2005local,koltchinskii2006local,mendelson2018learning,xu2020towards,kanade2022exponential} and the references therein. 
One should note that the statistical error in DRM cannot be directly handled using the contraction principle, as the differential operator involved in the loss function is not Lipschitz continuous.
One way to address this challenge is to use the chain rule to represent the gradient of the employed neural network class as another neural network class, and then bound the complexity of the latter \cite{duan2021convergence}.
By expressing the gradient of the neural network as another neural network, we can leverage the properties of neural network classes, such as their Lipschitz continuity and covering numbers, to derive bounds on the statistical error. This technique provides a more rigorous treatment for understanding the generalization performance of deep PDE's solver \citep{jiao2022rate,duan2021convergence,lu2021machine,jiao2023rate,ji2024deep,yang2024deeper,jiao2023improved}.  

\subsection{Theory  on ERM with deep neural networks}
The convergence rate of Empirical Risk Minimization (ERM) with deep neural networks in regression, classification, and solving PDEs can be established within the framework of nonparametric estimation \cite{bauer2019deep,kohler2021rate,schmidt2020nonparametric,nakada2020adaptive,farrell2021deep,jiao2021deep,suzuki2018adaptivity,suzuki2021deep,weinan2019priori,hong2021rademacher,lu2021priori,hutzenthaler2020overcoming,shin2020convergence,lanthaler2022error,muller2021error,mishra2021enhancing,kutyniok2022theoretical,son2021sobolev,wang2022and,weinan2020comparative,jiao2022rate,duan2021convergence,lu2021machine,mishra2022estimates,ji2024deep,yang2024deeper,hu2023solving1, hu2023solving2, dai2023solving}. This is done by carefully balancing the trade-off between the approximation error and the statistical error, which  provides  theoretical guarantees on the performance of deep learning models. 

However, the aforementioned results on the convergence rate of ERM are only useful when the size of the neural network class is smaller than the number of training samples.
Taking deep ReLU neural networks as an example, their VC-dimension or covering number is bounded by their size \citep{bartlett2019nearly}. This means that the theoretical guarantees for the convergence of ERM can only be established in the under-parameterized regime, where the depth, width, and size of the neural network are chosen as a function of the sample size to balance the approximation error and the generalization error.
Recent works in \cite{jiao2023approximation,yang2024optimal,jiao2024pinns,jiao2024over}
 have proposed using the weight norm instead of the more commonly used measures of network width, depth, or size to characterize both the approximation error and the statistical error. This has enabled them to derive convergence rates for ERM in the over-parameterized regime, 
 This represents an important step towards theoretical understanding  of modern, over-parameterized neural networks without considering optimization error.

%\subsectoin{The gap between}
% Optimization (the results of optimization with NTK)
\subsection{Optimization error}
The prevailing analytical tools for analyzing the optimization error in deep neural networks are currently the neural tangent kernel and mean field theory in the over-parameterized or even infinite width setting, see  \citep{jacot2018neural,allen2019convergence,du2019gradient,zou2019improved,liu2022loss,chizat2019lazy,nguyen2021proof,lu2020mean,mahankali2024beyond} and the reference therein. 
A technical limitation of the aforementioned works is that they require the training dynamics to remain close to the initial parameter values, which is often not realistic in practical applications. 

In the over-parameterized scheme, the optimal value of the training loss is zero, i.e., the network perfectly interpolates the data. One one hand, when the network memorizes the data, it may have a significantly large upper bound for the weights \cite{vershynin2020memory,vardi2021optimal,shen2020deep,lu2021deep}, which can further lead to the size-independent generalization error becoming uncontrollable \cite{golowich2018size,jiao2023approximation}.
On the other hand, the magnitude of the norm constraint used in recent studies \cite{jiao2023approximation,yang2024optimal,jiao2024pinns,jiao2024over} for analyzing  ERM may not be large enough. Even randomized initialization in  NTK and mean field analyses may not satisfy this norm constraint.
These  are  the primary reasons that prevent researchers from simultaneously considering the three key errors (approximation, generalization, and optimization ) in over-parameterized modern  deep learning.
\subsection{Works on complete error analysis}
\cite{beck2022full, jentzen2023overall} conducted a comprehensive error analysis of deep regression in the under-parameterization setting. Building upon recent research on the estimation error of gradient descent in regression \cite{kohler2021rate, kohler2023rate, drews2023analysis}, \cite{jiao2024error} derived consistency results for DRM, encompassing all three types of errors. However, it should be noted that the results in \cite{jiao2024error} are specifically applicable to three-layer networks only. One major drawback of \cite{jiao2024error, kohler2021rate, kohler2023rate, drews2023analysis} is their reliance on the iteration being very close to the initialization, which is an unrealistic requirement. This restrictive condition significantly limits the practicality of the theory proposed in \cite{jiao2024error, kohler2021rate, kohler2023rate, drews2023analysis}, as real-world training of deep neural networks often involves substantial parameter updates that move far from the initial configuration.

\section{Conclusion} \label{conclu}
In this paper, we provide the first complete error analysis for the Deep Ritz method (DRM) that includes the approximation, generalization, and optimization errors in the scenario of over-parameterization. Our analysis is based on the projected gradient descent algorithm and does not require constraining the neural network weights near their initial values during the optimization process, thereby completely moving away from the lazy training framework. This marks a milestone in the field of theoretical understanding of  solving PDEs via deep learning. 

Several questions deserve further investigation. 
Firstly, 
%in the context of nonparametric regression, it has been demonstrated that there exist an  ERM estimator  that  achieves the minimax rate \cite{lin2021generalization}. He
our analytical techniques rely on the random initialization of over-parameterized neural networks. In the current analysis, we do not use any prior knowledge to design the parameter initialization method; instead, we choose a general uniform distribution. This results in a theoretically excessive number of training samples and number of iteration steps to achieve the desired accuracy. Therefore, exploring the effective utilization of prior information to enhance analysis results is an intriguing subject.  Secondly, the gradient descent algorithm used in our theoretical analysis is full gradient descent, which still has some gaps compared to the stochastic gradient descent (SGD) algorithm commonly used in practice. Finally, the analytical framework presented in this paper is highly versatile and can be directly applied to the analysis of other areas in deep PDE solving, such as PINNs and various inverse problems. We plan to thoroughly investigate these issues in the future. 
% {\color{red}Summarize and extension to PINNs, to inverse problems}. 

\section{Appendix}
The appendix is divided into three parts. In Appendix \ref{proof of app}, we provide a detailed explanation of how to construct the optimal approximation function in the Sobolev space using the $\mathcal{PNN}$ structure as discussed in Section \ref{app}. In Appendix \ref{proof of sta}, we present the complete proof of the statistical error upper bound estimation for the over-parameterized $\mathcal{PNN}$ network class as given in Section \ref{sta}. Note that some of the lemmas used in Appendices \ref{proof of app} and \ref{proof of sta} are derived from previous work or are classical results. For the sake of completeness, we have still provided detailed proofs of these lemmas. In Appendix \ref{proof of opt}, we present the proofs of the theorems and lemmas involved in the optimization error analysis in Section \ref{opt}, which are newly proposed in this paper.

\subsection{Detailed approximation error analysis}
\label{proof of app}
Despite the many existing approximation results, for the sake of completeness, we provide a comprehensive error analysis in this section.  Building on methods from \cite{yarotsky2017error}, \cite{jiao2023approximation}, and \cite{guhring2021approximation}, we derive an approximation error bound for neural networks in the \( W^{k,p} \) norm (\( n \geq k+1 \)) for functions in the \( W^{n,p} \) Sobolev space.  This bound is achieved by explicitly constructing  \( \tanh \)-activated  neural networks that approximate local Taylor polynomials.
 % , whose details can be found in Section \ref{app}.
 Notably, the constructed network in this paper has a parallel architecture, meaning the final neural network is a linear combination of many structurally similar fully connected sub-networks, as specified in Section \ref{subsec:topo}. 

We divide the analysis into four parts. Firstly, following the construction in \cite{guhring2021approximation}, we introduce an approximate partition of unity that is compatible with the $tanh$ activation function in Section \ref{app1}. Secondly, in Section \ref{app2}, we approximate a function $f$ by localized Taylor polynomials, where the localization is realized by an approximate partition of unity. Thirdly, in Section \ref{app3}, we approximate the multiplication of Taylor polynomials and the multi-dimensional partition of unity by neural networks with accuracy $\epsilon$.  Finally, in Section \ref{app4}, we construct the sum of all approximations of localized Taylor polynomials by neural networks.

\subsubsection{Approximate partition of unity}
\label{app1}
\begin{defn}Let $d, j, \tau, N \in \mathbb{N}$, $s\in \mathbb{R}$. We say that the collection of families of functions $(\Lambda^{(j, \tau, N, s)})_{N\in\mathbb{N}, s\in \mathbb{R}_{\geq 1}}$, where each $\Lambda^{(j, \tau, N, s)}:=\{\Psi_{\raisebox{0.2ex}{$\hspace{-0.1em}{\scalebox{0.6}{$\boldsymbol{m}$}}$}}^{s}: \boldsymbol{m} \in\{0, \ldots, N\}^{d}\}$ consists of $(N+1)^{d}$ functions $\Psi_{\raisebox{0.2ex}{$\hspace{-0.1em}{\scalebox{0.6}{$\boldsymbol{m}$}}$}}^{s}: \mathbb{R}^{d} \rightarrow \mathbb{R}$, is an exponential partition of unity of order $\tau$ and smoothness $j$, if the following conditions are met:

There exist some $D>0$, $C=C(k, d)>0$ and $S>0$ such that for all $N \in \mathbb{N}, s \geq S, k \in\{0, \ldots, j\}$ the following properties hold:

{\rm (i) } $\|\Psi_{\raisebox{0.2ex}{$\hspace{-0.1em}{\scalebox{0.6}{$\boldsymbol{m}$}}$}}^{s}\|_{W^{k, \infty}(\mathbb{R}^{d})} \leq C N^{k} \cdot s^{\max \{0, k-\tau\}}$ for every $\Psi_{\raisebox{0.2ex}{$\hspace{-0.1em}{\scalebox{0.6}{$\boldsymbol{m}$}}$}}^{s} \in \Lambda^{(j, \tau, N, s)}$.

{\rm (ii) } For $\Omega_{\raisebox{0.2ex}{{\scalebox{0.6}{$\boldsymbol{m}$}}}}^{c}=\{\boldsymbol{x} \in \mathbb{R}^{d}:\|\boldsymbol{x}-N^{-1}\boldsymbol{m}\|_{\infty} \geq N^{-1}\}$, we have
$$
\|\Psi_{\raisebox{0.2ex}{\hspace{-0.1em}{\scalebox{0.6}{$\boldsymbol{m}$}}}}^{s}\|_{W^{k, \infty}(\Omega_{\scriptscriptstyle \boldsymbol{m}}^{c})} \leq C N^{k} s^{\max \{0, k-\tau\}} e^{-D s}\, ,
$$
for every $\Psi_{\raisebox{0.2ex}{$\hspace{-0.1em}{\scalebox{0.6}{$\boldsymbol{m}$}}$}}^{s} \in \Lambda^{(j, \tau, N, s)}$.

{\rm (iii) } We have
$$
\bigg\|\mathbf{1}_{[0,1]^{d}} - \sum_{\boldsymbol{m} \in\{0, \ldots, N\}^{d}} \Psi_{\raisebox{0.2ex}{$\hspace{-0.1em}{\scalebox{0.6}{$\boldsymbol{m}$}}$}}^{s}\bigg\|_{W^{k, \infty}([0,1]^{d})} \leq C N^{k} s^{\max \{0, k-\tau\}} e^{-D s}\, ,
$$
for every $\Psi_{\raisebox{0.2ex}{$\hspace{-0.1em}{\scalebox{0.6}{$\boldsymbol{m}$}}$}}^{s} \in \Lambda^{(j, \tau, N, s)}$.

{\rm (iv) } There exists a function $\rho: \mathbb{R} \rightarrow \mathbb{R}$ such that for each $\Psi_{\raisebox{0.2ex}{$\hspace{-0.1em}{\scalebox{0.6}{$\boldsymbol{m}$}}$}}^{s} \in \Lambda$ there is a neural network $\psi_{\boldsymbol{\theta}}$ with $d$-dimensional input $\boldsymbol{x}$ and $d$-dimensional output, with two layers and $C$ nonzero weights, that satisfies
$$
\prod_{l=1}^{d}[\psi_{\boldsymbol{\theta}}(\boldsymbol{x})]_{l}=\Psi_{\raisebox{0.2ex}{$\hspace{-0.1em}{\scalebox{0.6}{$\boldsymbol{m}$}}$}}^{s}
$$
and $\|\psi_{\boldsymbol{\theta}}(\boldsymbol{x})\|_{W^{k, \infty}([0,1]^{d})} \leq C N^{k} \cdot s^{\max \{0, k-\tau\}}$\, . Furthermore, for the weights of $\psi_{\boldsymbol{\theta}}$ it holds that $\|\boldsymbol{\theta}\|_{\infty} \leq C s N$.
\end{defn}
\begin{defn}
\label{adef4.2}
Let $j\in \mathbb{N}$, $\tau =0$, $\rho$ be the $tanh$ activation function $\frac{e^{x}-e^{-x}}{e^{x}+e^{-x}}$. Define that for a scaling factor $s\geq 1$, the one-dimension bump functions
$$
\psi^s :\mathbb{R} \rightarrow \mathbb{R}, \quad \psi^{s}(x):= \frac{1}{2}[\rho(s(x+3 / 2))-\rho(s(x-3 / 2))]\, .
$$
For $N, d \in \mathbb{N}$ and $\boldsymbol{m} \in\{0, \ldots, N\}^{d}$ we define multi-dimensional bumps $\Psi_{\raisebox{0.2ex}{$\hspace{-0.1em}{\scalebox{0.6}{$\boldsymbol{m}$}}$}}^{s}: \mathbb{R}^{d} \rightarrow \mathbb{R}$ as a tensor product of scaled and shifted versions of $\psi^{s}$. Concretely, we set
$$
\Psi_{\raisebox{0.2ex}{$\hspace{-0.1em}{\scalebox{0.6}{$\boldsymbol{m}$}}$}}^{s}(\boldsymbol{x}):=\prod_{l=1}^{d} \psi^{s}\Big(3 N\Big(x_{l}-\frac{m_{l}}{N}\Big)\Big)\, .
$$
Finally for $s \geq 1$, the collection of bump functions is denoted by $\Lambda^{(j, 0 , N, s)}(\rho):=\{\Psi_{\raisebox{0.2ex}{$\hspace{-0.1em}{\scalebox{0.6}{$\boldsymbol{m}$}}$}}^{s}: \boldsymbol{m} \in \{0, \ldots, N\}^{d}\}$.
\end{defn}
\begin{lem}
\label{alem4.1}
The collection of families of functions $(\Lambda^{(j, 0 , N, s)}(\rho))_{N\in\mathbb{N},s \in \mathbb{R}_{\geq 1}}$ defined in Definition \ref{adef4.2} is an exponential PU of order $0$ and smoothness $j$.
\end{lem}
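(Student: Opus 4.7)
The plan is to verify the four defining properties of an exponential partition of unity directly from the explicit form of $\psi^s$ and its tensor-product construction. The fundamental tool throughout will be the standard estimate for tanh: for $x \geq 0$ one has $|\tanh(x) - 1| \leq 2e^{-2x}$, together with bounds $|(\tanh)^{(k)}(x)| \leq C_k e^{-2|x|}$ for $k \geq 1$. Combined with the chain rule applied to the linear substitution $y \mapsto s(y \pm 3/2)$ and then $x \mapsto 3N(x_l - m_l/N)$, these estimates will drive every calculation.

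For property (i), I would begin with the one-dimensional bump: $|\psi^s(y)| \leq 1$ uniformly, and for $k \geq 1$, the chain rule gives $|(\psi^s)^{(k)}(y)| \leq C_k s^k$. After the substitution $y = 3N(x_l - m_l/N)$, taking $k$ derivatives in $x_l$ produces an extra factor $(3N)^k$. Applying the Leibniz rule to the product $\Psi^s_{\boldsymbol{m}}(\boldsymbol{x}) = \prod_{l=1}^d \psi^s(3N(x_l - m_l/N))$ and using $\|\psi^s\|_\infty \leq 1$ for the non-differentiated factors then yields $\|\Psi^s_{\boldsymbol{m}}\|_{W^{k,\infty}(\mathbb{R}^d)} \leq C(k,d) N^k s^k$, which is the required bound with $\tau = 0$.

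For property (ii), I would exploit the fact that on $\Omega^c_{\boldsymbol{m}}$, at least one coordinate satisfies $|x_l - m_l/N| \geq 1/N$, so the argument $y = 3N(x_l - m_l/N)$ satisfies $|y| \geq 3$. In this regime both $y + 3/2$ and $y - 3/2$ have the same sign with magnitude at least $3/2$, and the tanh-saturation estimate $|\tanh(s(y+3/2)) - \tanh(s(y-3/2))| \leq 4 e^{-3s/2}$ gives $|\psi^s(y)| \leq C e^{-Ds}$; a parallel argument controls $|(\psi^s)^{(k)}(y)| \leq C_k s^k e^{-Ds}$. Invoking Leibniz and using this decay for the ``bad'' coordinate while keeping the bounds from (i) for the others produces the claimed inequality. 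For property (iii), I would write $\sum_{\boldsymbol{m}} \Psi^s_{\boldsymbol{m}} = \prod_l \bigl(\sum_{m_l=0}^N \psi^s(3N(x_l - m_l/N))\bigr)$, reducing the problem to the one-dimensional approximation $\sum_{m=0}^N \psi^s(3N(x - m/N)) \approx 1$ on $[0,1]$; here the telescoping structure of $\psi^s$ as a tanh-difference makes most terms cancel, leaving a boundary tanh contribution that is within $C e^{-Ds}$ of 1, and the analogous bound on derivatives follows from the same decay estimates. Then I would apply the product rule and telescoping identity $\prod_l a_l - \prod_l 1 = \sum_l (\prod_{l' < l} a_{l'})(a_l - 1)(\prod_{l' > l} 1)$ to pass from one to $d$ dimensions.

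Property (iv) is essentially a bookkeeping exercise: the network $\psi_{\boldsymbol{\theta}}$ takes $\boldsymbol{x} \in \mathbb{R}^d$ as input; its hidden layer has $2d$ units computing $\rho(3Ns(x_l - m_l/N) + 3s/2)$ and $\rho(3Ns(x_l - m_l/N) - 3s/2)$ for $l = 1,\ldots,d$; its output layer combines each pair as $\tfrac{1}{2}[\cdot - \cdot]$, producing $d$ outputs whose product is $\Psi^s_{\boldsymbol{m}}$ by construction. The weight bound $\|\boldsymbol{\theta}\|_\infty \leq CsN$ is immediate since all weights are either $\pm 3Ns$, $\pm 3s/2$, $\pm \tfrac{1}{2}$, or $-3sm_l$. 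The main obstacle I anticipate is property (iii): keeping the exponential decay constants uniform across derivative orders $k \in \{0,\ldots,j\}$ requires care, since the telescoping product expansion in dimension $d$ accumulates factors of $CN^k s^k$ from the ``good'' coordinates multiplied against the exponentially small factor from the single ``bad'' coordinate — the $k$-dependent combinatorial constants must be tracked carefully to absorb them into the constant $C(k,d)$ prescribed by the definition.
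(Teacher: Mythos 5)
Your proof is essentially correct; the paper itself does not give a proof but simply cites Lemma~4.5 of G\"uhring--Raslan, and your direct verification matches the standard argument in that reference (tanh saturation estimates, separated-variable product structure, and the telescoping of the one-dimensional sum in property~(iii)).

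Two small remarks. First, the exponential constant in your property~(ii) sketch is slightly off: for $|y|\geq 3$ both arguments $s(y\pm 3/2)$ have magnitude $\geq 3s/2$, so the standard bound $|1-\tanh(z)|\leq 2e^{-2z}$ for $z\geq 0$ gives $|\psi^s(y)|\leq 4e^{-3s}$, not $4e^{-3s/2}$; this is harmless since only some $D>0$ is required. Second, the ``main obstacle'' you anticipate in property~(iii) is not actually an obstacle: since $k$ ranges over the finite set $\{0,\ldots,j\}$ and $j$ is fixed throughout, the $k$-dependent combinatorial constants arising from distributing derivatives across the $d$ factors can simply be absorbed into $C=C(k,d)$ as the definition permits; nothing about the uniformity is delicate. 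One additional minor clarification: because $\Psi^s_{\boldsymbol m}$ is a product of functions of distinct coordinates, $D^{\boldsymbol\alpha}\Psi^s_{\boldsymbol m}=\prod_l (3N)^{\alpha_l}(\psi^s)^{(\alpha_l)}(3N(x_l-m_l/N))$ with no Leibniz sum, so properties~(i) and~(ii) follow directly from the per-factor bounds; the genuine Leibniz-rule bookkeeping enters only when you multiply the one-dimensional error $(a_l-1)$ against the other $a_{l'}$ factors in the telescoping identity for property~(iii), which you handle correctly.
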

\begin{proof}
    See \cite{guhring2021approximation} (Lemma 4.5).
\end{proof}

\subsubsection{Approximate by  polynomials}
\label{app2}
\begin{prop}
\label{aprop4.1}
Let $d\in \mathbb{N}, j \in \mathbb{N}, k\in\{0,\cdots,j\}, n \in \mathbb{N}_{\geq k+1}$, $\boldsymbol{\alpha} \in \mathbb{N}^{d}$ and $1\leq p\leq \infty$. Let $\mu \in (0,1)$, $\rho$ be the $tanh$ activation function $\frac{e^{x}-e^{-x}}{e^{x}+e^{-x}}$. Denote  ${\boldsymbol{x}}^{\boldsymbol{\alpha}} = x_2^{\alpha_2} x_1^{\alpha_1}\cdots x_d^{\alpha_d}$ and $|{\boldsymbol{\alpha}}|_1 = \alpha_1 + \alpha_2 + \cdots +\alpha_d$. For $N\in \mathbb{N}$, set $s:=N^{\mu}$. Then for a collection of functions  $\Psi_{\raisebox{0.2ex}{$\hspace{-0.1em}{\scalebox{0.6}{$\boldsymbol{m}$}}$}}^{s} \in (\Lambda^{(j, 0, N, s)}(\rho))_{N\in\mathbb{N},s\in\mathbb{R}_{\geq 1}}$, there is a constant $C=C(d, n, p, k)>0$ and $\tilde{N}=\tilde{N}(d, p, \mu, k) \in \mathbb{N}$ such that for every $f \in$ $W^{n, p}([0,1]^{d})$ and every $\boldsymbol{m} \in\{0, \ldots, N\}^{d}$, there exist polynomials $p_{\scalebox{0.7}{$f, \mmn \boldsymbol{m}$}}(\boldsymbol{x})=\sum_{|\boldsymbol{\alpha}|_1 \leq n-1} c_{\scalebox{0.7}{$f, \mmn \boldsymbol{m}, \mmn \boldsymbol{\alpha}$}} {\boldsymbol{x}}^{\boldsymbol{\alpha}}$ with the following properties:

Set $f_{N}:=\sum_{\boldsymbol{m} \in\{0, \cdots, N\}^{d}} \Psi_{\raisebox{0.2ex}{$\hspace{-0.1em}{\scalebox{0.6}{$\boldsymbol{m}$}}$}}^{s} \mm p_{\scalebox{0.7}{$f, \mmn \boldsymbol{m}$}} .$ Then, the operator $T_{k}: W^{n, p}([0,1]^{d}) \rightarrow W^{k, p}([0,1]^{d})$ with $T_{k} f=$ $f-f_{N}$ is linear and bounded with
\begin{equation}
    \|T_kf\|_{W^{k,p}([0,1]^{d})}\leq C\|f\|_{W^{n,p}([0,1]^{d})}\cdot N^{-(n-k-\mu k)}\, ,
\end{equation}
for all $N \in \mathbb{N}$ with $N \geq \widetilde{N}$.
Moreover,
there is a constant $C=C(d,n,k)>0$ such that for any $f\in W^{n,p}([0,1]^{d})$ the coefficients of polynomials $p_{\scalebox{0.7}{$f, \mmn \boldsymbol{m}$}}$ satisfy
$$
|c_{\scalebox{0.7}{$f, \mmn \boldsymbol{m}, \mmn \boldsymbol{\alpha}$}}|\leq C\|\tilde{f}\|_{W^{n,p}(\Omega_{\m\boldsymbol{m},N})}N^{d/p}\, ,
$$
where $\Omega_{\boldsymbol{m},N}$ denotes the open ball w.r.t. $\|\cdot\|_{\infty}$ around $N^{-1}\boldsymbol{m}$ with radius $N^{-1}$, and $\tilde{f}\in W^{n,p}(\mathbb{R}^d)$ is an extension of $f$.

\end{prop}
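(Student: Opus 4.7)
The plan is to take $p_{f,\boldsymbol{m}}$ to be the averaged Taylor polynomial of an extension of $f$ around the grid point $N^{-1}\boldsymbol{m}$, estimate the local error on each patch $\Omega_{\boldsymbol{m},N}$ via the Bramble--Hilbert lemma, and then stitch the local estimates together using the approximate partition of unity $\Psi_{\boldsymbol{m}}^s$ from Lemma~\ref{alem4.1}. The key point is that the partition of unity is not exact, so the global decomposition must also absorb a ``leakage'' term which is made negligible by the exponential decay in property (iii) of $\Lambda^{(j,0,N,s)}(\rho)$.

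\textbf{Local construction and coefficient bound.} First I would fix a Stein extension $\tilde{f}\in W^{n,p}(\mathbb{R}^d)$ of $f$ satisfying $\|\tilde{f}\|_{W^{n,p}(\mathbb{R}^d)}\le C\|f\|_{W^{n,p}([0,1]^d)}$. For each $\boldsymbol{m}\in\{0,\dots,N\}^d$, let $\phi_{\boldsymbol{m}}$ be a smooth bump of total mass one supported in $\Omega_{\boldsymbol{m},N}$ (whose volume is $\sim N^{-d}$, hence $\|\phi_{\boldsymbol{m}}\|_\infty\lesssim N^d$). Define
\[
p_{f,\boldsymbol{m}}(\boldsymbol{x}) \;=\; \int_{\Omega_{\boldsymbol{m},N}}\!\!\sum_{|\boldsymbol{\alpha}|_1\le n-1}\frac{D^{\boldsymbol{\alpha}}\tilde{f}(\boldsymbol{y})}{\boldsymbol{\alpha}!}(\boldsymbol{x}-\boldsymbol{y})^{\boldsymbol{\alpha}}\,\phi_{\boldsymbol{m}}(\boldsymbol{y})\,\mathrm{d}\boldsymbol{y},
\]
which is the classical averaged Taylor polynomial of order $n$. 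The Bramble--Hilbert lemma then yields
\[
\|\tilde{f}-p_{f,\boldsymbol{m}}\|_{W^{k,p}(\Omega_{\boldsymbol{m},N})} \;\le\; C\,N^{-(n-k)}\,|\tilde{f}|_{W^{n,p}(\Omega_{\boldsymbol{m},N})}.
\]
Expanding $(\boldsymbol{x}-\boldsymbol{y})^{\boldsymbol{\alpha}}$ via the binomial formula and reading off the coefficients of $\boldsymbol{x}^{\boldsymbol{\alpha}}$, each $c_{f,\boldsymbol{m},\boldsymbol{\alpha}}$ is an integral of $D^{\boldsymbol{\beta}}\tilde{f}$ against bounded quantities over $\Omega_{\boldsymbol{m},N}$; Hölder's inequality together with $|\Omega_{\boldsymbol{m},N}|\lesssim N^{-d}$ and $\|\phi_{\boldsymbol{m}}\|_\infty\lesssim N^d$ immediately gives the coefficient bound $|c_{f,\boldsymbol{m},\boldsymbol{\alpha}}|\le C\|\tilde{f}\|_{W^{n,p}(\Omega_{\boldsymbol{m},N})}N^{d/p}$.

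\textbf{Global error decomposition.} On $[0,1]^d$ I would write
\[
f-f_N \;=\; f\Bigl(1-\sum_{\boldsymbol{m}}\Psi_{\boldsymbol{m}}^s\Bigr) \;+\; \sum_{\boldsymbol{m}}\Psi_{\boldsymbol{m}}^s\bigl(f-p_{f,\boldsymbol{m}}\bigr).
\]
Property (iii) of the exponential partition of unity controls the first term in $W^{k,\infty}([0,1]^d)$ by $CN^k s^k e^{-Ds}=CN^k(N^\mu)^k e^{-DN^\mu}$, which is negligible compared to $N^{-(n-k-\mu k)}$ once $N\ge\tilde{N}(d,p,\mu,k)$. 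For the second term I apply the Leibniz rule to $D^{\boldsymbol{\beta}}[\Psi_{\boldsymbol{m}}^s(f-p_{f,\boldsymbol{m}})]$ for $|\boldsymbol{\beta}|_1\le k$ and split the integration into $\Omega_{\boldsymbol{m},N}$ and $\Omega_{\boldsymbol{m}}^c$. On $\Omega_{\boldsymbol{m},N}$, property (i) bounds $\|D^{\boldsymbol{\gamma}}\Psi_{\boldsymbol{m}}^s\|_\infty$ by $CN^{|\boldsymbol{\gamma}|_1}s^{|\boldsymbol{\gamma}|_1}$ (since $\tau=0$), which combined with Bramble--Hilbert produces the rate $N^{-(n-k)}\cdot s^k=N^{-(n-k-\mu k)}$. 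On $\Omega_{\boldsymbol{m}}^c$, property (ii) supplies an $e^{-Ds}$ factor that dominates both $\|\tilde{f}\|_{W^{k,p}}$ and the polynomial growth of $p_{f,\boldsymbol{m}}$ (the latter being at most polynomial in $N$ by the coefficient bound above). Summing over the $(N+1)^d$ grid points is harmless because the supports of $\Psi_{\boldsymbol{m}}^s$ have bounded overlap: each $\boldsymbol{x}\in[0,1]^d$ lies in the essential support of only $O_d(1)$ bumps, so local $L^p$ norms aggregate into a global one with a constant depending only on $d$.

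\textbf{Main obstacle.} The delicate point is the $s^k=N^{\mu k}$ loss incurred by differentiating $\Psi_{\boldsymbol{m}}^s$ up to order $k$: it is precisely this loss that degrades the classical rate $N^{-(n-k)}$ to $N^{-(n-k-\mu k)}$, and the argument succeeds only because $\mu<1$ so that $n-k-\mu k>0$ under the assumption $n\ge k+1$. A secondary but subtle point is the far-field portion of the second sum: there $p_{f,\boldsymbol{m}}$ is a polynomial with coefficients of size $N^{d/p}$ evaluated on $[0,1]^d$, so $|p_{f,\boldsymbol{m}}|$ grows only polynomially in $N$, and the exponential $e^{-DN^\mu}$ absorbs this together with the combinatorial factor $(N+1)^d$ from the number of terms. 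Collecting both the extension norm estimate and the local Bramble--Hilbert bounds then yields the stated operator norm estimate with constant $C=C(d,n,p,k)$, completing the proof.
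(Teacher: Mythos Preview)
The paper does not prove this proposition at all; it simply cites \cite{guhring2021approximation} (Lemma D.1). Your proposal is a faithful and correct reconstruction of the argument in that reference: averaged Taylor polynomials for the local approximation, Bramble--Hilbert on each patch $\Omega_{\boldsymbol{m},N}$, and the three-way split into the leakage term $f(1-\sum_{\boldsymbol{m}}\Psi_{\boldsymbol{m}}^s)$, the near-field contribution on $\Omega_{\boldsymbol{m},N}$, and the far-field contribution on $\Omega_{\boldsymbol{m}}^c$, with properties (i)--(iii) of the exponential partition of unity doing exactly the jobs you assign them. Your identification of the $s^k=N^{\mu k}$ loss from differentiating $\Psi_{\boldsymbol{m}}^s$ as the source of the $\mu k$ degradation in the rate is the correct mechanism, and your treatment of the coefficient bound via H\"older on a ball of volume $\sim N^{-d}$ with $\|\phi_{\boldsymbol{m}}\|_\infty\lesssim N^d$ is also right.

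One phrasing to tighten: you write that ``each $\boldsymbol{x}$ lies in the essential support of only $O_d(1)$ bumps,'' but the $\tanh$-based bumps are not compactly supported. What you actually use (and state correctly elsewhere) is that for each $\boldsymbol{x}$ only $O_d(1)$ indices $\boldsymbol{m}$ satisfy $\boldsymbol{x}\in\Omega_{\boldsymbol{m},N}$, and for the remaining indices the exponential decay from property (ii) takes over; the bounded-overlap argument then applies to the patches $\Omega_{\boldsymbol{m},N}$, not to the supports of $\Psi_{\boldsymbol{m}}^s$. With that clarification the sketch is complete.
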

\begin{proof}
See \cite{guhring2021approximation} (Lemma D.1).
\end{proof}

\subsubsection{Approximate polynomials with neural network}
\label{app3}
 The goal of this subsection is to demonstrate how to approximate the sums of localized polynomials $\sum_{ \boldsymbol{m} \in\{0, \ldots, N\}^{d}}\sum_{ |\boldsymbol{\alpha}|_1 \leq n-1} c_{\scalebox{0.7}{$f, \mmn \boldsymbol{m}, \mmn \boldsymbol{\alpha}$}} \Psi_{\raisebox{0.2ex}{$\hspace{-0.1em}{\scalebox{0.6}{$\boldsymbol{m}$}}$}}^{s} \mm {\boldsymbol{x}}^{\boldsymbol{\alpha}}$ by neural networks 
  since  $$f_{N}:=\sum_{ \boldsymbol{m} \in\{0, \ldots, N\}^{d}}\sum_{ |\boldsymbol{\alpha}|_1 \leq n-1} c_{\scalebox{0.7}{$f, \mmn \boldsymbol{m}, \mmn \boldsymbol{\alpha}$}} \Psi_{\raisebox{0.2ex}{$\hspace{-0.1em}{\scalebox{0.6}{$\boldsymbol{m}$}}$}}^{s} \mm {\boldsymbol{x}}^{\boldsymbol{\alpha}}\, .$$ 
  We first consider the approximation of the quadratic function $f(x)=x^2$.
\begin{lem}
\label{aprop4.2}
For any $0 < \epsilon < 1$ and a positive constant $C$, there exists $\phi_{\boldsymbol{\theta}} \in \mathcal{NN}(W,L,B_{\boldsymbol{\theta}})$ where $W = 2$, $L = 2$, and $B_{\boldsymbol{\theta}} = C \cdot \epsilon^{-2}$, such that
$$
\|x^2-\phi_{\boldsymbol{\theta}}(x)\|_{W^{k,\infty}([0,1])}\leq \epsilon\, .
$$
\end{lem}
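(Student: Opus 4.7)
The plan is to exploit the classical identity $\tanh(b_0+h)+\tanh(b_0-h)-2\tanh(b_0)=h^{2}\tanh''(b_0)+O(h^{4})$, which is valid at any point $b_0\in\mathbb{R}$ with $\tanh''(b_0)\neq 0$. After the substitution $h=wx$ and division by $w^{2}\tanh''(b_0)$, this symmetric second difference extracts $x^{2}$ up to a remainder of order $w^{2}$ uniformly on $[0,1]$. The required approximator is then a linear combination of two $\tanh$ units with opposite inner weights and a common bias, which fits exactly into the width-2, depth-2 architecture.

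Concretely, I would fix $b_0$ with $\tanh''(b_0)\neq 0$ (for instance $b_0=1$), let $w>0$ be a small scale to be determined, and set
\[
\phi_{\boldsymbol{\theta}}(x) := \frac{1}{w^{2}\,\tanh''(b_0)}\Bigl[\tanh(wx+b_0)+\tanh(-wx+b_0)-2\tanh(b_0)\Bigr].
\]
This is of the form $a_{1}\tanh(w_{1}x+b_{1})+a_{2}\tanh(w_{2}x+b_{2})+c$ with $w_{1}=-w_{2}=w$, $b_{1}=b_{2}=b_{0}$, and $a_{1}=a_{2}=1/(w^{2}\tanh''(b_0))$, so $\phi_{\boldsymbol{\theta}}\in\mathcal{NN}(2,2,B_{\boldsymbol{\theta}})$ with $B_{\boldsymbol{\theta}}$ of order $\max\{|w|,\,|b_0|,\,w^{-2}\}=O(w^{-2})$. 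Thus choosing $w^{2}$ proportional to $\epsilon$ will automatically fit the bound $B_{\boldsymbol{\theta}}\le C\epsilon^{-2}$ since $\epsilon<1$.

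The core verification is the $W^{k,\infty}$ error bound. Differentiating $i$ times gives
\[
\phi_{\boldsymbol{\theta}}^{(i)}(x)=\frac{w^{\,i-2}}{\tanh''(b_0)}\Bigl[\tanh^{(i)}(wx+b_0)+(-1)^{i}\tanh^{(i)}(-wx+b_0)\Bigr],\qquad i\ge 1,
\]
and I would Taylor-expand the bracket around $b_0$ to sufficiently high order, using the fact that every derivative of $\tanh$ is bounded on $\mathbb{R}$. For $i=0,1,2$ the $O(1)$ part of the bracket matches $(x^{2})^{(i)}$ exactly after division by $\tanh''(b_0)$, leaving a remainder of order $w^{2}$; for $i\ge 3$ the target derivative is zero while the bracket itself is of size $w^{i-2}$ when $i$ is even and $w^{i-1}$ when $i$ is odd, again producing an error $\le C_{k}\,w^{2}$ in each order $0\le i\le k$. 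Setting $w^{2}=c(k)\,\epsilon$ with $c(k)$ small enough then yields $\|\phi_{\boldsymbol{\theta}}-x^{2}\|_{W^{k,\infty}([0,1])}\le\epsilon$ together with $B_{\boldsymbol{\theta}}=O(\epsilon^{-1})\le C\epsilon^{-2}$.

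The only delicate part of executing the plan is this derivative bookkeeping: one must verify that the cancellation provided by the symmetric difference truly persists after each differentiation, so that the explicit factor $w^{i-2}$ in $\phi_{\boldsymbol{\theta}}^{(i)}$ does not cause the error to blow up as $i$ approaches $k$. This is resolved cleanly by separating the parity of $i$, as indicated above, so that in both cases the leading Taylor contribution either cancels against $(x^{2})^{(i)}$ or is itself already of order $w^{2}$ or smaller, uniformly in $x\in[0,1]$.
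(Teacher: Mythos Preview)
Your argument is correct: the symmetric second difference of $\tanh$ at a base point $b_0$ with $\tanh''(b_0)\neq 0$ indeed recovers $x^2$ up to an $O(w^2)$ error in every $C^k$ norm, and the parity analysis you sketch for the higher derivatives is exactly what is needed to see that the prefactor $w^{i-2}$ is always compensated. The construction fits the width-2, depth-2 architecture precisely, and your weight bound $B_{\boldsymbol\theta}=O(\epsilon^{-1})$ comfortably lies under the stated $C\epsilon^{-2}$.

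The paper follows the same finite-difference philosophy but uses the \emph{forward} second difference
\[
\phi_{\boldsymbol\theta}(x)=\frac{\delta^{2}}{\rho''(x_0)}\Bigl[\rho(x_0)-2\rho\!\bigl(x_0-\tfrac{x}{\delta}\bigr)+\rho\!\bigl(x_0-\tfrac{2x}{\delta}\bigr)\Bigr],
\]
instantiating for $r=2$ the general formula $\rho_\delta^{r}(x)=\frac{\delta^{r}}{\rho^{(r)}(x_0)}\sum_{j=0}^{r}(-1)^{j}\binom{r}{j}\rho(x_0-jx/\delta)$, which approximates $x^{r}$ with error $O(\delta^{-1})$. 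The forward stencil therefore needs $\delta\sim\epsilon^{-1}$ and produces the tight $B_{\boldsymbol\theta}\sim\epsilon^{-2}$ of the lemma, whereas your central stencil gains an extra order of accuracy and hence the smaller $B_{\boldsymbol\theta}\sim\epsilon^{-1}$. What the paper's choice buys is uniformity: the same forward-difference template handles all powers $x^{r}$ at once, feeding directly into the subsequent lemmas on products and monomials, while your symmetric version is tailored to even exponents.
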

\begin{proof}
Let $\Omega=[0,1]$, since $\rho(x)=tanh(x)$, for some $n\in\mathbb{N}$ there exists $x_0\in\Omega$ such that $\rho^{(r)}(x_0)\neq 0,\ r\in \{1,\cdots,n\}$. According to \cite{guhring2021approximation} (Proposition 4.7) choose $C_0>1$ so that $[x_0-C_0^{-1}n, x_0+C_0^{-1}n]\subset \Omega$. Moreover, let $\delta\geq C_0$ be arbitrary. Define the function
\begin{align*}
\rho_{\delta}^{r}: \mathbb{R} \rightarrow \mathbb{R}\, , \quad x \mapsto \frac{\delta^{r}}{\rho^{(r)}(x_{0})} \sum_{j=0}^{r}(-1)^{j}\Bigg(\!\begin{array}{c}
r \\
j
\end{array} \!\Bigg) \cdot \rho\Big(x_{0}-j \frac{x}{\delta}\Big)\, .
\end{align*}
Then $\rho_{\delta}^r|_{\Omega} \in C^{n+1}(\Omega)$. Using the Taylor expansion we have that for every $k=0, \cdots, n$ and every $x\in \Omega$,
\begin{align*}
        |(\rho_{\delta}^{r})^{(k)}(x)&-(x^r)^{(k)}| \\
    & \leq 2^n(n+1)^{n+1}n!\cdot \frac{\|\rho\|_{C^{n+1}(\Omega)}}{\min_{i=0,\cdots,n}|\rho^{(i)}(x_0)|}\max\{|\Omega|,1\}^{n+1}\cdot \frac{1}{\delta} =: \frac{C^{\prime}(n,\rho)}{\delta}\, .
\end{align*}
This implies that there exists some $C \geq \max \{C_{0}, C^{\prime}(n, \rho)\}$ such that for every $\epsilon \in(0,1)$ and the neural network $\boldsymbol{\theta}:=((\boldsymbol{A}_{0}, \boldsymbol{b}_{0}),(\boldsymbol{A}_{1}, \boldsymbol{b}_{1}))$ with
\begin{align*}
\boldsymbol{A}_{0} &:=\Big(-\frac{\epsilon}{C}, \ldots,-\frac{r \epsilon}{C}\Big)^{\rm T} \in \mathbb{R}^{r\times1}\, , \qquad
\boldsymbol{b}_{0} :=(x_{0}, \ldots, x_{0})^{\rm T} \in \mathbb{R}^{r}\, , \\
\boldsymbol{A}_{1} &:=\frac{C^{r}}{\epsilon^{r} \rho^{(r)}(x_{0})}\Bigg((-1)^{0}\Bigg(\!\begin{array}{l}
r \\
1
\end{array}\!\Bigg),(-1)^{1}\Bigg(\!\begin{array}{l}
r \\
2
\end{array}\!\Bigg), \ldots,(-1)^{r}\Bigg(\!\begin{array}{l}
r \\
r
\end{array}\!\Bigg)\Bigg) \in \mathbb{R}^{1\times r}\, , \\
\boldsymbol{b}_{1} &:=\frac{C^{r}}{\epsilon^{r} \rho^{(r)}(x_0)}\cdot \rho(x_0) \in \mathbb{R}\, ,
\end{align*}
fulfills $\|\phi_{\boldsymbol{\theta}}(x)-x^{r}\|_{C^{n}(\Omega)} \leq \epsilon.$
Therefore, $L(\phi_{\boldsymbol{\theta}})=2$, $W(\phi_{\boldsymbol{\theta}}) =r$ and $B_{\boldsymbol{\theta}}(\phi_{\boldsymbol{\theta}}) = C \cdot \epsilon^{-2}$. The proof can be done when $r=2$.
\end{proof}

Using the relation $xy = 4^{-1}[(x+y)^2-(x-y)^2]$, we can approximate the product function by neural networks and then further approximate any monomials $x_1\cdots x_d$.
\begin{lem}
\label{alem4.3}
For any $0<\epsilon<1$ and a positive constant $C$, there exists $\phi_{\boldsymbol{\theta}} \in \mathcal{N N}(W,L,B_{\boldsymbol{\theta}})$ where $W = 4$, $L=2$, $B_{\boldsymbol{\theta}}=C \cdot \epsilon^{-2}$ such that
$$
\|xy-\phi_{\boldsymbol{\theta}}(x,y)\|_{W^{k,\infty}([0,1]^2)}\leq \epsilon\, .
$$
\end{lem}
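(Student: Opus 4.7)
The plan is to approximate the bilinear form $xy$ by invoking the polarization identity
\[
xy \;=\; \tfrac{1}{4}\bigl[(x+y)^2 - (x-y)^2\bigr]
\]
and then plugging in a $\tanh$-network approximation of the scalar squaring map provided by Lemma~\ref{aprop4.2}. Since on $[0,1]^2$ we have $x+y \in [0,2]$ and $x-y \in [-1,1]$, I will first absorb the inputs into $[0,1]$ by affine rescalings, say $u_+ := (x+y)/2$ and $u_- := (x-y+1)/2$, so that $u_\pm \in [0,1]$. This rescaling is linear, so derivatives of the composition pick up only uniformly bounded factors depending on $k$.

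Next, applying Lemma~\ref{aprop4.2} with target accuracy $\epsilon' := \epsilon/C'$ for a suitable constant $C'=C'(k)$, I obtain two networks $\phi_{\boldsymbol{\theta}_1}, \phi_{\boldsymbol{\theta}_2} \in \mathcal{NN}(2,2,C(\epsilon')^{-2})$ satisfying $\|\phi_{\boldsymbol{\theta}_i}(z) - z^2\|_{W^{k,\infty}([0,1])} \leq \epsilon'$. I then set
\[
\phi_{\boldsymbol{\theta}}(x,y) \;:=\; \phi_{\boldsymbol{\theta}_1}(u_+) \;-\; \phi_{\boldsymbol{\theta}_2}(u_-) \;+\; (\text{affine correction}),
\]
where the affine correction accounts for the shift introduced in $u_-$ so that the overall map equals $\tfrac14[(x+y)^2-(x-y)^2]=xy$ in the limit of perfect squaring. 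Stacking $\phi_{\boldsymbol{\theta}_1}$ and $\phi_{\boldsymbol{\theta}_2}$ in parallel produces a single network of width $W=2+2=4$ and depth $L=2$: the first hidden layer computes $u_+, u_-$ via the affine pre-processing absorbed into the input weights of the two squaring subnetworks, and the output layer linearly combines the four hidden activations with coefficients $\pm 1/(4\cdot \text{scale})$.

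To verify the error bound, I will use the chain and product rules: for any multi-index $\boldsymbol{\alpha}$ with $|\boldsymbol{\alpha}|_1\le k$, the derivative $D^{\boldsymbol{\alpha}}[\phi_{\boldsymbol{\theta}_i}(u_\pm(x,y))-u_\pm(x,y)^2]$ is controlled by $\|\phi_{\boldsymbol{\theta}_i}-(\cdot)^2\|_{W^{k,\infty}([0,1])}$ times a constant depending only on $k$ (coming from the bounded linear factors in $u_\pm$). Summing over the two branches and the prefactor $1/4$ (plus the scaling factors) yields $\|xy-\phi_{\boldsymbol{\theta}}(x,y)\|_{W^{k,\infty}([0,1]^2)} \leq C'\epsilon' = \epsilon$. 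The weight bound is governed by $B_{\boldsymbol{\theta}_i}\lesssim (\epsilon')^{-2}\asymp \epsilon^{-2}$, together with $O(1)$ multiplicative factors from the affine pre- and post-processing, hence $B_{\boldsymbol{\theta}}=C\epsilon^{-2}$.

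The main obstacle is bookkeeping rather than any deep new idea: I must make sure that absorbing the affine pre-processing into the squaring subnetworks does not blow up the weight bound beyond $O(\epsilon^{-2})$, and that the depth-$2$ parallel composition really yields a single depth-$2$ network of width exactly $4$ (rather than $4$ separated into two serial stages). Both points follow from the explicit two-layer structure of the squaring network in the proof of Lemma~\ref{aprop4.2}, whose input weights can be multiplied by the fixed scaling constants $\pm 1/2$ without changing the architecture. A minor point is that the polarization identity mixes a sum and a difference; I will need to carry one extra constant term through, which contributes only to the output bias and not to width or depth.
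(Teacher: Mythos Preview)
Your approach---polarization identity plus the two-layer squaring network from Lemma~\ref{aprop4.2}---is exactly the paper's, but your affine rescaling introduces a complication you have not handled correctly. With $u_-=(x-y+1)/2$ one computes
\[
xy \;=\; u_+^2 - u_-^2 + u_- - \tfrac14,
\]
so the required correction is $u_- - \tfrac14 = \tfrac12(x-y)+\tfrac14$, which is genuinely \emph{linear} in $(x,y)$, not a constant. In a depth-$2$ network $A_1\rho(A_0\,\cdot + b_0)+b_1$ a linear piece cannot be absorbed into the scalar output bias $b_1$; realizing it would require either extra hidden neurons (pushing $W>4$) or a separate argument that the four existing tanh neurons can be re-weighted to deliver the linear term $u_-$ alongside the two squares to the required accuracy. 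Your last paragraph claims this is ``one extra constant term'' going into the bias, which is where the step fails as written.

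The paper sidesteps this by dispensing with the rescaling altogether: it feeds $x+y$ and $x-y$ directly into one and the same squaring network $\phi_{\hat{\boldsymbol{\theta}}}$ (the Taylor-based construction behind Lemma~\ref{aprop4.2} works on any bounded interval, with the length absorbed into $C$) and sets
\[
\phi_{\boldsymbol{\theta}}(x,y)=\tfrac14\bigl[\phi_{\hat{\boldsymbol{\theta}}}(x+y)-\phi_{\hat{\boldsymbol{\theta}}}(x-y)\bigr].
\]
The constant $\rho(x_0)$ contributions from the two copies cancel, so $b_1=0$, no correction term is needed, and the error bound $\tfrac14\cdot 2\epsilon<\epsilon$ is immediate. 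If you drop the shift in $u_-$ (or drop the rescaling entirely, as the paper does) your argument goes through with the claimed $W=4$, $L=2$, $B_{\boldsymbol\theta}=C\epsilon^{-2}$.
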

\begin{proof}
Lemma \ref{aprop4.2} yields that there exists a neural network $\phi_{\hat{\boldsymbol{\theta}}}$ with $L=2$, $W=2$ and $B_{\boldsymbol{\theta}}=C\cdot \epsilon^{-2}$ such that for all $k\in\{0,1,\cdots,j\}$, we have $\|x^2-\phi_{\hat{\boldsymbol{\theta}}}(x)\|_{W^{k,\infty}([0,1])}\leq \epsilon,
$ where $$\phi_{\hat{\boldsymbol{\theta}}}(x)= \frac{C^{2}}{\epsilon^{2} \rho^{(2)}(x_{0})}\bigg[\rho(x_0)-2\rho\Big(x_0-\frac{\epsilon}{C}x\Big)+\rho\Big(x_0-\frac{2\epsilon}{C}x\Big)\bigg]\, .$$

Then, we construct a neural network which implements an approximate multiplication via the polarization identity $xy = 4^{-1}[(x+y)^2-(x-y)^2]$ for $x,y\in\mathbb{R}$. In detail, we define the neural network $\boldsymbol{\theta}:=((\boldsymbol{A}_{0}, \boldsymbol{b}_{0}), (\boldsymbol{A}_{1}, \boldsymbol{b}_{1}))$ with
\begin{align*}
\boldsymbol{A}_{0} &:=\left(\!\!\begin{array}{rr}
-\frac{\epsilon}{C} & -\frac{\epsilon}{C}\\
-\frac{2 \epsilon}{C} & -\frac{2 \epsilon}{C}\\
-\frac{\epsilon}{C} & \frac{\epsilon}{C}\\
-\frac{2 \epsilon}{C} & \frac{2 \epsilon}{C}
\end{array}\right) \in \mathbb{R}^{4\times 2}\, , ~~~~~~~~~~~~~~~~~~\m
\boldsymbol{b}_{0} :=(x_{0}, x_{0}, x_{0}, x_{0})^{\rm T} \in \mathbb{R}^{4}\, , \\
\boldsymbol{A}_{1} &:=\cdot \frac{C^{2}}{4\epsilon^{2} \rho^{(2)}(x_{0})}(-2, 1, 2, -1) \in \mathbb{R}^{1\times 4}\, , \qquad
\boldsymbol{b}_{1} :=0 \in \mathbb{R}\, ,
\end{align*}
which fulfills for all $(x,y)\in \mathbb{R}^2$ that
\begin{align*}
    \phi_{\boldsymbol{\theta}}(x,y)=\,&\frac{1}{4}\cdot \frac{C^{2}}{\epsilon^{2} \rho^{(2)}(x_{0})}\Big[-2\rho\Big(x_0-\frac{\epsilon}{C}(x+y)\Big)+\rho\Big(x_0-\frac{2\epsilon}{C}(x+y)\Big)\\
    &+2\rho\Big(x_0-\frac{\epsilon}{C}(x-y)\Big)-\rho\Big(x_0-\frac{2\epsilon}{C}(x-y)\Big)\Big]\\
    =\,&\frac{1}{4}\big(\phi_{\hat{\boldsymbol{\theta}}}(x+y)-\phi_{\hat{\boldsymbol{\theta}}}(x-y)\big)\, .
\end{align*}
Therefore, for any $\epsilon > 0$, we have $\|xy - \phi_{\boldsymbol{\theta}}(x,y)\|_{W^{k,\infty}([0,1]^2)} \leq 4^{-1} \cdot 2\epsilon = 2^{-1}\epsilon < \epsilon$. Here, $W = 4$, $L = 2$, and $B_{\boldsymbol{\theta}} = C \cdot \epsilon^{-2}$.  
\end{proof}

By rescaling, we have the following modification of Lemma \ref{alem4.3}.
\begin{lem}
\label{alem4.4}
For any $0<\epsilon<\frac{1}{(b-a)^2}$, $a,b\in \mathbb{R}$ with $a<b$ and a positive constant $C$, there exists $\phi_{\boldsymbol{\theta}} \in \mathcal{N N}(W,L,B_{\boldsymbol{\theta}})$ where $W = 4$, $L=3$, $B_{\boldsymbol{\theta}}=C\cdot \epsilon^{-2}$ such that
$$
\|x y-\phi_{\boldsymbol{\theta}}(x, y)\|_{W^{k,\infty}([a,b]^2)} \leq (b-a)^2\epsilon\, .
$$
\end{lem}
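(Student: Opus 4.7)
The strategy is to reduce Lemma \ref{alem4.4} to Lemma \ref{alem4.3} by an affine change of variables. Introduce normalized coordinates $\tilde{x} = (x-a)/(b-a)$ and $\tilde{y} = (y-a)/(b-a)$, which lie in $[0,1]$ whenever $(x,y) \in [a,b]^2$. Direct expansion yields the identity
$$
xy \;=\; (b-a)^2\,\tilde{x}\tilde{y} \;+\; a(x+y) - a^2,
$$
decomposing the target into a rescaled unit-square product plus an affine correction in the original coordinates. By Lemma \ref{alem4.3}, for any $0<\epsilon<1$ there exists $\phi_{\hat{\boldsymbol{\theta}}} \in \mathcal{NN}(4,2,C\epsilon^{-2})$ with $\|\tilde{x}\tilde{y} - \phi_{\hat{\boldsymbol{\theta}}}\|_{W^{k,\infty}([0,1]^2)} \leq \epsilon$. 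I would then define the candidate
$$
\phi_{\boldsymbol{\theta}}(x,y) \;:=\; (b-a)^2\,\phi_{\hat{\boldsymbol{\theta}}}\!\left(\tfrac{x-a}{b-a},\,\tfrac{y-a}{b-a}\right) + a(x+y) - a^2,
$$
so that $\phi_{\boldsymbol{\theta}}(x,y)-xy = (b-a)^2[\phi_{\hat{\boldsymbol{\theta}}}(\tilde{x},\tilde{y}) - \tilde{x}\tilde{y}]$. Since each $x$- or $y$-derivative contributes a factor $1/(b-a)$ via the chain rule, the targeted $W^{k,\infty}([a,b]^2)$ bound $(b-a)^2\epsilon$ follows in the stated parameter regime.

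The remaining task is to realize $\phi_{\boldsymbol{\theta}}$ as a standard $\tanh$ feed-forward network with $W=4$, $L=3$, and $B_{\boldsymbol{\theta}} = C\epsilon^{-2}$. The input rescaling is absorbed into the first hidden layer at zero cost by replacing the weight matrix $A_0$ of the Lemma \ref{alem4.3} network by $A_0/(b-a)$ and adjusting its bias to $b_0 - A_0(a,a)^\top/(b-a)$, preserving width $4$. An additional hidden layer is needed because the affine correction $a(x+y)-a^2$ cannot be routed to the output through a skip connection in this architecture. I would use the second hidden layer (also width $4$) and the order-one $\tanh$ approximation from Lemma \ref{aprop4.2} to transmit two scalar quantities: (i) an approximation of the scalar value $\phi_{\hat{\boldsymbol{\theta}}}(\tilde{x},\tilde{y})$, obtained by applying $\tanh$ to the small affine combination of layer-$1$ activations that already realizes the Lemma \ref{alem4.3} output, and (ii) an approximation of $\tilde{x}+\tilde{y}$, extracted from the first-order Taylor content of the same four layer-$1$ activations. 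The final affine output layer then assembles $(b-a)^2\phi_{\hat{\boldsymbol{\theta}}} + a(b-a)(\tilde{x}+\tilde{y}) + a^2$; the $(b-a)^{\pm 1}$ factors that appear in the weights of the input and output rescalings are absorbed into the universal constant $C$.

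The main obstacle is coordinating this identity-approximation layer so that the two $\tanh$ pass-throughs, once multiplied by the output coefficients of order $(b-a)^2$ and $a(b-a)$ respectively, still yield a cumulative error of order $(b-a)^2\epsilon$ rather than a looser bound involving $|a|(b-a)\epsilon$. This constrains the choice of the scale parameter $\delta$ in the order-one $\tanh$ trick, requiring the identity approximations to be sharper than $\epsilon$ by a factor of $(b-a)/|a|$ whenever $|a|$ is large compared with $b-a$. The weight bound $B_{\boldsymbol{\theta}} = C\epsilon^{-2}$ is then justified by choosing the constant $C$ large enough to absorb both this sharpened scale and the rescaling factors $(b-a)^{\pm 1}$ that arise in layer $1$ and in the output layer; the fact that all auxiliary scalings are polynomial in $(b-a)$ and independent of $\epsilon$ is what allows the same $\epsilon^{-2}$ dependence in the weight norm as in Lemma \ref{alem4.3}.
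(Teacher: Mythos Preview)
Your approach is essentially the same as the paper's: both reduce to Lemma~\ref{alem4.3} via the affine change of variables $\tilde x=(x-a)/(b-a)$, $\tilde y=(y-a)/(b-a)$ and the identity $xy=(b-a)^2\tilde x\tilde y + a(x-a)+a(y-a)+a^2$, then absorb the input rescaling into the first layer and add one hidden layer to accommodate the affine correction, arriving at $W=4$, $L=3$, $B_{\boldsymbol\theta}=C\epsilon^{-2}$. You are in fact more careful than the paper about how that extra layer routes the correction term through the $\tanh$ architecture and about the identity-approximation error it introduces; the paper simply asserts that the correction can be added without increasing the width and that one extra layer suffices, without spelling out the construction or the error bookkeeping you describe.
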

\begin{proof}
By Lemma \ref{alem4.3}, there exists  $\hat{\phi}_{\boldsymbol{\theta}} \in \mathcal{N N}(\hat{W},\hat{L},\hat{B}_{\boldsymbol{\theta}})$ with $\hat{W}= 4$, $\hat{L}=2$, and $\hat{B}_{\boldsymbol{\theta}}=C\cdot \epsilon^{-2}$ such that $\|\hat{x} \hat{y}-\hat{\phi}_{\boldsymbol{\theta}}(\hat{x}, \hat{y})\|_{W^{k,\infty}([0,1]^2)} \leq \epsilon$. By setting $x=a+(b-a)\hat{x}$ and $y=a+(b-a)\hat{y}$ for any $\hat{x},\hat{y}\in[0,1]$, we define the following network $\phi_{\boldsymbol{\theta}}$
$$
\phi_{\boldsymbol{\theta}}=(b-a)^2\hat{\phi}_{\boldsymbol{\theta}}\Big(\frac{x-a}{b-a}, \frac{y-a}{b-a}\Big)+a(x-a)+a(y-a)+a^2\, .
$$
Note that $a(x-a)+a(y-a)$ is positive. Hence, the width of $\phi_{\boldsymbol{\theta}}$ can be the same as $\hat{\phi}_{\boldsymbol{\theta}}$, $W = 4$ and $L=\hat{L}+1=3$. By $$xy=(b-a)^2\Big(\frac{x-a}{b-a}\cdot \frac{y-a}{b-a}\Big) + a(x-a)+a(y-a)+a^2\, ,$$ we have
\begin{align*}
&\|\phi_{\boldsymbol{\theta}}(x,y)-xy\|_{W^{k,\infty}([a,b]^2)}\\
= & (b-a)^2 \bigg\|\hat{\phi}_{\boldsymbol{\theta}}\bigg(\frac{x-a}{b-a}, \frac{y-a}{b-a}\bigg)-\bigg(\frac{x-a}{b-a}\cdot \frac{y-a}{b-a}\bigg)\bigg\|_{W^{k,\infty}([a,b]^2)} \leq (b-a)^2 \epsilon\, .
\end{align*}
\end{proof}

We can then obtain the following lemma through induction.
\begin{lem}
\label{alem4.5}
  Let $d\geq 2$ and $C$ be a positive constant. For some sufficiently small $\epsilon^{*}>0$ and any $0<\epsilon < \epsilon^{*}$, there exists $\phi_{\boldsymbol{\theta}} \in \mathcal{N N}(W, L, B_{\boldsymbol{\theta}})$ where $W = 2^{\lceil \log_2 d \rceil+1}$, $L=\lceil \log_2 d \rceil+1$, $B_{\boldsymbol{\theta}}=C\cdot \epsilon^{-2}$ such that
$$
\|x_1\cdots x_d-\phi_{\boldsymbol{\theta}}(\boldsymbol x)\|_{W^{k,\infty}([0,1]^{d})}\leq C(d)\epsilon\, , \quad \boldsymbol x=(x_1, \cdots, x_d)^{\top} \in [0,1]^{d}\, .
$$
\end{lem}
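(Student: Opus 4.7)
\textbf{Proof plan for Lemma \ref{alem4.5}.} The natural approach is a binary tree composition of the two-variable multiplication network from Lemma \ref{alem4.4}. Set $D := \lceil \log_2 d \rceil$ and, by appending factors equal to $1$, reduce to the case $d = 2^{D}$. I will construct the approximating network recursively: at the leaves of a balanced binary tree of depth $D$, place the $d$ input variables; at each internal node, attach a copy of the $xy$-approximator $\phi^{\times}_{\boldsymbol{\theta}}$ of Lemma \ref{alem4.4} that takes as inputs the outputs of its two children. Concretely, define $\phi^{(0)}(x) := x$ and, for $1 \leq \ell \leq D$,
\begin{equation*}
\phi^{(\ell)}(x_1,\dots,x_{2^\ell}) := \phi^{\times}_{\boldsymbol{\theta}}\!\Bigl(\phi^{(\ell-1)}(x_1,\dots,x_{2^{\ell-1}}),\ \phi^{(\ell-1)}(x_{2^{\ell-1}+1},\dots,x_{2^\ell})\Bigr),
\end{equation*}
and set $\phi_{\boldsymbol{\theta}}:=\phi^{(D)}$. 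Counting parameters: each multiplication node is a single hidden layer of width $4$, so composing $D$ nested nodes along any root-to-leaf path yields depth $L = D+1 = \lceil \log_2 d\rceil + 1$; the widest layer is the bottom one, containing $d/2$ independent copies of width $4$, hence $W \leq 2d \leq 2^{D+1} = 2^{\lceil \log_2 d\rceil+1}$; the weight bound $B_{\boldsymbol{\theta}} = C\epsilon^{-2}$ is inherited directly from Lemma \ref{alem4.4} since each node reuses the same construction (with a slightly rescaled accuracy, see below).

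The substantive work is the $W^{k,\infty}$ error estimate, which I would carry out by induction on $\ell$. Let $E_\ell := \|\phi^{(\ell)} - \prod_{i=1}^{2^\ell} x_i\|_{W^{k,\infty}([0,1]^{2^\ell})}$ and choose the accuracy at each node to be $\epsilon' := \epsilon / C'(d)$ for a constant $C'(d)$ depending only on $d, k$. The inductive step writes
\begin{equation*}
\phi^{(\ell)} - \prod_{i=1}^{2^\ell} x_i = \Bigl[\phi^{\times}_{\boldsymbol{\theta}}(a_\ell, b_\ell) - a_\ell b_\ell\Bigr] + \Bigl[a_\ell b_\ell - u_\ell v_\ell\Bigr],
\end{equation*}
where $a_\ell = \phi^{(\ell-1)}(\cdot)$, $u_\ell = \prod x_i$ over the first half of the indices, and $b_\ell, v_\ell$ analogously for the second half. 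The first bracket is controlled by Lemma \ref{alem4.4} once one verifies that $a_\ell, b_\ell$ stay inside an interval $[-\tau, 1+\tau]$ of controlled length (follows from $E_{\ell-1} \leq \tau$, which I enforce by choosing $\epsilon'$ small enough and invoking the induction hypothesis). The second bracket is handled by the Leibniz/product rule in $W^{k,\infty}$: for a multi-index $\boldsymbol{\gamma}$ with $|\boldsymbol{\gamma}|_1 \leq k$,
\begin{equation*}
D^{\boldsymbol{\gamma}}(a_\ell b_\ell - u_\ell v_\ell) = \sum_{\boldsymbol{\beta}\leq\boldsymbol{\gamma}} \binom{\boldsymbol{\gamma}}{\boldsymbol{\beta}} \Bigl[ D^{\boldsymbol{\beta}}(a_\ell-u_\ell)\, D^{\boldsymbol{\gamma}-\boldsymbol{\beta}} b_\ell + D^{\boldsymbol{\beta}} u_\ell\, D^{\boldsymbol{\gamma}-\boldsymbol{\beta}}(b_\ell - v_\ell)\Bigr],
\end{equation*}
which, combined with the elementary bound $\|v_\ell\|_{W^{k,\infty}} \leq C(d,k)$ (true monomials on $[0,1]^{2^{\ell-1}}$) and $\|b_\ell\|_{W^{k,\infty}} \leq C(d,k) + E_{\ell-1}$ (by induction), yields a linear recursion $E_\ell \leq C_1(d,k)\, E_{\ell-1} + C_2(d,k)\, \epsilon'$. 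Unrolling this recursion through $D = \lceil \log_2 d\rceil$ levels gives $E_D \leq C(d)\,\epsilon'$, and absorbing $C'(d)$ into the choice of $\epsilon'$ finishes the bound.

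\textbf{Main obstacle.} The cleanest part is the forward construction and parameter counts; the real technical bite is propagating the $W^{k,\infty}$ (not merely $L^\infty$) error through the composition, because each application of the product rule multiplies the error by the $W^{k,\infty}$-norm of the partial products, which itself grows with depth. The key stabilizing observation is that the true partial products $u_\ell, v_\ell$ are smooth monomials on $[0,1]^{2^\ell}$ with bounded $W^{k,\infty}$-norm independent of $\epsilon$, while the approximated partial products $a_\ell, b_\ell$ differ from them by an already-controlled $W^{k,\infty}$ error. One must also verify that the domain of $\phi^{\times}_{\boldsymbol{\theta}}$ used at level $\ell$ can be taken as $[-\tau,1+\tau]^2$ with $\tau$ uniform across levels; this is where the freedom in the constant $C'(d)$ is used, and where one must insist that $\epsilon^{*}$ be sufficiently small relative to $d$. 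With these two points in hand, the induction closes and the stated network architecture achieves $\|x_1\cdots x_d - \phi_{\boldsymbol{\theta}}\|_{W^{k,\infty}([0,1]^d)} \leq C(d)\epsilon$.
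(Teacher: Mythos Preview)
Your approach---binary-tree composition of the two-variable product approximator, induction on tree depth, and padding to $d=2^{D}$---is exactly the paper's proof. One small inconsistency to fix: your depth count $L=D+1$ presumes each multiplication node contributes a single hidden layer, which matches Lemma~\ref{alem4.3} ($L=2$) rather than Lemma~\ref{alem4.4} ($L=3$) as you cite; the paper indeed builds directly on Lemma~\ref{alem4.3}, handles the domain drift more informally by noting that intermediate outputs stay in $[-\epsilon,1+\epsilon]$, and obtains the explicit recursion $E_\kappa \le 4E_{\kappa-1}$ (hence $E_D \le 4^{D-1}\epsilon$) in place of your generic linear recursion.
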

\begin{proof}
We first consider the case $d=2^{\kappa}$ for some ${\kappa}\in \mathbb{N}$. For ${\kappa}=1$, by Lemma \ref{alem4.3}, there exists $\phi_{\boldsymbol{\theta}_1}\in \mathcal{N N}(4,2,C\epsilon^{-2})$ such that $|x_1x_2-\phi_{\boldsymbol{\theta}_1}(x_1,x_2)|_{W^{k,\infty}([0,1]^2)}\leq \epsilon$. For ${\kappa}=2$, by some simple calculations, we can approximate $x_1  x_2 \mm x_3\mm x_4$ by $\phi_{\boldsymbol{\theta}_2}(x_1,x_2,x_3,x_4)$, where
\begin{align*}
    \phi_{\boldsymbol{\theta}_2}(x_1,\ldots,x_4) = &\frac{C^{2}}{4\epsilon^{2} \rho^{(2)}(x_{0})}\Big[-\rho\Big(x_0-\frac{2\epsilon}{C}(x_1x_2-x_3x_4)\Big)+\rho\Big(x_0-\frac{2\epsilon}{C}(x_1x_2+x_3x_4)\Big)\\
    &+ 2\rho\Big(x_0-\frac{\epsilon}{C}(x_1x_2-x_3x_4)\Big)-2\rho\Big(x_0-\frac{\epsilon}{C}(x_1x_2+x_3x_4)\Big)\Big]\, .
\end{align*}
From Lemma \ref{alem4.3} we have that
$$
    \|x_1x_2-\phi_{\boldsymbol{\theta}_1}(x_1,x_2)\|_{W^{k,\infty}([0,1]^2)} \leq \epsilon\, ,\\
 $$ 
\begin{align*}
\phi_{\boldsymbol{\theta}_1}(x_1,x_2) = \frac{1}{4}&\cdot \frac{C^{2}}{\epsilon^{2} \rho^{(2)}(x_{0})}\Big[-2\rho\Big(x_0-\frac{\epsilon}{C}(x_1+x_2)\Big)+\rho\Big(x_0-\frac{2\epsilon}{C}(x_1+x_2)\Big)\\
    &+2\rho\Big(x_0-\frac{\epsilon}{C}(x_1-x_2)\Big)-\rho\Big(x_0-\frac{2\epsilon}{C}(x_1-x_2)\Big)\Big]\, ,
    % \|x_3x_4 -\phi_{\boldsymbol{\theta}_1}(x_3,&x_4)\|_{W^{k,\infty}([0,1]^2)} \leq \epsilon,\\
    % \phi_{\boldsymbol{\theta}_1}(x_3,x_4) =\frac{1}{4}&\cdot \frac{C^{2}}{\epsilon^{2} \rho^{(2)}(x_{0})}[-2\rho(x_0-\frac{\epsilon}{C}(x_3+x_4))+\rho(x_0-\frac{2\epsilon}{C}(x_3+x_4))\\
    % &+2\rho(x_0-\frac{\epsilon}{C}(x_3-x_4))-\rho(x_0-\frac{2\epsilon}{C}(x_3-x_4))]\, ,
\end{align*}
and the same is true for $x_3x_4$, then when $4\epsilon<1$ the approximation error is
\begin{align*}
    &\|x_1x_2 \mm x_3 \mm x_4 - \phi_{\boldsymbol{\theta}_2}(x_1,x_2,x_3,x_4)\|_{W^{k,\infty}([0,1]^4)} \\
    % =& \|x_1x_2 \mm x_3 \mm x_4-\phi_{\boldsymbol{\theta}_1}(\phi_{\boldsymbol{\theta}_1}(x_1,x_2),\phi_{\boldsymbol{\theta}_1}(x_3,x_4))\|_{W^{k,\infty}([0,1]^4)}\\
    \leq& \|x_1x_2 \mm x_3 \mm x_4-\phi_{\boldsymbol{\theta}_1}(x_1,x_2)\phi_{\boldsymbol{\theta}_1}(x_3,x_4)\|_{W^{k,\infty}([0,1]^4)}\\ & +\|\phi_{\boldsymbol{\theta}_1}(x_1,x_2)\phi_{\boldsymbol{\theta}_1}(x_3,x_4)-\phi_{\boldsymbol{\theta}_1}(\phi_{\boldsymbol{\theta}_1}(x_1,x_2),\phi_{\boldsymbol{\theta}_1}(x_3,x_4))\|_{W^{k,\infty}([0,1]^4)}\\
    \leq& \|x_1x_2 \mm x_3 \mm x_4-x_1x_2\phi_{\boldsymbol{\theta}_1}(x_3,x_4)\|_{W^{k,\infty}([0,1]^4)}\\
    & +\|x_1x_2\phi_{\boldsymbol{\theta}_1}(x_3,x_4)-\phi_{\boldsymbol{\theta}_1}(x_1,x_2)\phi_{\boldsymbol{\theta}_1}(x_3,x_4)\|_{W^{k,\infty}([0,1]^4)}\\
    & +\|\phi_{\boldsymbol{\theta}_1}(x_1,x_2)\phi_{\boldsymbol{\theta}_1}(x_3,x_4)-\phi_{\boldsymbol{\theta}_1}(\phi_{\boldsymbol{\theta}_1}(x_1,x_2),\phi_{\boldsymbol{\theta}_1}(x_3,x_4))\|_{W^{k,\infty}([0,1]^4)}\\
    \leq & \epsilon+ \epsilon(1+\epsilon)+\epsilon(1+2\epsilon)^2 \leq 4\epsilon\, .
\end{align*}
Therefore, we can define the neural network $\boldsymbol{\theta}_2:=((\boldsymbol{A}_{0},\boldsymbol{b}_{0}),(\boldsymbol{A}_{1},\boldsymbol{b}_{1}),(\boldsymbol{A}_{2},\boldsymbol{b}_{2}))$ with
\begin{align*}
\boldsymbol{A}_{0} &:=\left(\begin{array}{rrrr}
-\frac{\epsilon}{C} & -\frac{\epsilon}{C}& 0 & 0\\
-\frac{2 \epsilon}{C} & -\frac{2 \epsilon}{C}& 0 & 0\\
-\frac{\epsilon}{C} & \frac{\epsilon}{C}& 0 & 0\\
-\frac{2 \epsilon}{C} & \frac{2 \epsilon}{C}& 0 & 0\\
0 & 0& -\frac{\epsilon}{C} & -\frac{\epsilon}{C} \\
0 & 0& -\frac{2 \epsilon}{C} & -\frac{2 \epsilon}{C}\\
0 & 0& -\frac{\epsilon}{C} & \frac{\epsilon}{C}\\
0 & 0& -\frac{2 \epsilon}{C} & \frac{2 \epsilon}{C}
\end{array}\right) \in \mathbb{R}^{8\times4}\, , \qquad \ \boldsymbol{b}_{0} :=\left(\begin{array}{c}
x_0\\
x_0\\
x_0\\
x_0\\
x_0\\
x_0\\
x_0\\
x_0\\
\end{array}\right)\, ,
\end{align*}
\begin{align*}
\boldsymbol{A}_{1} &:=\frac{C}{4\epsilon\rho^{(2)}(x_0)}\left(\begin{array}{rrrrrrrr}
2 & -1& -2 & 1& 2 & -1 & -2 & 1\\
4 & -2& -4 & 2& 4 & -2& -4& 2\\
2 & -1& -2 & 1& -2 & 1 & 2 & -1\\
4 & -2& -4 & 2&  -4& 2& 4& -2\\
\end{array}\right) \in \mathbb{R}^{4\times 8}\, , \ \boldsymbol{b}_{1} :=\left(\begin{array}{c}
x_0\\
x_0\\
x_0\\
x_0\\
\end{array}\right)\, , \\
\boldsymbol{A}_{2} &:=\frac{1}{4}\cdot \frac{C^{2}}{\epsilon^{2} \rho^{(3)}(x_{0})}(-2, 1, 2, -1) \in \mathbb{R}^{1\times 4}\, , \ \boldsymbol{b}_{2} :=0\, ,
\end{align*}
which satisfies our construction.
We have that $W(\phi_{\boldsymbol{\theta}_2})=2^3$, $L(\phi_{\boldsymbol{\theta}_2})=3$ and $B_{\boldsymbol{\theta}}(\phi_{\boldsymbol{\theta}_2})=C\cdot \epsilon^{-2}$.

Next, we inductively show that $\phi_{\boldsymbol{\theta}_{\kappa}} \in \mathcal{NN}(2^{{\kappa}+1}, {\kappa}+1, C\cdot \epsilon^{-2})$. It is obvious that the assertion is true for ${\kappa}=1$ and ${\kappa}=2$ by construction. Assume that the assertion is true for some ${\kappa} \in \mathbb{N}$, then $W(\phi_{\boldsymbol{\theta}_{\kappa}}) = 2^{{\kappa}+1}$, $L(\phi_{\boldsymbol{\theta}_{\kappa}}) = {\kappa}+1$, the neural network $\boldsymbol{\theta}_{\kappa} = ((\boldsymbol{A}_{0}, \boldsymbol{b}_{0}), \dots, (\boldsymbol{A}_{{\kappa}}, \boldsymbol{b}_{{\kappa}}))$ with $B_{\boldsymbol{\theta}}(\phi_{\boldsymbol{\theta}_{{\kappa}}}) = C\cdot \epsilon^{-2}$, and when $4^{{\kappa}}\epsilon<1$,
$$
\|x_1x_2\cdots\ x_{2^{\kappa}}-\phi_{\boldsymbol{\theta}_{\kappa}}(x_1,x_2,\cdots,x_{2^{\kappa}})\|_{W^{k,\infty}([0,1]^{2^{\kappa}})}\leq 4^{{\kappa}-1}\epsilon\, .
$$
Then, by our construction $$W(\phi_{\boldsymbol{\theta}_{{\kappa}+1}})= 2W(\phi_{\boldsymbol{\theta}_{\kappa}})=2^{{\kappa}+2}, L(\phi_{\boldsymbol{\theta}_{{\kappa}+1}})= L(\phi_{\boldsymbol{\theta}_{\kappa}})+1 ={\kappa}+2$$ and $B_{\boldsymbol{\theta}}(\phi_{\boldsymbol{\theta}_{{\kappa}+1}}) = C\cdot \epsilon^{-2}$,
which means $\phi_{\boldsymbol{\theta}_{{\kappa}+1}} \in \mathcal{N N}(2^{{\kappa}+2}, {\kappa}+2, C\cdot \epsilon^{-2}).$ And
$$
\|x_1\cdots\ x_{2^{{\kappa}+1}}-\phi_{\boldsymbol{\theta}_{{\kappa}+1}}(x_1,\cdots, x_{2^{{\kappa}+1}})\|_{W^{k,\infty}([0,1]^{2^{{\kappa}+1}})}\leq 4\cdot 4^{{\kappa}-1} \epsilon = 4^{{\kappa}}\epsilon\, .
$$
Hence the assertion is true for ${\kappa}+1$.

For general $d \geq 2$, we choose ${\kappa}=\lceil \log_2 d \rceil$, then $2^{{\kappa}-1}<d \leq 2^{\kappa}$. We  define the target function $\phi_{\boldsymbol{\theta}}$ by
$$
\phi_{\boldsymbol{\theta}}(\boldsymbol{x}):=\phi_{\boldsymbol{\theta}_{\kappa}}\Bigg(\Bigg(\!\begin{array}{c}
\boldsymbol{I}_{d} \\
\mathbf{0}_{\left(2^{{\kappa}}-d\right) \times d}
\end{array}\!\Bigg) \boldsymbol{x}+\Bigg(\!\begin{array}{c}
\mathbf{0}_{d \times 1} \\
\mathbf{1}_{\left(2^{\kappa}-d\right) \times 1}
\end{array}\!\Bigg)\Bigg)\, ,
$$
where $\boldsymbol{I}_{d}$ is $d \times d$ identity matrix, $\mathbf{0}_{p \times q}$ is $p \times q$ zero matrix and $\mathbf{1}_{(2^{{\kappa}}-d) \times 1}$ is all ones vector. In this case $W=W(\phi_{\boldsymbol{\theta}_{\kappa}})=2^{{\kappa}+1}=2^{\lceil \log_2 d \rceil+1}$, $L= L(\phi_{\boldsymbol{\theta}_{\kappa}}) = {\kappa}+1 = \lceil \log_2 d \rceil +1$ and $B_{\boldsymbol{\theta}}(\phi_{\boldsymbol{\theta}_{{\kappa}}})= C\cdot \epsilon^{-2}$. The approximation error is
$$
\|x_{1} \cdots x_{d}-\phi_{\boldsymbol{\theta}}(\boldsymbol{x})\|_{W^{k,\infty}([0,1]^{d})} = \|x_{1} \cdots x_{2^{\kappa}}-\phi_{\boldsymbol{\theta}}(\boldsymbol{x})\|_{W^{k,\infty}([0,1]^{d})}\leq 4^{{\kappa}-1} \epsilon< C(d)\epsilon\, .
$$
\end{proof}

In Lemma \ref{alem4.5}, we construct neural networks to approximate monomials. In order to approximate $$f_{N}=\sum_{\scriptscriptstyle \boldsymbol{m} \in\{0, \ldots, N\}^{d}}\sum_{\scriptscriptstyle |\boldsymbol{\alpha}|_1 \leq n-1} c_{\scalebox{0.7}{$f, \mmn \boldsymbol{m}, \mmn \boldsymbol{\alpha}$}} \Psi_{\raisebox{0.2ex}{$\hspace{-0.1em}{\scalebox{0.6}{$\boldsymbol{m}$}}$}}^{s} \mm {\boldsymbol{x}}^{\boldsymbol{\alpha}},$$ we first construct a neural network $\phi_{\boldsymbol{\theta}}^{\scalebox{0.6}{$\boldsymbol{m}, \mmn \boldsymbol{\alpha}$}}$ to approximate $\Psi_{\raisebox{0.2ex}{$\hspace{-0.1em}{\scalebox{0.6}{$\boldsymbol{m}$}}$}}^{s} \mm {\boldsymbol{x}}^{\boldsymbol{\alpha}}$.
\begin{lem}
\label{alem4.6}
Let $C$ be a positive number and $C(n,d)$ be a polynomial that depends on $n$ and $d$ and $s\geq 1$. For some sufficiently small $\epsilon^{*}>0$ and any $0<\epsilon < \epsilon^{*}$, there exists $\phi_{\boldsymbol{\theta}}^{\scalebox{0.6}{$\boldsymbol{m}, \mmn \boldsymbol{\alpha}$}} \in \mathcal{NN}(W,L,B_{\boldsymbol{\theta}})$ where
\begin{align*}
W & = 2^{\lceil \log_2 (d + |\boldsymbol{\alpha}|_1) \rceil + 1}\, , \\
L & = \lceil \log_2 (d + |\boldsymbol{\alpha}|_1) \rceil + 2\, , \\
B_{\boldsymbol{\theta}} & = \max\{3Ns, (3d+3/2)s, C\epsilon^{-2} \}\, ,
\end{align*}
such that
$$
\|\Psi_{\raisebox{0.2ex}{$\hspace{-0.1em}{\scalebox{0.6}{$\boldsymbol{m}$}}$}}^{s} \mm {\boldsymbol{x}}^{\boldsymbol{\alpha}} - \phi_{\boldsymbol{\theta}}^{\scalebox{0.6}{$\boldsymbol{m}, \mmn \boldsymbol{\alpha}$}}(\boldsymbol x)\|_{W^{k,\infty}([0,1]^{d})} \leq C(n,d)\epsilon\, , \quad \boldsymbol x = (x_1, \cdots, x_d)^{\top} \in [0,1]^{d}\, ,
$$
for all $\boldsymbol{\alpha} \in \mathbb{N}^{d}$ with $|\boldsymbol{\alpha}|_1 \leq n-1$, and for all $\boldsymbol{m} \in \{0, \cdots, N\}^d$.
\end{lem}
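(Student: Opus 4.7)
The plan is to realize $\Psi_{\boldsymbol{m}}^{s}(\boldsymbol{x})\cdot\boldsymbol{x}^{\boldsymbol{\alpha}}$ as a product of $d+|\boldsymbol{\alpha}|_1$ simple scalar factors, and approximate this product by composing a single $\rho$-activated ``preparation'' layer with the iterated product network of Lemma~\ref{alem4.5} (applied with $d+|\boldsymbol{\alpha}|_1$ inputs in place of $d$). Explicitly, I factorize
\[
\Psi_{\boldsymbol{m}}^{s}(\boldsymbol{x})\cdot\boldsymbol{x}^{\boldsymbol{\alpha}} \;=\; \prod_{l=1}^{d}\psi^{s}\bigl(3N(x_{l}-m_{l}/N)\bigr)\;\cdot\;\prod_{l=1}^{d}\underbrace{x_{l}\,x_{l}\cdots x_{l}}_{\alpha_{l}\ \text{copies}},
\]
a product of $d+|\boldsymbol{\alpha}|_1$ factors, each uniformly bounded on $[0,1]^{d}$ by a constant independent of $N$ and $s$.

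The first step builds a preparation layer that outputs (approximations to) these factors. By the formula $\psi^{s}(y)=\tfrac{1}{2}[\rho(s(y+3/2))-\rho(s(y-3/2))]$ from Definition~\ref{adef4.2}, each bump factor $\psi^{s}(3N(x_{l}-m_{l}/N))$ is a fixed affine combination of two $\rho$-activations whose affine input has coefficient $3Ns$ on $x_{l}$ and bias bounded by $(3d+3/2)s$ after a standard regrouping. Each of the $|\boldsymbol{\alpha}|_{1}$ coordinate factors $x_{l}$ is simultaneously produced from the same $\rho$-layer to $W^{k,\infty}$ accuracy $\epsilon$ via the $r=1$ case of Lemma~\ref{aprop4.2}, with weights bounded by $C\epsilon^{-2}$. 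The resulting $d+|\boldsymbol{\alpha}|_{1}$ prepared factors are then fed into the iterated product network of Lemma~\ref{alem4.5} (with $d+|\boldsymbol{\alpha}|_1$ inputs), which has width $2^{\lceil\log_{2}(d+|\boldsymbol{\alpha}|_1)\rceil+1}$, depth $\lceil\log_{2}(d+|\boldsymbol{\alpha}|_1)\rceil+1$, and weights bounded by $C\epsilon^{-2}$.

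Merging the preparation layer with the first hidden layer of the product network produces a composed architecture of total depth $\lceil\log_{2}(d+|\boldsymbol{\alpha}|_1)\rceil+2$, width $2^{\lceil\log_{2}(d+|\boldsymbol{\alpha}|_1)\rceil+1}$, and maximum weight $\max\{3Ns,\,(3d+3/2)s,\,C\epsilon^{-2}\}$, exactly matching the claim. For the $W^{k,\infty}$ error, I would write $\Psi_{\boldsymbol{m}}^{s}\boldsymbol{x}^{\boldsymbol{\alpha}}-\phi_{\boldsymbol{\theta}}^{\boldsymbol{m},\boldsymbol{\alpha}}$ as a telescoping sum that replaces the exact $x_{l}$ factors by their $\rho$-based approximants one at a time and then replaces the exact product by the product network. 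By the triangle inequality and the Leibniz rule, this decomposes into $O(d+|\boldsymbol{\alpha}|_{1})=O(n+d)$ terms, each controlled either by Lemma~\ref{aprop4.2} or by Lemma~\ref{alem4.5}, and each of size $\epsilon$ times a polynomial in $n$ and $d$, giving the claimed bound $\|\Psi_{\boldsymbol{m}}^{s}\boldsymbol{x}^{\boldsymbol{\alpha}}-\phi_{\boldsymbol{\theta}}^{\boldsymbol{m},\boldsymbol{\alpha}}\|_{W^{k,\infty}([0,1]^{d})}\le C(n,d)\epsilon$.

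The main obstacle will be the Leibniz-rule bookkeeping in the $W^{k,\infty}$ norm: one has to verify that the number of terms arising from expanding $D^{\boldsymbol{\beta}}$ of a $(d+|\boldsymbol{\alpha}|_1)$-fold product is polynomial in $n$ and $d$, and that the $L^{\infty}$ norms of the individual factor derivatives---which for the bump factors grow with $N$ and $s$---combine through the triangle inequality into a single coefficient $C(n,d)$, with the $N$- and $s$-dependent pieces being absorbed into the network weights via the $\max$ defining $B_{\boldsymbol{\theta}}$. This is tedious but routine combinatorics and does not alter the high-level shape of the argument.
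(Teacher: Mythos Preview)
Your proposal is correct and takes essentially the same approach as the paper: realize $\Psi_{\boldsymbol{m}}^{s}\boldsymbol{x}^{\boldsymbol{\alpha}}$ as a $(d+|\boldsymbol{\alpha}|_1)$-fold product, use one $\rho$-activated preparation layer to produce the bump factors exactly and the coordinate copies via the $r=1$ identity approximant, then compose with the product network of Lemma~\ref{alem4.5}, merging the preparation layer into the product network's first hidden layer to reach depth $\lceil\log_2(d+|\boldsymbol{\alpha}|_1)\rceil+2$ and the stated width and weight bound. The paper actually writes out the resulting layer matrices $(\boldsymbol{A}_\ell,\boldsymbol{b}_\ell)$ explicitly and then simply asserts the $C(n,d)\epsilon$ bound without working through the chain-rule bookkeeping you flag, so your telescoping/Leibniz plan is already more detailed than what the paper provides.
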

\begin{proof}
For $N, d \in \mathbb{N}$ and $\boldsymbol{m} \in\{0, \ldots, N\}^{d}$, we define $$\Psi_{\raisebox{0.2ex}{$\hspace{-0.1em}{\scalebox{0.6}{$\boldsymbol{m}$}}$}}^{s}(\boldsymbol{x}):=\prod_{l=1}^{d} \psi^{s}(3 N(x_{l}-N^{-1}m_{l})).$$ Denote by $\psi^{s}_{l}:=\psi^{s}(3 N(x_{l}-N^{-1}m_{l}))$, then $\Psi_{\raisebox{0.2ex}{$\hspace{-0.1em}{\scalebox{0.6}{$\boldsymbol{m}$}}$}}^{s} \mm {\boldsymbol{x}}^{\boldsymbol{\alpha}}= \prod_{l=1}^{d} \psi^{s}_{l} {\boldsymbol{x}}^{\boldsymbol{\alpha}}$. Define $$\boldsymbol x=(x_{1}, \cdots, x_{d})^{\top} \in [0,1]^{d}\, , \quad  \bar{\boldsymbol x}=(\psi^{s}_{1}, \cdots, \psi^{s}_{d},\  \underbrace{\cdots, x_i, \cdots}_{|\boldsymbol{\alpha}|_1}\ )^{\top} \in [0,1]^{d+|\boldsymbol{\alpha}|_1}\, .$$ As we have shown in Lemma \ref{alem4.5}, let ${\kappa}=\lceil \log_2 (d+|\boldsymbol{\alpha}|_1) \rceil$ we can construct a neural network  $$\bar{\boldsymbol{\theta}}_{\kappa} =((\boldsymbol{A}_{0},\boldsymbol{b}_{0}),\cdots,(\boldsymbol{A}_{{\kappa}},\boldsymbol{b}_{{\kappa}}))$$ with $$W(\phi_{\bar{\boldsymbol{\theta}}_{\kappa}}(\bar{\boldsymbol{x}}))=2^{{\kappa}+1} \, , \quad L(\phi_{\bar{\boldsymbol{\theta}}_{\kappa}}(\bar{\boldsymbol{x}}))={\kappa}+1\, , \quad K(\phi_{\bar{\boldsymbol{\theta}}_{\kappa}}(\bar{\boldsymbol{x}}))=C{\epsilon}^{-({\kappa}+1)} \, ,$$ which satisfies
$$
\bigg\|\prod_{l=1}^{d} \psi^{s}_{l} {\boldsymbol{x}}^{\boldsymbol{\alpha}}-\phi_{\bar{\boldsymbol{\theta}}_{\kappa}}(\bar{\boldsymbol{x}})\bigg\|_{W^{k,\infty}([0,1]^{d+|\boldsymbol{\alpha}|_1})}\leq 4^{{\kappa}}\epsilon\, .
$$
Then we need to replace the input $\bar{\boldsymbol{x}}$ with $\boldsymbol{x}$. Since $$\psi^{s}(x):= 2^{-1}[\rho(s(x+3 / 2))-\rho(s(x-3 / 2))],$$ $$\psi^{s}_{l}(\boldsymbol{x}):=\psi^{s}(3 N(x_{l}-N^{-1}m_{l})),$$ we can add one hidden layer to the neural network $\bar{\boldsymbol{\theta}}_{\kappa}$ before its first layer. Denote the new neural network as 
\[
\boldsymbol{\theta}_{{\kappa}}:=((\boldsymbol{A}_{0},\boldsymbol{b}_{0}),(\boldsymbol{A}_{1},\boldsymbol{b}_{1}),\cdots,(\boldsymbol{A}_{\ell},\boldsymbol{b}_{\ell}),\cdots, (\boldsymbol{A}_{{\kappa}+1},\boldsymbol{b}_{{\kappa}+1}))\, , \]
where $$\ell \in \{0,1,\ldots, {\kappa}+1\}.$$ The parameter matrix of each layer can be written as follows.
\begin{itemize}
\item $\ell=0$. \begin{align*}
\boldsymbol{A}_{0} :=\left(\begin{array}{cccc}
3Ns         & \cdots & 0 \\
3Ns         & \cdots & 0 \\
\vdots&  \ddots & \vdots\\
0           & \cdots & 3Ns\\
0           & \cdots & 3Ns\\
-\frac{\epsilon}{C}&\\
&\ddots &   \\
&&-\frac{\epsilon}{C} \\
0& \\
&\ddots \\
&&0 \\
\end{array}\right) \in \mathbb{R}^{(d+2^{\kappa})\times d}
% \boldsymbol{A}_{0} :=\left(\begin{array}{cccc}
% 3Ns   & 0      & \cdots & 0 \\
% 3Ns   & 0      & \cdots & 0 \\
% 0     & 3Ns    & \cdots & 0 \\
% 0     & 3Ns    & \cdots & 0 \\
% \vdots& \vdots & \ddots & \vdots\\
% 0     & 0      & \cdots & 3Ns\\
% 0     & 0      & \cdots & 3Ns\\
% -\frac{\epsilon}{C}&\\
% &-\frac{\epsilon}{C}&\\
% &&\ddots &   \\
% &&&-\frac{\epsilon}{C} \\
% 0& \\
% &0& \\
% &&\ddots \\
% &&&0 \\
% \end{array}\right) \in \mathbb{R}^{(d+2^{\kappa})\times d}
\, , \qquad  \boldsymbol{b}_{0} :=\left(\setlength{\arraycolsep}{1.5pt}
\begin{array}{c}
-3m_1s+\frac{3}{2}s\\
-3m_1s-\frac{3}{2}s\\
% -3m_2s+\frac{3}{2}s\\
% -3m_2s-\frac{3}{2}s\\
\vdots\\
-3m_ds+\frac{3}{2}s\\
-3m_ds-\frac{3}{2}s\\
x_0\\
% x_0\\
\vdots \\
x_0\\
1\\
% 1 \\
\vdots\\
1\\
\end{array}\right)\in \mathbb{R}^{d+2^{\kappa}}\, .
\end{align*}
\item 
$\ell=1$.
$$
\boldsymbol{A}_{1} :=
\left(\!\setlength{\arraycolsep}{1.2pt}
\begin{array}{ccccccccccccccc}
 -\frac{\epsilon}{2C}   & \frac{\epsilon}{2C} & -\frac{\epsilon}{2C} & \frac{\epsilon}{2C}\\
 -\frac{\epsilon}{2C}   & \frac{\epsilon}{2C} & -\frac{\epsilon}{2C} & \frac{\epsilon}{2C}\\
 -\frac{\epsilon}{2C}   & \frac{\epsilon}{2C} & -\frac{\epsilon}{2C} & \frac{\epsilon}{2C}\\
 -\frac{\epsilon}{2C}   & \frac{\epsilon}{2C} & -\frac{\epsilon}{2C} & \frac{\epsilon}{2C}\\
 & & & & \ddots\\
 & & & & & \frac{1}{\rho^{(1)}(x_0)}&\frac{1}{\rho^{(1)}(x_0)}\\
 & & & & & \frac{2}{\rho^{(1)}(x_0)}&\frac{2}{\rho^{(1)}(x_0)}\\
 & & & & & \frac{1}{\rho^{(1)}(x_0)}&-\frac{1}{\rho^{(1)}(x_0)}\\
 & & & & & \frac{2}{\rho^{(1)}(x_0)}&-\frac{2}{\rho^{(1)}(x_0)}\\
 & & & & & & & \ddots\\
 & & & & & & & &-\frac{\epsilon}{C\rho(1)}&-\frac{\epsilon}{C\rho(1)}\\
 & & & & & & & &-\frac{2\epsilon}{C\rho(1)}&-\frac{2\epsilon}{C\rho(1)}\\
 & & & & & & & &-\frac{\epsilon}{C\rho(1)}&\frac{\epsilon}{C\rho(1)}\\
 & & & & & & & &-\frac{2\epsilon}{C\rho(1)}&\frac{2\epsilon}{C\rho(1)}
\end{array}\!\right)\, , \quad \boldsymbol{b}_{1} :=\left(\!\setlength{\arraycolsep}{1.5pt}
\begin{array}{c}
x_0\\
x_0\\
x_0\\
x_0\\
\vdots\\
x_0-\frac{2\rho(x_0)}{\rho^{(1)}(x_0)}\\
x_0-\frac{4\rho(x_0)}{\rho^{(1)}(x_0)}\\
x_0\\
x_0\\
\vdots \\
x_0\\
x_0\\
x_0 \\
x_0
\end{array}\!\right)\, .
$$

$\boldsymbol{A}_1 \in \mathbb{R}^{(2^{\kappa+1})\times(d+2^{\kappa})}\, , \qquad \boldsymbol{b}_{1} \in \mathbb{R}^{2^{{\kappa}+1}}. $

\
\item 
 $2\leq \ell \leq {\kappa}$.
\begin{align*}
&\boldsymbol{A}_{\ell} :=\frac{C}{4\epsilon\rho^{(2)}(x_0)}\left(\setlength{\arraycolsep}{1.5pt}
\begin{array}{rrrrrrrrrrrrrrrrr}
2 & -1& -2 & 1& 2 & -1 & -2 & 1\\
4 & -2& -4 & 2& 4 & -2& -4& 2\\
2 & -1& -2 & 1& -2 & 1 & 2 & -1\\
4 & -2& -4 & 2&  -4& 2& 4& -2\\
&&&&&&&&\ddots\\
&&&&&&&&&2 & -1& -2 & 1& 2 & -1 & -2 & 1\\
&&&&&&&&&4 & -2& -4 & 2& 4 & -2& -4& 2\\
&&&&&&&&&2 & -1& -2 & 1& -2 & 1 & 2 & -1\\
&&&&&&&&&4 & -2& -4 & 2&  -4& 2& 4& -2\\
\end{array}\right)\, ,\\
&\boldsymbol{A}_{\ell} \in \mathbb{R}^{(2^{{\kappa}-\ell+2})\times(2^{{\kappa}-\ell+3})}\, , \qquad \boldsymbol{b}_{\ell} :=(x_{0},\cdots , x_{0})^{\rm T} \in \mathbb{R}^{2^{{\kappa}-\ell+2}}\, . 
\end{align*}
\item 
The last layer.
\begin{align*}
&\boldsymbol{A}_{{\kappa}+1} :=\frac{1}{4}\cdot \frac{C^{2}}{\epsilon^{2} \rho^{(2)}(x_{0})}(-2, 1, 2, -1) \in \mathbb{R}^{1\times 4}\, , \qquad \boldsymbol{b}_{{\kappa}+1} :=0 \in \mathbb{R}\, .
\end{align*}
\end{itemize}
In conclusion, when $W = W(\phi_{\boldsymbol{\theta}_{\kappa}})=2^{{\kappa}+1}=2^{\lceil \log_2 (d+|\boldsymbol{\alpha}|_1) \rceil+1}$, $L = L(\phi_{\boldsymbol{\theta}_{\kappa}}) = {\kappa}+2 = \lceil \log_2 (d+|\boldsymbol{\alpha}|_1) \rceil +1$ and $B_{\boldsymbol{\theta}}(\phi_{\boldsymbol{\theta}_{{\kappa}}})= \max\{3Ns, (3d+3/2)s, C\epsilon^{-2} \}$. The approximation error is
$$
\|\Psi_{\raisebox{0.2ex}{$\hspace{-0.1em}{\scalebox{0.6}{$\boldsymbol{m}$}}$}}^{s} \mm {\boldsymbol{x}}^{\boldsymbol{\alpha}}-\phi_{\boldsymbol{\theta}_{\kappa}}(\boldsymbol{x})\|_{W^{k,\infty}([0,1]^{d})}\leq C(n,d)\epsilon\, ,
$$
where $C(n,d)$ is a polynomial which only depends on $n$ and $d$. Denote $\phi_{\boldsymbol{\theta}_{\kappa}}(\boldsymbol{x})$ as $\phi_{\boldsymbol{\theta}}^{\scalebox{0.6}{$\boldsymbol{m}, \mmn \boldsymbol{\alpha}$}}$, the proof is complete.
\end{proof}

Then we can construct the parallel neural network $\Phi_{\bar{\mathfrak{m}}, \bar{\boldsymbol{\theta}}}\in \mathcal{PNN}(\bar{\mathfrak{m}}, \bar{M}, \{\bar{W}, \bar{L}, B_{\bar{\boldsymbol{\theta}}}\})$ to approximate $f_{N}$.
\begin{thm}
\label{athm4.1}
Let $C$ be a positive number and $C(n,d)$ be a polynomial that depends on $n$ and $d$. For some sufficiently small $\epsilon^{*}>0$ and any $0<\epsilon < \epsilon^{*}$, there exists a neural network $\Phi_{\bar{\mathfrak{m}}, \bar{\boldsymbol{\theta}}}\in \mathcal{PNN}(\bar{\mathfrak{m}}, \bar{M}, \{\bar{W}, \bar{L}, B_{\bar{\boldsymbol{\theta}}}\})$ with 
\begin{align*}
\bar{\mathfrak{m}}&=(N+1)^dn \mm d^{ \mm n-1}\, ,\\
\bar{M} &= CN^{d/p}(N+1)^dn \mm d^{ \mm n-1}\, ,\\
\bar{W} &= 2^{\lceil \log_2 (d+|\boldsymbol{\alpha}|_1) \rceil+1}\, ,\\
\bar{L} &= \lceil \log_2 (d+|\boldsymbol{\alpha}|_1) \rceil+2\, ,\\
B_{\bar{\boldsymbol{\theta}}} &= \max\{3Ns, (3d+3/2)s, C\epsilon^{-2} \}\, ,
\end{align*}
such that
$$
\|f_N(\boldsymbol x)-\Phi_{\bar{\mathfrak{m}}, \bar{\boldsymbol{\theta}}}\|_{W^{k,\infty}([0,1]^{d})}\leq C(n,d)(N+1)^{d}\epsilon\, , \quad \boldsymbol x=(x_1, \cdots, x_d)^{\top} \in [0,1]^{d}\, ,
$$
for all $\boldsymbol{\alpha} \in \mathbb{N}^{d}$ with $|\boldsymbol{\alpha}|_1  \leq n-1$.
\end{thm}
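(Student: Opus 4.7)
The plan is to build $\Phi_{\bar{\mathfrak{m}},\bar{\boldsymbol{\theta}}}$ as a direct linear combination of the basic building blocks already furnished by Lemma \ref{alem4.6}, one sub-network per term in the double sum that defines $f_N$. Concretely, first I would enumerate the pairs $(\boldsymbol{m},\boldsymbol{\alpha})$ with $\boldsymbol{m}\in\{0,\ldots,N\}^d$ and $|\boldsymbol{\alpha}|_1\leq n-1$; the number of such pairs is at most $(N+1)^d\cdot n\m d^{\,n-1}$, which will become $\bar{\mathfrak{m}}$. For each such pair, Lemma \ref{alem4.6} supplies a sub-network $\phi_{\boldsymbol{\theta}}^{\boldsymbol{m},\boldsymbol{\alpha}}\in\mathcal{NN}(\bar W,\bar L,B_{\bar{\boldsymbol{\theta}}})$ with the claimed width, depth and weight bound, satisfying $\|\Psi_{\raisebox{0.2ex}{$\hspace{-0.1em}{\scalebox{0.6}{$\boldsymbol{m}$}}$}}^{s}\mm{\boldsymbol{x}}^{\boldsymbol{\alpha}}-\phi_{\boldsymbol{\theta}}^{\boldsymbol{m},\boldsymbol{\alpha}}\|_{W^{k,\infty}([0,1]^d)}\leq C(n,d)\epsilon$. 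The upper bound on $\boldsymbol{\alpha}$ enters the width/depth formulas only through $|\boldsymbol{\alpha}|_1\leq n-1$, so taking the worst case across $\boldsymbol{\alpha}$ yields the single-network architecture used in the statement.

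Next I define
\[
\Phi_{\bar{\mathfrak{m}},\bar{\boldsymbol{\theta}}}(\boldsymbol{x}) := \sum_{\boldsymbol{m}\in\{0,\ldots,N\}^d}\sum_{|\boldsymbol{\alpha}|_1\leq n-1} c_{\scalebox{0.7}{$f,\mmn\boldsymbol{m},\mmn\boldsymbol{\alpha}$}}\,\phi_{\boldsymbol{\theta}}^{\boldsymbol{m},\boldsymbol{\alpha}}(\boldsymbol{x}),
\]
which is by construction an element of the parallel network class $\mathcal{PNN}(\bar{\mathfrak{m}},\bar{M},\{\bar W,\bar L,B_{\bar{\boldsymbol{\theta}}}\})$. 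The outer $\ell^1$-bound $\bar{M}$ is obtained by summing $|c_{\scalebox{0.7}{$f,\mmn\boldsymbol{m},\mmn\boldsymbol{\alpha}$}}|$: the coefficient estimate from Proposition \ref{aprop4.1}, namely $|c_{\scalebox{0.7}{$f,\mmn\boldsymbol{m},\mmn\boldsymbol{\alpha}$}}|\leq C\|\tilde f\|_{W^{n,p}(\Omega_{\boldsymbol{m},N})}N^{d/p}$, together with the count of $(\boldsymbol{m},\boldsymbol{\alpha})$ pairs, produces exactly $\bar M=CN^{d/p}(N+1)^d\m n\m d^{\,n-1}$.

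For the error bound I would use the triangle inequality in $W^{k,\infty}([0,1]^d)$:
\[
\|f_N-\Phi_{\bar{\mathfrak{m}},\bar{\boldsymbol{\theta}}}\|_{W^{k,\infty}([0,1]^d)} \leq \sum_{\boldsymbol{m},\boldsymbol{\alpha}} |c_{\scalebox{0.7}{$f,\mmn\boldsymbol{m},\mmn\boldsymbol{\alpha}$}}|\cdot \|\Psi_{\raisebox{0.2ex}{$\hspace{-0.1em}{\scalebox{0.6}{$\boldsymbol{m}$}}$}}^{s}\mm{\boldsymbol{x}}^{\boldsymbol{\alpha}}-\phi_{\boldsymbol{\theta}}^{\boldsymbol{m},\boldsymbol{\alpha}}\|_{W^{k,\infty}([0,1]^d)},
\]
and invoke Lemma \ref{alem4.6} in each summand. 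Exploiting the localisation property of the bumps $\Psi_{\boldsymbol{m}}^s$ (the effective support is $\Omega_{\boldsymbol{m},N}$, so at any fixed $\boldsymbol{x}$ only $\mathcal{O}(1)$ indices $\boldsymbol{m}$ contribute with non-negligible mass), the number of $\boldsymbol{m}$ that effectively appear at a given point is bounded by a dimensional constant, while the $\boldsymbol{\alpha}$-sum contributes a $C(n,d)$ factor, and the coefficient sum over the relevant $\boldsymbol{m}$'s is controlled by $\|\tilde f\|_{W^{n,p}}\cdot(N+1)^d$ via the extension in Proposition \ref{aprop4.1}. This collapses the bound to $C(n,d)(N+1)^d\epsilon$.

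The only delicate bookkeeping step, and the place I would take the most care, is reconciling the factor $N^{d/p}$ appearing inside $\bar M$ with its disappearance from the final error bound. The point is that $\bar M$ is the \emph{global} $\ell^1$-bound on $\boldsymbol{\theta}_{\rm out}$ needed to certify membership in the parallel class, whereas the \emph{pointwise} error estimate above only sees the $\mathcal{O}(1)$ active coefficients at any given $\boldsymbol{x}$ because of the essentially disjoint supports of $\Psi_{\boldsymbol{m}}^s$. Making this argument rigorous in the $W^{k,\infty}$ norm (so that derivatives are also localised to $\Omega_{\boldsymbol{m},N}$ up to an exponentially small tail, precisely the content of items (ii)--(iii) of the exponential partition of unity) is the main technical obstacle; everything else reduces to counting and a direct triangle-inequality estimate.
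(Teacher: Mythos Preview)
Your construction and parameter counting match the paper exactly: define $\Phi_{\bar{\mathfrak{m}},\bar{\boldsymbol{\theta}}}$ as the linear combination of the sub-networks $\phi_{\boldsymbol{\theta}}^{\boldsymbol{m},\boldsymbol{\alpha}}$ from Lemma~\ref{alem4.6}, count $\bar{\mathfrak{m}}=(N+1)^d n\,d^{\,n-1}$, and read off $\bar M$ from the coefficient bound in Proposition~\ref{aprop4.1}. The error estimate via the triangle inequality is also the paper's route.

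The one place you go astray is the ``delicate bookkeeping'' paragraph. There is nothing to reconcile: the stated error bound is in $W^{k,\infty}$, and for $p=\infty$ the coefficient estimate from Proposition~\ref{aprop4.1} reads $|c_{\scalebox{0.7}{$f,\mmn\boldsymbol{m},\mmn\boldsymbol{\alpha}$}}|\leq C N^{d/p}=C$. The paper simply sums the triangle inequality over all $(\boldsymbol{m},\boldsymbol{\alpha})$ to get $C(n,d)N^{d/p}(N+1)^d\epsilon$ in $W^{k,p}$, then sets $p=\infty$ so that $N^{d/p}=1$ and the bound collapses to $C(n,d)(N+1)^d\epsilon$. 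No localisation is invoked or needed.

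Your proposed localisation argument would in fact not work: the quantity you are summing is $\|\Psi_{\boldsymbol{m}}^{s}\boldsymbol{x}^{\boldsymbol{\alpha}}-\phi_{\boldsymbol{\theta}}^{\boldsymbol{m},\boldsymbol{\alpha}}\|_{W^{k,\infty}}$, and while $\Psi_{\boldsymbol{m}}^s$ is essentially supported on $\Omega_{\boldsymbol{m},N}$, the neural-network approximant $\phi_{\boldsymbol{\theta}}^{\boldsymbol{m},\boldsymbol{\alpha}}$ is not---Lemma~\ref{alem4.6} only gives a global $W^{k,\infty}$ bound on the difference, with no claim that the error vanishes outside $\Omega_{\boldsymbol{m},N}$. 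So the ``only $\mathcal O(1)$ terms contribute at each point'' reasoning does not apply to the error terms. Drop that paragraph and replace it with the observation $N^{d/\infty}=1$, and your proof is complete and identical to the paper's.
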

\begin{proof}
In Lemma \ref{alem4.6}, $\Psi_{\raisebox{0.2ex}{$\hspace{-0.1em}{\scalebox{0.6}{$\boldsymbol{m}$}}$}}^{s} \mm {\boldsymbol{x}}^{\boldsymbol{\alpha}}$ can be approximated by a neural network $\phi_{\bar{\boldsymbol{\theta}}}^{\scalebox{0.6}{$\boldsymbol{m}, \mmn \boldsymbol{\alpha}$}} \in \mathcal{N N}(\bar{W}, \bar{L}, B_{\bar{\boldsymbol{\theta}}})$ with $\bar{W} = 2^{\lceil \log_2 (d+|\boldsymbol{\alpha}|_1) \rceil+1}$, $\bar{L} = \lceil \log_2 (d+|\boldsymbol{\alpha}|_1) \rceil+2$ and $B_{\bar{\boldsymbol{\theta}}} = \max\{3Ns, (3d+3/2)s, C\epsilon^{-2} \}$. According to Proposition \ref{aprop4.1}, we can approximate any $f\in W^{n,p}([0,1]^{d})$ by approximating
$$
f_N(\boldsymbol{x})= \sum_{\boldsymbol{m} \in\{0, \ldots, N\}^{d}}\sum_{|\boldsymbol{\alpha}|_1 \leq n-1} c_{\scalebox{0.7}{$f, \mmn \boldsymbol{m}, \mmn \boldsymbol{\alpha}$}} \Psi_{\raisebox{0.2ex}{$\hspace{-0.1em}{\scalebox{0.6}{$\boldsymbol{m}$}}$}}^{s} \mm {\boldsymbol{x}}^{\boldsymbol{\alpha}}\, .
$$
In Proposition \ref{aprop4.1},
$$
|c_{\scalebox{0.7}{$f, \mmn \boldsymbol{m}, \mmn \boldsymbol{\alpha}$}}|\leq C \|\tilde{f}\|_{W^{n,p}(\Omega_{\boldsymbol{m},N})}N^{d/p}\leq C \|f\|_{W^{n,p}([0,1]^{d})}N^{d/p}\leq CN^{d/p}\, .
$$
Observe that $\sum_{|\boldsymbol{\alpha}|_1\leq n-1}1 = \sum_{j=0}^{n-1}\sum_{|\boldsymbol{\alpha}|_1=j}1\leq \sum_{j=0}^{n-1}d^{\mm j}\leq nd^{\mm n-1}$.
Assume $f_{N}$ can be approximate by neural network $\Phi_{\bar{\mathfrak{m}}, \bar{\boldsymbol{\theta}}}\in \mathcal{PNN}(\bar{\mathfrak{m}}, \bar{M}, \{\bar{W}, \bar{L}, B_{\bar{\boldsymbol{\theta}}}\})$,
$$
\Phi_{\bar{\mathfrak{m}}, \bar{\boldsymbol{\theta}}}(\boldsymbol{x}) = \sum_{\boldsymbol{m} \in\{0, \ldots, N\}^{d}}\sum_{|\boldsymbol{\alpha}|_1 \leq n-1} c_{\scalebox{0.7}{$f, \mmn \boldsymbol{m}, \mmn \boldsymbol{\alpha}$}} \phi_{\bar{\boldsymbol{\theta}}}^{\scalebox{0.6}{$\boldsymbol{m}, \mmn \boldsymbol{\alpha}$}}(\boldsymbol{x}) = \sum_{k = 1}^{\bar{\mathfrak{m}}} c_{k} \phi^{k}_{\bar{\boldsymbol{\theta}}}(\boldsymbol{x})\, ,
$$
where $\bar{\mathfrak{m}}=(N+1)^dn \mm d^{\mm n-1}$ and $\bar{M} =CN^{d/p}(N+1)^dn \mm d^{ \mm n-1}$, then
\begin{align*}
\big\|f_{N}(\boldsymbol{x}) - \Phi_{\bar{\mathfrak{m}}, \bar{\boldsymbol{\theta}}}(\boldsymbol{x})\big\|&_{W^{k,p}([0,1]^{d})}=\Big\|f_{N}(\boldsymbol{x}) - \sum_{\boldsymbol{m} \in\{0, \ldots, N\}^{d}}\sum_{|\boldsymbol{\alpha}|_1 \leq n-1} c_{\scalebox{0.7}{$f, \mmn \boldsymbol{m}, \mmn \boldsymbol{\alpha}$}} \mm \phi_{\bar{\boldsymbol{\theta}}}^{\scalebox{0.6}{$\boldsymbol{m}, \mmn \boldsymbol{\alpha}$}}(\boldsymbol{x})\Big\|_{W^{k,p}([0,1]^{d})}\\
& \leq \sum_{\boldsymbol{m} \in\{0, \ldots, N\}^{d}}\sum_{|\boldsymbol{\alpha}|_1 \leq n-1} |c_{\scalebox{0.7}{$f, \mmn \boldsymbol{m}, \mmn \boldsymbol{\alpha}$}}|\cdot \big\|\Psi_{\raisebox{0.2ex}{$\hspace{-0.1em}{\scalebox{0.6}{$\boldsymbol{m}$}}$}}^{s} \mm {\boldsymbol{x}}^{\boldsymbol{\alpha}}-\phi_{\bar{\boldsymbol{\theta}}}^{\scalebox{0.6}{$\boldsymbol{m}, \mmn \boldsymbol{\alpha}$}}(\boldsymbol{x})\big\|_{W^{k,p}([0,1]^{d})}\\
&\leq C(n,d)N^{d/p}\mm 2^d \mm n \mm d^{\mm n-1}(N+1)^d\epsilon\, .
\end{align*}
When $p = \infty$, we have $\big\|f_{N}(\boldsymbol{x}) - \Phi_{\bar{\mathfrak{m}}, \bar{\boldsymbol{\theta}}}(\boldsymbol{x})\big\|_{W^{k,\infty}([0,1]^{d})}\leq C(n,d)(N+1)^d\epsilon$.
\end{proof}

\subsubsection{Approximation bound for neural networks}
\label{app4}
In this section, by combining Proposition \ref{aprop4.1} with Theorem \ref{athm4.1}, we can derive an upper bound for the approximation error.

\begin{thm}
\label{athm4.2}
Let $n, k, d, \bar{\mathfrak{m}} \in \mathbb{N}$, $n\geq k+1$ and $1\leq p\leq \infty$, $C$ be a positive number and $C(n,d)$ be a polynomial that depends on $n$ and $d$. Let $f \in \mathcal{F}_{n, d, p}$. For some sufficiently small $\epsilon^{*}>0$ and any $0<\epsilon < \epsilon^{*}$, there exists a neural network $\Phi_{\bar{\mathfrak{m}}, \bar{\boldsymbol{\theta}}} \in \mathcal{PNN}(\bar{\mathfrak{m}}, \bar{M}, \{\bar{W}, \bar{L}, B_{\bar{\boldsymbol{\theta}}}\})$ with $\bar{\mathfrak{m}}=C_1(n, d, k)\epsilon^{-\frac{d}{n-k-\mu k}}$, $\bar{M} = C_2(n, d, k)\epsilon^{-\frac{d(p+1)}{(n-k-\mu k)p}}$ , $\bar{W} = 2^{\lceil \log_2 (d+|\boldsymbol{\alpha}|_1) \rceil+1}$, $\bar{L}= \lceil \log_2 (d+|\boldsymbol{\alpha}|_1) \rceil+2$, and $B_{\bar{\boldsymbol{\theta}}} = C_3(n,d,k)\epsilon^{-2-\frac{2d}{n-k-\mu k}}$ such that
$$
\big\|f-\Phi_{\bar{\mathfrak{m}}, \bar{\boldsymbol{\theta}}}\big\|_{W^{k, p}([0,1]^{d})} \leq \epsilon\, ,
$$
where $\mu$ is an arbitrarily small positive number.
\end{thm}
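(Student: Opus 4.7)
The strategy is to chain together the two approximation results that precede the statement: Proposition \ref{aprop4.1}, which bounds the distance between $f$ and the sum $f_N = \sum_{\boldsymbol{m}} \Psi^s_{\boldsymbol{m}} p_{f,\boldsymbol{m}}$ of localized polynomials in the $W^{k,p}$ norm, and Theorem \ref{athm4.1}, which bounds the distance between $f_N$ and an explicit parallel network $\Phi_{\bar{\mathfrak{m}}, \bar{\boldsymbol{\theta}}}$ in the $W^{k,\infty}$ norm. Since $[0,1]^d$ has finite measure, the $W^{k,\infty}$ bound upgrades (up to a constant) to a $W^{k,p}$ bound, and the triangle inequality gives
\begin{equation*}
\| f - \Phi_{\bar{\mathfrak{m}}, \bar{\boldsymbol{\theta}}} \|_{W^{k,p}([0,1]^d)} \leq \| f - f_N \|_{W^{k,p}([0,1]^d)} + \| f_N - \Phi_{\bar{\mathfrak{m}}, \bar{\boldsymbol{\theta}}} \|_{W^{k,p}([0,1]^d)}.
\end{equation*}
I will then balance the two terms by making each at most $\epsilon/2$ through appropriate choices of the discretization parameter $N$ and the internal precision parameter fed into Theorem \ref{athm4.1}.

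First, by Proposition \ref{aprop4.1} applied to the unit ball in $W^{n,p}$, $\| f - f_N \|_{W^{k,p}} \leq C(d,n,p,k) \cdot N^{-(n-k-\mu k)}$ (taking the scaling $s = N^{\mu}$ from that proposition, which is exactly where the $\mu$ in the statement enters). Choosing $N = \lceil C' \epsilon^{-1/(n-k-\mu k)} \rceil$ for a suitable constant $C'$ makes the first term at most $\epsilon/2$. Next, to handle the second term I invoke Theorem \ref{athm4.1} with its internal precision parameter set to
\begin{equation*}
\epsilon' = \frac{\epsilon}{2 \, C(n,d) (N+1)^d} \sim \epsilon^{1 + d/(n-k-\mu k)},
\end{equation*}
which yields a network $\Phi_{\bar{\mathfrak{m}}, \bar{\boldsymbol{\theta}}}$ that approximates $f_N$ in $W^{k,\infty}([0,1]^d)$, and hence in $W^{k,p}([0,1]^d)$, with error at most $\epsilon/2$.

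Finally, I read off the architectural parameters by substituting the chosen $N$ and $\epsilon'$ into the formulas given by Theorem \ref{athm4.1}: $\bar{\mathfrak{m}} = (N+1)^d n d^{n-1} \sim \epsilon^{-d/(n-k-\mu k)}$ gives the stated bound on the number of subnetworks; $\bar{M} = C N^{d/p}(N+1)^d n d^{n-1} \sim \epsilon^{-d(p+1)/((n-k-\mu k) p)}$ recovers the $\ell_1$ coefficient bound (using $N^{d/p+d} = N^{d(p+1)/p}$); the width $\bar{W}$ and depth $\bar{L}$ are inherited unchanged; and $B_{\bar{\boldsymbol{\theta}}} = \max\{3Ns, (3d+3/2)s, C\epsilon'^{-2}\}$ is dominated by the $C\epsilon'^{-2}$ term, giving $B_{\bar{\boldsymbol{\theta}}} \sim \epsilon^{-2 - 2d/(n-k-\mu k)}$ as claimed.

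The main obstacles are bookkeeping rather than deep: (i) verifying that the $\mu$ appearing in Proposition \ref{aprop4.1}, which parametrizes the scaling $s = N^\mu$ of the tanh bumps, can indeed be taken to be the arbitrary small $\mu$ in the target statement; (ii) confirming that the $W^{k,\infty}$-to-$W^{k,p}$ inclusion on $[0,1]^d$ loses at most a constant factor (with $p = \infty$ requiring no change); and (iii) absorbing all $n$- and $d$-dependent prefactors into the universal constants $C_1, C_2, C_3$ so that the scaling in $\epsilon$ comes out exactly as stated. These are routine once the two-step decomposition is set up.
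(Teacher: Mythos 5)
Your proof follows the same two-step route as the paper: use the triangle inequality to split the error between $\|f-f_N\|_{W^{k,p}}$ (controlled by Proposition \ref{aprop4.1} with $N\sim\epsilon^{-1/(n-k-\mu k)}$, $s=N^{\mu}$) and $\|f_N-\Phi_{\bar{\mathfrak{m}},\bar{\boldsymbol{\theta}}}\|_{W^{k,p}}$ (controlled by Theorem \ref{athm4.1} with internal precision $\epsilon'\sim\epsilon^{1+d/(n-k-\mu k)}$), then read off $\bar{\mathfrak{m}},\bar{M},B_{\bar{\boldsymbol{\theta}}}$ by substituting $N$ and $\epsilon'$ into that theorem's parameter formulas. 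This matches the paper's argument exactly; the only small addition you make is to explicitly note the $W^{k,\infty}\hookrightarrow W^{k,p}$ step on the bounded cube, which the paper uses implicitly.
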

\begin{proof}
We divide the proof into two steps: First, we approximate the function $f$ by a sum of localized polynomials. Afterwards, we proceed by approximating this sum by a neural network. For the first step, by Proposition \ref{aprop4.1}, we can set
$$
N:=\bigg\lceil\Big(\frac{\tilde{\epsilon}}{2C}\Big)^{-1 /(n-k-\mu k)}\bigg\rceil \quad \text { and } \quad s:=N^{\mu}\, ,
$$
then there exist polynomials $p_{\scalebox{0.7}{$\boldsymbol{m}$}}(\boldsymbol{x})=\sum_{|\boldsymbol{\alpha}|_1 \leq n-1} c_{\scalebox{0.7}{$f, \mmn \boldsymbol{m}, \mmn \boldsymbol{\alpha}$}} {\boldsymbol{x}}^{\boldsymbol{\alpha}}$ for $\boldsymbol{m} \in\{0, \cdots, N\}^{d}$ such that
$$
\bigg\|f-\sum_{\boldsymbol{m} \in\{0, \cdots, N\}^{d}} \Psi_{\raisebox{0.2ex}{$\hspace{-0.1em}{\scalebox{0.6}{$\boldsymbol{m}$}}$}}^{s} \mm p_{\scalebox{0.7}{$\boldsymbol{m}$}}\bigg\|_{W^{k, p}([0,1]^{d})} \leq C \bigg(\frac{1}{N}\bigg)^{n-k-\mu k} \leq C \cdot \frac{\tilde{\epsilon}}{2C}=\frac{\tilde{\epsilon}}{2}\, .
$$
Secondly, combining with Theorem \ref{athm4.1} and plugging in $N$, $s$ calculated in the first step, we can get that when $\big\|f_{N}(\boldsymbol{x}) - \Phi_{\bar{\mathfrak{m}}, \bar{\boldsymbol{\theta}}}(\boldsymbol{x})\big\|_{W^{k,\infty}([0,1]^{d})}\leq C(n,d)(N+1)^d\epsilon = 2^{-1}\tilde{\epsilon}$, $\epsilon = C \m \tilde{\epsilon}^{\mm 1+\frac{d}{n-k-\mu k}}$. Then the total approximation error is
$$
\big\|f-\Phi_{\bar{\mathfrak{m}}, \bar{\boldsymbol{\theta}}}\big\|_{W^{k, p}([0,1]^{d})} \leq \big\|f-f_N\big\|_{W^{k, p}([0,1]^{d})}+ \big\|f_N-\Phi_{\bar{\mathfrak{m}}, \bar{\boldsymbol{\theta}}}\big\|_{W^{k,p}([0,1]^{d})}\leq\tilde{\epsilon}\, ,
$$
and $\bar{\mathfrak{m}}=C_1(n, d, k)\tilde{\epsilon}^{-\frac{d}{n-k-\mu k}}$, $\bar{M} = C_2(n, d, k)\tilde{\epsilon}^{-\frac{d(p+1)}{(n-k-\mu k)p}}$ , $\bar{W} = 2^{\lceil \log_2 (d+|\boldsymbol{\alpha}|_1) \rceil+1}$, $\bar{L}= \lceil \log_2 (d+|\boldsymbol{\alpha}|_1) \rceil+2$, and $B_{\bar{\boldsymbol{\theta}}} = C_3(n,d,k)\tilde{\epsilon}^{-2-\frac{2d}{n-k-\mu k}}$.
\end{proof}

\subsection{Detailed statistical error analysis}
\label{proof of sta}
In this section, following the approach of \cite{kohler2023opt} and \cite{jiao2023error}, and for the sake of completeness, we present a specific proof for controlling the upper bound of the statistical error. The neural network space is denoted as $\mathcal{PNN}(\mathfrak{m}, M, \{W, L, B_{\boldsymbol{\theta}}\})$, where each element in this space is a parallel neural network composed of $\mathfrak{m}$ sub-neural networks with width $W$, depth $L$, and the uniform upper bound of the weights $B_{\boldsymbol{\theta}}$. Moreover, each element satisfies $\sum_{k=1}^{\mathfrak{m}} |c_k| \leq M$.  Recall the definition of statistical error in Section \ref{sta}, 
$$
\mathcal{E}_{sta} := \sup_{u\in\mathcal{PNN}}\big|\mathcal{L}(u)-\widehat{\mathcal{L}}(u)\big|\, .
$$
The task is to control $\mathcal{E}_{sta}$ with high probability, which involves initially controlling its expectation, $\mathbb{E}[\mathcal{E}_{sta}]$. Our proof is organized into four steps. First, through careful computation and the application of the triangle inequality, we provide a detailed decomposition of $\mathbb{E}[\mathcal{E}_{sta}]$. These components will be analyzed individually in the subsequent sections.

\begin{lem}
\label{lem4.1}
The expectation of $\mathcal{E}_{sta}$ w.r.t. the sample points can be decomposed as follows
\begin{align*}
\mathbb{E}[\mathcal{E}_{sta}]&=\underset{\{X_p\}_{p=1}^{N_{\rm in}},\{Y_p\}_{p=1}^{N_{b}}}{\mathbb{E}}\bigg[\sup_{u_{\mathfrak{m}, \boldsymbol{\theta}}\in\mathcal{PNN}}\big|\mathcal{L}(u_{\mathfrak{m}, \boldsymbol{\theta}})-\widehat{\mathcal{L}}(u_{\mathfrak{m}, \boldsymbol{\theta}})\big|\bigg] \\
&\leq \sum_{i=1}^{4}\underset{\{X_p\}_{p=1}^{N_{\rm in}},\{Y_p\}_{p=1}^{N_{b}}}{\mathbb{E}}\sup_{u_{\mathfrak{m}, \boldsymbol{\theta}}\in\mathcal{PNN}}\big|\mathcal{L}_i(u_{\mathfrak{m}, \boldsymbol{\theta}})-\widehat{\mathcal{L}}_i(u_{\mathfrak{m}, \boldsymbol{\theta}})\big| = \sum_{i=1}^{4} \mathbb{E}[\mathcal{E}^{i}_{sta}]\, .
\end{align*}
where
\begin{align*}
	&\mathcal{L}_1(u_{\mathfrak{m}, \boldsymbol{\theta}})=\frac{|\Omega|}{2}\mathbb{E}_{X\sim U(\Omega)}\|\nabla u_{\mathfrak{m}, \boldsymbol{\theta}}(X)\|^2\, ,
	&\mathcal{L}_2(u_{\mathfrak{m}, \boldsymbol{\theta}})&=\frac{|\Omega|}{2}\mathbb{E}_{X\sim U(\Omega)}\omega(X)u_{\mathfrak{m}, \boldsymbol{\theta}}^2(X)\, ,\\
	&\mathcal{L}_3(u_{\mathfrak{m}, \boldsymbol{\theta}})=-|\Omega|\mathbb{E}_{X\sim U(\Omega)}h(X)u_{\mathfrak{m}, \boldsymbol{\theta}}(X)\, ,
	&\mathcal{L}_4(u_{\mathfrak{m}, \boldsymbol{\theta}})&=-|\partial\Omega|\mathbb{E}_{Y\sim U(\partial\Omega)}g(Y)Tu_{\mathfrak{m}, \boldsymbol{\theta}}(Y)\, .
\end{align*}
	and $\widehat{\mathcal{L}}_i(u_{\mathfrak{m}, \boldsymbol{\theta}})$ is the discrete version of $\mathcal{L}_i(u_{\mathfrak{m}, \boldsymbol{\theta}})$, for example,
	\begin{equation*}
	\widehat{\mathcal{L}}_1(u_{\mathfrak{m}, \boldsymbol{\theta}})=\frac{|\Omega|}{2N_{\rm in}}\sum_{p=1}^{N_{\rm in}}\|\nabla u_{\mathfrak{m}, \boldsymbol{\theta}}(X_p)\|^2\, .
	\end{equation*}
\end{lem}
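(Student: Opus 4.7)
The plan is that this decomposition lemma is essentially a direct application of the triangle inequality combined with the monotonicity of supremum and expectation, since the four components $\mathcal{L}_i$ are defined precisely so that $\mathcal{L} = \sum_{i=1}^{4} \mathcal{L}_i$. I do not anticipate any substantial obstacle; the result is bookkeeping that isolates each integrand in $\mathcal{L}(u)$ so that the four pieces can later be analyzed separately in terms of their own Rademacher complexities.

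Concretely, I would first verify by inspection of the definitions in (\ref{eq11}) and (\ref{eq12.}) that for every $u_{\mathfrak{m},\boldsymbol{\theta}} \in \mathcal{PNN}$ one has the identities
\begin{equation*}
\mathcal{L}(u_{\mathfrak{m},\boldsymbol{\theta}}) = \sum_{i=1}^{4}\mathcal{L}_i(u_{\mathfrak{m},\boldsymbol{\theta}})\, ,\qquad \widehat{\mathcal{L}}(u_{\mathfrak{m},\boldsymbol{\theta}}) = \sum_{i=1}^{4}\widehat{\mathcal{L}}_i(u_{\mathfrak{m},\boldsymbol{\theta}})\, ,
\end{equation*}
where the first three terms involve the interior samples $\{X_p\}_{p=1}^{N_{\rm in}}$ and the fourth involves the boundary samples $\{Y_p\}_{p=1}^{N_b}$. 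Taking the difference and applying the triangle inequality pointwise in $\boldsymbol{\theta}$ yields $|\mathcal{L}(u_{\mathfrak{m},\boldsymbol{\theta}}) - \widehat{\mathcal{L}}(u_{\mathfrak{m},\boldsymbol{\theta}})| \le \sum_{i=1}^{4}|\mathcal{L}_i(u_{\mathfrak{m},\boldsymbol{\theta}}) - \widehat{\mathcal{L}}_i(u_{\mathfrak{m},\boldsymbol{\theta}})|$.

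Next I would take the supremum over $u_{\mathfrak{m},\boldsymbol{\theta}} \in \mathcal{PNN}$ on both sides. Using the elementary fact that the supremum of a finite sum of nonnegative functions is bounded by the sum of the individual suprema, I obtain
\begin{equation*}
\sup_{u_{\mathfrak{m},\boldsymbol{\theta}} \in \mathcal{PNN}}\big|\mathcal{L}(u_{\mathfrak{m},\boldsymbol{\theta}}) - \widehat{\mathcal{L}}(u_{\mathfrak{m},\boldsymbol{\theta}})\big| \le \sum_{i=1}^{4}\sup_{u_{\mathfrak{m},\boldsymbol{\theta}} \in \mathcal{PNN}}\big|\mathcal{L}_i(u_{\mathfrak{m},\boldsymbol{\theta}}) - \widehat{\mathcal{L}}_i(u_{\mathfrak{m},\boldsymbol{\theta}})\big|\, .
\end{equation*}
Finally, taking expectation over $\{X_p\}_{p=1}^{N_{\rm in}}$ and $\{Y_p\}_{p=1}^{N_b}$, the linearity of expectation and its monotonicity conclude the decomposition as stated. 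The only care needed is to ensure measurability of the suprema, which holds because $\mathcal{PNN}$ is parameterized by a bounded subset of Euclidean space and each map $\boldsymbol{\theta} \mapsto \mathcal{L}_i(u_{\mathfrak{m},\boldsymbol{\theta}}) - \widehat{\mathcal{L}}_i(u_{\mathfrak{m},\boldsymbol{\theta}})$ is continuous, so standard separability arguments reduce the supremum to a countable one.
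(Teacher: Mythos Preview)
Your proposal is correct and matches the paper's approach: the paper states this lemma without a detailed proof, describing it only as ``careful computation and the application of the triangle inequality,'' which is precisely your argument of writing $\mathcal{L}=\sum_i\mathcal{L}_i$, $\widehat{\mathcal{L}}=\sum_i\widehat{\mathcal{L}}_i$, applying the triangle inequality, bounding the supremum of a sum by the sum of suprema, and taking expectations.
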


To facilitate subsequent analysis, we first introduce the definition of the Rademacher complexity, which will aid us in bounding $\mathbb{E}[\mathcal{E}^{i}_{sta}]$ through the technique of symmetrization.

\begin{defn}
	Two types of Rademacher complexity of  function class $\mathcal{F}$ associate with random sample $\{X_k\}_{k=1}^{N}$ are defined as
	\begin{align*}
		\mathfrak{R}_{N}(\mathcal{F}) & = \mathbb{E}_{\{X_k,\sigma_k\}_{k=1}^{N}}\bigg[\sup_{u\in \mathcal{F}}\frac{1}{N}\sum_{k=1}^N \sigma_k u(X_k)\bigg]\, ,\\
  \hat{\mathfrak{R}}_{N}(\mathcal{F}) & = \mathbb{E}_{\{X_k,\sigma_k\}_{k=1}^{N}}\bigg[\sup_{u\in \mathcal{F}}\frac{1}{N}\Big|\sum_{k=1}^N \sigma_k u(X_k)\Big|\bigg]\, .
	\end{align*}
 where,   $\{\sigma_k\}_{k=1}^N$ are $N$ i.i.d  Rademacher variables with $\mathbb{P}(\sigma_k = 1) = \mathbb{P}(\sigma_k = -1) = \frac{1}{2}.$
\end{defn}

For Rademacher complexity $\mathfrak{R}_N(\mathcal{F})$, we have following two structural results.
\begin{prop} \label{structural result of Rademacher}
	Assume that $\omega:\Omega\to\mathbb{R}$ and $|\omega(x)|\leq\mathcal{B}$ for all $x\in\Omega$, then for any function class $\mathcal{F}$, there holds
	\begin{equation*}
		\mathfrak{R}_N(\omega\cdot\mathcal{F})\leq\mathcal{B}\mathfrak{R}_N(\mathcal{F})\, ,
	\end{equation*}
	where $\omega\cdot\mathcal{F}:=\{\bar{u}:\bar{u}(x)=\omega(x)u(x),u\in\mathcal{F}\}$.
\end{prop}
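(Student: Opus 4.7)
The plan is to reduce the statement to a computation about the Rademacher complexity of $\mathcal{F}$ itself via a symmetrization/Jensen argument that absorbs the multiplicative weight $\omega(X_k)$ into an auxiliary Bernoulli randomization. The case $\mathcal{B}=0$ is trivial, so I assume $\mathcal{B}>0$ and set $b_k := \omega(X_k)/\mathcal{B}\in[-1,1]$. Unfolding the definition gives
\begin{align*}
\mathfrak{R}_N(\omega \cdot \mathcal{F}) = \mathcal{B}\cdot \mathbb{E}\left[\sup_{u \in \mathcal{F}}\frac{1}{N}\sum_{k=1}^N \sigma_k\, b_k\, u(X_k)\right],
\end{align*}
so the goal reduces to showing that the expectation on the right is at most $\mathfrak{R}_N(\mathcal{F})$.

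The main step introduces auxiliary random variables $\tau_1,\ldots,\tau_N\in\{-1,1\}$, conditionally independent given $\{X_k\}$ and independent of $\{\sigma_k\}$, with $P(\tau_k=1\mid X_k)=(1+b_k)/2$. Because $|b_k|\le 1$ this distribution is well defined, and $\mathbb{E}[\tau_k\mid X_k]=b_k$. Hence $\sigma_k b_k = \mathbb{E}_\tau[\sigma_k\tau_k\mid X,\sigma]$, and Jensen's inequality (pulling the supremum outside the conditional expectation) yields
\begin{align*}
\sup_{u}\sum_k \sigma_k b_k u(X_k) \leq \mathbb{E}_\tau\!\left[\sup_{u}\sum_k \sigma_k\tau_k u(X_k)\,\Big|\, X,\sigma\right].
\end{align*}
Taking expectation over $\sigma$ and $X$ then reduces the problem to evaluating $\mathbb{E}\bigl[\sup_u\sum_k \sigma_k\tau_k u(X_k)\bigr]$.

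The last step is a distributional identification: the products $\sigma_k\tau_k$ form an i.i.d.\ Rademacher sequence independent of $\{X_k\}$, since a direct conditional calculation gives $P(\sigma_k\tau_k=1\mid X_k)=\tfrac12$ regardless of $b_k$, and independence across $k$ is inherited from the independence of the pairs $(\sigma_k,\tau_k)$. Replacing $\sigma_k\tau_k$ by a fresh Rademacher sequence turns the right-hand side into $N\,\mathfrak{R}_N(\mathcal{F})$, and dividing by $N$ produces the claimed bound $\mathcal{B}\,\mathfrak{R}_N(\mathcal{F})$. The argument is essentially routine; the only place where the hypothesis $|\omega|\leq\mathcal{B}$ is genuinely used is in defining $\tau_k$ (which requires $b_k\in[-1,1]$), and the main care needed is bookkeeping of which variables are integrated over at each stage of the symmetrization.
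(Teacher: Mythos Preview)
Your proof is correct, but it takes a genuinely different route from the paper's argument. The paper proceeds coordinate by coordinate in the style of the Ledoux--Talagrand contraction lemma: it conditions on $\sigma_1$, combines the two resulting suprema into a single supremum over pairs $(u,u')\in\mathcal{F}^2$, uses $|\omega(X_1)(u(X_1)-u'(X_1))|\le \mathcal{B}|u(X_1)-u'(X_1)|$, and then splits back and iterates through the remaining coordinates. Your approach instead absorbs the weights all at once by writing each $b_k=\omega(X_k)/\mathcal{B}$ as the conditional mean of a biased sign $\tau_k$, invoking Jensen to pull the supremum outside $\mathbb{E}_\tau$, and observing that $\sigma_k\tau_k$ is exactly Rademacher and independent of $X_k$. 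The paper's argument is slightly more elementary (no auxiliary randomness, just algebra), while yours is cleaner and handles all coordinates simultaneously; both are standard and yield the same bound with no loss in constants.
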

\begin{proof}
	\begin{align*}
		&\mathfrak{R}_N(\omega\cdot\mathcal{F})
		=\frac{1}{N}\mathbb{E}_{\{X_k,\sigma_k\}_{k=1}^{N}}\sup_{u\in\mathcal{F}}\sum_{k=1}^{N}\sigma_k \m \omega(X_k)u(X_k)\\
		&=\frac{1}{2N}\mathbb{E}_{\{X_k\}_{k=1}^{N}}\mathbb{E}_{\{\sigma_k\}_{k=2}^{N}}\sup_{u\in\mathcal{F}}\bigg[\omega(X_1)u(X_1)+\sum_{k=2}^{N}\sigma_k \m \omega(X_k)u(X_k)\bigg]\\
		&\quad\ +\frac{1}{2N}\mathbb{E}_{\{X_k\}_{k=1}^{N}}\mathbb{E}_{\{\sigma_k\}_{k=2}^{N}}\sup_{u\in\mathcal{F}}\bigg[-\omega(X_1)u(X_1)+\sum_{k=2}^{N}\sigma_k \m \omega(X_k)u(X_k)\bigg]\\
		&=\frac{1}{2N}\mathbb{E}_{\{X_k\}_{k=1}^{N}}\mathbb{E}_{\{\sigma_k\}_{k=2}^{N}}\\
		&\quad\sup_{u,u^{\prime}\in\mathcal{F}}\bigg[\omega(X_1)[u(X_1)-u^{\prime}(X_1)]+\sum_{k=2}^{N}\sigma_k \m \omega(X_k)u(X_k)+\sum_{k=2}^{N}\sigma_k \m \omega(X_k)u^{\prime}(X_k)\bigg]\\
		&\leq\frac{1}{2N}\mathbb{E}_{\{X_k\}_{k=1}^{N}}\mathbb{E}_{\{\sigma_k\}_{k=2}^{N}}\\
		&\quad\sup_{u,u^{\prime}\in\mathcal{F}}\bigg[\mathcal{B}|u(X_1)-u^{\prime}(X_1)|+\sum_{k=2}^{N}\sigma_k \m \omega(X_k)u(X_k)+\sum_{k=2}^{N}\sigma_k \m \omega(X_k)u^{\prime}(X_k)\bigg]\\
		&=\frac{1}{2N}\mathbb{E}_{\{X_k\}_{k=1}^{N}}\mathbb{E}_{\{\sigma_k\}_{k=2}^{N}}\\
		&\quad\sup_{u,u^{\prime}\in\mathcal{F}}\bigg[\mathcal{B}[u(X_1)-u^{\prime}(X_1)]+\sum_{k=2}^{N}\sigma_k \m \omega(X_k)u(X_k)+\sum_{k=2}^{N}\sigma_k \m \omega(X_k)u^{\prime}(X_k)\bigg]\\
		&=\frac{1}{N}\mathbb{E}_{\{X_k,\sigma_k\}_{k=1}^{N}}\sup_{u\in\mathcal{F}}\bigg[\sigma_1\mathcal{B}u(X_1)+\sum_{k=2}^{N}\sigma_k \m \omega(X_k)u(X_k)\bigg]\\
		&\leq\cdots\leq\frac{\mathcal{B}}{N}\mathbb{E}_{\{X_k,\sigma_k\}_{k=1}^{N}}\sup_{u\in\mathcal{F}}\sum_{k=1}^{N}\sigma_k \m u(X_k)=\mathcal{B}\mm \mathfrak{R}_{N}(\mathcal{F})\, .
	\end{align*}
\end{proof}

\begin{prop}
\label{Talagrand}
Let $\Phi: \mathbb{R} \rightarrow \mathbb{R}$ be a $\lambda$-Lipschitz. Then, for any hypothesis set $H$ of real-valued functions, the following inequality holds:
$$
\mathfrak{R}_N(\Phi \circ H) \leq \lambda \mathfrak{R}_N(H)\, .
$$
\end{prop}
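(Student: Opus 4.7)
The plan is to prove the Talagrand-type contraction inequality by a classical ``peeling'' argument that removes the Lipschitz function $\Phi$ one Rademacher variable at a time, using convexity-style symmetrization on each coordinate. First I would condition on the sample $\{X_k\}_{k=1}^N$ and work with the conditional Rademacher average
\[
 \frac{1}{N}\mathbb{E}_{\sigma}\sup_{h\in H}\sum_{k=1}^N \sigma_k \Phi(h(X_k)),
\]
reducing the problem to a deterministic statement about the multi-index supremum that holds for each realization of $X_1,\ldots,X_N$.

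Next, I would carry out the key one-coordinate step. Fix $\sigma_2,\ldots,\sigma_N$ and set $u(h):=\sum_{k=2}^N \sigma_k\Phi(h(X_k))$. Averaging over $\sigma_1\in\{\pm1\}$ gives
\[
 \mathbb{E}_{\sigma_1}\!\sup_{h\in H}\bigl[\sigma_1 \Phi(h(X_1))+u(h)\bigr]
 =\tfrac{1}{2}\sup_{h,h'\in H}\bigl[\Phi(h(X_1))-\Phi(h'(X_1))+u(h)+u(h')\bigr].
\]
By the $\lambda$-Lipschitz property, $\Phi(h(X_1))-\Phi(h'(X_1))\leq \lambda|h(X_1)-h'(X_1)|$, and since the supremum over $(h,h')$ is symmetric under the swap $h\leftrightarrow h'$, the absolute value can be dropped, yielding the bound $\tfrac{1}{2}\sup_{h,h'}[\lambda(h(X_1)-h'(X_1))+u(h)+u(h')]=\mathbb{E}_{\sigma_1}\sup_h[\lambda\sigma_1 h(X_1)+u(h)]$. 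Thus one occurrence of $\Phi$ is replaced by the identity at the cost of a factor $\lambda$.

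Finally, I would iterate this contraction across the remaining indices $k=2,\ldots,N$. After each step, the function pushed through the supremum becomes $\lambda\sigma_k h(X_k)$ instead of $\sigma_k\Phi(h(X_k))$, while the yet-to-be-processed terms still carry $\Phi$. Applying the one-coordinate inequality $N$ times and then taking the expectation over $\{X_k\}_{k=1}^N$ gives $\mathfrak{R}_N(\Phi\circ H)\leq \lambda\,\mathfrak{R}_N(H)$, as required.

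The main technical subtlety, and the only non-routine point, is the symmetrization step that removes the absolute value using the $h\leftrightarrow h'$ symmetry of the pair-supremum; it is essential that the Rademacher average is defined without absolute value on the outside (which is the case here), so the peeling step composes cleanly and no spurious factor of $2$ appears. Everything else is bookkeeping over the iterated conditional expectations.
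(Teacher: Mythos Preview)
Your proposal is correct and follows essentially the same approach as the paper: a coordinate-by-coordinate peeling argument that, for each Rademacher variable, splits the $\pm 1$ average into a pair-supremum, applies the Lipschitz bound, and removes the absolute value via the $h\leftrightarrow h'$ symmetry. The only cosmetic difference is that the paper carries out the one-coordinate step by picking $\epsilon$-approximate maximizers $h_1,h_2$ rather than writing the joint $\sup_{h,h'}$ directly, but the logic and the resulting inequality are identical.
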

\begin{proof}
First we fix a sample $S=(x_1, \ldots, x_N)$, then, by definition,
\begin{align*}
\mathfrak{R}_N(\Phi \circ H) &=\frac{1}{N} \underset{\boldsymbol{\sigma}}{\mathbb{E}}\bigg[\sup _{h \in H} \sum_{i=1}^N \sigma_i(\Phi \circ h)(x_i)\bigg] \\
&=\frac{1}{N} \underset{\sigma_1, \ldots, \sigma_{N-1}}{\mathbb{E}}\bigg[\underset{\sigma_N}{\mathbb{E}}\Big[\sup _{h \in H} u_{N-1}(h)+\sigma_N(\Phi \circ h)(x_N)\Big]\bigg]\, ,
\end{align*}
where $u_{N-1}(h)=\sum_{i=1}^{N-1} \sigma_i(\Phi \circ h)(x_i)$. By definition of the supremum, for any $\epsilon>0$, there exist $h_1, h_2 \in H$ such that
\begin{align*}
& u_{N-1}(h_1)+(\Phi \circ h_1)(x_N) \geq(1-\epsilon)\Big[\sup _{h \in H} u_{N-1}(h)+(\Phi \circ h)(x_N)\Big] \\
\text { and } & u_{N-1}(h_2)-(\Phi \circ h_2)(x_N) \geq(1-\epsilon)\Big[\sup _{h \in H} u_{N-1}(h)-(\Phi \circ h)(x_N)\Big] \, .
\end{align*}
Thus, for any $\epsilon>0$, by definition of $\mathbb{E}_{\sigma_N}$,
\begin{align*}
&(1-\epsilon) \underset{\sigma_N}{\mathbb{E}}\Big[\sup _{h \in H} u_{N-1}(h)+\sigma_N(\Phi \circ h)(x_N)\Big] \\
&\quad=(1-\epsilon)\bigg[\frac{1}{2} \sup _{h \in H} u_{N-1}(h)+(\Phi \circ h)(x_N)+\frac{1}{2} \sup _{h \in H} u_{N-1}(h)-(\Phi \circ h)(x_N)\bigg] \\
&\quad \leq \frac{1}{2}[u_{N-1}(h_1)+(\Phi \circ h_1)(x_N)]+\frac{1}{2}[u_{N-1}(h_2)-(\Phi \circ h_2)(x_N)] \, .
\end{align*}
Let $s=\operatorname{sgn}(h_1(x_N)-h_2(x_N))$. Then, the previous inequality implies
\begin{align*}
&(1-\epsilon) \underset{\sigma_N}{\mathbb{E}}\bigg[\sup _{h \in H} u_{N-1}(h)+\sigma_N(\Phi \circ h)(x_N)\bigg] \\
&\leq \frac{1}{2}[u_{N-1}(h_1)+u_{N-1}(h_2)+s \lambda(h_1(x_N)-h_2(x_N))] \\
&=\frac{1}{2}[u_{N-1}(h_1)+s \lambda h_1(x_N)]+\frac{1}{2}[u_{N-1}(h_2)-s\lambda h_2(x_N)] \\
&\leq \frac{1}{2} \sup _{h \in H}[u_{N-1}(h)+s\lambda h(x_N)]+\frac{1}{2} \sup _{h \in H}[u_{N-1}(h)-s\lambda h(x_N)] \\
&=\underset{\sigma_N}{\mathbb{E}}\Big[\sup _{h \in H} u_{N-1}(h)+\sigma_N \lambda h(x_N)\Big]\, ,
\end{align*}
where the first step is due to the Lipschitz inequality. Since the inequality holds for all $\epsilon>0$, we have
$$
\underset{\sigma_N}{\mathbb{E}}\Big[\sup _{h \in H} u_{N-1}(h)+\sigma_N(\Phi \circ h)(x_N)\Big] \leq \underset{\sigma_N}{\mathbb{E}}\Big[\sup _{h \in H} u_{N-1}(h)+\sigma_N \lambda h(x_N)\Big] \, .
$$
Proceeding in the same way for all other $\sigma_i (i \neq N)$ proves the lemma.
\end{proof}

Secondly, we will bound $\mathbb{E}[\mathcal{E}^{i}_{sta}]$ in terms of Rademacher complexity. Define some neural network function class as follows:
\begin{align*}
&\mathcal{F}_1=\{\pm f:\Omega \to \mathbb{R} \mid \exists \  u_{\mathfrak{m}, \boldsymbol{\theta}}\in\mathcal{PNN}\, \ s.t.\ f(\boldsymbol{x}; \boldsymbol{\theta})=\partial_{x_1}u_{\mathfrak{m}, \boldsymbol{\theta}}(\boldsymbol{x})\}\, ,\\
& \mathcal{F}_2=\{\pm f:\Omega\to \mathbb{R} \mid \exists \ u_{\mathfrak{m}, \boldsymbol{\theta}}\in\mathcal{PNN} \ s.t.\  f(\boldsymbol{x}; \boldsymbol{\theta})=u_{\mathfrak{m}, \boldsymbol{\theta}}(\boldsymbol{x})\}\, , \\
& \mathcal{F}_3=\{\pm f:\Omega\to \mathbb{R} \mid \exists \ u_{\mathfrak{m}, \boldsymbol{\theta}}\in\mathcal{PNN} \ s.t.\  f(\boldsymbol{x}; \boldsymbol{\theta})=u_{\mathfrak{m}, \boldsymbol{\theta}}(\boldsymbol{x})|_{\partial\Omega}\}.
\end{align*} 
Denote the sub-network function class as
\begin{align*}
&\mathcal{F}_{1, sub}=\{\pm f:\Omega \to \mathbb{R} \mid \exists \ \phi_{\boldsymbol{\theta}}(\boldsymbol{x})\in\mathcal{NN}(W, L, B_{\boldsymbol{\theta}})\, \ s.t.\ f(\boldsymbol{x}; \boldsymbol{\theta})=\partial_{x_1}\phi_{\boldsymbol{\theta}}(\boldsymbol{x})\}\, ,\\
& \mathcal{F}_{2, sub}=\{\pm f:\Omega\to \mathbb{R} \mid \exists \ \phi_{\boldsymbol{\theta}}(\boldsymbol{x})\in\mathcal{NN}(W, L, B_{\boldsymbol{\theta}})\ s.t.\  f(\boldsymbol{x}; \boldsymbol{\theta})={\phi_{\boldsymbol{\theta}}(\boldsymbol{x})}\}\, ,\\
& \mathcal{F}_{3, sub}=\{\pm f:\Omega\to \mathbb{R} \mid \exists \ \phi_{\boldsymbol{\theta}}(\boldsymbol{x})\in\mathcal{NN}(W, L, B_{\boldsymbol{\theta}})\ s.t.\  f(\boldsymbol{x}; \boldsymbol{\theta})={\phi_{\boldsymbol{\theta}}(\boldsymbol{x})}|_{\partial\Omega}\}\, .
\end{align*}

When $u_{\mathfrak{m}, \boldsymbol{\theta}} \in \mathcal{PNN}(\mathfrak{m}, M, \{W, L, B_{\boldsymbol{\theta}}\})$, we have $\|\boldsymbol{\theta}^{\mathfrak{m}}_{\rm out}\|_1 \leq M$, i.e., $M$  regulates the magnitude of the linear coefficients which connect all the sub-networks. The following lemma reveals an important fact: the Rademacher complexity of $\mathcal{F}_i$ can be controlled by $M$ and the complexity of 
$\mathcal{F}_{i, sub}$. This suggests that the network's overall complexity may not be affected by the number of sub-networks $\mathfrak{m}$, aiding us in managing the statistical error within the over-parameterized setting, where $\mathfrak{m}$ can grow arbitrarily large.
\begin{lem}\label{relation of R}
For $i = 1, 2, 3,$
$$
\mathfrak{R}_{N_{\rm in}}(\mathcal{F}_i) \leq M \cdot \hat{\mathfrak{R}}_{N_{\rm in}}(\mathcal{F}_{i,sub})\, .
$$
\end{lem}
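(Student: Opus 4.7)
The plan is to unwind the definition of Rademacher complexity and exploit the fact that every element of $\mathcal{F}_i$ decomposes linearly into sub-network contributions. Specifically, since the partial derivative operator $\partial_{x_1}$ and the trace operator $T$ are both linear, any $f \in \mathcal{F}_i$ can be written as $\pm \sum_{k=1}^{\mathfrak{m}} c_k \, g_k(\boldsymbol{x})$ where each $g_k$ is the corresponding image (either $\pm \partial_{x_1} \phi^k_{\boldsymbol{\theta}}$, $\pm\phi^k_{\boldsymbol{\theta}}$, or $\pm \phi^k_{\boldsymbol{\theta}}|_{\partial \Omega}$) of one sub-network $\phi^k_{\boldsymbol{\theta}} \in \mathcal{NN}(W, L, B_{\boldsymbol{\theta}})$, and the coefficients satisfy $\sum_{k=1}^{\mathfrak{m}} |c_k| \leq M$. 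Each $g_k$ lies in $\mathcal{F}_{i,\mathrm{sub}}$, and importantly, the sign can be absorbed since $\mathcal{F}_{i,\mathrm{sub}}$ is closed under negation.

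First, I would substitute this expansion into the definition of $\mathfrak{R}_{N_{\rm in}}(\mathcal{F}_i)$ and interchange the order of the two summations to obtain
\begin{align*}
\mathfrak{R}_{N_{\rm in}}(\mathcal{F}_i)
= \mathbb{E}_{\{X_p,\sigma_p\}_{p=1}^{N_{\rm in}}} \sup_{\substack{g_1,\ldots,g_{\mathfrak{m}} \in \mathcal{F}_{i,\mathrm{sub}} \\ \sum_k |c_k| \leq M}} \sum_{k=1}^{\mathfrak{m}} c_k \cdot \frac{1}{N_{\rm in}} \sum_{p=1}^{N_{\rm in}} \sigma_p \, g_k(X_p)\, .
\end{align*}
Next, I would fix the samples and signs and apply the Hölder-type bound $\sum_k c_k S_k \leq \|c\|_1 \cdot \max_k |S_k|$ with $S_k := \tfrac{1}{N_{\rm in}} \sum_p \sigma_p g_k(X_p)$, which yields
\begin{align*}
\mathfrak{R}_{N_{\rm in}}(\mathcal{F}_i) \leq M \cdot \mathbb{E}_{\{X_p,\sigma_p\}} \sup_{g \in \mathcal{F}_{i,\mathrm{sub}}} \Big| \frac{1}{N_{\rm in}} \sum_{p=1}^{N_{\rm in}} \sigma_p g(X_p) \Big| \,= M \cdot \hat{\mathfrak{R}}_{N_{\rm in}}(\mathcal{F}_{i,\mathrm{sub}})\, ,
\end{align*}
where the step from the maximum over $g_1,\ldots,g_{\mathfrak{m}}$ to the supremum over a single $g$ uses that each sub-network can be chosen independently of the others.

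There is no real obstacle here; the only subtlety to be careful about is confirming that for $i=1$ (the gradient case) the sum $\partial_{x_1}\!\left(\sum_k c_k \phi^k_{\boldsymbol{\theta}}\right) = \sum_k c_k \partial_{x_1}\phi^k_{\boldsymbol{\theta}}$ really produces summands lying in $\mathcal{F}_{1,\mathrm{sub}}$, and similarly for $i=3$ that the trace commutes with the finite linear combination --- both facts being immediate from linearity. The use of the \emph{absolute} empirical Rademacher complexity $\hat{\mathfrak{R}}_{N_{\rm in}}$ on the right-hand side (rather than the plain $\mathfrak{R}_{N_{\rm in}}$) is precisely what allows us to accommodate negative coefficients $c_k$ through the absolute value coming from Hölder's inequality.
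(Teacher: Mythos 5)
Your proposal is correct and follows essentially the same route as the paper: expand $u_{\mathfrak{m},\boldsymbol{\theta}} = \sum_k c_k \phi_{\boldsymbol{\theta}}^k$, interchange the two finite sums, apply the $\ell^1$--$\ell^\infty$ H\"older bound against $\|\boldsymbol{\theta}^{\mathfrak{m}}_{\rm out}\|_1 \le M$, and observe that the resulting supremum over the $\mathfrak m$ independent sub-network parameters and the index $k$ collapses to the absolute Rademacher complexity $\hat{\mathfrak{R}}_{N_{\rm in}}(\mathcal{F}_{i,\mathrm{sub}})$ of a single sub-network class. No substantive difference.
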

\begin{proof}
We present the proof with respect to $\mathcal{F}_1$.
\begin{align*}
\mathfrak{R}_{N_{\rm in}}(\mathcal{F}_{1})& =  \mathbb{E}_{\{X_p,\sigma_p\}_{p=1}^{N_{\rm in}}}\bigg[\sup_{u_{\mathfrak{m}, \boldsymbol{\theta}} \in \mathcal{PNN}}\frac{1}{N_{\rm in}}\sum_{p=1}^{N_{\rm in}} \sigma_p \partial_{x_1}u_{\mathfrak{m}, \boldsymbol{\theta}}(X_p)\bigg] \\
& = \mathbb{E}_{\{X_p,\sigma_p\}_{p=1}^{N_{\rm in}}}\bigg[\sup_{\boldsymbol{\theta}_{\rm total}^{\mathfrak{m}} \in \Theta^{\mathfrak{m}}}\frac{1}{N_{\rm in}}\sum_{p=1}^{N_{\rm in}}\sigma_p \bigg[\sum_{k=1}^{\mathfrak{m}} c_{k} \cdot \partial_{x_1}\phi_{\boldsymbol{\theta}}^{k}(X_p)\bigg] \bigg] \\
& = \mathbb{E}_{\{X_p,\sigma_p\}_{p=1}^{N_{\rm in}}} \bigg[\sup_{\boldsymbol{\theta}_{\rm total}^{\mathfrak{m}} \in \Theta^{\mathfrak{m}}} \sum_{k=1}^{\mathfrak{m}} c_{k} \cdot   \frac{1}{N_{\rm in}}\sum_{p=1}^{N_{\rm in}} \sigma_p  \partial_{x_1}\phi_{\boldsymbol{\theta}}^{k}(X_p) \bigg] \\
& \leq \mathbb{E}_{\{X_p,\sigma_p\}_{p=1}^{N_{\rm in}}} \bigg[\sup_{\boldsymbol{\theta}_{\rm total}^{\mathfrak{m}} \in \Theta^{\mathfrak{m}}} \sum_{k=1}^{\mathfrak{m}} |c_{k}| \cdot   \Big|\frac{1}{N_{\rm in}}\sum_{p=1}^{N_{\rm in}} \sigma_p  \partial_{x_1}\phi_{\boldsymbol{\theta}}^{k}(X_p)\Big| \bigg]\\
&\leq \mathbb{E}_{\{X_p,\sigma_p\}_{p=1}^{N_{\rm in}}} \bigg[\sup_{\boldsymbol{\theta}_{\rm total}^{\mathfrak{m}} \in \Theta^{\mathfrak{m}}} \sum_{k=1}^{\mathfrak{m}} |c_{k}| \cdot \sup_{  k\in \{1, \ldots, \mathfrak{m}\} }  \Big|\frac{1}{N_{\rm in}}\sum_{p=1}^{N_{\rm in}} \sigma_p  \partial_{x_1}\phi_{\boldsymbol{\theta}}^{k}(X_p)\Big| \bigg]\\
& \leq M \cdot \mathbb{E}_{\{X_p,\sigma_p\}_{p=1}^{N_{\rm in}}} \bigg[\sup_{ \boldsymbol{\theta}_{\rm total}^{\mathfrak{m}} \in \Theta^{\mathfrak{m}}, k\in \{1, \ldots, \mathfrak{m}\} }  \Big|\frac{1}{N_{\rm in}}\sum_{p=1}^{N_{\rm in}} \sigma_p  \partial_{x_1}\phi_{\boldsymbol{\theta}}^{k}(X_p)\Big| \bigg]\\
& = M \cdot \mathbb{E}_{\{X_p,\sigma_p\}_{p=1}^{N_{\rm in}}} \bigg[\sup_{ \boldsymbol{\theta}_{1}\in \Theta}  \Big|\frac{1}{N_{\rm in}}\sum_{p=1}^{N_{\rm in}} \sigma_p  \partial_{x_1}\phi_{\boldsymbol{\theta}}^{1}(X_p)\Big| \bigg]\\
& = M \cdot \hat{\mathfrak{R}}_{N_{\rm in}}(\mathcal{F}_{1,sub})\, ,
\end{align*}
where the H$\mathrm{\ddot{o}}$lder inequality is used in the first inequality. 
\end{proof}

Next, from Lemma \ref{f-lip} to Lemma \ref{Lip of Fi}, we will provide several results concerning the sub-neural network $\mathcal{NN}(W, L, B_{\boldsymbol{\theta}})$. Note that the activation function used in this paper is $tanh$, which is 1-Lipschitz and also has a 1-Lipschitz continuous gradient.

\begin{lem} \label{f-lip}
Let $W, L \in\mathbb{N}^{+}$, $B_{\boldsymbol{\theta}}\ge 1$. Let $\rho(x)$ be the hyperbolic tangent function $\frac{e^{x}-e^{-x}}{e^{x}+e^{-x}}$. Then,  for any $\phi_{\boldsymbol{\theta}}(\boldsymbol{x})\in \mathcal{NN}(W, L, B_{\boldsymbol{\theta}})$, we have
\begin{align*}
\big|\phi_{\boldsymbol{\theta}}(\boldsymbol{x})\big|\leq \big(W+1\big)B_{\boldsymbol{\theta}}\, .
\end{align*}
Moreover, for any $\phi_{\boldsymbol{\theta}}(\boldsymbol{x}), \phi_{\tilde{\boldsymbol{\theta}}}(\boldsymbol{x})\in \mathcal{NN}(W, L, B_{\boldsymbol{\theta}})$,
\begin{equation*}
	|\phi_{\boldsymbol{\theta}}(\boldsymbol{x})-\phi_{\tilde{\boldsymbol{\theta}}}(\boldsymbol{x})|
	\leq 2W^{L}\sqrt{L}\cdot B_{\boldsymbol{\theta}}^{L-1} \big\|\boldsymbol{\theta}-\tilde{\boldsymbol{\theta}}\big\|_2\, ,\quad \forall \boldsymbol{x}\in\Omega\, .
\end{equation*}
\end{lem}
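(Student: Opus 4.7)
The plan is to prove the two inequalities separately, each by direct layer-by-layer estimation that exploits the two key properties of $\tanh$: it is bounded by $1$ in absolute value, and it is $1$-Lipschitz on $\mathbb{R}$. Since $\Omega\subset[0,1]^d$, we have $\|\phi_0(\boldsymbol{x})\|_\infty = \|\boldsymbol{x}\|_\infty \leq 1$, which will serve as the base case in both arguments.

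For the first bound, I would use the fact that each hidden layer applies $\tanh$ componentwise, so $\|\phi_\ell(\boldsymbol{x})\|_\infty \leq 1$ for $\ell=1,\ldots,L-1$ regardless of the weights. Only the last layer is linear, giving $|\phi_L(\boldsymbol{x})| \leq \|\boldsymbol{A}_{L-1}\|_1\|\phi_{L-1}\|_\infty + |\boldsymbol{b}_{L-1}|$; since $\boldsymbol{A}_{L-1}$ has at most $W$ entries, each bounded by $B_{\boldsymbol{\theta}}$, this yields $(W+1)B_{\boldsymbol{\theta}}$ immediately.

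For the Lipschitz bound in the parameters, the approach is induction on layer index with the pre-activations $\psi_\ell := \boldsymbol{A}_{\ell-1}\phi_{\ell-1} + \boldsymbol{b}_{\ell-1}$. Using the 1-Lipschitzness of $\tanh$ and the splitting
\[
\psi_\ell - \tilde\psi_\ell = \boldsymbol{A}_{\ell-1}(\phi_{\ell-1}-\tilde\phi_{\ell-1}) + (\boldsymbol{A}_{\ell-1}-\tilde{\boldsymbol{A}}_{\ell-1})\tilde\phi_{\ell-1} + (\boldsymbol{b}_{\ell-1}-\tilde{\boldsymbol{b}}_{\ell-1}),
\]
together with the uniform bound $\|\tilde\phi_{\ell-1}\|_\infty \leq 1$ already established, I get a recursion of the form
\[
\|\phi_\ell - \tilde\phi_\ell\|_\infty \leq W B_{\boldsymbol{\theta}}\, \|\phi_{\ell-1}-\tilde\phi_{\ell-1}\|_\infty + W\,\|\boldsymbol{\theta}^{(\ell-1)} - \tilde{\boldsymbol{\theta}}^{(\ell-1)}\|_\infty,
\]
where $\boldsymbol{\theta}^{(\ell-1)}$ collects the weights and biases of layer $\ell$. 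The last layer (linear) is handled by exactly the same estimate.

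Unrolling the recursion from $a_0 = 0$ gives $|\phi_L - \tilde\phi_L| \leq W\sum_{\ell=1}^{L}(WB_{\boldsymbol{\theta}})^{L-\ell}\|\boldsymbol{\theta}^{(\ell-1)}-\tilde{\boldsymbol{\theta}}^{(\ell-1)}\|_\infty$. Bounding each $(WB_{\boldsymbol{\theta}})^{L-\ell}\leq (WB_{\boldsymbol{\theta}})^{L-1}$ (valid since $W,B_{\boldsymbol{\theta}}\geq 1$) and applying Cauchy--Schwarz to $\sum_\ell \|\boldsymbol{\theta}^{(\ell-1)}-\tilde{\boldsymbol{\theta}}^{(\ell-1)}\|_\infty$ supplies the desired $\sqrt L$ factor, after which $\|\cdot\|_\infty\leq\|\cdot\|_2$ blockwise converts the inner norms into the global $\|\boldsymbol{\theta}-\tilde{\boldsymbol{\theta}}\|_2$. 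Comparing powers of $W$ and $B_{\boldsymbol{\theta}}$ yields the factor $W^L B_{\boldsymbol{\theta}}^{L-1}$, and the constant $2$ in the statement absorbs the slack between the hidden and output layers. I don't expect a genuine obstacle here; the main care point is to keep the matrix/vector norm inequalities consistent across layers (operator $\ell_\infty$ norm versus max-entry norm versus $\ell_2$ norm of the parameter block) so that all factors of $W$ compound correctly to $W^L$.
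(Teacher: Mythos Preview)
Your proposal is correct and follows essentially the same layer-by-layer inductive scheme as the paper, using $|\tanh|\le 1$ for the magnitude bound and the $1$-Lipschitzness of $\tanh$ together with the splitting $\boldsymbol{A}_{\ell-1}(\phi_{\ell-1}-\tilde\phi_{\ell-1})+(\boldsymbol{A}_{\ell-1}-\tilde{\boldsymbol{A}}_{\ell-1})\tilde\phi_{\ell-1}+(\boldsymbol{b}_{\ell-1}-\tilde{\boldsymbol{b}}_{\ell-1})$ for the Lipschitz bound. The only bookkeeping difference is that the paper carries the cumulative $\ell_1$ parameter difference $\sum_{j=1}^{\mathfrak{n}_\ell}|\theta_j-\tilde\theta_j|$ through the recursion and extracts the $\sqrt{L}$ factor at the end via H\"older with $\sqrt{\mathfrak{n}_L}\le\sqrt{W(W+1)L}$, whereas you track per-layer $\ell_\infty$ blocks and obtain $\sqrt{L}$ from Cauchy--Schwarz over the $L$ summands; both routes land on the same constant $2W^L\sqrt{L}\,B_{\boldsymbol{\theta}}^{L-1}$.
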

\begin{proof}
For any $\phi_{\boldsymbol{\theta}}(\boldsymbol{x})\in \mathcal{NN}(W, L, B_{\boldsymbol{\theta}})$, we have
\begin{align*}
\big|\phi_{\boldsymbol{\theta}}(\boldsymbol{x})\big|\leq \big(N_{L-1}+1\big)B_{\boldsymbol{\theta}}\leq \big(W+1\big)B_{\boldsymbol{\theta}}\, .
\end{align*}

Denote $\phi_{q}^{(\ell)}$ as the $q$-th output of the $\ell$-th layer. $\mathfrak{n}_{\ell},N_{\ell}\in\mathbb{N}^{+}$, $\ell \in \{1,\ldots, L-1\}$, $N_L =1$. For $\ell=2,\cdots, L$ (the argument for the case of $\ell= L$ is slightly different),
\begin{align*}
	\big|\phi_q^{(\ell)}-\tilde{\phi}_q^{(\ell)}\big|
	&=\bigg|\rho\bigg(\sum_{j=1}^{N_{\ell-1}}a_{qj}^{(\ell-1)}\phi_j^{(\ell-1)}+b_q^{(\ell-1)}\bigg)-\rho\bigg(\sum_{j=1}^{N_{\ell-1}}\tilde{a}_{qj}^{(\ell-1)}\tilde{\phi}_j^{(\ell-1)}+\tilde{b}_q^{(\ell-1)}\bigg)\bigg|\\
	&\leq \bigg|\sum_{j=1}^{N_{\ell-1}}a_{qj}^{(\ell-1)}\phi_j^{(\ell-1)}-\sum_{j=1}^{N_{\ell-1}}\tilde{a}_{qj}^{(\ell-1)}\tilde{\phi}_j^{(\ell-1)}+b_q^{(\ell-1)}-\tilde{b}_q^{(\ell-1)}\bigg|\\
	&\leq \sum_{j=1}^{N_{\ell-1}}\big|a_{qj}^{(\ell-1)}\big|\big|\phi_j^{(\ell-1)}-\tilde{\phi}_j^{(\ell-1)}\big|+\sum_{j=1}^{N_{\ell-1}}\big|a_{qj}^{(\ell-1)}-\tilde{a}_{qj}^{(\ell-1)}\big|\big|\tilde{\phi}_j^{(\ell-1)}\big|+\big|b_q^{(\ell-1)}-\tilde{b}_q^{(\ell-1)}\big|\\
	&\leq B_{\boldsymbol{\theta}}\sum_{j=1}^{N_{\ell-1}}\big|\phi_j^{(\ell-1)}-\tilde{\phi}_j^{(\ell-1)}\big|+\sum_{j=1}^{N_{\ell-1}}\big|a_{qj}^{(\ell-1)}-\tilde{a}_{qj}^{(\ell-1)}\big|+\big|b_q^{(\ell-1)}-\tilde{b}_q^{(\ell-1)}\big|\, .
\end{align*}
For $\ell=1$,
\begin{align*}
	\big|\phi_q^{(1)}-\tilde{\phi}_q^{(1)}\big|
	&= \bigg|\rho\bigg(\sum_{j=1}^{N_0}a_{qj}^{(0)}x_j + b_q^{(0)}\bigg)-\rho\bigg(\sum_{j=1}^{N_0}\tilde{a}_{qj}^{(0)}x_j+\tilde{b}_q^{(0)}\bigg)\bigg|\\
	&\le \sum_{j=1}^{N_0}\big|a_{qj}^{(0)}-\tilde{a}_{qj}^{(0)}\big|+\big|b_q^{(0)}-\tilde{b}_q^{(0)}\big|
	=\sum_{j=1}^{\mathfrak{n}_1}\big|\theta_j-\tilde{\theta}_j\big|\, .
\end{align*}
For $\ell=2$,
\begin{align*}
\big|\phi_q^{(2)}-\tilde{\phi}_q^{(2)}\big|
	&\leq B_{\boldsymbol{\theta}}\sum_{j=1}^{N_{1}}\big|\phi_j^{(1)}-\tilde{\phi}_j^{(1)}\big|+\sum_{j=1}^{N_{1}}\big|a_{qj}^{(1)}-\tilde{a}_{qj}^{(1)}\big|+\big|b_q^{(1)}-\tilde{b}_q^{(1)}\big|\\
	&\leq B_{\boldsymbol{\theta}}\sum_{j=1}^{N_{1}}\sum_{k=1}^{\mathfrak{n}_1}\big|\theta_k-\tilde{\theta}_k\big|+\sum_{j=1}^{N_{1}}\big|a_{qj}^{(1)}-\tilde{a}_{qj}^{(1)}\big|+\big|b_q^{(1)}-\tilde{b}_q^{(1)}\big|\\
	&\leq N_{1}B_{\boldsymbol{\theta}}\sum_{j=1}^{\mathfrak{n}_2}\big|\theta_j-\tilde{\theta}_j\big|\, .
\end{align*}
Assuming that for $\ell\geq2$,
\begin{align*}
\big|\phi_q^{(\ell)}-\tilde{\phi}_q^{(\ell)}\big|
\leq \bigg(\prod_{i=1}^{\ell-1}N_i\bigg) B_{\boldsymbol{\theta}}^{\ell-1}\sum_{j=1}^{\mathfrak{n}_{\ell}}\big|\theta_j-\tilde{\theta}_j\big|\, ,
\end{align*}
we have
\begin{align*}
	\big|\phi_q^{(\ell+1)}-\tilde{\phi}_q^{(\ell+1)}\big|
	&\leq B_{\boldsymbol{\theta}}\sum_{j=1}^{N_{\ell}}\big|\phi_j^{(\ell)}-\tilde{\phi}_j^{(\ell)}\big|+\sum_{j=1}^{N_{\ell}}\big|a_{qj}^{(\ell)}-\tilde{a}_{qj}^{(\ell)}\big|+\big|b_q^{(\ell)}-\tilde{b}_q^{(\ell)}\big|\\
	&\leq B_{\boldsymbol{\theta}}\sum_{j=1}^{N_{\ell}}\bigg(\prod_{i=1}^{\ell-1}N_i\bigg) B_{\boldsymbol{\theta}}^{\ell-1}\sum_{k=1}^{\mathfrak{n}_{\ell}}\big|\theta_k-\tilde{\theta}_k\big|+\sum_{j=1}^{N_{\ell}}\big|a_{qj}^{(\ell)}-\tilde{a}_{qj}^{(\ell)}\big|+\big|b_q^{(\ell)}-\tilde{b}_q^{(\ell)}\big|\\
	&\leq \bigg(\prod_{i=1}^{\ell}N_i\bigg) B_{\boldsymbol{\theta}}^{\ell}\sum_{j=1}^{\mathfrak{n}_{\ell+1}}\big|\theta_j-\tilde{\theta}_j\big|\, .
\end{align*}
Hence by induction and H$\mathrm{\ddot{o}}$lder inequality we conclude that
\begin{align*}
|\phi_{\boldsymbol{\theta}}(\boldsymbol{x})-\phi_{\tilde{\boldsymbol{\theta}}}(\boldsymbol{x})|
&\leq \bigg(\prod_{i=1}^{L-1}N_i\bigg) B_{\boldsymbol{\theta}}^{L-1}\sum_{j=1}^{\mathfrak{n}_{L}}\big|\theta_j-\tilde{\theta}_j\big|\\
&\leq\sqrt{\mathfrak{n}_{L}}B_{\boldsymbol{\theta}}^{L-1}\bigg(\prod_{i=1}^{L-1}N_i\bigg)\big\|\boldsymbol{\theta}-\tilde{\boldsymbol{\theta}}\big\|_2 \leq  2W^{L}\sqrt{L}\cdot B_{\boldsymbol{\theta}}^{L-1} \big\|\boldsymbol{\theta}-\tilde{\boldsymbol{\theta}}\big\|_2\, .
\end{align*}
\end{proof}

\begin{lem}\label{df-bound}
Let $W, L\in\mathbb{N}^{+}$, $B_{\boldsymbol{\theta}}\ge 1$. Let $\rho(x)$ be the hyperbolic tangent function $\frac{e^{x}-e^{-x}}{e^{x}+e^{-x}}$. Then,  for any $\phi_{\boldsymbol{\theta}}(\boldsymbol{x})\in \mathcal{NN}(W, L, B_{\boldsymbol{\theta}})$, we have
\begin{align*}
	\big|\partial_{x_m}\phi_q^{(\ell)}\big|\leq W^{\ell -1}B_{\boldsymbol{\theta}}^{\ell}\, ,
	\ \ell=1,2,\cdots,L-1\, ; \qquad
	\big|\partial_{x_m}\phi_{\boldsymbol{\theta}}(\boldsymbol{x})\big|\leq W^{L-1}B_{\boldsymbol{\theta}}^{L}\, .
\end{align*}
\end{lem}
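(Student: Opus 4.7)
The plan is to prove both bounds by induction on the layer index $\ell$, with the chain rule as the sole tool. The key observation is that the $\tanh$ activation satisfies $|\rho'(z)| \le 1$ uniformly in $z$, so each time we differentiate through a layer we pick up only the Lipschitz constant of the affine map, not an extra factor from $\rho'$.

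For the base case $\ell = 1$, I would compute directly: $\partial_{x_m}\phi_q^{(1)} = \rho'\bigl(\sum_j a_{qj}^{(0)}x_j + b_q^{(0)}\bigr)\cdot a_{qm}^{(0)}$, and since $|\rho'|\le 1$ and each weight is bounded by $B_{\boldsymbol{\theta}}$, the right hand side is at most $B_{\boldsymbol{\theta}} = W^{0}B_{\boldsymbol{\theta}}^{1}$, matching the claim. For the inductive step, assuming the bound holds at layer $\ell$ with $1\le \ell \le L-2$, I would differentiate $\phi_q^{(\ell+1)} = \rho\bigl(\sum_{j=1}^{N_\ell} a_{qj}^{(\ell)}\phi_j^{(\ell)} + b_q^{(\ell)}\bigr)$ to obtain
\[
\partial_{x_m}\phi_q^{(\ell+1)} = \rho'(\,\cdot\,)\sum_{j=1}^{N_\ell} a_{qj}^{(\ell)}\,\partial_{x_m}\phi_j^{(\ell)},
\]
bound each $|a_{qj}^{(\ell)}|\le B_{\boldsymbol{\theta}}$, use the inductive bound on $|\partial_{x_m}\phi_j^{(\ell)}|$, and use $N_\ell \le W$ to sum at most $W$ terms. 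This produces $|\partial_{x_m}\phi_q^{(\ell+1)}| \le W\cdot B_{\boldsymbol{\theta}} \cdot W^{\ell-1}B_{\boldsymbol{\theta}}^{\ell} = W^{\ell}B_{\boldsymbol{\theta}}^{\ell+1}$, which closes the induction.

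For the final inequality involving $\phi_{\boldsymbol{\theta}}(\boldsymbol{x})$, I would handle the output layer separately because it carries no activation: $\phi_{\boldsymbol{\theta}}(\boldsymbol{x}) = \sum_{j=1}^{N_{L-1}} a_{1j}^{(L-1)}\phi_j^{(L-1)}(\boldsymbol{x}) + b_1^{(L-1)}$, so differentiation gives $\partial_{x_m}\phi_{\boldsymbol{\theta}}(\boldsymbol{x}) = \sum_{j} a_{1j}^{(L-1)}\partial_{x_m}\phi_j^{(L-1)}$. Applying the layer-$(L-1)$ bound just established and again using $N_{L-1}\le W$ and $|a_{1j}^{(L-1)}|\le B_{\boldsymbol{\theta}}$ yields $|\partial_{x_m}\phi_{\boldsymbol{\theta}}(\boldsymbol{x})| \le W \cdot B_{\boldsymbol{\theta}} \cdot W^{L-2}B_{\boldsymbol{\theta}}^{L-1} = W^{L-1}B_{\boldsymbol{\theta}}^{L}$.

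There is no real obstacle here; the only subtlety worth flagging is that the final layer is affine rather than $\rho$-activated, so the proof naturally splits into the inductive estimate for hidden layers $1\le\ell\le L-1$ and a separate one-line estimate for the output $\phi_{\boldsymbol{\theta}}$. The uniform boundedness $|\rho'|\le 1$ of $\tanh$ is what keeps the $\ell$-th layer bound exactly $W^{\ell-1}B_{\boldsymbol{\theta}}^{\ell}$ rather than growing by an additional factor at each layer; any activation with a larger Lipschitz constant would force a corresponding modification.
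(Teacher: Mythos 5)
Your proof is correct and takes essentially the same route as the paper: both proceed by bounding the chain rule layer by layer, using $|\rho'|\le 1$, $|a^{(\ell)}_{qj}|\le B_{\boldsymbol{\theta}}$, and $N_\ell\le W$ to close an inductive (in the paper, telescoping) estimate, with a separate one-line calculation for the affine output layer.
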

\begin{proof}
For $\ell=1,2,\cdots,L-1$, $\mathfrak{n}_{\ell},N_{\ell}\in\mathbb{N}^{+}$,
\begin{align*}
	\big|\partial_{x_m}\phi_q^{(\ell)}\big|
	&=\bigg|\sum_{j=1}^{N_{\ell-1}}a_{qj}^{(\ell-1)}\partial_{x_m}\phi_j^{(\ell-1)}\rho^{\prime}\bigg(\sum_{j=1}^{N_{\ell-1}}a_{qj}^{(\ell-1)}\phi_j^{(\ell-1)}+b_q^{(\ell-1)}\bigg)\bigg|
	\leq B_{\boldsymbol{\theta}}\sum_{j=1}^{N_{\ell-1}}\big|\partial_{x_m}\phi_j^{(\ell-1)}\big|\\
	&\leq(B_{\boldsymbol{\theta}})^2\sum_{k=1}^{N_{\ell-1}}\sum_{j=1}^{N_{\ell-2}}\big|\partial_{x_m}\phi_j^{(\ell-2)}\big|
	=N_{\ell-1}(B_{\boldsymbol{\theta}})^2\sum_{j=1}^{N_{\ell-2}}\big|\partial_{x_m}\phi_j^{(\ell-2)}\big|\\
	&\leq\cdots\leq\bigg(\prod_{i=2}^{\ell-1}N_i\bigg)(B_{\boldsymbol{\theta}})^{\ell-1}\sum_{j=1}^{N_{1}}\big|\partial_{x_m}\phi_j^{(1)}\big| \\
	&\leq\bigg(\prod_{i=2}^{\ell-1}N_i\bigg)(B_{\boldsymbol{\theta}})^{\ell-1}\sum_{j=1}^{N_{1}}B_{\boldsymbol{\theta}}=\bigg(\prod_{i=1}^{\ell-1}N_i\bigg)(B_{\boldsymbol{\theta}})^{\ell}\leq  W^{\ell -1}B_{\boldsymbol{\theta}}^{\ell}\, .
\end{align*}
The bound for $|\partial_{x_m}\phi_{\boldsymbol{\theta}}(\boldsymbol{x})|$ can be derived similarly.
\end{proof}

\begin{lem}\label{df-lip}
Let $W, L\in\mathbb{N}^{+}$, $B_{\boldsymbol{\theta}}\ge 1$. Let $\rho(x)$ be the hyperbolic tangent function $\frac{e^{x}-e^{-x}}{e^{x}+e^{-x}}$. Let $m =1,\cdots,d$. Then,  for any $\phi_{\boldsymbol{\theta}}(\boldsymbol{x}), \phi_{\tilde{\boldsymbol{\theta}}}(\boldsymbol{x})\in \mathcal{NN}(W, L, B_{\boldsymbol{\theta}})$,
\begin{equation*}
	|\partial_{x_m}\phi_{\boldsymbol{\theta}}(\boldsymbol{x})-\partial_{x_m}\phi_{\tilde{\boldsymbol{\theta}}}(\boldsymbol{x})|
	\leq 2W^{2L-1}\sqrt{L}(L+1)\cdot B_{\boldsymbol{\theta}}^{2L}\big\|\boldsymbol{\theta}-\tilde{\boldsymbol{\theta}}\big\|_2\, ,\quad \forall \boldsymbol{x}\in\Omega\, .
\end{equation*}
\end{lem}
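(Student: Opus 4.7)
The plan is to mimic the layer-by-layer induction used in Lemma~\ref{f-lip}, but now applied to the partial derivatives $\partial_{x_m}\phi_q^{(\ell)}$. The key identity is the chain-rule recursion
\[
\partial_{x_m}\phi_q^{(\ell)} \;=\; \rho'\!\Big(\textstyle\sum_{j}a_{qj}^{(\ell-1)}\phi_j^{(\ell-1)}+b_q^{(\ell-1)}\Big)\cdot\sum_{j}a_{qj}^{(\ell-1)}\partial_{x_m}\phi_j^{(\ell-1)} \, ,
\]
together with the fact that for $\rho=\tanh$ both $|\rho'|\le 1$ and $\rho'$ is itself $1$-Lipschitz (since $\rho''$ is uniformly bounded by $1$). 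Writing $A_q^{(\ell)}=\sum_j a_{qj}^{(\ell-1)}\phi_j^{(\ell-1)}+b_q^{(\ell-1)}$ and $S_q^{(\ell)}=\sum_j a_{qj}^{(\ell-1)}\partial_{x_m}\phi_j^{(\ell-1)}$, and using tildes for the perturbed network, the triangle inequality yields
\[
\big|\partial_{x_m}\phi_q^{(\ell)}-\partial_{x_m}\tilde\phi_q^{(\ell)}\big| \;\le\; \big|A_q^{(\ell)}-\tilde A_q^{(\ell)}\big|\cdot\big|S_q^{(\ell)}\big|\;+\;\big|S_q^{(\ell)}-\tilde S_q^{(\ell)}\big| \, .
\]

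Next I would control each of the two terms on the right. The pre-activation difference $|A_q^{(\ell)}-\tilde A_q^{(\ell)}|$ is estimated exactly by the induction already carried out in Lemma~\ref{f-lip}, giving a bound proportional to $W^{\ell-1}B_{\boldsymbol\theta}^{\ell-1}\|\boldsymbol\theta-\tilde{\boldsymbol\theta}\|_2$ (after extracting an $\sqrt{\mathfrak{n}_\ell}$ factor via Cauchy--Schwarz). The multiplier $|S_q^{(\ell)}|$ is bounded by $W^{\ell-1}B_{\boldsymbol\theta}^{\ell}$ via Lemma~\ref{df-bound}. For the second term I split
\[
\big|S_q^{(\ell)}-\tilde S_q^{(\ell)}\big| \;\le\; B_{\boldsymbol\theta}\!\sum_{j}\big|\partial_{x_m}\phi_j^{(\ell-1)}-\partial_{x_m}\tilde\phi_j^{(\ell-1)}\big|\;+\;\sum_{j}\big|a_{qj}^{(\ell-1)}-\tilde a_{qj}^{(\ell-1)}\big|\cdot\big|\partial_{x_m}\tilde\phi_j^{(\ell-1)}\big| \, ,
\]
where the first sum supplies the inductive term and the second, via Lemma~\ref{df-bound}, contributes a factor of order $W^{\ell-2}B_{\boldsymbol\theta}^{\ell-1}\|\boldsymbol\theta-\tilde{\boldsymbol\theta}\|_1$.

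Assembling these pieces and setting $D^{(\ell)}:=\max_q|\partial_{x_m}\phi_q^{(\ell)}-\partial_{x_m}\tilde\phi_q^{(\ell)}|$, I would prove by induction on $\ell$ a recurrence of the rough form
\[
D^{(\ell)} \;\le\; W\,B_{\boldsymbol\theta}\cdot D^{(\ell-1)} \;+\; C_\ell\cdot W^{2\ell-2}B_{\boldsymbol\theta}^{2\ell-1}\,\|\boldsymbol\theta-\tilde{\boldsymbol\theta}\|_2 \, ,
\]
with base case $D^{(1)}\le W B_{\boldsymbol\theta}\|\boldsymbol\theta-\tilde{\boldsymbol\theta}\|_2$ (handled directly from the explicit first-layer formula $\partial_{x_m}\phi_q^{(1)}=a_{qm}^{(0)}\rho'(\cdot)$ by adding and subtracting $\tilde a_{qm}^{(0)}\rho'(\cdot)$ and then using $1$-Lipschitzness of $\rho'$). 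Unrolling the recursion across the $L-1$ hidden layers and one output layer produces the telescoping sum that yields a leading coefficient of order $L\cdot W^{2L-1}B_{\boldsymbol\theta}^{2L}$; replacing the $\ell_1$-norm on the raw coefficient-differences by $\sqrt{\mathfrak{n}_L}\|\boldsymbol\theta-\tilde{\boldsymbol\theta}\|_2\le\sqrt{W(W+1)L}\,\|\boldsymbol\theta-\tilde{\boldsymbol\theta}\|_2$ via Cauchy--Schwarz, exactly as in Lemma~\ref{f-lip}, gives the factor $\sqrt{L}$ and an extra $W$ and absorbs everything into the stated bound $2W^{2L-1}\sqrt{L}(L+1)B_{\boldsymbol\theta}^{2L}$.

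The main obstacle I anticipate is bookkeeping: keeping careful track of how the $B_{\boldsymbol\theta}^\ell$ and $W^{\ell-1}$ factors combine across the two sources of error (the $\rho'$-perturbation term and the coefficient-perturbation term) so that the final exponent on $B_{\boldsymbol\theta}$ is exactly $2L$ and on $W$ is exactly $2L-1$, and that the linear-in-$L$ prefactor comes out as $(L+1)$ rather than something larger. Everything else is an entirely routine triangle-inequality-plus-induction argument driven by the bounds already established in Lemmas~\ref{f-lip} and~\ref{df-bound}.
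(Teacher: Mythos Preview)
Your proposal is correct and follows essentially the same approach as the paper: the paper also sets up the layerwise recurrence
\[
\big|\partial_{x_m}\phi_q^{(\ell)}-\partial_{x_m}\tilde{\phi}_q^{(\ell)}\big|
\;\le\; B_{\boldsymbol\theta}\sum_{j=1}^{N_{\ell-1}}\big|\partial_{x_m}\phi_j^{(\ell-1)}-\partial_{x_m}\tilde{\phi}_j^{(\ell-1)}\big|
\;+\; B_{\boldsymbol\theta}^{2\ell}\Big(\prod_{i=1}^{\ell-1}N_i\Big)^{2}\sum_{k=1}^{\mathfrak{n}_\ell}|\theta_k-\tilde\theta_k|\,,
\]
proves by induction that the left side is at most $(\ell+1)\,B_{\boldsymbol\theta}^{2\ell}\big(\prod_{i=1}^{\ell-1}N_i\big)^{2}\|\boldsymbol\theta-\tilde{\boldsymbol\theta}\|_1$, and then converts to the $\ell_2$ norm via $\sqrt{\mathfrak{n}_L}\le\sqrt{W(W+1)L}$ exactly as you describe. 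The only cosmetic difference is that the paper works with $\sum_j$ and $\prod_i N_i$ throughout rather than $\max_q$ and powers of $W$, and keeps the parameter difference in $\ell_1$ until the very last line; your ``bookkeeping'' concern about the exponents is precisely what the paper's explicit induction hypothesis $(\ell+1)B_{\boldsymbol\theta}^{2\ell}(\prod N_i)^2$ resolves.
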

\begin{proof}
Note that $\mathfrak{n}_{\ell},N_{\ell}\in\mathbb{N}^{+}$, $\ell \in \{1,\ldots, L-1\}$, $N_L =1$. For $\ell=1$,
\begin{align*}
&\big|\partial_{x_m}\phi_q^{(1)}-\partial_{x_m}\tilde{\phi}_q^{(1)}\big| \\
=&\bigg|a_{qp}^{(0)}\rho^{\prime}\bigg(\sum_{j=1}^{N_0}a_{qj}^{(0)}x_j+b_q^{(0)}\bigg)-\tilde{a}_{qp}^{(0)}\rho^{\prime}\bigg(\sum_{j=1}^{N_0}\tilde{a}_{qj}^{(0)}x_j+\tilde{b}_q^{(0)}\bigg)\bigg|\\
\leq&\big|a_{qp}^{(0)}-\tilde{a}_{qp}^{(0)}\big|\bigg|\rho^{\prime}\bigg(\sum_{j=1}^{N_0}a_{qj}^{(0)}x_j+b_q^{(0)}\bigg)\bigg| +\big|\tilde{a}_{qp}^{(0)}\big| \bigg|\rho^{\prime}\bigg(\sum_{j=1}^{N_0}a_{qj}^{(0)}x_j+b_{q}^{(0)}\bigg)-\rho^{\prime}\bigg(\sum_{j=1}^{N_0}\tilde{a}_{qj}^{(0)}x_j+\tilde{b}_q^{(0)}\bigg)\bigg|\\
\leq& \big|a_{qp}^{(0)}-\tilde{a}_{qp}^{(0)}\big|+B_{\boldsymbol{\theta}}\sum_{j=1}^{N_0}\big|a_{qj}^{(0)}-\tilde{a}_{qj}^{(0)}\big|+B_{\boldsymbol{\theta}}\big|{b}_q^{(0)}-\tilde{b}_q^{(0)}\big|\leq 2B_{\boldsymbol{\theta}}\sum_{k=1}^{\mathfrak{n}_{1}}\big|\theta_k-\tilde{\theta}_k\big| \, .
\end{align*}
For $\ell\geq 2$, we establish the Recurrence relation:
\begin{align*}
	&\big|\partial_{x_m}\phi_q^{(\ell)}-\partial_{x_m}\tilde{\phi}_q^{(\ell)}\big|\\
	&\leq\sum_{j=1}^{N_{\ell-1}}\big|a_{qj}^{(\ell-1)}\big|\big|\partial_{x_m}\phi_j^{(\ell-1)}\big|\bigg|\rho^{\prime}\bigg(\sum_{j=1}^{N_{\ell-1}}a_{qj}^{(\ell-1)}\phi_j^{(\ell-1)}+b_q^{(\ell-1)}\bigg)-\rho^{\prime}\bigg(\sum_{j=1}^{N_{\ell-1}}\tilde{a}_{qj}^{(\ell-1)}\tilde{\phi}_j^{(\ell-1)}+\tilde{b}_q^{(\ell-1)}\bigg)\bigg|\\
	&\quad+\sum_{j=1}^{N_{\ell-1}}\big|a_{qj}^{(\ell-1)}\partial_{x_m}\phi_j^{(\ell-1)}-\tilde{a}_{qj}^{(\ell-1)}\partial_{x_m}\tilde{\phi}_j^{(\ell-1)}\big|\bigg|\rho^{\prime}\bigg(\sum_{j=1}^{N_{\ell-1}}\tilde{a}_{qj}^{(\ell-1)}\tilde{\phi}_j^{(\ell-1)}+\tilde{b}_q^{(\ell-1)}\bigg)\bigg|\\
	&\leq B_{\boldsymbol{\theta}}\sum_{j=1}^{N_{\ell-1}}\big|\partial_{x_m}\phi_j^{(\ell-1)}\big|\bigg(\sum_{j=1}^{N_{\ell-1}}\big|a_{qj}^{(\ell-1)}\phi_j^{(\ell-1)}-\tilde{a}_{qj}^{(\ell-1)}\tilde{\phi}_j^{(\ell-1)}\big|+\big|b_q^{(\ell-1)}-\tilde{b}_q^{(\ell-1)}\big|\bigg)\\
	&\quad+\sum_{j=1}^{N_{\ell-1}}\big|a_{qj}^{(\ell-1)}\partial_{x_m}\phi_j^{(\ell-1)}-\tilde{a}_{qj}^{(\ell-1)}\partial_{x_m}\tilde{\phi}_j^{(\ell-1)}\big|\\
	&\leq B_{\boldsymbol{\theta}}\sum_{j=1}^{N_{\ell-1}}\big|\partial_{x_m}\phi_j^{(\ell-1)}\big|\bigg(\sum_{j=1}^{N_{\ell-1}}\big|a_{qj}^{(\ell-1)}-\tilde{a}_{qj}^{(\ell-1)}\big| + B_{\boldsymbol{\theta}}\sum_{j=1}^{N_{\ell-1}}\big|\phi_j^{(\ell-1)}-\tilde{\phi}_j^{(\ell-1)}\big|+\big|b_q^{(\ell-1)}-\tilde{b}_q^{(\ell-1)}\big|\bigg)\\
	&\quad+B_{\boldsymbol{\theta}}\sum_{j=1}^{N_{\ell-1}}\big|\partial_{x_m}\phi_j^{(\ell-1)}-\partial_{x_m}\tilde{\phi}_j^{(\ell-1)}\big|+\sum_{j=1}^{N_{\ell-1}}\big|a_{qj}^{(\ell-1)}-\tilde{a}_{qj}^{(\ell-1)}\big|\big|\partial_{x_m}\tilde{\phi}_j^{(\ell-1)}\big|\\
	&\leq B_{\boldsymbol{\theta}}\sum_{j=1}^{N_{\ell-1}}\big|\partial_{x_m}\phi_j^{(\ell-1)}\big|\bigg(\sum_{j=1}^{N_{\ell-1}}\big|a_{qj}^{(\ell-1)}-\tilde{a}_{qj}^{(\ell-1)}\big|+B_{\boldsymbol{\theta}}\sum_{j=1}^{N_{\ell-1}}\big|\phi_j^{(\ell-1)}-\tilde{\phi}_j^{(\ell-1)}\big|+\big|b_q^{(\ell-1)}-\tilde{b}_q^{(\ell-1)}\big|\bigg)\\
	&\quad+B_{\boldsymbol{\theta}}\sum_{j=1}^{N_{\ell-1}}\big|\partial_{x_m}\phi_j^{(\ell-1)}-\partial_{x_m}\tilde{\phi}_j^{(\ell-1)}\big|+\sum_{j=1}^{N_{\ell-1}}\big|a_{qj}^{(\ell-1)}-\tilde{a}_{qj}^{(\ell-1)}\big|\big|\partial_{x_m}\tilde{\phi}_j^{(\ell-1)}\big|\\
	&\leq B_{\boldsymbol{\theta}}\bigg(\prod_{i=1}^{\ell-1}N_i\bigg)B_{\boldsymbol{\theta}}^{\ell}\bigg(\sum_{j=1}^{N_{\ell-1}}\big|a_{qj}^{(\ell-1)}-\tilde{a}_{qj}^{(\ell-1)}\big|\!+\!B_{\boldsymbol{\theta}}\sum_{j=1}^{N_{\ell-1}}
	\bigg(\prod_{i=1}^{\ell-2}N_i\bigg) B_{\boldsymbol{\theta}}^{\ell-2}\sum_{k=1}^{\mathfrak{n}_{\ell-1}}\big|\theta_k-\tilde{\theta}_k\big|+\big|b_q^{(\ell-1)}-\tilde{b}_q^{(\ell-1)}\big|\bigg)\\
	&\quad+B_{\boldsymbol{\theta}}\sum_{j=1}^{N_{\ell-1}}\big|\partial_{x_m}\phi_j^{(\ell-1)}-\partial_{x_m}\tilde{\phi}_j^{(\ell-1)}\big|+\sum_{j=1}^{N_{\ell-1}}\big|a_{qj}^{(\ell-1)}-\tilde{a}_{qj}^{(\ell-1)}\big|\bigg(\prod_{i=1}^{\ell-2}N_i\bigg)B_{\boldsymbol{\theta}}^{\ell-1}\\
	&\leq B_{\boldsymbol{\theta}}\sum_{j=1}^{N_{\ell-1}}\big|\partial_{x_m}\phi_j^{(\ell-1)}-\partial_{x_m}\tilde{\phi}_j^{(\ell-1)}\big| + B_{\boldsymbol{\theta}}^{2\ell}\bigg(\prod_{i=1}^{\ell-1}N_i\bigg)^{\mkern-4mu 2}\sum_{k=1}^{\mathfrak{n}_{\ell}}\big|\theta_k-\tilde{\theta}_k\big| \, .
\end{align*}
For $\ell=2$,
\begin{align*}
    \big|\partial_{x_m}\phi_q^{(2)}-\partial_{x_m}\tilde{\phi}_q^{(2)}\big|&\leq B_{\boldsymbol{\theta}}\sum_{j=1}^{N_{1}}\big|\partial_{x_m}\phi_j^{(1)}-\partial_{x_m}\tilde{\phi}_j^{(1)}\big|
	+B_{\boldsymbol{\theta}}^{4}N_{1}^2\sum_{k=1}^{\mathfrak{n}_{2}}\big|\theta_k-\tilde{\theta}_k\big|\\
	&\leq2B_{\boldsymbol{\theta}}^2N_{1}\sum_{k=1}^{\mathfrak{n}_{1}}\big|\theta_k-\tilde{\theta}_k\big|+B_{\boldsymbol{\theta}}^{4}N_{1}^2\sum_{k=1}^{\mathfrak{n}_{2}}\big|\theta_k-\tilde{\theta}_k\big|
	\leq 3B_{\boldsymbol{\theta}}^{4}N_{1}^2\sum_{k=1}^{\mathfrak{n}_{2}}\big|\theta_k-\tilde{\theta}_k\big| \, .
\end{align*}
Assuming that for $\ell\geq2$,
\begin{equation*}
	\big|\partial_{x_m}\phi_q^{(\ell)}-\partial_{x_m}\tilde{\phi}_q^{(\ell)}\big|\leq (\ell+1) B_{\boldsymbol{\theta}}^{2\ell}\bigg(\prod_{i=1}^{\ell-1}N_i\bigg)^{\mkern-4mu 2}\sum_{k=1}^{\mathfrak{n}_{\ell}}\big|\theta_k-\tilde{\theta}_k\big| \, ,
\end{equation*}
we have
\begin{align*}
	&\big|\partial_{x_m}\phi_q^{(\ell+1)}-\partial_{x_m}\tilde{\phi}_q^{(\ell+1)}\big| \\
	\leq& B_{\boldsymbol{\theta}}\sum_{j=1}^{N_{\ell}}\big|\partial_{x_m}\phi_j^{(\ell)}-\partial_{x_m}\tilde{\phi}_j^{(\ell)}\big|
	+B_{\boldsymbol{\theta}}^{2\ell+2}\bigg(\prod_{i=1}^{\ell}N_i\bigg)^{\mkern-4mu 2}\sum_{k=1}^{\mathfrak{n}_{\ell+1}}\big|\theta_k-\tilde{\theta}_k\big|\\
	\leq & B_{\boldsymbol{\theta}}\sum_{j=1}^{N_{\ell}}(\ell+1) B_{\boldsymbol{\theta}}^{2\ell}\bigg(\prod_{i=1}^{\ell-1}N_i\bigg)^{\mkern-4mu 2}\sum_{k=1}^{\mathfrak{n}_{\ell}}\big|\theta_k-\tilde{\theta}_k\big|
	+B_{\boldsymbol{\theta}}^{2\ell+2}\bigg(\prod_{i=1}^{\ell}N_i\bigg)^{\mkern-4mu 2}\sum_{k=1}^{\mathfrak{n}_{\ell+1}}\big|\theta_k-\tilde{\theta}_k\big|\\
	\leq& (\ell+2)B_{\boldsymbol{\theta}}^{2\ell+2}\bigg(\prod_{i=1}^{\ell}N_i\bigg)^{\mkern-4mu 2}\sum_{k=1}^{\mathfrak{n}_{\ell+1}}\big|\theta_k-\tilde{\theta}_k\big| \, .
\end{align*}
Hence by by induction and H$\mathrm{\ddot{o}}$lder inequality we conclude that
\begin{align*}
\big|\partial_{x_m}\phi_{\boldsymbol{\theta}}-\partial_{x_m}\phi_{\tilde{\boldsymbol{\theta}}}\big|  &=  \big|\partial_{x_m}\phi-\partial_{x_m}\tilde{\phi}\big| \leq (L+1)B_{\boldsymbol{\theta}}^{2L}\bigg(\prod_{i=1}^{L-1}N_i\bigg)^{\mkern-4mu 2}\sum_{k=1}^{\mathfrak{n}_{L}}\big|\theta_k-\tilde{\theta}_k\big|\\
&\leq \sqrt{\mathfrak{n}_{L}}(L+1)B_{\boldsymbol{\theta}}^{2L}\bigg(\prod_{i=1}^{L-1}N_i\bigg)^{\mkern-4mu 2}\big\|\boldsymbol{\theta}-\tilde{\boldsymbol{\theta}}\big\|_2 \leq 2W^{2L-1}\sqrt{L}(L+1)\cdot B_{\boldsymbol{\theta}}^{2L}\big\|\boldsymbol{\theta}-\tilde{\boldsymbol{\theta}}\big\|_2\, .
\end{align*}
\end{proof}

\begin{lem} \label{Lip of Fi}
Let $W, L\in\mathbb{N}^{+}$, $B_{\boldsymbol{\theta}}\ge 1$. Let $\rho(x)$ be the hyperbolic tangent function $\frac{e^{x}-e^{-x}}{e^{x}+e^{-x}}$. Then, for any $f_{i}(\boldsymbol{x}; \boldsymbol{\theta}),f_{i}(\boldsymbol{x}; \tilde{\boldsymbol{\theta}})\in\mathcal{F}_{i, sub}$, $i=1,\cdots,3$, we have
\begin{align*}
\big|f_{i}(\boldsymbol{x}; \boldsymbol{\theta})\big| &\le B_i\, , \quad \forall x\in\Omega\, ,\\
\big|f_{i}(\boldsymbol{x}; \boldsymbol{\theta})-f_{i}(\boldsymbol{x}; \tilde{\boldsymbol{\theta}})\big| &\leq L_i \big\|\boldsymbol{\theta} -\tilde{\boldsymbol{\theta}}\big\|_2\, ,\quad \forall x\in\Omega\, ,
\end{align*}
with
\begin{align*}
B_1&=W^{L-1}\cdot B_{\boldsymbol{\theta}}^{L}\, ,
&B_2&= B_3 = (W+1)\cdot B_{\boldsymbol{\theta}}\, ,\\
L_1&=2W^{2L-1}\sqrt{L}(L+1)\cdot B_{\boldsymbol{\theta}}^{2L}\, ,
&L_2&=L_3 = 2W^{L}\sqrt{L}\cdot B_{\boldsymbol{\theta}}^{L-1}\, ,
\end{align*}
\end{lem}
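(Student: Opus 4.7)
The plan is to observe that this lemma is essentially a repackaging of the three preceding lemmas (\ref{f-lip}, \ref{df-bound}, \ref{df-lip}) applied case by case to the three sub-network function classes $\mathcal{F}_{1,sub}$, $\mathcal{F}_{2,sub}$, $\mathcal{F}_{3,sub}$. No new analytic work is required; the main task is to match notation and invoke the correct bound in each case.

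First I would handle $i=2$. Any $f_2(\boldsymbol{x};\boldsymbol{\theta}) \in \mathcal{F}_{2,sub}$ is by definition $\phi_{\boldsymbol{\theta}}(\boldsymbol{x})$ for some $\phi_{\boldsymbol{\theta}} \in \mathcal{NN}(W,L,B_{\boldsymbol{\theta}})$. Lemma \ref{f-lip} then gives both the pointwise bound $|\phi_{\boldsymbol{\theta}}(\boldsymbol{x})| \leq (W+1)B_{\boldsymbol{\theta}} = B_2$ and the Lipschitz-in-$\boldsymbol{\theta}$ bound $|\phi_{\boldsymbol{\theta}}(\boldsymbol{x}) - \phi_{\tilde{\boldsymbol{\theta}}}(\boldsymbol{x})| \leq 2W^L\sqrt{L}\, B_{\boldsymbol{\theta}}^{L-1}\|\boldsymbol{\theta}-\tilde{\boldsymbol{\theta}}\|_2 = L_2 \|\boldsymbol{\theta}-\tilde{\boldsymbol{\theta}}\|_2$. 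The case $i=3$ is identical: restricting to $\partial\Omega \subset \overline{\Omega}$ does not increase either the sup norm over $\Omega$ or the Lipschitz constant, so the same bounds carry through with $B_3 = B_2$, $L_3 = L_2$. The $\pm$ in the definition of $\mathcal{F}_{i,sub}$ is harmless since both the magnitude and the Lipschitz estimate are invariant under negation.

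For $i=1$, any $f_1(\boldsymbol{x};\boldsymbol{\theta}) \in \mathcal{F}_{1,sub}$ is $\pm \partial_{x_1} \phi_{\boldsymbol{\theta}}(\boldsymbol{x})$. Lemma \ref{df-bound} specialized to $m=1$ gives $|\partial_{x_1}\phi_{\boldsymbol{\theta}}(\boldsymbol{x})| \leq W^{L-1} B_{\boldsymbol{\theta}}^L = B_1$, and Lemma \ref{df-lip} with $m=1$ yields $|\partial_{x_1}\phi_{\boldsymbol{\theta}}(\boldsymbol{x}) - \partial_{x_1}\phi_{\tilde{\boldsymbol{\theta}}}(\boldsymbol{x})| \leq 2W^{2L-1}\sqrt{L}(L+1) B_{\boldsymbol{\theta}}^{2L} \|\boldsymbol{\theta}-\tilde{\boldsymbol{\theta}}\|_2 = L_1 \|\boldsymbol{\theta}-\tilde{\boldsymbol{\theta}}\|_2$, which is precisely the stated estimate.

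There is no real obstacle here, since all three preceding lemmas have already been proved by direct induction on the layer index $\ell$ using the fact that $\tanh$ and $\tanh'$ are both $1$-Lipschitz and bounded by $1$, together with the weight bound $\|\boldsymbol{\theta}\|_\infty \leq B_{\boldsymbol{\theta}}$ and the width bound $N_\ell \leq W$. The only bookkeeping point worth emphasizing is that the constants $B_i$ and $L_i$ in the statement are defined uniformly in $m$, so that when the general Lemmas \ref{df-bound} and \ref{df-lip} (which are stated for arbitrary coordinate $m$) are invoked with $m=1$, the resulting constants agree with those listed in the lemma. This completes the proof.
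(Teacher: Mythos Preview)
Your proposal is correct and matches the paper's approach exactly: the paper's proof is the single line ``Direct result from Lemma \ref{f-lip}, Lemma \ref{df-bound} and Lemma \ref{df-lip},'' and you have simply unpacked that sentence case by case.
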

\begin{proof}
    Direct result from Lemma \ref{f-lip}, Lemma \ref{df-bound} and Lemma \ref{df-lip}.
\end{proof}

Thus, we obtain the following conclusion, which bounds $\mathbb{E}[\mathcal{E}^{i}_{sta}] $ in terms of the Rademacher complexity of $\mathcal{F}_{i, sub}$.
\begin{lem}
\label{sta1}
The element $u_{\mathfrak{m}, \boldsymbol{\theta}}\in \mathcal{PNN}(\mathfrak{m}, M, \{W, L, B_{\boldsymbol{\theta}}\})$, then
\begin{align*}
&\mathbb{E}[\mathcal{E}^{1}_{sta}] \leq 2|\Omega| d M^2\cdot W^{L-1} B_{\boldsymbol{\theta}}^{L}\cdot \hat{\mathfrak{R}}_{N_{\rm in}}(\mathcal{F}_{1, sub})\, , ~~~~~~~~~ \mathbb{E}[\mathcal{E}^{3}_{sta}]\leq 2B_0|\Omega|M\cdot\hat{\mathfrak{R}}_{N_{\rm in}}(\mathcal{F}_{2, sub})\, ,\\
&\mathbb{E}[\mathcal{E}^{2}_{sta}] \leq 2 B_0 |\Omega| M^2\cdot (W+1) B_{\boldsymbol{\theta}} \cdot \hat{\mathfrak{R}}_{N_{\rm in}}(\mathcal{F}_{2, sub})\, , ~~~~~ \mathbb{E}[\mathcal{E}^{4}_{sta}] \leq 2B_0 |\partial\Omega|M\cdot \hat{\mathfrak{R}}_{N_{b}}(\mathcal{F}_{3, sub})\, .
\end{align*}
\end{lem}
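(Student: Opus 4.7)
The plan is to treat all four terms via the same three-step pipeline—symmetrization, then contraction/scalar-multiplication, then the PNN-to-subnetwork reduction of Lemma \ref{relation of R}—and simply track constants carefully. For each $i$, a standard ghost-sample symmetrization argument gives
$$
\mathbb{E}[\mathcal{E}^i_{sta}]\;\leq\;2\,\mathfrak{R}_{N_i}(\mathcal{G}_i),
$$
where $\mathcal{G}_i$ is the loss class $\{\mathcal{L}_i(u_{\mathfrak{m},\boldsymbol{\theta}}):u_{\mathfrak{m},\boldsymbol{\theta}}\in\mathcal{PNN}\}$, and $N_i=N_{\rm in}$ for $i\in\{1,2,3\}$ and $N_i=N_b$ for $i=4$. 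Positive-homogeneity of $\mathfrak{R}_N$ strips off the prefactors $|\Omega|/2$, $|\Omega|$, or $|\partial\Omega|$ appearing in the definitions of $\mathcal{L}_i$.

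Next, I would peel off the nonlinear/multiplicative dependence on $u$. For $\mathcal{E}^1_{sta}$ the triangle inequality applied to $\|\nabla u\|_2^2=\sum_{m=1}^d(\partial_{x_m}u)^2$ produces a factor of $d$; on the PNN class one has $|\partial_{x_m}u|\le M\cdot W^{L-1}B_{\boldsymbol{\theta}}^L=MB_1$, where I combine $\|\boldsymbol{\theta}_{\rm out}^{\mathfrak{m}}\|_1\le M$ with the per-subnetwork derivative bound of Lemma \ref{Lip of Fi}. On this range the scalar map $t\mapsto t^2$ is $2MB_1$-Lipschitz, so Proposition \ref{Talagrand} gives $\mathfrak{R}_{N_{\rm in}}((\partial_{x_m}u)^2)\le 2MB_1\cdot\mathfrak{R}_{N_{\rm in}}(\mathcal{F}_1)$. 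For $\mathcal{E}^2_{sta}$ I first apply Proposition \ref{structural result of Rademacher} with $\omega(x)$ (bounded by $B_0$) to factor out $B_0$, then apply Proposition \ref{Talagrand} to $t\mapsto t^2$, now with Lipschitz constant $2MB_2=2M(W+1)B_{\boldsymbol{\theta}}$ from the uniform bound $|u|\le M(W+1)B_{\boldsymbol{\theta}}$ in Lemma \ref{Lip of Fi}. For $\mathcal{E}^3_{sta}$ and $\mathcal{E}^4_{sta}$, the loss is linear in $u$, so only Proposition \ref{structural result of Rademacher} is needed (with $h$ or $g$ in place of $\omega$, both bounded by $B_0$) to reduce directly to $\mathfrak{R}_{N_{\rm in}}(\mathcal{F}_2)$ and $\mathfrak{R}_{N_b}(\mathcal{F}_3)$.

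Finally, Lemma \ref{relation of R} converts each $\mathfrak{R}_{N_i}(\mathcal{F}_j)$ into $M\cdot\hat{\mathfrak{R}}_{N_i}(\mathcal{F}_{j,sub})$, contributing one further factor of $M$ (noting that the classes $\mathcal{F}_j$ are symmetric under negation, so the passage between $\mathfrak{R}$ and $\hat{\mathfrak{R}}$ is free). Multiplying the bookkeeping factors—$2$ from symmetrization, the geometric prefactor from $\mathcal{L}_i$, $d$ or $B_0$ from step two, $2MB_i$ from the squaring contraction where applicable, and a final $M$ from Lemma \ref{relation of R}—reproduces exactly the four stated inequalities. The only delicate point is to apply the Lipschitz contraction on the sharp range $[-MB_i,MB_i]$ that is available at the \emph{full} PNN level (using $\|\boldsymbol{\theta}_{\rm out}^{\mathfrak{m}}\|_1\le M$) rather than the na\"ive term-by-term bound $\mathfrak{m}\,B_i$; this is precisely what keeps the final estimate independent of the number $\mathfrak{m}$ of sub-networks and hence usable in the over-parameterized regime.
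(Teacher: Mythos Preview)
Your proposal is correct and follows essentially the same three-step pipeline as the paper: symmetrization, then Talagrand contraction (Proposition~\ref{Talagrand}) together with the scalar-multiplication bound (Proposition~\ref{structural result of Rademacher}) applied at the full $\mathcal{PNN}$ level, then the reduction to sub-networks via Lemma~\ref{relation of R}. Your constant bookkeeping matches the paper's, and you correctly single out the crucial point---that the Lipschitz constant for $t\mapsto t^2$ must be taken on the range $[-MB_i,MB_i]$ available at the $\mathcal{PNN}$ level (before reducing to sub-networks), which is exactly what keeps the bound free of $\mathfrak m$. The only cosmetic difference is that the paper makes the $\mathfrak{R}$ vs.\ $\hat{\mathfrak{R}}$ passage explicit by building the $\pm$ sign into the definitions of $\mathcal{F}_i'$ and $\mathcal{F}_i$ from the outset, whereas you invoke symmetry of $\mathcal{F}_j$ at the end; both are fine.
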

\begin{proof}
\textbf{Step 1.} We present the proof with respect to $\mathcal{L}_1$ and $\mathcal{L}_2$, other inequalities can be shown similarly. Denote
\begin{align*}
&\mathcal{F}_1^{\prime} = \big\{\pm f:\Omega \to \mathbb{R} \mid \exists \  u_{\mathfrak{m}, \boldsymbol{\theta}}\in\mathcal{PNN}\, \ s.t.\ f(\boldsymbol{x}; \boldsymbol{\theta})=\big(\partial_{x_1}u_{\mathfrak{m}, \boldsymbol{\theta}}(\boldsymbol{x})\big)^2\big\}\, ,\\
& \mathcal{F}_2^{\prime} = \big\{\pm f:\Omega\to \mathbb{R} \mid \exists \ u_{\mathfrak{m}, \boldsymbol{\theta}}\in\mathcal{PNN} \ s.t.\  f(\boldsymbol{x}; \boldsymbol{\theta})=u^2_{\mathfrak{m}, \boldsymbol{\theta}}(\boldsymbol{x})\big\}\, .
\end{align*} 
We take $\{\tilde{X_p}\}_{p=1}^{N_{\rm in}}$ as an independent copy of $\{{X_p}\}_{p=1}^{N_{\rm in}}$, then
\begin{align*}
	\mathcal{L}_1(u_{\mathfrak{m}, \boldsymbol{\theta}})-\widehat{\mathcal{L}}_1(u_{\mathfrak{m}, \boldsymbol{\theta}})
	&=\frac{|\Omega|}{2}\bigg[\mathbb{E}_{X\sim U(\Omega)}\|\nabla u_{\mathfrak{m}, \boldsymbol{\theta}}(X)\|^2-\sum_{p=1}^{N_{\rm in}}\|\nabla u_{\mathfrak{m}, \boldsymbol{\theta}}(X_p)\|^2\bigg]\\	
	&=\frac{|\Omega|}{2N_{\rm in}}\mathbb{E}_{\{\tilde{X_p}\}_{p=1}^{N_{\rm in}}}\sum_{m=1}^{d}\sum_{p=1}^{N_{\rm in}}\Big[(\partial_{x_m}u_{\mathfrak{m}, \boldsymbol{\theta}}(\tilde{X_p}))^2-(\partial_{x_m}u_{\mathfrak{m}, \boldsymbol{\theta}}(X_p))^2\Big]\, .
\end{align*}
Hence
\begin{align*}
	&\mathbb{E}[\mathcal{E}^{1}_{sta}] =\mathbb{E}_{\{{X_p}\}_{p=1}^{N_{\rm in}}}\sup_{u_{\mathfrak{m}, \boldsymbol{\theta}}\in\mathcal{PNN}}\big|\mathcal{L}_1(u_{\mathfrak{m}, \boldsymbol{\theta}})-\widehat{\mathcal{L}}_1(u_{\mathfrak{m}, \boldsymbol{\theta}})\big|\\
    & \leq \frac{|\Omega|}{2N_{\rm in}} \mathbb{E}_{\{{X_p}\}_{p=1}^{N_{\rm in}}}\sup_{u_{\mathfrak{m}, \boldsymbol{\theta}}\in\mathcal{PNN}}\bigg|\mathbb{E}_{\{\tilde{X_p}\}_{p=1}^{N_{\rm in}}}\sum_{m=1}^{d}\sum_{p=1}^{N_{\rm in}}\Big[(\partial_{x_m}u_{\mathfrak{m}, \boldsymbol{\theta}}(\tilde{X_p}))^2-(\partial_{x_m}u_{\mathfrak{m}, \boldsymbol{\theta}}(X_p))^2\Big]\bigg|\\
    & \leq \frac{|\Omega|}{2N_{\rm in}} \mathbb{E}_{\{{X_p}\}_{p=1}^{N_{\rm in}}}\sup_{u_{\mathfrak{m}, \boldsymbol{\theta}}\in\mathcal{PNN}}\mathbb{E}_{\{\tilde{X_p}\}_{p=1}^{N_{\rm in}}}\sum_{m=1}^{d}\bigg|\sum_{p=1}^{N_{\rm in}}\Big[(\partial_{x_m}u_{\mathfrak{m}, \boldsymbol{\theta}}(\tilde{X_p}))^2-(\partial_{x_m}u_{\mathfrak{m}, \boldsymbol{\theta}}(X_p))^2\Big]\bigg|\\
    & \leq \frac{|\Omega|}{2N_{\rm in}} \mathbb{E}_{\{{X_p}, \tilde{X_p}\}_{p=1}^{N_{\rm in}}}\sup_{u_{\mathfrak{m}, \boldsymbol{\theta}}\in\mathcal{PNN}}\sum_{m=1}^{d}\bigg|\sum_{p=1}^{N_{\rm in}}\Big[(\partial_{x_m}u_{\mathfrak{m}, \boldsymbol{\theta}}(\tilde{X_p}))^2-(\partial_{x_m}u_{\mathfrak{m}, \boldsymbol{\theta}}(X_p))^2\Big]\bigg|\\
    &\leq \frac{|\Omega|}{2N_{\rm in}} \mathbb{E}_{\{{X_p}, \tilde{X_p}, \sigma_p\}_{p=1}^{N_{\rm in}}}\sup_{u_{\mathfrak{m}, \boldsymbol{\theta}}\in\mathcal{PNN}}\sum_{m=1}^{d}\bigg|\sum_{p=1}^{N_{\rm in}}\sigma_p \Big[(\partial_{x_m}u_{\mathfrak{m}, \boldsymbol{\theta}}(\tilde{X_p}))^2-(\partial_{x_m}u_{\mathfrak{m}, \boldsymbol{\theta}}(X_p))^2\Big]\bigg|\\
	&\leq\frac{|\Omega|}{2N_{\rm in}}\mathbb{E}_{\{{X_p}, \tilde{X_p}, \sigma_p\}_{p=1}^{N_{\rm in}}}\sup_{u_{\mathfrak{m}, \boldsymbol{\theta}}\in\mathcal{PNN}}\sum_{m=1}^{d}\bigg|\sum_{p=1}^{N_{\rm in}}\sigma_p (\partial_{x_m}u_{\mathfrak{m}, \boldsymbol{\theta}}(\tilde{X_p}))^2\bigg|\\
    &\quad +\frac{|\Omega|}{2N_{\rm in}}\mathbb{E}_{\{{X_p}, \tilde{X_p}, \sigma_p\}_{p=1}^{N_{\rm in}}}\sup_{u_{\mathfrak{m}, \boldsymbol{\theta}}\in\mathcal{PNN}}\sum_{m=1}^{d}\bigg|\sum_{p=1}^{N_{\rm in}}-\sigma_p (\partial_{x_m}u_{\mathfrak{m}, \boldsymbol{\theta}}(X_p))^2\bigg|\\
	&=\frac{|\Omega|}{N_{\rm in}}\mathbb{E}_{\{{X_p},{\sigma_p}\}_{p=1}^{N_{\rm in}}}\sup_{u_{\mathfrak{m}, \boldsymbol{\theta}}\in\mathcal{PNN}}\sum_{m=1}^{d}\bigg|\sum_{p=1}^{N_{\rm in}} \sigma_p (\partial_{x_m}u_{\mathfrak{m}, \boldsymbol{\theta}}(X_p))^2\bigg|\\
	&\leq\frac{|\Omega|}{N_{\rm in}}\sum_{m=1}^{d}\mathbb{E}_{\{{X_p},{\sigma_p}\}_{p=1}^{N_{\rm in}}}\sup_{u_{\mathfrak{m}, \boldsymbol{\theta}}\in\mathcal{PNN}}\bigg|\sum_{p=1}^{N_{\rm in}} \sigma_p (\partial_{x_m}u_{\mathfrak{m}, \boldsymbol{\theta}}(X_p))^2\bigg|\\
 &=\frac{|\Omega|}{N_{\rm in}}d \cdot \mathbb{E}_{\{{X_p},{\sigma_p}\}_{p=1}^{N_{\rm in}}}\sup_{u_{\mathfrak{m}, \boldsymbol{\theta}}\in\mathcal{PNN}}\bigg|\sum_{p=1}^{N_{\rm in}} \sigma_p (\partial_{x_1}u_{\mathfrak{m}, \boldsymbol{\theta}}(X_p))^2\bigg|\\
	&= \frac{|\Omega|}{N_{\rm in}}d\cdot \mathbb{E}_{\{{X_p},{\sigma_p}\}_{p=1}^{N_{\rm in}}}\sup_{f\in\mathcal{F}_{1}^{\prime}}\sum_{p=1}^{N_{\rm in}}\sigma_p f({X_p}) \le d \mm |\Omega| \mm\mathfrak{R}_{N_{\rm in}}(\mathcal{F}_{1}^{\prime})\, ,
\end{align*}
where the fourth step is due to the fact that the insertion of Rademacher variables does not change the distribution, the nineth step is due to the symmetric structure of the neural network function in $\mathcal{PNN}$ w.r.t. its individual variable components and the tenth step is due to the fact that $\mathcal{F}_{1}^{\prime}$ is symmetric (i.e., if $f\in \mathcal{F}_{1}^{\prime}$, then $-f \in \mathcal{F}_{1}^{\prime}$). 

Similarly, 
\begin{align*}
	\mathbb{E}[\mathcal{E}^{2}_{sta}] & = \mathbb{E}_{\{{X_p}\}_{p=1}^{N_{\rm in}}}\sup_{u_{\mathfrak{m}, \boldsymbol{\theta}}\in\mathcal{PNN}}\big|\mathcal{L}_2(u_{\mathfrak{m}, \boldsymbol{\theta}})-\widehat{\mathcal{L}}_2(u_{\mathfrak{m}, \boldsymbol{\theta}})\big|\\
 &\leq\frac{|\Omega|}{N_{\rm in}}\mathbb{E}_{\{{X_p},{\sigma_p}\}_{p=1}^{N_{\rm in}}}\sup_{u_{\mathfrak{m},\boldsymbol{\theta}}\in\mathcal{PNN}}\bigg|\sum_{p=1}^{N_{\rm in}} \sigma_p u^2_{\mathfrak{m}, \boldsymbol{\theta}}(X_p)\bigg|\leq \frac{|\Omega|}{N_{\rm in}}\mathbb{E}_{\{{X_p},{\sigma_p}\}_{p=1}^{N_{\rm in}}}\sup_{f\in\mathcal{F}_{2}^{\prime}}\bigg|\sum_{p=1}^{N_{\rm in}}\sigma_p \mm \omega({X_p})f({X_p})\bigg|\\
	&= \frac{|\Omega|}{N_{\rm in}}\mathbb{E}_{\{{X_p},{\sigma_p}\}_{p=1}^{N_{\rm in}}}\sup_{f\in\mathcal{F}_{2}^{\prime}}\sum_{p=1}^{N_{\rm in}}\sigma_p \mm \omega({X_p})f({X_p})= |\Omega|\mathfrak{R}_N(\omega\cdot\mathcal{F}_{2}^{\prime})
	\le B_0|\Omega| \mathfrak{R}_{N_{\rm in}}(\mathcal{F}_{2}^{\prime})\, ,
\end{align*}
where we use Proposition \ref{structural result of Rademacher} in the last step. And
\begin{align*}
	\mathbb{E}[\mathcal{E}^{3}_{sta}]\leq 2B_0|\Omega| \mathfrak{R}_{N_{\rm in}}(\mathcal{F}_{2})\, ,\quad 
	\mathbb{E}[\mathcal{E}^{4}_{sta}]	\leq 2B_0 |\partial\Omega| \mathfrak{R}_{N_{b}}(\mathcal{F}_{3})\, .
\end{align*}
\textbf{Step 2.}
Since $u_{\mathfrak{m}, \boldsymbol{\theta}} (\boldsymbol{x}) = \sum_{k=1}^{\mathfrak{m}} c_{k} \phi_{\boldsymbol{\theta}}^{k}(\boldsymbol{x})$, by the proof of Lemma \ref{Lip of Fi}, 
\begin{align*}
|u_{\mathfrak{m}, \boldsymbol{\theta}}| &= \bigg|\sum_{k=1}^{\mathfrak{m}} c_{k}  \phi_{\boldsymbol{\theta}}^{k}(\boldsymbol{x})\bigg| \leq M\cdot (W+1) B_{\boldsymbol{\theta}}\, ,\\
|\partial_{x_m}u_{\mathfrak{m}, \boldsymbol{\theta}}| &= \bigg|\sum_{k=1}^{\mathfrak{m}} c_{k}  \partial_{x_m}\phi_{\boldsymbol{\theta}}^{k}(\boldsymbol{x})\bigg| \leq M\cdot W^{L-1} B_{\boldsymbol{\theta}}^{L}\, .
\end{align*}
Then,
\begin{align*}
|u^2_{\mathfrak{m}, \boldsymbol{\theta}}(X_p) - \hat{u}^2_{\mathfrak{m}, \boldsymbol{\theta}}(X_p)| &\leq 2 M\cdot (W+1) B_{\boldsymbol{\theta}} \cdot |u_{\mathfrak{m}, \boldsymbol{\theta}}(X_p) - \hat{u}_{\mathfrak{m}, \boldsymbol{\theta}}(X_p)|\, ,\\
\big|(\partial_{x_m}u_{\mathfrak{m}, \boldsymbol{\theta}}(X_p))^2 - (\partial_{x_m}\hat{u}_{\mathfrak{m}, \boldsymbol{\theta}}(X_p))^2\big| &\leq 2 M\cdot W^{L-1} B_{\boldsymbol{\theta}}^{L}\cdot |\partial_{x_m} u_{\mathfrak{m}, \boldsymbol{\theta}}(X_p) - \partial_{x_m} \hat{u}_{\mathfrak{m}, \boldsymbol{\theta}}(X_p)|\, .\\
\end{align*}
By  Proposition \ref{Talagrand}, 
\begin{align*}
\mathfrak{R}_{N_{\rm in}}(\mathcal{F}_{1}^{\prime}) \leq 2 M\cdot W^{L-1} B_{\boldsymbol{\theta}}^{L}\cdot \mathfrak{R}_{N_{\rm in}}(\mathcal{F}_{1})\, , \quad
\mathfrak{R}_{N_{\rm in}}(\mathcal{F}_{2}^{\prime}) \leq  2M\cdot (W+1) B_{\boldsymbol{\theta}} \cdot \mathfrak{R}_{N_{\rm in}}(\mathcal{F}_{2})\, .
\end{align*}
\textbf{Step 3.} By Lemma \ref{relation of R}, we have $\mathfrak{R}_{N_{\rm in}}(\mathcal{F}_i) \leq M \cdot \hat{\mathfrak{R}}_{N_{\rm in}}(\mathcal{F}_{i,sub})$, then
\begin{align*}
&\mathbb{E}[\mathcal{E}^{1}_{sta}] \leq 2|\Omega| d M^2\cdot W^{L-1} B_{\boldsymbol{\theta}}^{L}\cdot \hat{\mathfrak{R}}_{N_{\rm in}}(\mathcal{F}_{1, sub})\, , ~~~~~~~~~ \mathbb{E}[\mathcal{E}^{3}_{sta}]\leq 2B_0|\Omega|M\cdot\hat{\mathfrak{R}}_{N_{\rm in}}(\mathcal{F}_{2, sub})\, ,\\
&\mathbb{E}[\mathcal{E}^{2}_{sta}] \leq 2 B_0 |\Omega| M^2\cdot (W+1) B_{\boldsymbol{\theta}} \cdot \hat{\mathfrak{R}}_{N_{\rm in}}(\mathcal{F}_{2, sub})\, , ~~~~~ \mathbb{E}[\mathcal{E}^{4}_{sta}] \leq 2B_0 |\partial\Omega|M\cdot \hat{\mathfrak{R}}_{N_{b}}(\mathcal{F}_{3, sub})\, .
\end{align*}
\end{proof}

Thirdly, we provide an upper bound for each  $\hat{\mathfrak{R}}_{N_{\rm in}}(\mathcal{F}_{i,sub}), i = 1, \ldots, 3$ in terms of the covering number   of $\mathcal{F}_{i,sub}$  by using Dudley's entropy formula (Proposition \ref{Dudley}).

\begin{defn}
An $\epsilon$-cover of a set $T$ in a metric space $(S, \tau)$
is a subset $T_c\subset S$ such  that for each $t\in T$, there exists a $t_c\in T_c$ such that $\tau(t, t_c) \leq\epsilon$. The $\epsilon$-covering number of $T$, denoted as $\mathcal{C}(\epsilon, T,\tau)$ is  defined to be the minimum cardinality among all $\epsilon$-cover of $T$ with respect to the metric $\tau$.
\end{defn}

\begin{prop}
\label{Massart}
Let $A \subseteq \mathbb{R}^{m}$ be a finite set, with $r=\max _{\boldsymbol{x} \in A}\|\boldsymbol{x}\|_{2}$, then the following holds:
$$
\underset{\boldsymbol{\sigma}}{\mathbb{E}}\bigg[\frac{1}{m} \sup _{\boldsymbol{x} \in A} \Big| \sum_{i=1}^{m} \sigma_{i} x_{i}\Big| \bigg] \leq \frac{r \sqrt{2 \log (2|A|)}}{m}\, ,
$$
where $\sigma_{i}$ are independent uniform random variables taking values in $\{-1,+1\}$ and $x_{1}, \ldots, x_{m}$ are the components of vector $\boldsymbol{x}$.
\end{prop}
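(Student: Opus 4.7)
The plan is to prove this classical inequality (Massart's finite class lemma) by the standard Chernoff/exponentiation trick combined with the sub-Gaussian moment generating function bound for Rademacher sums. The target on the right-hand side, $r\sqrt{2\log(2|A|)}$, is exactly what one obtains from a union-bound-in-exponential-form argument optimized over the free temperature parameter, so this suggests the route.

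First I would reduce the absolute value to two one-sided suprema: since $|y|=\max(y,-y)$, one has $\sup_{\boldsymbol{x}\in A}|\sum_i\sigma_i x_i|=\sup_{\boldsymbol{x}\in A'}\sum_i\sigma_i x_i$ where $A':=A\cup(-A)\subset\mathbb{R}^m$ satisfies $|A'|\le 2|A|$ and $\max_{\boldsymbol{x}\in A'}\|\boldsymbol{x}\|_2=r$. This collapses the problem to controlling $\mathbb{E}_{\boldsymbol\sigma}[\sup_{\boldsymbol{x}\in A'}\sum_i\sigma_ix_i]$.

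Next I would apply Jensen's inequality to exponentiate: for any $t>0$,
\begin{equation*}
\exp\!\Big(t\,\mathbb{E}_{\boldsymbol\sigma}\sup_{\boldsymbol{x}\in A'}\sum_{i=1}^m\sigma_ix_i\Big)\le \mathbb{E}_{\boldsymbol\sigma}\exp\!\Big(t\sup_{\boldsymbol{x}\in A'}\sum_{i=1}^m\sigma_ix_i\Big)\le\sum_{\boldsymbol{x}\in A'}\mathbb{E}_{\boldsymbol\sigma}\exp\!\Big(t\sum_{i=1}^m\sigma_ix_i\Big).
\end{equation*}
Then I would use independence of the $\sigma_i$ together with the Rademacher sub-Gaussian moment bound $\mathbb{E}[\exp(t\sigma_i x_i)]=\cosh(tx_i)\le\exp(t^2x_i^2/2)$, yielding
\begin{equation*}
\mathbb{E}_{\boldsymbol\sigma}\exp\!\Big(t\sum_i\sigma_ix_i\Big)=\prod_i\mathbb{E}[\exp(t\sigma_ix_i)]\le\exp\!\big(t^2\|\boldsymbol{x}\|_2^2/2\big)\le\exp(t^2r^2/2).
\end{equation*}
Combining, $\exp(t\,\mathbb{E}_{\boldsymbol\sigma}\sup_{\boldsymbol{x}\in A'}\sum_i\sigma_ix_i)\le 2|A|\exp(t^2r^2/2)$. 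Taking logarithms gives the bound $\mathbb{E}_{\boldsymbol\sigma}\sup_{\boldsymbol{x}\in A'}\sum_i\sigma_ix_i\le\log(2|A|)/t+tr^2/2$.

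Finally, I would optimize over $t>0$; the right-hand side is minimized at $t^\star=\sqrt{2\log(2|A|)}/r$, producing the clean bound $r\sqrt{2\log(2|A|)}$. Dividing by $m$ yields the stated inequality. There is no real obstacle here: the only care needed is the step from a one-sided supremum to the two-sided supremum (handled by the doubling $A\mapsto A\cup(-A)$), and making sure the Rademacher MGF bound $\cosh(tx)\le e^{t^2x^2/2}$ is invoked cleanly, which itself follows from comparing Taylor coefficients term by term.
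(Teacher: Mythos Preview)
Your proposal is correct and follows essentially the same route as the paper: Jensen to exponentiate, union bound over the finite set, the Rademacher sub-Gaussian MGF bound $\cosh(tx)\le e^{t^2x^2/2}$, then optimization in $t$. The only cosmetic difference is in how the absolute value is absorbed: you double the set via $A'=A\cup(-A)$ before exponentiating, whereas the paper keeps the absolute value inside and uses $e^{|y|}\le e^{y}+e^{-y}$ term by term; both produce the same factor $2|A|$ and the same final constant.
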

\begin{proof}
For any $t>0$, using Jensen's inequality, rearranging terms, and bounding the supremum by a sum, we obtain:
\begin{align*}
\exp \bigg[t \underset{\boldsymbol{\sigma}}{\mathbb{E}}\bigg(\sup _{x \in A} \Big|\sum_{i=1}^{m} \sigma_{i} x_{i}\Big|\bigg)\bigg] & \leq \underset{\boldsymbol{\sigma}}{\mathbb{E}}\bigg[\exp \bigg(t \sup _{x \in A} \Big|\sum_{i=1}^{m} \sigma_{i} x_{i}\Big|\bigg)\bigg] \\
&=\underset{\boldsymbol{\sigma}}{\mathbb{E}}\bigg[\sup _{x \in A} \exp \bigg(t \Big|\sum_{i=1}^{m} \sigma_{i} x_{i}\Big|\bigg)\bigg] \leq \sum_{x \in A} \underset{\boldsymbol{\sigma}}{\mathbb{E}}\bigg[\exp \bigg(t \Big|\sum_{i=1}^{m} \sigma_{i} x_{i}\Big|\bigg)\bigg]\, .
\end{align*}
Since $e^{|x|} \leq e^{x}+e^{-x}$ and $e^{x}+e^{-x} \leq 2e^{x^2/2}$, it then holds that
\begin{align*}
    \sum_{x \in A} \underset{\boldsymbol{\sigma}}{\mathbb{E}}\bigg[\exp \bigg(t \Big|\sum_{i=1}^{m} \sigma_{i} x_{i}\Big|\bigg)\bigg] &\leq \sum_{x \in A} \underset{\boldsymbol{\sigma}}{\mathbb{E}}\bigg[\exp \bigg(t \sum_{i=1}^{m} \sigma_{i} x_{i}\bigg)\bigg] + \sum_{x \in A} \underset{\boldsymbol{\sigma}}{\mathbb{E}}\bigg[\exp \bigg(-t \sum_{i=1}^{m} \sigma_{i} x_{i}\bigg)\bigg] \\
    &= \sum_{x \in A} \prod_{i=1}^{m} \underset{\sigma_{i}}{\mathbb{E}}\big[\exp \mm (t \mm \sigma_{i} \mm x_{i})\big] + \sum_{x \in A} \prod_{i=1}^{m} \underset{\sigma_{i}}{\mathbb{E}}\big[\exp \mm (-t \mm \sigma_{i} \mm x_{i})\big] \\
    &= 2 \sum_{x \in A} \prod_{i=1}^{m} \frac{\exp\mm(t \mm x_i)+\exp\mm(-t\mm x_i)}{2} \leq 2 \sum_{x \in A} \prod_{i=1}^{m} \exp\bigg(\frac{t^2 x_i^2}{2}\bigg) \\
    &=2\sum_{x \in A} \exp \bigg(\frac{t^{2}}{2} \sum_{i=1}^{m} x_{i}^{2}\bigg) \leq 2\sum_{x \in A} \exp \bigg(\frac{t^{2} r^{2}}{2}\bigg)=2|A| e^{\frac{t^{2} r^{2}}{2}} \, .
\end{align*}
Taking the log of both sides and dividing by $t$ gives us:
$$
\underset{\boldsymbol{\sigma}}{\mathbb{E}}\bigg[\sup _{x \in A} \sum_{i=1}^{m} \sigma_{i} x_{i}\bigg] \leq \frac{\log (2|A|)}{t}+\frac{t r^{2}}{2} \, .
$$
If we choose $t=\frac{\sqrt{2 \log (2|A|)}}{r}$, which minimizes this upper bound, we get:
$$
\underset{\boldsymbol{\sigma}}{\mathbb{E}}\bigg[\sup _{x \in A} \sum_{i=1}^{m} \sigma_{i} x_{i}\bigg] \leq r \sqrt{2 \log (2|A|)}\, .
$$
Dividing both sides by $m$ leads to the statement of the lemma.
\end{proof}

\begin{prop}
\label{Dudley}
Let $\mathcal{F}$ be a class of functions from $\Omega$ to $\mathbb{R}$ such that $0\in \mathcal{F}$ and the diameter of $\mathcal{F}$ is less than $M$, i.e., $\|u\|_{L^{\infty}(\Omega)}\leq \mathcal{B}, \forall u\in \mathcal{F}$. Then
\begin{equation*}
  \hat{\mathfrak{R}}_N(\mathcal{F}) \leq \inf_{0<\delta<\mathcal{B}}\bigg(4\delta+\frac{12}{\sqrt{N}}\int_{\delta}^{\mathcal{B}}\sqrt{\log\mathcal{C}(\epsilon,\mathcal{F},d_{\infty})}\, \mathrm{d}\epsilon\bigg)\, .
\end{equation*}
\end{prop}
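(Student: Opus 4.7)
The plan is to use the standard chaining argument of Dudley. Fix an arbitrary realization of the Rademacher variables and the sample $X_1,\ldots,X_N$, and view $\mathcal{F}$ as a subset of $\mathbb{R}^N$ through the evaluation map. Fix $\delta \in (0,\mathcal{B})$ and choose the largest integer $J$ with $\delta_J:=\mathcal{B}\mm 2^{-J}\geq \delta$. For each $j=0,1,\ldots,J$, pick a minimal $\delta_j$-cover $T_j$ of $\mathcal{F}$ with respect to $d_{\infty}$ (so $|T_j|=\mathcal{C}(\delta_j,\mathcal{F},d_\infty)$), and a projection $\pi_j:\mathcal{F}\to T_j$ with $\|f-\pi_j f\|_\infty\leq \delta_j$. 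Since $0\in\mathcal{F}$ and $\|f\|_\infty\leq \mathcal{B}$, we may take $T_0=\{0\}$, so $\pi_0 f\equiv 0$.

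First I would split the complete sum using $f=(f-\pi_J f)+\sum_{j=1}^{J}(\pi_j f-\pi_{j-1} f)$ and apply the triangle inequality to the absolute value $|\sum_i \sigma_i f(X_i)|$. The remainder term is bounded in sup norm by $\delta_J$, hence contributes at most $\delta_J\leq 2\delta$ after taking expectation and supremum. For each chaining level $j\geq 1$, the ``link'' set $\{\pi_j f-\pi_{j-1} f:f\in\mathcal{F}\}$ has cardinality at most $|T_j|\cdot|T_{j-1}|\leq \mathcal{C}(\delta_j,\mathcal{F},d_\infty)^2$ and $L^\infty$-diameter at most $\delta_j+\delta_{j-1}=3\delta_j$. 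Massart's finite class lemma (Proposition \ref{Massart}) with $r\leq\sqrt{N}\cdot 3\delta_j$ then yields
\begin{equation*}
\mathbb{E}_{\boldsymbol\sigma}\sup_{f\in\mathcal{F}}\frac{1}{N}\bigg|\sum_{i=1}^N\sigma_i(\pi_j f-\pi_{j-1}f)(X_i)\bigg|\leq \frac{3\delta_j\sqrt{2\log(2\mathcal{C}(\delta_j,\mathcal{F},d_\infty)^2)}}{\sqrt{N}}\leq \frac{6\delta_j\sqrt{\log(2\mathcal{C}(\delta_j,\mathcal{F},d_\infty))}}{\sqrt{N}}\mm .
\end{equation*}

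Next I would sum these contributions over $j=1,\ldots,J$ and convert the dyadic sum into the promised Dudley integral. Since $\epsilon\mapsto \sqrt{\log \mathcal{C}(\epsilon,\mathcal{F},d_\infty)}$ is nonincreasing and $\delta_j-\delta_{j+1}=\delta_j/2$, we have $\delta_j\sqrt{\log\mathcal{C}(\delta_j,\mathcal{F},d_\infty)}\leq 2\int_{\delta_{j+1}}^{\delta_j}\sqrt{\log\mathcal{C}(\epsilon,\mathcal{F},d_\infty)}\,\mathrm{d}\epsilon$, so the sum telescopes into $\frac{12}{\sqrt{N}}\int_{\delta_{J+1}}^{\mathcal{B}}\sqrt{\log\mathcal{C}(\epsilon,\mathcal{F},d_\infty)}\,\mathrm{d}\epsilon$, which is bounded by $\frac{12}{\sqrt{N}}\int_\delta^{\mathcal{B}}\sqrt{\log\mathcal{C}(\epsilon,\mathcal{F},d_\infty)}\,\mathrm{d}\epsilon$. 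Combining with the remainder bound of at most $4\delta$ (absorbing the constant from the $\delta_J$ remainder and from passing $\log(2\mathcal{C}^2)\to\log\mathcal{C}$ cleanly) and taking the infimum over $\delta$ yields the stated estimate.

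The main obstacle is purely cosmetic: careful bookkeeping to fold the logarithmic factors $\log(2|A|)$ from Massart's lemma into $\log \mathcal{C}$ (exploiting $\mathcal{C}\geq 1$) and to track the multiplicative constants so that the final bound is exactly $4\delta + 12 N^{-1/2}\int_\delta^{\mathcal{B}}\sqrt{\log\mathcal{C}}\,\mathrm{d}\epsilon$ rather than a slightly worse constant. No new idea is needed; the result follows by standard chaining once the dyadic covers and the telescoping are set up as above.
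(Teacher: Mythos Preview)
Your proposal follows essentially the same chaining argument as the paper: dyadic covers, the telescoping decomposition $f=(f-\pi_J f)+\sum_j(\pi_j f-\pi_{j-1}f)$ with the coarsest cover taken to be $\{0\}$, Massart's finite-class lemma (Proposition~\ref{Massart}) on each link, and conversion of the dyadic sum into the Dudley integral. Structurally there is no meaningful difference.

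There is one small bookkeeping slip at the very end. With your choice of $J$ as the \emph{largest} integer satisfying $\delta_J\geq\delta$, you have $\delta_{J+1}<\delta$, and therefore
\[
\int_{\delta_{J+1}}^{\mathcal{B}}\sqrt{\log\mathcal{C}(\epsilon,\mathcal{F},d_\infty)}\,\mathrm{d}\epsilon \;\geq\; \int_{\delta}^{\mathcal{B}}\sqrt{\log\mathcal{C}(\epsilon,\mathcal{F},d_\infty)}\,\mathrm{d}\epsilon,
\]
not the reverse inequality you wrote. The fix is exactly what the paper does: choose the cutoff index so that the \emph{next} scale still exceeds $\delta$ (the paper takes $K$ to be the largest integer with $\epsilon_{K+1}>\delta$). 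Then the integral lower limit $\epsilon_{K+1}$ sits above $\delta$, giving the inequality in the right direction, while the remainder term satisfies $\epsilon_K=4\epsilon_{K+2}\leq 4\delta$, which is precisely where the constant $4$ in $4\delta$ comes from. Once you adjust the choice of $J$ this way, your sketch goes through verbatim and matches the paper's proof.
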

\begin{proof}
The proof is based on the chaining method. Set $\epsilon_{k}=2^{-k+1} \mathcal{B} .$ We denote by $\mathcal{F}_{k}$ such that $\mathcal{F}_{k}$ is an $\epsilon_{k}$-cover of $\mathcal{F}$ and $|\mathcal{F}_{k}|=\mathcal{C}(\epsilon_{k}, \mathcal{F}, d_{\infty}) .$ Hence for any $u \in \mathcal{F}$, there exists $u_{k} \in \mathcal{F}_{k}$ such that
$$
d_{\infty}(u,u_k)\leq \epsilon_{k} \, .
$$
Let $K$ be a positive integer determined later. We have
\begin{align*}
&\hat{\mathfrak{R}}_N(\mathcal{F}) =\underset{\{\sigma_{i}, X_{i}\}_{i=1}^{N}}{\mathbb{E}} \bigg[\sup _{u \in \mathcal{F}} \frac{1}{N} \Big| \sum_{i=1}^{N} \sigma_{i} u(X_{i})\Big|\bigg]\\
&=\underset{\{\sigma_{i}, X_{i}\}_{i=1}^{N}}{\mathbb{E}} \frac{1}{N} \sup _{u \in \mathcal{F}}\bigg[ \Big|\sum_{i=1}^{N} \sigma_{i}(u(X_{i})-u_{K}(X_{i})) +\sum_{j=1}^{K-1} \sum_{i=1}^{N} \sigma_{i}(u_{j+1}(X_{i})-u_{j}(X_{i}))+\sum_{i=1}^{N} \sigma_{i} u_{1}(X_{i})\Big|\bigg]\\
&\leq \underset{\{\sigma_{i}, X_{i}\}_{i=1}^{N}}{\mathbb{E}} \bigg[\sup _{u \in \mathcal{F}} \frac{1}{N} \Big|\sum_{i=1}^{N} \sigma_{i}(u(X_{i})-u_{K}(X_{i}))\Big|\bigg]+\underset{\{\sigma_{i}, X_{i}\}_{i=1}^{N}}{\mathbb{E}} \bigg[\sup _{u_1 \in \mathcal{F}_{1}} \frac{1}{N} \Big|\sum_{i=1}^{N} \sigma_{i} u_1(X_{i})\Big|\bigg]\\
&\quad +\sum_{j=1}^{K-1} \underset{\{\sigma_{i}, X_{i}\}_{i=1}^{N}}{\mathbb{E}}\bigg[\sup _{u \in \mathcal{F}} \frac{1}{N} \Big|\sum_{i=1}^{N} \sigma_{i}(u_{j+1}(X_{i})-u_{j}(X_{i}))\Big|\bigg] \, .
\end{align*}

We can choose $\mathcal{F}_{1}=\{0\}$ to eliminate the third term. For the first term,
$$
\underset{\{\sigma_{i}, X_{i}\}_{i=1}^{N}}{\mathbb{E}} \sup _{u \in \mathcal{F}} \frac{1}{N}\Big|\sum_{i=1}^{N} \sigma_{i}(u(X_{i})-u_{K}(X_{i}))\Big| \leq \underset{\{\sigma_{i}, X_{i}\}_{i=1}^{N}}{\mathbb{E}} \sup _{u \in \mathcal{F}} \frac{1}{N} \sum_{i=1}^{N}|\sigma_{i}|\|u-u_{K}\|_{L^\infty} \leq \epsilon_{K} \, .
$$
For the second term, for any fixed samples $\{X_{i}\}_{i=1}^{N}$, we define
$$
V_{j}:=\big\{(u_{j+1}(X_{1})-u_{j}(X_{1}), \ldots, u_{j+1}(X_{N})-u_{j}(X_{N})) \in \mathbb{R}^{N}: u \in \mathcal{F}\big\} \, .
$$
Then, for any $\boldsymbol{v}^{j} \in V_{j}$,
\begin{align*}
\|\boldsymbol{v}^{j}\|_{2} &=\bigg(\sum_{i=1}^{N}|u_{j+1}(X_{i})-u_{j}(X_{i})|^{2}\bigg)^{1 / 2} \leq \sqrt{N}\|u_{j+1}-u_{j}\|_{L^\infty} \\
& \leq \sqrt{N}\|u_{j+1}-u\|_{L^\infty}+\sqrt{N}\|u_{j}-u\|_{L^\infty}=\sqrt{N} \epsilon_{j+1}+\sqrt{N} \epsilon_{j}=3 \sqrt{N} \epsilon_{j+1} \, .
\end{align*}
Applying Proposition \ref{Massart}, we have
\begin{align*}
&\sum_{j=1}^{K-1} \underset{\{\sigma_{i}\}_{i=1}^{N}}{\mathbb{E}}\bigg[\sup _{u \in \mathcal{F}} \frac{1}{N} \Big| \sum_{i=1}^{N} \sigma_{i}(u_{j+1}(X_{i})-u_{j}(X_{i}))\Big| \bigg] \\
&=\sum_{j=1}^{K-1} \underset{\{\sigma_{i}\}_{i=1}^{N}}{\mathbb{E}} \bigg[\sup _{\boldsymbol{v}^{j} \in V_{j}} \frac{1}{N} \Big|\sum_{i=1}^{N} \sigma_{i} {v}_{i}^{\m j}\Big|\bigg] \leq \sum_{j=1}^{K-1} \frac{6 \epsilon_{j+1}}{\sqrt{N}} \sqrt{2 \log (2|V_{j}|)} \, .
\end{align*}
By the definition of $V_{j}$, we know that $|V_{j}| \leq|\mathcal{F}_{j}||\mathcal{F}_{j+1}| \leq|\mathcal{F}_{j+1}|^{2}$. Hence
$$
\sum_{j=1}^{K-1} \underset{\{\sigma_{i}, X_{i}\}_{i=1}^{N}}{\mathbb{E}} \bigg[\sup _{u \in \mathcal{F}} \frac{1}{N} \sum_{i=1}^{N} \sigma_{i}(u_{j+1}(X_{i})-u_{j}(X_{i}))\bigg] \leq \sum_{j=1}^{K-1} \frac{6 \epsilon_{j+1}}{\sqrt{N}} \sqrt{\log |\mathcal{F}_{j+1}|} \, .
$$
Now we obtain
\begin{align*}
\hat{\mathfrak{R}}_N(\mathcal{F}) & \leq \epsilon_{K}+\sum_{j=1}^{K-1} \frac{6 \epsilon_{j+1}}{\sqrt{N}} \sqrt{\log |\mathcal{F}_{j+1}|} \\
&=\epsilon_{K}+\frac{12}{\sqrt{N}} \sum_{j=1}^{K}(\epsilon_{j}-\epsilon_{j+1}) \sqrt{\log \mathcal{C}(\epsilon_{j}, \mathcal{F},d_{\infty})} \\
& \leq \epsilon_{K}+\frac{12}{\sqrt{N}} \int_{\epsilon_{K+1}}^{\mathcal{B}} \sqrt{\log \mathcal{C}(\epsilon, \mathcal{F},d_{\infty})} \, {\rm d} \epsilon \\
&\leq \inf_{0<\delta<\mathcal{B}}\bigg(4\delta+\frac{12}{\sqrt{N}}\int_{\delta}^{\mathcal{B}}\sqrt{\log\mathcal{C}(\epsilon,\mathcal{F},d_{\infty})} \, {\rm d}\epsilon\bigg)\, .
\end{align*}
where, last inequality holds since for $0\leq \delta \leq \mathcal{B}$, we can choose $K$ to be the largest integer such that $\epsilon_{K+1} > \delta$, at this time $\epsilon_K \leq 4\epsilon_{K+2} \leq 4\delta $.
\end{proof}

In Euclidean space, we can establish an upper bound of covering number for a bounded set easily.
\begin{prop} \label{covering number Euclidean space}
	Suppose that $T\subset\mathbb{R}^d$ and $\|t\|_2\leq B$ for $t\in T$, then
	\begin{equation*}
		\mathcal{C}(\epsilon,T,\|\cdot\|_2)\leq \bigg(\frac{2B\sqrt{d}}{\epsilon}\bigg)^{\!d}\, .
	\end{equation*}
\end{prop}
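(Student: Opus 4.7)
The proposition is a standard volumetric covering-number bound for a bounded subset of Euclidean space, so the plan is to carry out a direct grid construction rather than invoke a general packing/volume comparison argument.

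The plan is to enclose $T$ in an axis-aligned cube and then overlay a uniform grid whose cells have $\ell_2$-diameter exactly $\epsilon$. First I would observe that since $\|t\|_2 \leq B$ implies $\|t\|_\infty \leq B$, we have $T \subset [-B,B]^d$. Next, I would subdivide $[-B,B]^d$ into closed axis-aligned sub-cubes of side length $s := \epsilon/\sqrt{d}$. A cube of side $s$ in $\mathbb{R}^d$ has $\ell_2$-diameter $s\sqrt{d} = \epsilon$, so any two points lying in the same sub-cube are within $\ell_2$-distance $\epsilon$ of one another.

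Then I would count the sub-cubes: along each coordinate axis the interval $[-B,B]$ of length $2B$ is covered by at most $\lceil 2B/s \rceil = \lceil 2B\sqrt{d}/\epsilon \rceil$ segments of length $s$, so the total number of sub-cubes is at most $\lceil 2B\sqrt{d}/\epsilon\rceil^d \leq (2B\sqrt{d}/\epsilon)^d$ (one may, without loss of generality, assume $\epsilon \leq 2B\sqrt{d}$, since otherwise $T$ itself is already contained in a single ball of radius $\epsilon$ about the origin and the bound is trivial). For every sub-cube that has nonempty intersection with $T$, I would select one representative point from $T$ in that sub-cube; the resulting finite collection $T_c \subset T$ has cardinality at most $(2B\sqrt{d}/\epsilon)^d$, and by construction every $t \in T$ lies in some sub-cube containing a chosen representative, hence is within $\ell_2$-distance $\epsilon$ of $T_c$.

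There is no real obstacle here; the only minor technicality is handling the ceiling term cleanly, which is why I would dispose of the trivial regime $\epsilon > 2B\sqrt{d}$ separately so that the bound $\lceil 2B\sqrt{d}/\epsilon\rceil \leq 2B\sqrt{d}/\epsilon \cdot 2$ is not needed and the stated inequality $\mathcal{C}(\epsilon,T,\|\cdot\|_2) \leq (2B\sqrt{d}/\epsilon)^d$ holds directly from the grid count.
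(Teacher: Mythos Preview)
Your approach is essentially the same grid construction the paper uses, but there is a small arithmetic slip in the counting step. You write
\[
\lceil 2B\sqrt{d}/\epsilon\rceil^{d}\;\le\;(2B\sqrt{d}/\epsilon)^{d},
\]
which is false in general (ceilings round \emph{up}), and disposing of the regime $\epsilon>2B\sqrt{d}$ does not repair it: even when $x:=2B\sqrt{d}/\epsilon\ge 1$ one still has $\lceil x\rceil\ge x$ strictly unless $x$ is an integer. The bound $\lceil x\rceil\le 2x$ you mention would work but costs an extra factor of~$2^{d}$.

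The paper sidesteps this by taking $m=\lfloor 2B\sqrt{d}/\epsilon\rfloor$ and using the grid points
\(
T_c=\{-B+j\epsilon/\sqrt{d}:j=1,\dots,m\}^{d}
\)
themselves as the cover (recall their definition of an $\epsilon$-cover allows $T_c\subset S$, not necessarily $T_c\subset T$). With the floor, $|T_c|=m^{d}\le(2B\sqrt{d}/\epsilon)^{d}$ is immediate, and one checks that every coordinate of any $t\in[-B,B]^{d}$ lies within $\epsilon/\sqrt{d}$ of some grid coordinate, giving $\|t-t_c\|_2\le\epsilon$. Your sub-cube picture and the paper's grid-point picture are the same construction; the only difference is floor versus ceiling, and using the floor is what makes the stated bound come out without an extra constant.
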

\begin{proof}
	Let $m=\lfloor 2B\sqrt{d} \epsilon^{-1}\rfloor$ and define
	\begin{equation*}
		T_c=\bigg\{-B+\frac{\epsilon}{\sqrt{d}},-B+\frac{2\epsilon}{\sqrt{d}},\cdots,-B+\frac{m\epsilon}{\sqrt{d}}\bigg\}^d\, ,
	\end{equation*}
	then for $t\in T$, there exists $t_c\in T_c$ such that
	\begin{equation*}
		\|t-t_c\|_2\leq\sqrt{\sum_{i=1}^{d}\bigg(\frac{\epsilon}{\sqrt{d}}\bigg)^2}=\epsilon \, .
	\end{equation*}
	Hence
	\begin{equation*}
		\mathcal{C}(\epsilon,T,\|\cdot\|_2)\leq|T_c|=m^d\leq\bigg(\frac{2B\sqrt{d}}{\epsilon}\bigg)^d\, .
	\end{equation*}
\end{proof}

A Lipschitz parameterization allows us to translates a cover of the function space into a cover of the parameter space. Such a property plays an essential role in our analysis of statistical error.
\begin{prop} \label{covering number Lipshcitz parameterization}
	Let $\mathcal{F}$ be a parameterized class of functions: $\mathcal{F} = \{f(\boldsymbol{x}; \boldsymbol{\theta}) : \boldsymbol{\theta}\in\Theta \}$. Let $\|\cdot\|_{\Theta}$ be a norm on $\Theta$ and let $\|\cdot\|_{\mathcal{F}}$ be a norm on $\mathcal{F}$. Suppose that the mapping $\boldsymbol{\theta} \mapsto f(\boldsymbol{x};\boldsymbol{\theta})$ is $\lambda$-Lipschitz, that is,
	\begin{equation*}
		\big\|f(\boldsymbol{x};\boldsymbol{\theta})-f\big(\boldsymbol{x};\tilde{\boldsymbol{\theta}}\big)\big\|_{\mathcal{F}} \leq \lambda\big\|\boldsymbol{\theta}-\tilde{\boldsymbol{\theta}}\big\|_{\Theta} \, ,
	\end{equation*}
	then for any $\epsilon>0$, 
 $$\mathcal{C}(\epsilon, \mathcal{F},\|\cdot\|_{\mathcal{F}}) \leq \mathcal{C}(\epsilon \lambda^{-1}, \Theta,\|\cdot\|_{\Theta}) \, .$$
\end{prop}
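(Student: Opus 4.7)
The plan is to transfer a cover of the parameter space $\Theta$ directly into a cover of the function class $\mathcal{F}$ via the Lipschitz parameterization hypothesis. The argument is essentially a one-step reduction: a fine net in $\Theta$ produces, by pushforward through $\boldsymbol{\theta}\mapsto f(\boldsymbol{x};\boldsymbol{\theta})$, a net in $\mathcal{F}$ that is only a factor of $\lambda$ coarser, so rescaling the radius by $\lambda^{-1}$ before covering $\Theta$ yields the desired radius in $\mathcal{F}$.

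Concretely, I would start by fixing $\epsilon>0$ and picking a minimal $\epsilon\lambda^{-1}$-cover $\Theta_c\subseteq\Theta$ of $\Theta$ with respect to $\|\cdot\|_{\Theta}$, so that $|\Theta_c|=\mathcal{C}(\epsilon\lambda^{-1},\Theta,\|\cdot\|_{\Theta})$. I would then define the candidate cover of $\mathcal{F}$ to be $\mathcal{F}_c:=\{f(\boldsymbol{x};\boldsymbol{\theta}_c):\boldsymbol{\theta}_c\in\Theta_c\}$, which clearly satisfies $|\mathcal{F}_c|\leq|\Theta_c|$.

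Next I would verify that $\mathcal{F}_c$ is an $\epsilon$-cover of $\mathcal{F}$ under $\|\cdot\|_{\mathcal{F}}$. Given any $f(\boldsymbol{x};\boldsymbol{\theta})\in\mathcal{F}$, by definition of $\Theta_c$ there exists $\boldsymbol{\theta}_c\in\Theta_c$ with $\|\boldsymbol{\theta}-\boldsymbol{\theta}_c\|_{\Theta}\leq\epsilon\lambda^{-1}$. Applying the Lipschitz hypothesis gives
\[
\bigl\|f(\boldsymbol{x};\boldsymbol{\theta})-f(\boldsymbol{x};\boldsymbol{\theta}_c)\bigr\|_{\mathcal{F}}\leq\lambda\bigl\|\boldsymbol{\theta}-\boldsymbol{\theta}_c\bigr\|_{\Theta}\leq\lambda\cdot\epsilon\lambda^{-1}=\epsilon,
\]
so every element of $\mathcal{F}$ lies within $\|\cdot\|_{\mathcal{F}}$-distance $\epsilon$ of some element of $\mathcal{F}_c$. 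Taking the minimum over all valid covers then gives
\[
\mathcal{C}(\epsilon,\mathcal{F},\|\cdot\|_{\mathcal{F}})\leq|\mathcal{F}_c|\leq|\Theta_c|=\mathcal{C}(\epsilon\lambda^{-1},\Theta,\|\cdot\|_{\Theta}).
\]

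There is no real obstacle in this proof; it is a textbook pushforward argument. The only small care point is the (trivial) observation that the map $\boldsymbol{\theta}_c\mapsto f(\boldsymbol{x};\boldsymbol{\theta}_c)$ need not be injective, so one uses $|\mathcal{F}_c|\leq|\Theta_c|$ rather than equality, which is perfectly sufficient for the upper bound on the covering number.
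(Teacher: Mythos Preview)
Your proof is correct and follows essentially the same approach as the paper: take a minimal $\epsilon\lambda^{-1}$-cover of $\Theta$, push it forward through the parameterization, and use the Lipschitz bound to verify the image is an $\epsilon$-cover of $\mathcal{F}$. Your remark about non-injectivity (hence $|\mathcal{F}_c|\leq|\Theta_c|$) is a nice extra detail the paper leaves implicit.
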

\begin{proof}
	Suppose that $\mathcal{C}(\epsilon  \lambda^{-1}, \Theta,\|\cdot\|_{\Theta})=n$ and $\{\boldsymbol{\theta}_i\}_{i=1}^{n}$ is an $\epsilon \lambda^{-1}$-cover of $\Theta$. Then for any $\boldsymbol{\theta}\in\Theta$, there exists $1\leq i\leq n$ such that
	\begin{equation*}
		\|f(\boldsymbol{x};\boldsymbol{\theta})-f(\boldsymbol{x};{\boldsymbol{\theta}}_i)\|_{\mathcal{F}} \leq \lambda\|\boldsymbol{\theta}-{\boldsymbol{\theta}}_i\|_{\Theta}\leq\epsilon \, .
	\end{equation*}
	Hence $\{f(\boldsymbol{x};\boldsymbol{\theta}_i)\}_{i=1}^{n}$ is an $\epsilon$-cover of $\mathcal{F}$, implying that $\mathcal{C}(\epsilon, \mathcal{F},\|\cdot\|_{\mathcal{F}}) \leq n$.
\end{proof}

Combining Proposition \ref{Dudley}, Proposition \ref{covering number Euclidean space} and Proposition \ref{covering number Lipshcitz parameterization}, we can obtain an upper bound for each  $\hat{\mathfrak{R}}_{N_{\rm in}}(\mathcal{F}_{i,sub}), i = 1, \ldots, 3$. 

Finally, using  the McDiarmid's inequality, we can obtain high probability control over $\mathcal{E}_{sta}$.

\begin{prop} \label{lem:McD}
 (McDiarmid's inequality) Let $g$ be a function from $\Omega_1 \times \Omega_2 \times \cdots \times \Omega_n$ to $\mathbb{R}$. Suppose that function $g$ satisfies the bounded differences property, i.e., there exists constants $c_1, \ldots, c_n>0$ such that for any $x_1 \in \Omega_1, \ldots, x_n \in \Omega_n$
$$
\sup _{\tilde{x}_i \in \Omega_i}|g(x_1, \cdots, \tilde{x}_i, \cdots, x_n)-g(x_1, \cdots, x_i, \cdots, x_n)| \leq c_i\, , \quad i = 1, \ldots, n \, .
$$
Let $\{X_i\}_{i=1}^n$ be independent variables, where $X_i \in \Omega_i$, then for any $\tau>0$, we have
$$
|g(X_1, \cdots, X_n)-\mathbb{E} [g(X_1, \cdots, X_n)]| \leq \tau
$$
with probability at least $1-2 \exp[-2 \tau^2 (\sum_{i=1}^n c_i^2)^{-1}]$.
\end{prop}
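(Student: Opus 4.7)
The plan is to prove McDiarmid's inequality via the classical martingale (Doob) decomposition combined with a conditional Hoeffding-type bound. First, I would introduce the filtration $\mathcal{F}_i = \sigma(X_1, \ldots, X_i)$ with $\mathcal{F}_0$ trivial, and define the Doob martingale $Z_i = \mathbb{E}[g(X_1, \ldots, X_n) \mid \mathcal{F}_i]$ for $i = 0, 1, \ldots, n$. By construction $Z_0 = \mathbb{E}[g(X_1, \ldots, X_n)]$ and $Z_n = g(X_1, \ldots, X_n)$, so that the quantity to control telescopes as
\[
g(X_1,\ldots,X_n) - \mathbb{E}[g(X_1,\ldots,X_n)] = \sum_{i=1}^n D_i, \qquad D_i := Z_i - Z_{i-1},
\]
where $(D_i)$ is a martingale difference sequence with respect to $(\mathcal{F}_i)$.

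Next I would establish that each increment $D_i$ lies in an interval of length $c_i$ conditionally on $\mathcal{F}_{i-1}$. Using the independence of $\{X_j\}_{j=1}^n$, I would write, via Fubini,
\[
Z_i = \mathbb{E}_{X_{i+1},\ldots,X_n}\big[g(X_1,\ldots,X_i,X_{i+1},\ldots,X_n)\big],
\]
and define the conditional envelopes
\[
U_i := \sup_{\tilde{x}_i \in \Omega_i}\mathbb{E}_{X_{i+1},\ldots,X_n}\big[g(X_1,\ldots,X_{i-1},\tilde{x}_i,X_{i+1},\ldots,X_n)\big] - Z_{i-1},
\]
and analogously $L_i$ with an infimum. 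Then $L_i \le D_i \le U_i$ almost surely, and the bounded differences hypothesis gives $U_i - L_i \le c_i$. Applying Hoeffding's lemma conditionally on $\mathcal{F}_{i-1}$ then yields $\mathbb{E}[e^{\lambda D_i}\mid \mathcal{F}_{i-1}] \le \exp(\lambda^2 c_i^2 / 8)$ for every $\lambda \in \mathbb{R}$.

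Iterating the tower property, I obtain $\mathbb{E}[\exp(\lambda \sum_{i=1}^n D_i)] \le \exp(\lambda^2 \sum_{i=1}^n c_i^2 / 8)$. A standard Chernoff bound with $\lambda = 4\tau / \sum_{i=1}^n c_i^2$ then gives
\[
\mathbb{P}\!\Big(g(X_1,\ldots,X_n) - \mathbb{E}[g(X_1,\ldots,X_n)] \ge \tau\Big) \le \exp\!\Big(-\frac{2\tau^2}{\sum_{i=1}^n c_i^2}\Big),
\]
and the identical argument applied to $-g$ (which trivially satisfies the same bounded differences property) yields the matching lower tail. A union bound over the two tails produces the claimed two-sided bound with probability at least $1 - 2\exp[-2\tau^2 (\sum_{i=1}^n c_i^2)^{-1}]$.

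The main obstacle is the measurability and conditional-independence step that justifies the envelopes $L_i, U_i$: one has to be careful that the bounded differences hypothesis, which is stated pointwise in $\tilde{x}_i$, translates into the required conditional range $U_i - L_i \le c_i$ for the martingale increment $D_i$. This is resolved cleanly by invoking independence of the $X_j$ to replace the conditional expectation $\mathbb{E}[\,\cdot\mid \mathcal{F}_i]$ with a partial integration over $X_{i+1},\ldots,X_n$ at fixed $(X_1,\ldots,X_i)$, after which the deterministic bounded differences hypothesis applies directly under the integral. Once this technical point is in place, the remainder of the argument is the standard Azuma--Hoeffding chain and involves only routine algebra.
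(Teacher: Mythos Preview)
Your proposal is correct and follows the classical Doob martingale plus Azuma--Hoeffding route, which is the standard proof of McDiarmid's inequality. The paper itself does not give a proof at all: it simply cites McDiarmid's original paper \cite{mcdiarmid1989method}, so there is no ``paper's own proof'' to compare against beyond noting that your argument is essentially the one found in that reference.
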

\begin{proof}
    See \cite{mcdiarmid1989method}.
\end{proof}

\begin{thm}
Let $\mathcal{PNN} = \mathcal{PNN}(\mathfrak{m}, M, \{W, L, B_{\boldsymbol{\theta}}\})$. Let $N_{\rm in} = N_{b} = N_{s}$ in the Monte Carlo sampling. Let $0<\xi<1$. Then, with probability at least $1-\xi$, it holds that
\begin{align*}
    \mathcal{E}_{sta}&=\sup_{u_{\mathfrak{m}, \boldsymbol{\theta}}\in\mathcal{PNN}}\big|\mathcal{L}(u_{\mathfrak{m}, \boldsymbol{\theta}})-\widehat{\mathcal{L}}(u_{\mathfrak{m}, \boldsymbol{\theta}})\big| \\
    &\leq C(\Omega, B_0, d, W, L)\cdot M^2  B_{\boldsymbol{\theta}}^{2L} N_{s}^{-\frac{1}{2}}  \big(\sqrt{\log(B_{\boldsymbol{\theta}} WLN_{s})}+\sqrt{\log\xi^{-1}}\big) \, ,
\end{align*}
where $C(\Omega, B_0, d, W, L)$ is a universal constant which only depends on $\Omega, B_0, d, W$ and $L$.
\end{thm}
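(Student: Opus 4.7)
The plan is to chain together the preceding ingredients in a single pipeline. First I would apply Lemma \ref{lem4.1} to decompose $\mathbb{E}[\mathcal{E}_{sta}]$ into four expectation terms $\mathbb{E}[\mathcal{E}^i_{sta}]$, and then Lemma \ref{sta1} to bound each $\mathbb{E}[\mathcal{E}^i_{sta}]$ by a constant (depending on $|\Omega|$, $|\partial\Omega|$, $B_0$, and $d$) times $M$ or $M^2 \cdot W^{L-1} B_{\boldsymbol{\theta}}^L$ times the empirical Rademacher complexity $\hat{\mathfrak{R}}_{N_s}(\mathcal{F}_{i,\mathrm{sub}})$ of the sub-network class. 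The task is thereby reduced to controlling the Rademacher complexity of a single sub-network class, and the over-parameterization count $\mathfrak{m}$ no longer appears.

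Second, for each $i \in \{1,2,3\}$, Lemma \ref{Lip of Fi} certifies that $\mathcal{F}_{i,\mathrm{sub}}$ consists of functions uniformly bounded by $B_i$ that are $L_i$-Lipschitz in the parameter vector $\boldsymbol{\theta}$, with $B_i, L_i$ polynomial in $W, L, B_{\boldsymbol{\theta}}$ (the largest exponents being $B_{\boldsymbol{\theta}}^L$ and $B_{\boldsymbol{\theta}}^{2L}$ respectively). Since $\Theta$ sits in an $\ell_\infty$-ball of radius $B_{\boldsymbol{\theta}}$ inside $\mathbb{R}^{\mathfrak{n}_L}$ with $\mathfrak{n}_L \le W(W+1)L$, Propositions \ref{covering number Lipshcitz parameterization} and \ref{covering number Euclidean space} together yield $\mathcal{C}(\epsilon,\mathcal{F}_{i,\mathrm{sub}},d_\infty) \le (2B_{\boldsymbol{\theta}}\sqrt{\mathfrak{n}_L}\, L_i/\epsilon)^{\mathfrak{n}_L}$. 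Plugging this into Dudley's formula (Proposition \ref{Dudley}) with the balanced choice $\delta = N_s^{-1/2}$, the integrand becomes $\sqrt{\mathfrak{n}_L\log(2B_{\boldsymbol{\theta}}\sqrt{\mathfrak{n}_L}\, L_i/\epsilon)}$, and a direct estimate produces
$$\hat{\mathfrak{R}}_{N_s}(\mathcal{F}_{i,\mathrm{sub}}) \le C(W,L,d)\, B_i \, N_s^{-1/2}\sqrt{\log(B_{\boldsymbol{\theta}} W L N_s)}.$$
Feeding this back through Lemmas \ref{sta1} and \ref{lem4.1} delivers the expectation bound
$$\mathbb{E}[\mathcal{E}_{sta}] \le C(\Omega,B_0,d,W,L)\, M^2 B_{\boldsymbol{\theta}}^{2L} N_s^{-1/2}\sqrt{\log(B_{\boldsymbol{\theta}} WLN_s)},$$
where the $B_{\boldsymbol{\theta}}^{2L}$ scaling arises from the product of $W^{L-1}B_{\boldsymbol{\theta}}^L$ (the uniform derivative bound pulled out in Lemma \ref{sta1}) and $B_1 \lesssim W^{L-1}B_{\boldsymbol{\theta}}^L$ (the diameter entering Dudley's formula).

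Third, I would upgrade the expectation bound to a high-probability statement via McDiarmid's inequality (Proposition \ref{lem:McD}). Viewing $\mathcal{E}_{sta}$ as a function of the $2N_s$ independent samples $\{X_p,Y_p\}$ and using the uniform bounds on $|u|$ and $\|\nabla u\|$ over $\mathcal{PNN}$ supplied by Lemma \ref{Lip of Fi}, replacing a single sample changes $\widehat{\mathcal{L}}(u)$ by at most $\mathcal{O}(M^2 B_{\boldsymbol{\theta}}^{2L}/N_s)$ uniformly in $u$, so the supremum obeys the same bounded-differences bound. With $\sum_p c_p^2 = \mathcal{O}(M^4 B_{\boldsymbol{\theta}}^{4L}/N_s)$, solving $2\exp(-2\tau^2/\sum_p c_p^2)=\xi$ produces the additive term $C M^2 B_{\boldsymbol{\theta}}^{2L} N_s^{-1/2}\sqrt{\log\xi^{-1}}$, yielding the advertised statement.

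The main obstacle will be the bookkeeping inside Dudley's integral: one must identify the dominant Lipschitz constant $L_1$ (which carries the $B_{\boldsymbol{\theta}}^{2L}$ scale from differentiating a depth-$L$ $\tanh$ network), tune the truncation $\delta$ so that the $4\delta$ term is absorbed into the desired $N_s^{-1/2}\sqrt{\log N_s}$ rate, and verify that the polynomial-in-$(W,L,B_{\boldsymbol{\theta}},N_s)$ prefactors inside the logarithm collapse into the single factor $\log(B_{\boldsymbol{\theta}} WL N_s)$ appearing in the statement. A secondary point of care is justifying the bounded-differences inequality for the supremum, which relies on the uniform $L^\infty$ bounds over the parallel class being independent of $\mathfrak{m}$; this $\mathfrak{m}$-independence is exactly what allows the conclusion to survive in the over-parameterized regime.
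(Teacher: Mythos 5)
Your proposal follows the paper's proof essentially step for step: decompose via Lemma \ref{lem4.1}, reduce to sub-network Rademacher complexities $\hat{\mathfrak{R}}_{N_s}(\mathcal{F}_{i,\mathrm{sub}})$ via Lemma \ref{sta1} (this is precisely where $\mathfrak{m}$ drops out, through Lemma \ref{relation of R}), bound the covering number via the Lipschitz parametrization (Propositions \ref{covering number Lipshcitz parameterization} and \ref{covering number Euclidean space}), feed it into Dudley's integral with $\delta = N_s^{-1/2}$, and finish with McDiarmid. Your accounting of where the $M^2B_{\boldsymbol{\theta}}^{2L}$ factor comes from (one $M W^{L-1}B_{\boldsymbol{\theta}}^{L}$ from the Lipschitz contraction in Lemma \ref{sta1}, another $B_1\asymp W^{L-1}B_{\boldsymbol{\theta}}^{L}$ as the diameter in Dudley) matches the paper, and the bounded-differences estimate for McDiarmid is the same.
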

\begin{proof}
By Lemma \ref{lem4.1} and Lemma \ref{sta1}, we have that
\begin{align*}
\underset{\{X_p\}_{p=1}^{N_{\rm in}},\{Y_p\}_{p=1}^{N_{b}}}{\mathbb{E}}&\bigg[\sup_{u_{\mathfrak{m}, \boldsymbol{\theta}}\in\mathcal{PNN}}\big|\mathcal{L}(u_{\mathfrak{m}, \boldsymbol{\theta}})-\widehat{\mathcal{L}}(u_{\mathfrak{m}, \boldsymbol{\theta}})\big|\bigg] \leq d\cdot |\Omega|\cdot W^{L-1}\cdot 2M^2 \cdot B_{\boldsymbol{\theta}}^{L} \cdot \hat{\mathfrak{R}}_{N_{\rm in}}(\mathcal{F}_{1, sub})\\
&+ 2B_0 \cdot|\Omega| \cdot \big(M(W+1)B_{\boldsymbol{\theta}}+1\big)\cdot M\cdot\hat{\mathfrak{R}}_{N_{\rm in}}(\mathcal{F}_{2, sub}) +  2B_0 |\partial\Omega|M\cdot \hat{\mathfrak{R}}_{N_{b}}(\mathcal{F}_{3, sub})\, .
\end{align*}
Using Proposition \ref{Dudley}, for $i =1, \ldots, 3$,
\begin{equation*}
  \hat{\mathfrak{R}}_N(\mathcal{F}_{i, sub}) \leq \inf_{0<\delta<B_i}\bigg(4\delta+\frac{12}{\sqrt{N}}\int_{\delta}^{B_i}\sqrt{\log\mathcal{C}(\epsilon,\mathcal{F}_{i, sub},d_{\infty})}\, \mathrm{d}\epsilon\bigg) \, .
\end{equation*}
Combining Lemma \ref{Lip of Fi} and Proposition \ref{covering number Lipshcitz parameterization}, we have that
\begin{equation*}
\mathcal{C}(\epsilon,\mathcal{F}_{i, sub},d_{\infty}) \leq \mathcal{C}(\epsilon L_i^{-1},\mathcal{F}_{i, sub},d_{\infty}) \ ,
\end{equation*}
and each of them can be bounded through Proposition \ref{covering number Euclidean space}:
\begin{equation*}
\mathcal{C}(\epsilon,\mathcal{F}_{i, sub},d_{\infty}) \leq \bigg(\frac{2B_{\boldsymbol{\theta}}\sqrt{\mathfrak{n}_{L}}\cdot L_i}{\epsilon}\bigg)^{\mathfrak{n}_{L}} \leq \bigg(\frac{2B_{\boldsymbol{\theta}}\sqrt{W(W+1)L}\cdot L_1}{\epsilon}\bigg)^{W(W+1)L}.
\end{equation*}
Therefore,
\begin{align*}
\hat{\mathfrak{R}}_{N_{\rm in}}(\mathcal{F}_{1, sub})& \leq \inf_{0<\delta<B_1}\bigg(4\delta+\frac{12}{\sqrt{N_{\rm in}}}\int_{\delta}^{B_1}\sqrt{\log\mathcal{C}(\epsilon,\mathcal{F}_{1, sub},d_{\infty})}\, \mathrm{d}\epsilon\bigg)\\
&\leq \inf_{0<\delta<B_1}\bigg(4\delta+\frac{12}{\sqrt{N_{\rm in}}}\int_{\delta}^{B_1}\sqrt{W(W+1)L} \mm \m{\log^{1/2}\!\big[2L(L+1)W^{2L}B_{\boldsymbol{\theta}}^{2L+1}\epsilon^{-1}\big]}\, \mathrm{d}\epsilon\bigg)\\
&\leq \inf_{0<\delta<B_1}\Big(4\delta+12W\sqrt{L} \cdot B_1 N^{-\frac{1}{2}}_{\rm in} \log^{1/2} \!\big[2L(L+1)W^{2L}B_{\boldsymbol{\theta}}^{2L+1}\delta^{-1}\big]\Big)\, .
\end{align*}
Choosing $\delta=N^{-\frac{1}{2}}_{\rm in}<B_1/2$ and applying Lemma $\ref{Lip of Fi}$, we have
\begin{align*}
\hat{\mathfrak{R}}_{N_{\rm in}}(\mathcal{F}_{1, sub})& \leq 12W(W+1)L\cdot B_{\boldsymbol{\theta}}^{L}\cdot N^{-\frac{1}{2}}_{\rm in} \cdot \sqrt{\log(B_{\boldsymbol{\theta}} WLN_{s})}\, .
\end{align*}
Moreover, $\hat{\mathfrak{R}}_{N_{\rm in}}(\mathcal{F}_{2, sub})$ and $ \hat{\mathfrak{R}}_{N_{\rm in}}(\mathcal{F}_{3, sub})$ can also be bounded in a similar way to $\hat{\mathfrak{R}}_{N_{\rm in}}(\mathcal{F}_{1, sub})$. Add them together and we get
\begin{align} \label{eq:expgene}
&\underset{\{X_p\}_{p=1}^{N_{\rm in}},\{Y_p\}_{p=1}^{N_{b}}}{\mathbb{E}}\bigg[\sup_{u_{\mathfrak{m}, \boldsymbol{\theta}}\in\mathcal{PNN}}\big|\mathcal{L}(u_{\mathfrak{m}, \boldsymbol{\theta}})-\widehat{\mathcal{L}}(u_{\mathfrak{m}, \boldsymbol{\theta}})\big|\bigg] \\
&~~~~~~~~~~~~~~~~~\leq C_1(\Omega, B_0, d, W, L)\cdot M^2  B_{\boldsymbol{\theta}}^{2L}N_{s}^{-\frac{1}{2}}  \sqrt{\log(B_{\boldsymbol{\theta}} WLN_{s})} \notag \, .
\end{align}
Now we define 
\begin{align*}
    \gamma(X_1, \ldots, X_{N_{\rm in}}, Y_1, \ldots, Y_{N_b}) = \sup_{u_{\mathfrak{m}, \boldsymbol{\theta}}\in\mathcal{PNN}}|\mathcal{L}(u_{\mathfrak{m}, \boldsymbol{\theta}})-\widehat{\mathcal{L}}(u_{\mathfrak{m}, \boldsymbol{\theta}})| \, ,
\end{align*}
Here, we expand $\mathcal{L}(u_{\mathfrak{m}, \boldsymbol{\theta}})$ as follows
\begin{align*}
    \mathcal{L}(u_{\mathfrak{m}, \boldsymbol{\theta}})&=|\Omega|\underset{X\sim U(\Omega)}{\mathbb{E}}\bigg[\frac{\| \nabla u_{\mathfrak{m}, \boldsymbol{\theta}}(X)\|_2^2}{2}+ \frac{\omega(X)u_{\mathfrak{m}, \boldsymbol{\theta}}^2(X)}{2} -u_{\mathfrak{m}, \boldsymbol{\theta}}(X)h(X)\bigg] \\
    &~~~~-|\partial\Omega|\underset{Y\sim U(\partial\Omega)}{\mathbb{E}}[Tu(Y)g(Y)] \, ,
\end{align*}
 Also, we expand $\widehat{\mathcal{L}}(u_{\mathfrak{m}, \boldsymbol{\theta}})$ as follows:
\begin{align*}
    \widehat{\mathcal{L}}(u_{\mathfrak{m}, \boldsymbol{\theta}})&=\frac{|\Omega|}{N_{\rm in}}\sum_{p=1}^{N_{\rm in}}\bigg[\frac{\| \nabla u_{\mathfrak{m}, \boldsymbol{\theta}}(X_p)\|_2^2}{2} +\frac{\omega(X_p)u_{\mathfrak{m}, \boldsymbol{\theta}}^2(X_p)}{2}- u_{\mathfrak{m}, \boldsymbol{\theta}}(X_p)h(X_p)\bigg] \\
    &~~~~-\frac{|\partial\Omega|}{N_b}\sum_{p=1}^{N_b}[u_{\mathfrak{m}, \boldsymbol{\theta}}(Y_p)g(Y_p)] \, ,
\end{align*}
We then examine the difference of $\gamma(X_1, \ldots, X_{N_{\rm in}}, Y_1, \ldots, Y_{N_b})$:
\begin{align*}
    &|\gamma(X_1, \ldots, X_i, \ldots, Y_{N_b}) - \gamma(X_1, \ldots, X^{\prime}_i, \ldots, Y_{N_b})|\\
    &~~~\leq \frac{|\Omega|}{N_{\rm in}}\sup_{u_{\mathfrak{m}, \boldsymbol{\theta}} \in \mathcal{PNN}}\bigg|\frac{\| \nabla u_{\mathfrak{m}, \boldsymbol{\theta}}(X_i)\|_2^2-\| \nabla u_{\mathfrak{m}, \boldsymbol{\theta}}(X^{\prime}_i)\|_2^2}{2} +\frac{\omega(X_i)u_{\mathfrak{m}, \boldsymbol{\theta}}^2(X_i)-\omega(X^{\prime}_i)u_{\mathfrak{m}, \boldsymbol{\theta}}^2(X^{\prime}_i)}{2} \\
    &~~~~~~~~~~~~~~~~~~~~~~~~~~+u_{\mathfrak{m}, \boldsymbol{\theta}}(X^{\prime}_i)h(X^{\prime}_i)-u_{\mathfrak{m}, \boldsymbol{\theta}}(X_i)h(X_i)\bigg| \\
    &~~~\leq 4|\Omega|N^{-1}_{\rm in}d \mm(B_0+1)W^{2L-2}B_{\boldsymbol{\theta}}^{2L} \, ,
\end{align*}
where we have utilized the boundedness properties outlined in Lemma \ref{Lip of Fi}. We also have
\begin{align*}
    &|\gamma(X_1, \ldots, Y_j, \ldots, Y_{N_b}) - \gamma(X_1, \ldots, Y^{\prime}_j, \ldots, Y_{N_b})|\\
    &~~~~~~\leq \frac{|\partial\Omega|}{N_b}\big|u_{\mathfrak{m}, \boldsymbol{\theta}}(Y^{\prime}_j)g(Y^{\prime}_j)-u_{\mathfrak{m}, \boldsymbol{\theta}}(Y_j)g(Y_j)\big| \leq 2 |\partial\Omega| N_b^{-1} B_0 (W+1)B_{\boldsymbol{\theta}} \, .
\end{align*}
Then by Proposition \ref{lem:McD} and (\ref{eq:expgene}), it holds that
\begin{align*}
    &\sup_{u_{\mathfrak{m}, \boldsymbol{\theta}}\in\mathcal{PNN}}\big|\mathcal{L}(u_{\mathfrak{m}, \boldsymbol{\theta}})-\widehat{\mathcal{L}}(u_{\mathfrak{m}, \boldsymbol{\theta}})\big| \\
    &~~~~~~~~~~\leq \underset{\{X_p\}_{p=1}^{N_{\rm in}},\{Y_p\}_{p=1}^{N_{b}}}{\mathbb{E}}\bigg[\sup_{u_{\mathfrak{m}, \boldsymbol{\theta}}\in\mathcal{PNN}}\big|\mathcal{L}(u_{\mathfrak{m}, \boldsymbol{\theta}})-\widehat{\mathcal{L}}(u_{\mathfrak{m}, \boldsymbol{\theta}})\big|\bigg] + \tau \\
    & ~~~~~~~~~~\leq C_1(\Omega, B_0, d, W, L)\cdot M^2  B_{\boldsymbol{\theta}}^{2L} N_{s}^{-\frac{1}{2}}  \sqrt{\log(B_{\boldsymbol{\theta}} WLN_{s})} + \tau
\end{align*}
with probability as least 
\begin{align*}
    1 - \exp \bigg\{-\frac{N_s\tau^{2}}{32d^2(|\partial \Omega|^2+|\Omega|^2)(B_0+1)^2W^{4L}B_{\boldsymbol{\theta}}^{4L}}\bigg\} \, .
\end{align*}
This implies that with probability at least $1-\xi$, we have
\begin{align*}
    &\sup_{u_{\mathfrak{m}, \boldsymbol{\theta}}\in\mathcal{PNN}}\big|\mathcal{L}(u_{\mathfrak{m}, \boldsymbol{\theta}})-\widehat{\mathcal{L}}(u_{\mathfrak{m}, \boldsymbol{\theta}})\big| \\
    &~~~~~~~~~\leq C_2(\Omega, B_0, d, W, L)\cdot M^2  B_{\boldsymbol{\theta}}^{2L} N_{s}^{-\frac{1}{2}}  \big(\sqrt{\log(B_{\boldsymbol{\theta}} WLN_{s})}+\sqrt{\log\xi^{-1}}\big) \, .
\end{align*}
\end{proof}

\subsection{Detailed optimization error analysis}
\label{proof of opt}
\subsubsection{Analysis of the iteration error} \label{iter}
We first introduce the following results  for the PGD algorithm.
\begin{lem} 

\label{opt_lem}
Let $d_1, d_2 \in \mathbb{N}$, let $U, V \geq 0$, let $\mathrm{X} \subset \mathbb{R}^{d_1}$ and $\mathrm{Y} \subseteq \mathbb{R}^{d_2}$ be closed and convex, and let $F: \mathbb{R}^{d_1} \times \mathbb{R}^{d_2} \rightarrow \mathbb{R}_{+}$be a function such that $F(\boldsymbol{x}, \boldsymbol{y})$ is differentiable while
$\boldsymbol{y} \mapsto F(\boldsymbol{x}, \boldsymbol{y})$ is  convex for all $\boldsymbol{x} \in \mathbb{R}^{d_1}$. Meanwhile, assume that
\begin{equation}
\label{opteq1}
\|(\nabla_{\boldsymbol{y}} F)(\boldsymbol{x}, \boldsymbol{y})\|_2 \leq V \, ,
\end{equation}
\begin{equation}
\label{opteq2}
\|\nabla F(\boldsymbol{x}_1, \boldsymbol{y}_1) - \nabla F(\boldsymbol{x}_2, \boldsymbol{y}_2) \|_2 \leq K \|(\boldsymbol{x}_1, \boldsymbol{y}_1) - (\boldsymbol{x}_2, \boldsymbol{y}_2)\|_2 \, .
\end{equation}
for all $(\boldsymbol{x}, \boldsymbol{y}), (\boldsymbol{x}_1, \boldsymbol{y}_1)$ and $  (\boldsymbol{x}_2, \boldsymbol{y}_2) \in \mathrm{X} \times \mathrm{Y}$. Choose $(\boldsymbol{x}_0, \boldsymbol{y}_0) \in \mathrm{X} \times \mathrm{Y}$ and set
\begin{equation}
\label{opteq3}
({\boldsymbol{x}}_{t+1}, {\boldsymbol{y}}_{t+1})=\operatorname{Proj}_{\mkern 3mu \mathrm{X} \times \mathrm{Y}} \! \big\{(\boldsymbol{x}_t, \boldsymbol{y}_t)-\lambda \cdot \nabla F(\boldsymbol{x}_t, \boldsymbol{y}_t)\big\} \, ,
\end{equation}
for $t = 0, 1, \ldots, T$, where
$$
\lambda=\frac{1}{T} \wedge \frac{2}{K} \, .
$$
Let $\boldsymbol{y}^* \in Y$ and assume
\begin{equation}
\label{opteq4}
|F(\boldsymbol{x}_t, \boldsymbol{y}^*)-F(\boldsymbol{x}_0, \boldsymbol{y}^*)| \leq U \cdot\|\boldsymbol{y}^*\|_2 \cdot\|\boldsymbol{x}_t-\boldsymbol{x}_0\|_2 \, .
\end{equation}
for all $t=1, \ldots, T$. Then it holds:
\begin{align} \label{eq:opt}
F(\boldsymbol{x}_{\scriptscriptstyle T}, \boldsymbol{y}_{\scriptscriptstyle T}) -F(\boldsymbol{x}_0, \boldsymbol{y}^*) \leq U \cdot\|\boldsymbol{y}^*\|_2 \cdot \operatorname{diam}({\rm X})+\frac{\|\boldsymbol{y}^*-\boldsymbol{y}_0\|_2^2}{2}+\frac{V^2}{2 \mm T} \, .
\end{align}
\end{lem}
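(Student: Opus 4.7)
The plan is to combine a monotone descent argument in the joint variable with a classical online gradient descent estimate restricted to the $y$-block, so that the last iterate is dominated by an average over previous iterates, each of which can be analyzed by standard convexity tools. Concretely, I would start from the decomposition
$$F(\boldsymbol{x}_T,\boldsymbol{y}_T)-F(\boldsymbol{x}_0,\boldsymbol{y}^*) \;\le\; \frac{1}{T}\sum_{t=0}^{T-1}\bigl[F(\boldsymbol{x}_t,\boldsymbol{y}_t)-F(\boldsymbol{x}_t,\boldsymbol{y}^*)\bigr] + \frac{1}{T}\sum_{t=0}^{T-1}\bigl[F(\boldsymbol{x}_t,\boldsymbol{y}^*)-F(\boldsymbol{x}_0,\boldsymbol{y}^*)\bigr],$$
which is valid once the inequality $F(\boldsymbol{x}_T,\boldsymbol{y}_T)\le \tfrac{1}{T}\sum_{t=0}^{T-1}F(\boldsymbol{x}_t,\boldsymbol{y}_t)$ is in hand. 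The second sum is then absorbed directly by hypothesis (\ref{opteq4}) together with $\|\boldsymbol{x}_t-\boldsymbol{x}_0\|_2 \le \diam(\mathrm X)$, producing the $U\|\boldsymbol{y}^*\|_2\cdot\diam(\mathrm X)$ term of (\ref{eq:opt}).

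To secure the average-iterate bound I would establish $F(\boldsymbol{x}_{t+1},\boldsymbol{y}_{t+1})\le F(\boldsymbol{x}_t,\boldsymbol{y}_t)$ for every $t$. Writing $\boldsymbol{z}_t=(\boldsymbol{x}_t,\boldsymbol{y}_t)$, the $K$-smoothness (\ref{opteq2}) gives the descent inequality
$$F(\boldsymbol{z}_{t+1}) \le F(\boldsymbol{z}_t)+\langle \nabla F(\boldsymbol{z}_t),\boldsymbol{z}_{t+1}-\boldsymbol{z}_t\rangle+\tfrac{K}{2}\|\boldsymbol{z}_{t+1}-\boldsymbol{z}_t\|_2^2,$$
while the projection characterization applied to the update (\ref{opteq3}) with test point $\boldsymbol{z}_t$ yields $\|\boldsymbol{z}_{t+1}-\boldsymbol{z}_t\|_2^2 \le \lambda\langle \nabla F(\boldsymbol{z}_t),\boldsymbol{z}_t-\boldsymbol{z}_{t+1}\rangle$. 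Substituting the latter into the former produces $F(\boldsymbol{z}_{t+1})\le F(\boldsymbol{z}_t)-(\lambda^{-1}-K/2)\|\boldsymbol{z}_{t+1}-\boldsymbol{z}_t\|_2^2$, which is non-positive thanks to the step-size choice $\lambda \le 2/K$. Iterating this monotonicity delivers $F(\boldsymbol{x}_T,\boldsymbol{y}_T)\le F(\boldsymbol{x}_t,\boldsymbol{y}_t)$ for all $t\le T$, and averaging supplies the desired inequality.

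The first sum requires the online-gradient-descent argument on the $y$-block. The key observation is that the projection onto the product $\mathrm X\times \mathrm Y$ decouples, so $\boldsymbol{y}_{t+1}=\operatorname{Proj}_{\mathrm Y}(\boldsymbol{y}_t-\lambda\nabla_{\boldsymbol{y}}F(\boldsymbol{x}_t,\boldsymbol{y}_t))$ is ordinary PGD for the convex surrogate $\phi_t(\boldsymbol{y}):=F(\boldsymbol{x}_t,\boldsymbol{y})$. Expanding $\|\boldsymbol{y}_{t+1}-\boldsymbol{y}^*\|_2^2$ via non-expansiveness of the projection, applying convexity of $\phi_t$ to convert the cross term into a function-value gap, and using the gradient bound $V$ from (\ref{opteq1}), telescoping over $t=0,\dots,T-1$ yields
$$\sum_{t=0}^{T-1}[F(\boldsymbol{x}_t,\boldsymbol{y}_t)-F(\boldsymbol{x}_t,\boldsymbol{y}^*)] \le \frac{\|\boldsymbol{y}_0-\boldsymbol{y}^*\|_2^2}{2\lambda}+\frac{T\lambda V^2}{2}.$$
Dividing by $T$ and plugging in $\lambda=1/T$ (the active branch of the minimum in the regime of interest) matches the remaining two terms $\tfrac{\|\boldsymbol{y}^*-\boldsymbol{y}_0\|_2^2}{2}+\tfrac{V^2}{2T}$ of (\ref{eq:opt}) exactly.

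The main obstacle is the delicate bookkeeping forced by the joint step size $\lambda=\tfrac{1}{T}\wedge\tfrac{2}{K}$: the descent half of the argument relies on $\lambda\le 2/K$ to guarantee monotonicity, whereas the OGD half benefits from $\lambda=1/T$ to make the two regret terms balance into the clean form in (\ref{eq:opt}); both conditions must be honored simultaneously, and the projection-based descent identity $\|\boldsymbol{z}_{t+1}-\boldsymbol{z}_t\|_2^2\le \lambda\langle\nabla F,\boldsymbol{z}_t-\boldsymbol{z}_{t+1}\rangle$ must be invoked in the slightly non-standard direction sketched above. The rest of the proof is a routine assembly: telescoping, non-expansiveness of projections, convexity in $\boldsymbol{y}$, and collecting the three contributions.
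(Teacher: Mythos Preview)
Your proposal is correct and follows essentially the same route as the paper: both arguments combine (i) monotonicity $F(\boldsymbol{z}_{t+1})\le F(\boldsymbol{z}_t)$ obtained from $K$-smoothness and the projection optimality condition under $\lambda\le 2/K$, (ii) a standard OGD/telescoping regret bound on the $\boldsymbol{y}$-block using convexity and the gradient bound $V$, and (iii) the bound on the $\boldsymbol{x}$-block variation via (\ref{opteq4}) and $\operatorname{diam}(\mathrm X)$. The paper presents the OGD step first and the descent step second, but the ingredients and the way they are assembled are the same; your observation that the projection onto $\mathrm X\times\mathrm Y$ decouples is exactly what the paper uses implicitly, and your caveat about which branch of $\lambda=\tfrac{1}{T}\wedge\tfrac{2}{K}$ is active mirrors how the paper silently substitutes $\lambda=1/T$ in the regret estimate.
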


If we could choose $F$ to be $\widehat{F}$, 
$(\boldsymbol{x}_t, \boldsymbol{y}_t)$ to be $(\boldsymbol{\theta}_{\rm in}^{\mathfrak{m}}, \boldsymbol{\theta}_{\rm out}^{\mathfrak{m}})^{\scriptscriptstyle [t]}$ , and $\boldsymbol{y}^{*}$ to be $\boldsymbol{\theta}_{\rm out}^{\mathfrak{m, *}}$ in Lemma \ref{opt_lem}, then according to (\ref{eq:opt}), we can obtain an estimate of $\widehat{F}((\boldsymbol{\theta}^{\mathfrak{m}}_{ \rm in})^{\scriptscriptstyle [T]}, (\boldsymbol{\theta}^{\mathfrak{m}}_{ \rm out})^{\scriptscriptstyle [T]})-\widehat{F}((\boldsymbol{\theta}^{\mathfrak{m}}_{ \rm in})^{\scriptscriptstyle [0]}, \boldsymbol{\theta}^{\mathfrak{m}, *}_{ \rm out})$, which is exactly the iteration error in (\ref{eq:opt decomp}). However, this necessitates satisfying the requirements outlined in (\ref{opteq1}), (\ref{opteq2}) and (\ref{opteq4}). Therefore, we propose the following lemmas to meticulously characterize the properties of $\widehat{F}$. Note that proofs for all the lemmas in this section can be found in the Appendix \ref{proof of opt}.

We first obtain an upper bound of $\nabla_{\boldsymbol{\theta}^{\mathfrak{m}}_{\rm out}} \widehat{F}(\boldsymbol{\theta}^{\mathfrak{m}}_{ \rm in}, \boldsymbol{\theta}^{\mathfrak{m}}_{ \rm out})$ to meet the condition in (\ref{opteq1}).
\begin{lem}
\label{bound_lem}
For $u_{\mathfrak{m}, \boldsymbol{\theta}}\in \mathcal{PNN}(\mathfrak{m}, M, \{W, L, B_{\boldsymbol{\theta}}\})$, we denote the empirical risk $\widehat{\mathcal{L}}(u_{\mathfrak{m}, \boldsymbol{\theta}})$ in (\ref{eq12}) as $\widehat{F}(\boldsymbol{\theta}_{\rm total}^{\mathfrak{m}})=\widehat{F}(\boldsymbol{\theta}_{\rm in}^{\mathfrak{m}}, \boldsymbol{\theta}_{\rm out}^{\mathfrak{m}})$ to omit the dependence on sample points. Then, we have
\begin{align}
\big \|\nabla_{\boldsymbol{\theta}^{\mathfrak{m}}_{\rm out}} \widehat{F}\big(\boldsymbol{\theta}^{\mathfrak{m}}_{ \rm in}, \boldsymbol{\theta}^{\mathfrak{m}}_{ \rm out}\big)\big\|_2^{2} \leq C_1 \cdot \mathfrak{m} \cdot M^2 \cdot B^{4L}_{\boldsymbol{\theta}} 
\, ,
\end{align}
where $C_1$ is a universal constant which only depends on $\Omega, W, L, d$ and $B_0$, $\mathfrak{m}$ is the total number of sub-networks in $u_{\mathfrak{m}, \boldsymbol{\theta}}$, $\|\boldsymbol{\theta}_{\rm out}^{\mathfrak{m}}\|_1 \leq M$, and each sub-network weight in $\boldsymbol{\theta}_{\rm in}^{\mathfrak{m}}$ belongs to $[-B_{\boldsymbol{\theta}}, B_{\boldsymbol{\theta}}]$.
\end{lem}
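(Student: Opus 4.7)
The plan is to compute $\nabla_{\boldsymbol{\theta}^{\mathfrak{m}}_{\rm out}} \widehat{F}$ component-wise. Since $u_{\mathfrak{m}, \boldsymbol{\theta}}(\boldsymbol{x}) = \sum_{k=1}^{\mathfrak{m}} c_k \m \phi^k_{\boldsymbol{\theta}}(\boldsymbol{x})$ is linear in each $c_k$, differentiating the expression for $\widehat{\mathcal{L}}(u_{\mathfrak{m}, \boldsymbol{\theta}})$ in (\ref{eq12}) with respect to $c_k$ gives
$$
\frac{\partial \widehat F}{\partial c_k} = \frac{|\Omega|}{N_{\rm in}}\sum_{p=1}^{N_{\rm in}}\Big[\nabla u_{\mathfrak{m}, \boldsymbol{\theta}}(X_p)\!\cdot\! \nabla \phi^k_{\boldsymbol{\theta}}(X_p) + \omega(X_p)\m u_{\mathfrak{m}, \boldsymbol{\theta}}(X_p)\m\phi^k_{\boldsymbol{\theta}}(X_p) - h(X_p)\m\phi^k_{\boldsymbol{\theta}}(X_p)\Big] -\frac{|\partial\Omega|}{N_b}\sum_{p=1}^{N_b}g(Y_p)\m\phi^k_{\boldsymbol{\theta}}(Y_p)\, .
$$
The first step is then to bound each $|\partial \widehat F/\partial c_k|$ uniformly in $k$ using the sup-norm bounds for sub-networks established earlier in Appendix \ref{proof of sta}.

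Next, I would apply Lemma \ref{f-lip} and Lemma \ref{df-bound} to each individual sub-network factor, obtaining $|\phi^k_{\boldsymbol{\theta}}(\boldsymbol{x})| \leq (W+1)B_{\boldsymbol{\theta}}$ and $|\partial_{x_m}\phi^k_{\boldsymbol{\theta}}(\boldsymbol{x})| \leq W^{L-1}B_{\boldsymbol{\theta}}^L$. For the full network, since $\|\boldsymbol{\theta}^{\mathfrak{m}}_{\rm out}\|_1 \leq M$, the triangle inequality yields $|u_{\mathfrak{m}, \boldsymbol{\theta}}(\boldsymbol{x})| \leq M(W+1)B_{\boldsymbol{\theta}}$ and $|\partial_{x_m} u_{\mathfrak{m}, \boldsymbol{\theta}}(\boldsymbol{x})| \leq M W^{L-1}B_{\boldsymbol{\theta}}^L$. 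Combining these with $|\omega|, |h|, |g| \leq B_0$ and summing $d$ coordinate contributions in the gradient inner product, each term in the above expression for $\partial\widehat F/\partial c_k$ is bounded by a constant (depending on $\Omega, W, L, d, B_0$) times $M \cdot B_{\boldsymbol{\theta}}^{2L}$, the dominant scaling coming from the $\nabla u_{\mathfrak{m}, \boldsymbol{\theta}}\!\cdot\!\nabla \phi^k_{\boldsymbol{\theta}}$ term (since $B_{\boldsymbol{\theta}} \geq 1$).

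Finally, squaring this uniform bound and summing over the $\mathfrak{m}$ coordinates of $\boldsymbol{\theta}^{\mathfrak{m}}_{\rm out}$ gives
$$
\big\|\nabla_{\boldsymbol{\theta}^{\mathfrak{m}}_{\rm out}} \widehat F\big\|_2^2 = \sum_{k=1}^{\mathfrak{m}} \Big|\frac{\partial \widehat F}{\partial c_k}\Big|^2 \leq C_1 \cdot \mathfrak{m} \cdot M^2 \cdot B_{\boldsymbol{\theta}}^{4L}\, ,
$$
as claimed. No step here is genuinely hard: the result is essentially a bookkeeping exercise in propagating the layer-wise sub-network bounds from Appendix \ref{proof of sta} through a linear combination, with the Monte Carlo averages $\frac{1}{N_{\rm in}}\sum_p$ and $\frac{1}{N_b}\sum_p$ absorbing cleanly since each summand is already uniformly bounded. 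The main subtlety to keep in mind is tracking that the linear coefficients $c_k$ enter only through the outer sum, so the bound on the partial derivative with respect to $c_k$ does not itself carry a factor of $\mathfrak{m}$; the $\mathfrak{m}$-dependence appears only when aggregating the $\mathfrak{m}$ squared partials into the $\ell^2$ norm.
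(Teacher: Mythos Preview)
Your proposal is correct and follows essentially the same approach as the paper: compute $\partial\widehat F/\partial c_k$ explicitly from (\ref{eq12}), bound each of the four resulting terms using the sub-network sup-norm bounds $|\phi^k_{\boldsymbol{\theta}}|\le (W+1)B_{\boldsymbol{\theta}}$ and $|\partial_{x_m}\phi^k_{\boldsymbol{\theta}}|\le W^{L-1}B_{\boldsymbol{\theta}}^L$ together with $\|\boldsymbol{\theta}^{\mathfrak{m}}_{\rm out}\|_1\le M$, and then sum the $\mathfrak{m}$ squared partials. The only cosmetic difference is that the paper first squares and applies $(a+b+c+d)^2\le 4(a^2+b^2+c^2+d^2)$ followed by Cauchy--Schwarz on the Monte Carlo averages, whereas you bound each summand uniformly before averaging; both routes give the same final estimate.
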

Next, in order to satisfy the condition in (\ref{opteq2}), we will estimate the Lipschitz constant of $\nabla \widehat{F}(\boldsymbol{\theta}^{\mathfrak{m}}_{\rm total})$. For this endeavor, we first associate the Lipschitz property of the gradient $\nabla f$ with the norm of Hessian matrix $\nabla^{2}f$.

\begin{lem}
\label{hessian_lem}
For $f(\boldsymbol{x})$ which is convex and twice differentiable, it holds that 
$$ \|\nabla^2 f(\boldsymbol{x})\|_{2, 2} \leq \|\nabla^2 f(\boldsymbol{x})\|_{\rm F} \le K \Longrightarrow \|\nabla f(\boldsymbol{x})-\nabla f(\boldsymbol{y})\|_2 \le K\|\boldsymbol{x}-\boldsymbol{y}\|_2 \, ,$$
where $\|\cdot\|_{2, 2}$ represents the spectral norm of the matrix, while $\|\cdot\|_{\rm F}$ represents the Frobenius norm of the matrix.
\end{lem}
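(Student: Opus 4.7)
The plan is to establish the claim in two independent pieces corresponding to the two inequalities. For the first inequality $\|\nabla^2 f(\boldsymbol{x})\|_{2,2} \leq \|\nabla^2 f(\boldsymbol{x})\|_{\rm F}$, I would invoke the standard singular value characterization of these matrix norms: if $\sigma_1 \geq \sigma_2 \geq \ldots \geq \sigma_d \geq 0$ are the singular values of $\nabla^2 f(\boldsymbol{x})$, then $\|\nabla^2 f(\boldsymbol{x})\|_{2,2} = \sigma_1$ and $\|\nabla^2 f(\boldsymbol{x})\|_{\rm F} = (\sum_i \sigma_i^2)^{1/2}$. Since $\sigma_1^2 \leq \sum_i \sigma_i^2$, the inequality is immediate, and the same chain then places $\sigma_1 \leq K$ whenever $\|\nabla^2 f(\boldsymbol{x})\|_{\rm F} \leq K$.

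For the second inequality, I would parameterize the segment joining $\boldsymbol{x}$ and $\boldsymbol{y}$ by $\boldsymbol{z}(t) = \boldsymbol{x} + t(\boldsymbol{y}-\boldsymbol{x})$, $t \in [0,1]$, and apply the fundamental theorem of calculus to the $\mathbb{R}^d$-valued function $t \mapsto \nabla f(\boldsymbol{z}(t))$, which is well-defined and continuously differentiable by the twice-differentiability hypothesis. The chain rule gives
$$
\nabla f(\boldsymbol{y}) - \nabla f(\boldsymbol{x}) = \int_0^1 \nabla^2 f(\boldsymbol{z}(t))\,(\boldsymbol{y}-\boldsymbol{x})\, \mathrm{d}t.
$$
Taking Euclidean norms, passing the norm inside the integral via the standard triangle inequality for Bochner integrals, and then using the submultiplicative bound $\|\nabla^2 f(\boldsymbol{z}(t))(\boldsymbol{y}-\boldsymbol{x})\|_2 \leq \|\nabla^2 f(\boldsymbol{z}(t))\|_{2,2}\,\|\boldsymbol{y}-\boldsymbol{x}\|_2$ together with the pointwise bound $\|\nabla^2 f(\boldsymbol{z}(t))\|_{2,2} \leq K$ obtained in the first step, yields
$$
\|\nabla f(\boldsymbol{y})-\nabla f(\boldsymbol{x})\|_2 \leq \int_0^1 K\, \|\boldsymbol{y}-\boldsymbol{x}\|_2\, \mathrm{d}t = K\, \|\boldsymbol{y}-\boldsymbol{x}\|_2,
$$
which is exactly the desired Lipschitz estimate.

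Since the argument is a textbook calculus-of-variations exercise, there is essentially no serious obstacle; the only points needing a brief justification are that the integrand $t \mapsto \nabla^2 f(\boldsymbol{z}(t))(\boldsymbol{y}-\boldsymbol{x})$ is continuous on $[0,1]$ (so the Bochner integral reduces to a Riemann integral) and that the segment lies in the domain where $f$ is twice differentiable, which is implicit since $f$ is assumed twice differentiable on the whole space. Note also that convexity of $f$ is not actually needed for the conclusion; it is listed among the hypotheses purely because the lemma will be applied in Appendix~\ref{proof of opt} to a convex sub-objective of $\widehat{F}$.
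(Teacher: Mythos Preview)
Your proposal is correct and follows the same two-step skeleton as the paper: bound the spectral norm by the Frobenius norm, then deduce the Lipschitz estimate for $\nabla f$. The only cosmetic differences are that the paper proves $\|\cdot\|_{2,2}\le\|\cdot\|_{\rm F}$ by a direct Cauchy--Schwarz computation on $\|\boldsymbol{A}\boldsymbol{x}\|_2$ rather than via singular values, and for the Lipschitz step it invokes a mean-value-type inequality at a single intermediate point $\boldsymbol{\xi}=t\boldsymbol{x}+(1-t)\boldsymbol{y}$ rather than your integral representation; your integral argument is in fact the cleaner justification for vector-valued maps, and your observation that convexity is not used is correct (the paper's proof does not use it either).
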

Thus, it suffices to estimate the Frobenius norm of the Hessian matrix of $\widehat{F}(\boldsymbol{\theta}^{\mathfrak{m}}_{\rm total})$.
\begin{lem}
\label{hessian}
With notations and symbols consistent with those in Lemma \ref{bound_lem}, we have
$$
\big\|\nabla^2_{\boldsymbol{\theta}^{\mathfrak{m}}_{\rm total}} \widehat{F}\big(\boldsymbol{\theta}^{\mathfrak{m}}_{\rm total}\big)\big\|_{\rm F} \le C_2 \cdot \mathfrak{m} \cdot M^2 \cdot B^{4L}_{\boldsymbol{\theta}} \, ,
$$
where $C_2$ is a universal constant which only depends on $\Omega, W, L, d$ and $B_0$. Utilizing Lemma \ref{hessian_lem}, $\nabla \widehat{F}(\boldsymbol{\theta}^{\mathfrak{m}}_{\rm total})$ is then equipped with Lipschitz continuity and Lipschitz constant $C_2 \cdot \mathfrak{m} \cdot M^2 \cdot B^{4L}_{\boldsymbol{\theta}}$.
\end{lem}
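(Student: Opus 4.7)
The plan is to decompose the Hessian as a $2\times 2$ block matrix
\[
\nabla^2_{\boldsymbol{\theta}^{\mathfrak{m}}_{\rm total}}\widehat{F}=\begin{pmatrix}H_{\rm in,in} & H_{\rm in,out}\\ H_{\rm out,in} & H_{\rm out,out}\end{pmatrix}
\]
according to the split $\boldsymbol{\theta}^{\mathfrak{m}}_{\rm total}=(\boldsymbol{\theta}^{\mathfrak{m}}_{\rm in},\boldsymbol{\theta}^{\mathfrak{m}}_{\rm out})$, bound the Frobenius norm of each block separately, and combine via $\|\nabla^2\widehat{F}\|_{\rm F}^2=\|H_{\rm in,in}\|_{\rm F}^2+2\|H_{\rm in,out}\|_{\rm F}^2+\|H_{\rm out,out}\|_{\rm F}^2$. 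Since $u_{\mathfrak{m},\boldsymbol{\theta}}=\sum_{k}c_k\phi^k_{\boldsymbol{\theta}}$ is linear in the $c_k$'s and each sub-network $\phi^k_{\boldsymbol{\theta}}$ depends only on its own weight vector $\boldsymbol{\theta}_k$, differentiating $\widehat{F}$ twice reduces every entry to a finite sum of sample averages of products whose factors are drawn from $\{\phi^k,\partial_{x_m}\phi^k,\nabla_{\boldsymbol{\theta}_k}\phi^k,\nabla_{\boldsymbol{\theta}_k}\partial_{x_m}\phi^k,\nabla^2_{\boldsymbol{\theta}_k}\phi^k,\nabla^2_{\boldsymbol{\theta}_k}\partial_{x_m}\phi^k\}$, multiplied by $c_k,c_l$ and the bounded data $\omega,h,g$.

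The pointwise bounds on $\phi^k$ and $\partial_{x_m}\phi^k$ and the first-order parameter-gradient bounds on $\nabla_{\boldsymbol{\theta}_k}\phi^k$ and $\nabla_{\boldsymbol{\theta}_k}\partial_{x_m}\phi^k$ all come from Lemmas \ref{f-lip}, \ref{df-bound} and \ref{df-lip}, by re-interpreting the Lipschitz constants as bounds on gradient norms. The new ingredients required are analogous second-order bounds $\|\nabla^2_{\boldsymbol{\theta}_k}\phi^k\|_{\rm F}\leq C(W,L)\,B_{\boldsymbol{\theta}}^{L}$ and $\|\nabla^2_{\boldsymbol{\theta}_k}\partial_{x_m}\phi^k\|_{\rm F}\leq C(W,L)\,B_{\boldsymbol{\theta}}^{2L}$, which I would derive by the same layer-by-layer induction used in Lemma \ref{df-lip}: differentiate the forward recursion of one sub-network twice, use $|\rho|,|\rho'|,|\rho''|\leq C$ for the $\tanh$ activation, and propagate the resulting geometric factors through the $L$ layers. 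With these ingredients in hand the blockwise estimates are mostly mechanical. The $\mathfrak{m}^2$-entry block $H_{\rm out,out}$ contributes $O(\mathfrak{m}\,B_{\boldsymbol{\theta}}^{2L})$ in Frobenius norm, which is the only source of the explicit $\mathfrak{m}$ factor in the final bound; the cross block $H_{\rm in,out}$ contributes $O(\sqrt{\mathfrak{n}_L}\,M\,B_{\boldsymbol{\theta}}^{3L})$ after squaring the $(c_k,\theta_{l,j})$-entries and using $\sum_l c_l^2\leq M^2$; and $H_{\rm in,in}$ splits into off-diagonal rank-one style blocks $c_k c_l(\nabla_{\boldsymbol{\theta}_k}\partial_{x_m}\phi^k)(\nabla_{\boldsymbol{\theta}_l}\partial_{x_m}\phi^l)^{\top}$ (plus a $\phi^k\phi^l$ analog) and diagonal blocks containing the new second-order factors, both bounded in Frobenius norm by $O(M^2\,B_{\boldsymbol{\theta}}^{4L})$ after summing via $\sum_{k,l}(c_kc_l)^2\leq(\sum_k c_k^2)^2\leq M^4$. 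Combining all three blocks and absorbing the $W,L,d,B_0,|\Omega|,|\partial\Omega|,\mathfrak{n}_L$ dependence into a single constant $C_2$ yields the stated estimate $C_2\cdot\mathfrak{m}\cdot M^2\cdot B_{\boldsymbol{\theta}}^{4L}$.

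The hardest step will be deriving the new second-order sub-network bounds cleanly: each extra differentiation of the forward recursion produces cross-terms proportional to $(\partial_{\theta_{k,i}}\phi^{(\ell-1)})(\partial_{\theta_{k,j}}\phi^{(\ell-1)})$, and I must prove by induction on $\ell$ that the correct exponent on $B_{\boldsymbol{\theta}}$ remains $L$ (respectively $2L$) rather than growing geometrically with each differentiation, so that after multiplying two first-order factors in the $H_{\rm in,in}$ diagonal blocks the final exponent on $B_{\boldsymbol{\theta}}$ is exactly $4L$ and not higher. A secondary subtlety in the $H_{\rm in,in}$ estimate is ordering Cauchy--Schwarz correctly---first over sample points $p$, then over the index pair $(k,l)$---so that the coefficient sums collapse to $M^4$ via $\|c\|_4\leq\|c\|_2\leq\|c\|_1\leq M$ rather than producing unwanted factors of $\mathfrak{m}$, which would spoil the form of the stated bound.
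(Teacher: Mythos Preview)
Your block-decomposition route is valid and genuinely different from the paper's argument. The paper does not exploit the $2\times2$ block structure at all; instead it fixes a single representative Hessian entry, namely $\partial_{a^{(0)}_{1,1,1}}\partial_{a^{(0)}_{1,1,1}}\widehat{\mathcal{L}}(u_{\mathfrak{m},\boldsymbol{\theta}})$, expands this via the chain rule into a sum of products of first-, second- and third-order derivatives of a single sub-network, bounds each factor by an explicit layer-by-layer recursion (obtaining $C(B_0,L)\,M^2\,(\prod_i N_i)^4 B_{\boldsymbol{\theta}}^{4L}$ for the entry), asserts that differentiating with respect to any other pair of parameters yields no worse a bound, and then simply multiplies by the matrix dimension $(\mathfrak{n}_L+1)\mathfrak{m}$ to pass to the Frobenius norm. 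Your structure-aware approach is in principle tighter and more transparent about where the $\mathfrak{m}$ appears, while the paper's entrywise approach is cruder but avoids the bookkeeping of four separate block estimates.

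Two minor corrections to your proposal. First, the exponents you target for the second-order sub-network bounds are too optimistic: the paper's explicit computation gives $|\partial^2_{\boldsymbol{\theta}_k}\phi^k|\lesssim B_{\boldsymbol{\theta}}^{2L-1}$ and $|\partial^2_{\boldsymbol{\theta}_k}\partial_{x_m}\phi^k|\lesssim B_{\boldsymbol{\theta}}^{3L-1}$ per entry, not $B_{\boldsymbol{\theta}}^L$ and $B_{\boldsymbol{\theta}}^{2L}$. This does not spoil the final $B_{\boldsymbol{\theta}}^{4L}$ because these second-order factors are multiplied not by two first-order factors but by $u$ (order $B_{\boldsymbol{\theta}}$) or $\partial_x u$ (order $B_{\boldsymbol{\theta}}^L$) respectively, together with a single $c_k$; the product still lands at $4L$. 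Second, $H_{\rm in,out}$ is not free of $\mathfrak{m}$: for $j\neq l$ the $(c_j,\theta_{l,i})$-entry is $c_l\cdot(\text{bounded})$, and summing its square over the free index $j\in[\mathfrak{m}]$ produces a $\sqrt{\mathfrak{m}}$ in the Frobenius norm. This is harmless since it is dominated by the $\mathfrak{m}$ from $H_{\rm out,out}$, but your claim that $H_{\rm out,out}$ is the \emph{only} source of $\mathfrak{m}$ is not quite right.
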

Finally, we assures that $\widehat{F}$ satisfies the condition in (\ref{opteq4}).
\begin{lem}
With notations and symbols consistent with those in Lemma \ref{bound_lem}, we have
\label{lip_lem}
\begin{align*}
&\big|\widehat{F}\big(\boldsymbol{\theta}^{\mathfrak{m}, 1}_{ \rm in}, \boldsymbol{\theta}^{\mathfrak{m}}_{ \rm out}\big)-\widehat{F}\big(\boldsymbol{\theta}^{\mathfrak{m}, 2}_{ \rm in}, \boldsymbol{\theta}^{\mathfrak{m}}_{ \rm out}\big)\big| \leq C_3 \cdot M\cdot B^{3L}_{\boldsymbol{\theta}} \cdot \big\|\boldsymbol{\theta}^{\mathfrak{m}}_{ \rm out}\big\|_2 \cdot \big\|\boldsymbol{\theta}_{\rm in}^{\mathfrak{m}, 1} - \boldsymbol{\theta}_{\rm in}^{\mathfrak{m}, 2}\big\|_2 \, ,
\end{align*}
where $\boldsymbol{\theta}_{\rm in}^{\mathfrak{m}, 1}$ and $\boldsymbol{\theta}_{\rm in}^{\mathfrak{m}, 2}$ denote two different sub-network weight vectors. $C_3$ is a universal constant which only depends on $\Omega, W, L, d$ and $B_0$.
\end{lem}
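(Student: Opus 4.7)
The plan is to decompose the difference $\widehat{F}(\boldsymbol{\theta}^{\mathfrak{m}, 1}_{\rm in}, \boldsymbol{\theta}^{\mathfrak{m}}_{\rm out}) - \widehat{F}(\boldsymbol{\theta}^{\mathfrak{m}, 2}_{\rm in}, \boldsymbol{\theta}^{\mathfrak{m}}_{\rm out})$ according to the four summands of the discrete energy (\ref{eq12}): the Dirichlet term $\|\nabla u\|_2^2/2$, the mass term $\omega u^2/2$, the source term $-uh$, and the Neumann boundary term $-ug$. Since the linear coefficients $\boldsymbol{\theta}^{\mathfrak{m}}_{\rm out}=(c_1,\ldots,c_{\mathfrak{m}})$ are held fixed, only the sub-network evaluations $\phi^{k}_{\boldsymbol{\theta}}$ and their partial derivatives $\partial_{x_m} \phi^{k}_{\boldsymbol{\theta}}$ are perturbed, so each of the four contributions can be controlled by a product-rule argument starting from the sub-network-level Lipschitz estimates.

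The core preliminary step is to lift Lemmas \ref{f-lip} and \ref{df-lip} to network-level Lipschitz bounds. Writing the two sub-network weight vectors componentwise as $(\boldsymbol{\theta}_k^1)_{k=1}^{\mathfrak{m}}$ and $(\boldsymbol{\theta}_k^2)_{k=1}^{\mathfrak{m}}$, multiplying the per-sub-network Lipschitz estimate by $|c_k|$, summing over $k$, and invoking Cauchy--Schwarz yields, uniformly in $\boldsymbol{x}$,
\begin{align*}
|u_{\mathfrak{m}, \boldsymbol{\theta}^1}(\boldsymbol{x}) - u_{\mathfrak{m}, \boldsymbol{\theta}^2}(\boldsymbol{x})| &\leq 2W^L\sqrt{L}\, B_{\boldsymbol{\theta}}^{L-1} \|\boldsymbol{\theta}_{\rm out}^{\mathfrak{m}}\|_2 \|\boldsymbol{\theta}_{\rm in}^{\mathfrak{m},1}-\boldsymbol{\theta}_{\rm in}^{\mathfrak{m},2}\|_2\, ,\\
|\partial_{x_m} u_{\mathfrak{m}, \boldsymbol{\theta}^1}(\boldsymbol{x}) - \partial_{x_m} u_{\mathfrak{m}, \boldsymbol{\theta}^2}(\boldsymbol{x})| &\leq 2W^{2L-1}\sqrt{L}(L+1)\, B_{\boldsymbol{\theta}}^{2L} \|\boldsymbol{\theta}_{\rm out}^{\mathfrak{m}}\|_2 \|\boldsymbol{\theta}_{\rm in}^{\mathfrak{m},1}-\boldsymbol{\theta}_{\rm in}^{\mathfrak{m},2}\|_2\, .
\end{align*}
These serve as the universal ``difference factors'' feeding into every term of the decomposition.

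For the two quadratic components I will apply the identity $a^2-b^2=(a+b)(a-b)$: the sum factor is controlled by the pointwise uniform bounds $|u_{\mathfrak{m}, \boldsymbol{\theta}^i}(\boldsymbol{x})|\leq M(W+1)B_{\boldsymbol{\theta}}$ and $|\partial_{x_m} u_{\mathfrak{m}, \boldsymbol{\theta}^i}(\boldsymbol{x})|\leq MW^{L-1}B_{\boldsymbol{\theta}}^L$, both immediate from Lemmas \ref{f-lip} and \ref{df-bound} together with $\|\boldsymbol{\theta}_{\rm out}^{\mathfrak{m}}\|_1\leq M$, while the difference factor is supplied by the network-level estimate above. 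After multiplying by $|\Omega|/2$ (respectively $|\Omega|B_0/2$ for the mass term, using $\|\omega\|_{L^\infty}\leq B_0$), summing over the $d$ coordinates in the Dirichlet term, and averaging over Monte Carlo samples, the Dirichlet contribution scales as $dMB_{\boldsymbol{\theta}}^{3L}\|\boldsymbol{\theta}_{\rm out}^{\mathfrak{m}}\|_2\|\boldsymbol{\theta}_{\rm in}^{\mathfrak{m},1}-\boldsymbol{\theta}_{\rm in}^{\mathfrak{m},2}\|_2$ and the mass contribution scales as $B_0 MB_{\boldsymbol{\theta}}^L\|\boldsymbol{\theta}_{\rm out}^{\mathfrak{m}}\|_2\|\boldsymbol{\theta}_{\rm in}^{\mathfrak{m},1}-\boldsymbol{\theta}_{\rm in}^{\mathfrak{m},2}\|_2$. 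For the linear source and boundary components I directly invoke the Lipschitz estimate on $u$ together with $\|h\|_{L^\infty},\|g\|_{L^\infty}\leq B_0$; each contributes a term of order $B_0 B_{\boldsymbol{\theta}}^{L-1}\|\boldsymbol{\theta}_{\rm out}^{\mathfrak{m}}\|_2\|\boldsymbol{\theta}_{\rm in}^{\mathfrak{m},1}-\boldsymbol{\theta}_{\rm in}^{\mathfrak{m},2}\|_2$.

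Adding the four contributions and using $B_{\boldsymbol{\theta}}\geq 1$ and $M\geq 1$ to absorb the lower powers $B_{\boldsymbol{\theta}}^L$ and $B_{\boldsymbol{\theta}}^{L-1}$ into the dominant factor $B_{\boldsymbol{\theta}}^{3L}$ produces exactly the stated bound with a constant $C_3=C_3(\Omega,W,L,d,B_0)$. I do not anticipate any deep analytical obstacle here: the main work is careful bookkeeping of constants through a product-rule decomposition, and the only subtle point worth double-checking is that the $d$-dependence generated by summing over partial derivatives in the Dirichlet term is absorbed into the universal constant, which is permitted by the statement.
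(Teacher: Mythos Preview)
Your proposal is correct and follows essentially the same route as the paper: decompose $\widehat{F}$ into the four Monte Carlo pieces, use the identity $a^2-b^2=(a+b)(a-b)$ on the quadratic terms, bound the sum factor via the $\ell_1$-constraint $\sum_k|c_k|\le M$ together with the pointwise sub-network bounds from Lemmas~\ref{f-lip} and~\ref{df-bound}, and extract the factor $\|\boldsymbol{\theta}_{\rm out}^{\mathfrak m}\|_2\|\boldsymbol{\theta}_{\rm in}^{\mathfrak m,1}-\boldsymbol{\theta}_{\rm in}^{\mathfrak m,2}\|_2$ from the difference factor by Cauchy--Schwarz combined with the per-sub-network Lipschitz estimates of Lemmas~\ref{f-lip} and~\ref{df-lip}. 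The only cosmetic difference is that you first package the Cauchy--Schwarz step into a ``network-level Lipschitz bound'' and then plug it in, whereas the paper carries out the Cauchy--Schwarz argument inside each of the four terms separately; the resulting constants are identical.
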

Based on these, we return to our specific setting. Let $\zeta=\bar{M}$ in (\ref{eq:M}) to ensure that $ \boldsymbol{\theta}^{\mathfrak{m}, *}_{\rm out} \in B_{\zeta}=B_{\bar{M}}$, while we naturally have $(\boldsymbol{\theta}^{\mathfrak{m}}_{ \rm in})^{\scriptscriptstyle [0]} \in A_{\eta} $. Now, combining Lemmas \ref{opt_lem}, \ref{bound_lem}, \ref{hessian} and \ref{lip_lem}, together with (\ref{eq:norm*}), we achieve the following corollary to bound the iteration error.
\begin{cor} \label{cor:iteration}
    Let $\zeta=\bar{M}$ in (\ref{eq:M}). Then, we get $u_{\mathcal{A}}$, the output of the PGD algorithm in (\ref{eq:pgd}), belonging to $\mathcal{PNN}(\mathfrak{m}, \bar{M}, \{\bar{W}, \bar{L}, B_{\bar{\boldsymbol{\theta}}} + \eta\})$. Also, we run the algorithm with step size $\lambda$ satisfying
    \begin{align*}
        \lambda = T^{-1} \wedge 2\mm C_2^{-1} \m \mm \mathfrak{m}^{-1} \mm \bar{M}^{-2} \mm (B_{\bar{\boldsymbol{\theta}}}+\eta)^{-4\bar{L}} \, ,
    \end{align*}
    where $T$ is the total number of iterations, and $\eta$ is the projection radius of sub-network weights in (\ref{eq:eta}). Then, with $u^{*}_{\mathfrak{m}} \in \mathcal{PNN}(\mathfrak{m}, \bar{M}, \{\bar{W}, \bar{L}, B_{\bar{\boldsymbol{\theta}}}+\eta \})$ defined in (\ref{eq:u*}), the iteration error in (\ref{eq:opt decomp}) is bounded by
\begin{align*} \widehat{\mathcal{L}}(u_{\mathcal{A}})-\widehat{\mathcal{L}}(u_{\mathfrak{m}}^{*}) \leq \frac{C_3 \cdot \bar{M}^2\cdot (B_{\bar{\boldsymbol{\theta}}}+\eta)^{3\bar{L}} \cdot \eta}{\sqrt{R}}+\frac{\bar{M}^2}{2 R}+\frac{C_1 \cdot \mathfrak{m} \cdot \bar{M}^2 \cdot (B_{\bar{\boldsymbol{\theta}}}+\eta)^{4\bar{L}}}{2\mm T} \, .
\end{align*}
Here, $C_1$ is from Lemma \ref{bound_lem}, $C_2$ is from Lemma \ref{hessian} and $C_3$ is from Lemma \ref{lip_lem}.
\end{cor}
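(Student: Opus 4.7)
The strategy is to invoke Lemma \ref{opt_lem} with the substitutions $F = \widehat{F}$, $(\boldsymbol{x}_t, \boldsymbol{y}_t) = ((\boldsymbol{\theta}_{\rm in}^{\mathfrak{m}})^{\scriptscriptstyle [t]}, (\boldsymbol{\theta}_{\rm out}^{\mathfrak{m}})^{\scriptscriptstyle [t]})$, $\boldsymbol{y}^* = \boldsymbol{\theta}_{\rm out}^{\mathfrak{m}, *}$, and constraint sets $\mathrm{X} = A_\eta$, $\mathrm{Y} = B_{\bar{M}}$ (which is legal since $\zeta = \bar{M}$ and $\|\boldsymbol{\theta}_{\rm out}^{\mathfrak{m}, *}\|_1 = \bar{M}$, so $\boldsymbol{\theta}_{\rm out}^{\mathfrak{m}, *} \in \mathrm{Y}$). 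The convexity-in-$\boldsymbol{y}$ hypothesis is immediate because $\widehat{F}$ is quadratic in the outer linear coefficients $\boldsymbol{\theta}_{\rm out}^{\mathfrak{m}}$, and the PGD update (\ref{eq:pgd}) matches the projected-gradient scheme (\ref{opteq3}).

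As a preliminary step I would verify that $u_\mathcal{A} \in \mathcal{PNN}(\mathfrak{m}, \bar{M}, \{\bar{W}, \bar{L}, B_{\bar{\boldsymbol{\theta}}} + \eta\})$ by the triangle inequality already used in the main text: the projection onto $A_\eta$ guarantees $\|(\boldsymbol{\theta}_{\rm in}^{\mathfrak{m}})^{\scriptscriptstyle [t]} - (\boldsymbol{\theta}_{\rm in}^{\mathfrak{m}})^{\scriptscriptstyle [0]}\|_2 \leq \eta$, so every coordinate of $(\boldsymbol{\theta}_{\rm in}^{\mathfrak{m}})^{\scriptscriptstyle [t]}$ lies in $[-(B_{\bar{\boldsymbol{\theta}}}+\eta),\,B_{\bar{\boldsymbol{\theta}}}+\eta]$. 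In particular, throughout the iteration the weights lie in a sub-network class to which Lemmas \ref{bound_lem}, \ref{hessian}, and \ref{lip_lem} apply with the replacements $B_{\boldsymbol{\theta}} \leftarrow B_{\bar{\boldsymbol{\theta}}}+\eta$ and $M \leftarrow \bar{M}$.

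With those substitutions, Lemma \ref{bound_lem} supplies $V^2 = C_1 \mathfrak{m}\, \bar{M}^2\, (B_{\bar{\boldsymbol{\theta}}}+\eta)^{4\bar{L}}$ for hypothesis (\ref{opteq1}); Lemmas \ref{hessian_lem} and \ref{hessian} together supply Lipschitz constant $K = C_2 \mathfrak{m}\, \bar{M}^2\, (B_{\bar{\boldsymbol{\theta}}}+\eta)^{4\bar{L}}$ for hypothesis (\ref{opteq2}), which is precisely why the prescribed step size $\lambda = T^{-1} \wedge 2 K^{-1}$ satisfies the admissibility condition of Lemma \ref{opt_lem}; and Lemma \ref{lip_lem} supplies $U = C_3\, (B_{\bar{\boldsymbol{\theta}}}+\eta)^{3\bar{L}}$ for hypothesis (\ref{opteq4}), with the factor $\bar{M}$ in that lemma's statement being absorbed into the product $U \cdot \|\boldsymbol{y}^*\|_2$ below via $\|\boldsymbol{\theta}_{\rm out}^{\mathfrak{m}}\|_2$. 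Since $(\boldsymbol{\theta}_{\rm out}^{\mathfrak{m}})^{\scriptscriptstyle [0]} = \boldsymbol{0}$ by (\ref{eq:out0}), we have $\|\boldsymbol{y}^* - \boldsymbol{y}_0\|_2 = \|\boldsymbol{\theta}_{\rm out}^{\mathfrak{m}, *}\|_2$; and we can replace $\operatorname{diam}(\mathrm{X})$ by $\eta$ in (\ref{eq:opt}) because the argument uses only $\|\boldsymbol{x}_t - \boldsymbol{x}_0\|_2 \leq \eta$. Plugging everything into (\ref{eq:opt}) produces the intermediate bound of the form displayed in Proposition \ref{prop:iteration}, and the claimed estimate then follows by substituting the a priori control $\|\boldsymbol{\theta}_{\rm out}^{\mathfrak{m}, *}\|_2 \leq \bar{M}/\sqrt{R}$ from (\ref{eq:norm*}) into the first two terms.

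The main bookkeeping subtlety---what I expect to be the most error-prone step---is keeping careful track of where each power of $(B_{\bar{\boldsymbol{\theta}}}+\eta)$, $\bar{M}$, and $\mathfrak{m}$ enters through $V$, $K$, $U$, and $\lambda$, especially because Lemma \ref{lip_lem} already contains a factor of $M = \bar{M}$ that must be combined (not double-counted) with the bound $\|\boldsymbol{\theta}_{\rm out}^{\mathfrak{m}, *}\|_2 \leq \bar{M}/\sqrt{R}$ to produce the final $\bar{M}^2/\sqrt{R}$ prefactor in the first displayed term. Beyond this accounting, no new analytic work is required: the corollary is a direct specialization of Lemma \ref{opt_lem} once its three hypotheses are verified via Lemmas \ref{bound_lem}, \ref{hessian}, and \ref{lip_lem}.
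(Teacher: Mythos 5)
Your proposal is correct and follows the same route the paper takes: invoke Lemma \ref{opt_lem} with $\mathrm{X}=A_\eta$, $\mathrm{Y}=B_{\bar M}$, $\boldsymbol y^*=\boldsymbol\theta_{\rm out}^{\mathfrak m,*}$, certify conditions (\ref{opteq1}), (\ref{opteq2}), (\ref{opteq4}) by Lemmas \ref{bound_lem}, \ref{hessian_lem}+\ref{hessian}, and \ref{lip_lem} with $B_{\boldsymbol\theta}\leftarrow B_{\bar{\boldsymbol\theta}}+\eta$ and $M\leftarrow\bar M$, use $(\boldsymbol\theta_{\rm out}^{\mathfrak m})^{[0]}=\boldsymbol 0$ to identify $\|\boldsymbol y^*-\boldsymbol y_0\|_2$, and finally apply the $\|\boldsymbol\theta_{\rm out}^{\mathfrak m,*}\|_2\le\bar M/\sqrt R$ estimate from (\ref{eq:norm*}). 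One small imprecision to tighten: you write $U=C_3(B_{\bar{\boldsymbol\theta}}+\eta)^{3\bar L}$ and say the $\bar M$ from Lemma \ref{lip_lem} is ``absorbed'' elsewhere, but in fact that $\bar M$ is part of $U$ itself (so $U=C_3\bar M(B_{\bar{\boldsymbol\theta}}+\eta)^{3\bar L}$), while the factor $\|\boldsymbol\theta_{\rm out}^{\mathfrak m}\|_2$ in Lemma \ref{lip_lem} plays the role of $\|\boldsymbol y^*\|_2$; together with $\|\boldsymbol y^*\|_2\le\bar M/\sqrt R$ this gives the $\bar M^2/\sqrt R$ prefactor. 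You also correctly observe that one should use $\|\boldsymbol x_t-\boldsymbol x_0\|_2\le\eta$ directly rather than $\operatorname{diam}(A_\eta)=2\eta$, which is what the proof of Lemma \ref{opt_lem} actually requires and what the paper's stated constant reflects.
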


\subsubsection{Analysis of the initialization error} \label{init}
We first propose the following lemma to estimate the probability of $G_{\mathfrak{m}, \bar{\mathfrak{m}}, R, \delta}$.
\begin{lem} \label{lem:prob of G}
    For $u_{\bar{\mathfrak{m}}, \bar{\boldsymbol{\theta}}} \in \mathcal{PNN}(\bar{\mathfrak{m}}, \bar{M}, \{\bar{W}, \bar{L}, B_{\bar{\boldsymbol{\theta}}}\})$ in Corollary \ref{cor4.1} with sub-network parameters $(\bar{\boldsymbol{\theta}}_1, \ldots, \bar{\boldsymbol{\theta}}_{\bar{\mathfrak{m}}})$, if we have $\mathfrak{m} \in \mathbb{N}$ satisfying 
    $$
    \mathfrak{m} = \bar{\mathfrak{m}} \cdot R \cdot Q\, , \quad R, Q \in \mathbb{N} \,\, \text{and} \,\, Q  \text{ is sufficiently large} \, .
    $$
    Then, it holds that
    $$
    \mathbb{P}\big(G_{\mathfrak{m}, \bar{\mathfrak{m}}, R, \delta}\big) \ge 1 - \bar{\mathfrak{m}}R\Big[1-\delta^{\bar{W}(\bar{W}+1)\bar{L}}(2B_{\bar{\boldsymbol{\theta}}})^{-\bar{W}(\bar{W}+1)\bar{L}}\Big]^{Q} \, .
    $$
\end{lem}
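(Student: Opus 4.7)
The strategy is a partitioning argument followed by a union bound. First, I would estimate the probability that a single randomly initialized sub-network parameter vector falls within $\delta$ (in $\ell_\infty$) of a fixed target $\bar{\boldsymbol{\theta}}_k$. After padding to the common dimension $\mathfrak{D}(\bar W,\bar L,d)\le \bar W(\bar W+1)\bar L$, each coordinate is drawn i.i.d.\ from $\mathcal{U}[-B_{\bar{\boldsymbol{\theta}}},B_{\bar{\boldsymbol{\theta}}}]$, and the corresponding coordinate of $\bar{\boldsymbol{\theta}}_k$ lies in $[-B_{\bar{\boldsymbol{\theta}}},B_{\bar{\boldsymbol{\theta}}}]$. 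The marginal probability that a single coordinate lands within $\delta$ of its target is at least $\delta/(2B_{\bar{\boldsymbol{\theta}}})$ (the worst case being when the target sits at the boundary of the cube). By independence across coordinates,
$$p \;:=\; \mathbb{P}\big(\|(\boldsymbol{\theta}_i)^{\scriptscriptstyle[0]}-\bar{\boldsymbol{\theta}}_k\|_{\infty}\le \delta\big) \;\ge\; \delta^{\bar W(\bar W+1)\bar L}\,(2B_{\bar{\boldsymbol{\theta}}})^{-\bar W(\bar W+1)\bar L}\,.$$

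Next, since $\mathfrak{m}=\bar{\mathfrak{m}}\cdot R\cdot Q$, I would partition the $\mathfrak{m}$ sub-network indices into $\bar{\mathfrak{m}}\cdot R$ disjoint blocks $\{\mathcal{B}_{k,v}\}$ indexed by $(k,v)\in[\bar{\mathfrak{m}}]\times[R]$, each of cardinality $Q$. For each block, let $E_{k,v}$ be the event that at least one initialization in $\mathcal{B}_{k,v}$ satisfies $\|(\boldsymbol{\theta}_i)^{\scriptscriptstyle[0]}-\bar{\boldsymbol{\theta}}_k\|_{\infty}\le \delta$. Independence of the $Q$ initializations within the block, together with the single-sub-network estimate above, gives $\mathbb{P}(E_{k,v}^{c})\le (1-p)^{Q}$. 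Whenever every $E_{k,v}$ occurs, one may choose for each pair $(k,v)$ a qualifying index $i_{k,v}\in\mathcal{B}_{k,v}$; the disjointness of the blocks then forces $i_{k,v}\neq i_{k',v'}$ whenever $(k,v)\neq(k',v')$, which is exactly the distinctness required in Definition \ref{def:G}. A union bound over the $\bar{\mathfrak{m}} R$ block events therefore yields
$$\mathbb{P}\big(G_{\mathfrak{m},\bar{\mathfrak{m}},R,\delta}\big) \;\ge\; 1-\bar{\mathfrak{m}}R(1-p)^{Q}\,,$$
which, after substituting the explicit lower bound for $p$, matches the stated inequality.

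The argument is essentially routine probabilistic book-keeping: the only genuinely delicate aspect is respecting the distinctness constraint on the witness indices $i_{k,v}$, and this is precisely what motivates partitioning into disjoint blocks of size $Q$ rather than pooling all $\mathfrak{m}$ samples together with a looser covering estimate. The hypothesis that $Q$ be ``sufficiently large'' plays no role in establishing the inequality itself; it becomes relevant only downstream in Section \ref{opt} when one wants $(1-p)^{Q}$ to be genuinely small, so that the subsequent choices of $\delta$, $R$, and $Q$ in the proof of Theorem \ref{mainth} can simultaneously drive the initialization error and the failure probability below the desired precision $\epsilon$.
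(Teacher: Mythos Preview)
Your proposal is correct and follows essentially the same route as the paper: compute the single-hit probability $p\ge(\delta/(2B_{\bar{\boldsymbol{\theta}}}))^{\bar W(\bar W+1)\bar L}$ via coordinate-wise independence, partition the $\mathfrak{m}=\bar{\mathfrak{m}}RQ$ sub-networks into $\bar{\mathfrak{m}}R$ disjoint blocks of size $Q$, and apply a union bound over the block events. Your explicit remark that the disjoint-block structure is what enforces the distinctness of the witnesses $i_{k,v}$, and that the ``$Q$ sufficiently large'' hypothesis is inert for the inequality itself, are both accurate observations that the paper leaves implicit.
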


 Intuitively, the initialization error would be well-controlled if there exists only a slight perturbation between the target weights $\bar{\boldsymbol{\theta}}_k$ and random initialization $(\boldsymbol{\theta}_{s_{k, v}})^{\scriptscriptstyle[0]}$. We introduce the following lemma to formalize such intuition.
\begin{lem}
\label{lip_lem2}
Choose $u_{\bar{\mathfrak{m}}, \bar{\boldsymbol{\theta}}} \in \mathcal{PNN}(\bar{\mathfrak{m}}, \bar{M}, \{\bar{W}, \bar{L}, B_{\bar{\boldsymbol{\theta}}}\})$ in Corollary \ref{cor4.1}. Then, it holds that
\begin{align*}
 \widehat{\mathcal{L}}(u_{\mathfrak{m}}^{*}) -\widehat{\mathcal{L}}(u_{\bar{\mathfrak{m}}, \bar{\boldsymbol{\theta}}}) \leq C_4 \cdot \bar{M}^2 \cdot B^ {\raisebox{0.2ex}{$\scriptscriptstyle 3\bar{L}$}}_{\bar{\boldsymbol{\theta}}} \cdot \max_{k = 1, \ldots, \bar{\mathfrak{m}}} \max_{v = 1, \ldots, R} \big\|(\boldsymbol{\theta}_{s_{k,v}})^{\scriptscriptstyle [0]} -\bar{\boldsymbol{\theta}}_k\big\|_{\infty} \, 
\end{align*}
with $u^{*}_{\mathfrak{m}} \in \mathcal{PNN}(\mathfrak{m}, \bar{M}, \{\bar{W}, \bar{L}, B_{\bar{\boldsymbol{\theta}}}+\eta\})$ defined in (\ref{eq:u*}). Here the random indices $s_{k, v}$ are from (\ref{eq:skv}).  $C_4$ is a universal constant which only depends on $\Omega, \bar{W}, \bar{L}, d$ and $B_0$.
\end{lem}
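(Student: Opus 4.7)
\textbf{Proof proposal for Lemma \ref{lip_lem2}.}
The plan is to reduce the comparison of $\widehat{\mathcal{L}}(u^{*}_{\mathfrak{m}})$ and $\widehat{\mathcal{L}}(u_{\bar{\mathfrak{m}},\bar{\boldsymbol{\theta}}})$ to a sub-network-wise Lipschitz argument. Starting from the representations \eqref{eq:u*} and \eqref{eq:umbar}, both networks can be expressed as the same weighted sum $\sum_{k=1}^{\bar{\mathfrak{m}}}\sum_{v=1}^{R}\frac{\bar{c}_k}{R}\,\psi_{k,v}(\boldsymbol{x})$, where $\psi_{k,v}$ is $(\phi_{\boldsymbol{\theta}}^{s_{k,v}})^{\scriptscriptstyle[0]}$ for $u^{*}_{\mathfrak{m}}$ and $\phi_{\bar{\boldsymbol{\theta}}}^{k}$ for $u_{\bar{\mathfrak{m}},\bar{\boldsymbol{\theta}}}$. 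Subtracting gives $u^{*}_{\mathfrak{m}} - u_{\bar{\mathfrak{m}},\bar{\boldsymbol{\theta}}} = \sum_{k,v}\frac{\bar{c}_k}{R}\bigl[(\phi_{\boldsymbol{\theta}}^{s_{k,v}})^{\scriptscriptstyle[0]} - \phi_{\bar{\boldsymbol{\theta}}}^{k}\bigr]$, and an identical expression holds for $\partial_{x_m}u^{*}_{\mathfrak{m}} - \partial_{x_m}u_{\bar{\mathfrak{m}},\bar{\boldsymbol{\theta}}}$.

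Next I would apply Lemma \ref{f-lip} and Lemma \ref{df-lip} to each pair of sub-networks. Since each sub-network has at most $\bar{W}(\bar{W}+1)\bar{L}$ nonzero weights, one has $\|(\boldsymbol{\theta}_{s_{k,v}})^{\scriptscriptstyle[0]}-\bar{\boldsymbol{\theta}}_k\|_2 \le \sqrt{\bar{W}(\bar{W}+1)\bar{L}}\cdot\delta$ with $\delta := \max_{k,v}\|(\boldsymbol{\theta}_{s_{k,v}})^{\scriptscriptstyle[0]}-\bar{\boldsymbol{\theta}}_k\|_\infty$, yielding the pointwise bounds
\begin{align*}
\bigl|(\phi_{\boldsymbol{\theta}}^{s_{k,v}})^{\scriptscriptstyle[0]}(\boldsymbol{x}) - \phi_{\bar{\boldsymbol{\theta}}}^{k}(\boldsymbol{x})\bigr| &\le C(\bar{W},\bar{L})\cdot B_{\bar{\boldsymbol{\theta}}}^{\bar{L}-1}\cdot\delta, \\
\bigl|\partial_{x_m}(\phi_{\boldsymbol{\theta}}^{s_{k,v}})^{\scriptscriptstyle[0]}(\boldsymbol{x}) - \partial_{x_m}\phi_{\bar{\boldsymbol{\theta}}}^{k}(\boldsymbol{x})\bigr| &\le C(\bar{W},\bar{L})\cdot B_{\bar{\boldsymbol{\theta}}}^{2\bar{L}}\cdot\delta.
\end{align*}
Summing with weights $\tfrac{|\bar{c}_k|}{R}$ and using $\sum_{k,v}\tfrac{|\bar{c}_k|}{R} = \sum_k|\bar{c}_k| \le \bar{M}$ produces the uniform estimates $\|u^{*}_{\mathfrak{m}}-u_{\bar{\mathfrak{m}},\bar{\boldsymbol{\theta}}}\|_{L^\infty(\Omega)} \le C\bar{M}B_{\bar{\boldsymbol{\theta}}}^{\bar{L}-1}\delta$ and $\|\nabla u^{*}_{\mathfrak{m}}-\nabla u_{\bar{\mathfrak{m}},\bar{\boldsymbol{\theta}}}\|_{L^\infty(\Omega)} \le C\bar{M}B_{\bar{\boldsymbol{\theta}}}^{2\bar{L}}\delta$.

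Finally I would expand
\begin{align*}
\widehat{\mathcal{L}}(u^{*}_{\mathfrak{m}}) - \widehat{\mathcal{L}}(u_{\bar{\mathfrak{m}},\bar{\boldsymbol{\theta}}}) &= \tfrac{|\Omega|}{2N_{\rm in}}\sum_{p}\Bigl[\|\nabla u^{*}_{\mathfrak{m}}(X_p)\|_2^2 - \|\nabla u_{\bar{\mathfrak{m}},\bar{\boldsymbol{\theta}}}(X_p)\|_2^2\Bigr] \\
&\quad + \tfrac{|\Omega|}{2N_{\rm in}}\sum_{p}\omega(X_p)\bigl[(u^{*}_{\mathfrak{m}})^2(X_p) - u_{\bar{\mathfrak{m}},\bar{\boldsymbol{\theta}}}^{\,2}(X_p)\bigr] \\
&\quad - \tfrac{|\Omega|}{N_{\rm in}}\sum_{p}h(X_p)\bigl[u^{*}_{\mathfrak{m}}(X_p)-u_{\bar{\mathfrak{m}},\bar{\boldsymbol{\theta}}}(X_p)\bigr] - \tfrac{|\partial\Omega|}{N_b}\sum_{p}g(Y_p)\bigl[u^{*}_{\mathfrak{m}}(Y_p)-u_{\bar{\mathfrak{m}},\bar{\boldsymbol{\theta}}}(Y_p)\bigr],
\end{align*}
and bound each term using the factorization $|a^2-b^2| \le |a-b|(|a|+|b|)$. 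The uniform bounds $|u^{*}_{\mathfrak{m}}|,|u_{\bar{\mathfrak{m}},\bar{\boldsymbol{\theta}}}| \le \bar{M}(\bar{W}+1)B_{\bar{\boldsymbol{\theta}}}$ and $|\nabla u^{*}_{\mathfrak{m}}|,|\nabla u_{\bar{\mathfrak{m}},\bar{\boldsymbol{\theta}}}| \le \bar{M}\bar{W}^{\bar{L}-1}B_{\bar{\boldsymbol{\theta}}}^{\bar{L}}$ follow directly from Lemma \ref{f-lip} and Lemma \ref{df-bound} applied to each sub-network combined with $\|\boldsymbol{\theta}^{\mathfrak{m},*}_{\rm out}\|_1 \le \bar{M}$. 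The $\|\nabla u\|_2^2$ contribution is the dominant one and produces $C(d,\bar{W},\bar{L},B_0)\cdot\bar{M}^2\cdot B_{\bar{\boldsymbol{\theta}}}^{3\bar{L}-1}\cdot\delta$; the remaining terms are of strictly lower order in $B_{\bar{\boldsymbol{\theta}}}$. Absorbing all dimension, geometry, and regularity constants into a single $C_4=C_4(\Omega,\bar{W},\bar{L},d,B_0)$ and bounding $B_{\bar{\boldsymbol{\theta}}}^{3\bar{L}-1} \le B_{\bar{\boldsymbol{\theta}}}^{3\bar{L}}$ (valid since $B_{\bar{\boldsymbol{\theta}}} \ge 1$) yields the stated estimate. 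No genuine obstacle is expected; the work is entirely in careful bookkeeping of the polynomial factors in $\bar{M}$ and $B_{\bar{\boldsymbol{\theta}}}$ across the four integrand pieces.
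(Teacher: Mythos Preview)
Your proposal is correct and follows essentially the same route as the paper: decompose $\widehat{\mathcal{L}}(u^{*}_{\mathfrak{m}})-\widehat{\mathcal{L}}(u_{\bar{\mathfrak{m}},\bar{\boldsymbol{\theta}}})$ into the four integrand pieces, use the common representation $\sum_{k,v}\tfrac{\bar{c}_k}{R}(\cdot)$ from \eqref{eq:u*}--\eqref{eq:umbar}, apply the $a^2-b^2$ factorization together with the sub-network bounds of Lemmas~\ref{f-lip}, \ref{df-bound}, \ref{df-lip}, and convert $\|\cdot\|_2$ to $\|\cdot\|_\infty$ via the $\sqrt{\bar W(\bar W+1)\bar L}$ factor. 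One small bookkeeping slip: the dominant gradient term actually produces the exponent $B_{\bar{\boldsymbol{\theta}}}^{3\bar L}$ directly (since $B_{\bar{\boldsymbol{\theta}}}^{\bar L}\cdot B_{\bar{\boldsymbol{\theta}}}^{2\bar L}=B_{\bar{\boldsymbol{\theta}}}^{3\bar L}$), not $3\bar L-1$, so your final upgrade step is unnecessary but harmless.
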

Combining Lemmas \ref{lem:prob of G} and \ref{lip_lem2}, by properly selecting the sub-network size $\mathfrak{m}$ of $u_{\mathfrak{m}, \boldsymbol{\theta}}$, we can bound the initialization error with arbitrary high probability and precision.
\begin{prop} \label{lem:pertur}
    Choose $u_{\bar{\mathfrak{m}}, \bar{\boldsymbol{\theta}}} \in \mathcal{PNN}(\bar{\mathfrak{m}}, \bar{M}, \{\bar{W}, \bar{L}, B_{\bar{\boldsymbol{\theta}}}\})$ in Corollary \ref{cor4.1}. Let $\delta>0$, $R, Q\in\mathbb{N}$ while $Q$ is sufficiently large. If we set the number of sub-networks  $\mathfrak{m} = \bar{\mathfrak{m}} \cdot R \cdot Q$, then with probability at least 
    \begin{align*}
        1 - \bar{\mathfrak{m}}R\Big[1-\delta^{\bar{W}(\bar{W}+1)\bar{L}}(2B_{\bar{\boldsymbol{\theta}}})^{-\bar{W}(\bar{W}+1)\bar{L}}\Big]^{Q} \, ,
    \end{align*}
    the initialization error in (\ref{eq:opt decomp}) is bounded by
    \begin{align*}
        \widehat{\mathcal{L}}(u_{\mathfrak{m}}^{*}) -\widehat{\mathcal{L}}(u_{\bar{\mathfrak{m}}, \bar{\boldsymbol{\theta}}}) \leq C_4 \cdot \bar{M}^2 \cdot B^ {\raisebox{0.2ex}{$\scriptscriptstyle 3\bar{L}$}}_{\bar{\boldsymbol{\theta}}} \cdot \delta \, ,
    \end{align*}
    with $u^{*}_{\mathfrak{m}} \in  \mathcal{PNN}({\mathfrak{m}}, \bar{M}, \{\bar{W}, \bar{L}, B_{\bar{\boldsymbol{\theta}}}+\eta\})$ defined in (\ref{eq:u*}). $C_4$ is from Lemma \ref{lip_lem2}.
\end{prop}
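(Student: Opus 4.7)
The plan is to obtain Proposition \ref{lem:pertur} by directly composing the two preceding lemmas, Lemma \ref{lem:prob of G} and Lemma \ref{lip_lem2}, which together already contain all the technical content. First, I would invoke Lemma \ref{lem:prob of G} with the given parameters $\mathfrak{m} = \bar{\mathfrak{m}} \cdot R \cdot Q$ (for $Q$ sufficiently large) and threshold $\delta$: this immediately produces the probability bound
$$\mathbb{P}\big(G_{\mathfrak{m}, \bar{\mathfrak{m}}, R, \delta}\big) \ge 1 - \bar{\mathfrak{m}}R\Big[1-\delta^{\bar{W}(\bar{W}+1)\bar{L}}(2B_{\bar{\boldsymbol{\theta}}})^{-\bar{W}(\bar{W}+1)\bar{L}}\Big]^{Q}$$
that appears in the proposition's conclusion.

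Next, I would unpack what occurs on the event $G_{\mathfrak{m}, \bar{\mathfrak{m}}, R, \delta}$. By Definition \ref{def:G}, for every target weight vector $\bar{\boldsymbol{\theta}}_k$ ($k = 1, \ldots, \bar{\mathfrak{m}}$) there exist $R$ distinct indices $s_{k,1}, \ldots, s_{k,R}$ (with the global disjointness condition $i_{k,v} \neq i_{k',v'}$ for $(k,v) \neq (k',v')$) such that $\|(\boldsymbol{\theta}_{s_{k,v}})^{\scriptscriptstyle [0]} - \bar{\boldsymbol{\theta}}_k\|_{\infty} \leq \delta$. These are precisely the indices used to define the random transition parameters $\boldsymbol{\theta}_{\rm total}^{\mathfrak{m}, *}$ in \eqref{eq:transpara} and hence the auxiliary network $u_{\mathfrak{m}}^{*}$ in \eqref{eq:u*}. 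Thus on this event one has the pointwise control
$$\max_{k = 1, \ldots, \bar{\mathfrak{m}}} \max_{v = 1, \ldots, R} \big\|(\boldsymbol{\theta}_{s_{k,v}})^{\scriptscriptstyle [0]} -\bar{\boldsymbol{\theta}}_k\big\|_{\infty} \le \delta.$$

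Finally, I would apply Lemma \ref{lip_lem2}, whose right-hand side is exactly the perturbation quantity just bounded. Substituting $\delta$ yields
$$\widehat{\mathcal{L}}(u_{\mathfrak{m}}^{*}) -\widehat{\mathcal{L}}(u_{\bar{\mathfrak{m}}, \bar{\boldsymbol{\theta}}}) \leq C_4 \cdot \bar{M}^2 \cdot B^{\raisebox{0.2ex}{$\scriptscriptstyle 3\bar{L}$}}_{\bar{\boldsymbol{\theta}}} \cdot \delta,$$
which is the desired upper bound on the initialization error, and this bound holds on the event $G_{\mathfrak{m}, \bar{\mathfrak{m}}, R, \delta}$, thus with at least the stated probability.

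At the level of this proposition itself, there is essentially no obstacle: it is a clean conjunction of a probabilistic and a deterministic ingredient. The genuine technical difficulty lies inside the two lemmas that the argument relies upon. Lemma \ref{lem:prob of G} requires a careful counting argument: viewing each of the $Q$ independent ``blocks'' of sub-networks as Bernoulli trials for falling into an $\ell_\infty$-ball of radius $\delta$ around $\bar{\boldsymbol{\theta}}_k$ (whose probability is at least $(\delta/2B_{\bar{\boldsymbol{\theta}}})^{\bar{W}(\bar{W}+1)\bar{L}}$ under the product uniform initialization), and then applying a union bound across the $\bar{\mathfrak{m}} R$ targets while ensuring the disjointness of the selected indices. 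Lemma \ref{lip_lem2}, in turn, hinges on propagating an $\ell_\infty$ weight perturbation through each $\tanh$-activated sub-network and its gradients using the Lipschitz estimates from Lemmas \ref{f-lip}--\ref{df-lip}, together with the identities \eqref{eq:u*} and \eqref{eq:umbar} that align $u_{\mathfrak{m}}^{*}$ and $u_{\bar{\mathfrak{m}}, \bar{\boldsymbol{\theta}}}$ pair-by-pair through the coefficients $\bar{c}_k / R$.
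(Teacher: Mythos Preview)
Your proposal is correct and matches the paper's approach exactly: the paper presents Proposition~\ref{lem:pertur} as an immediate consequence of combining Lemma~\ref{lem:prob of G} (the probability bound for $G_{\mathfrak{m},\bar{\mathfrak{m}},R,\delta}$) with Lemma~\ref{lip_lem2} (the deterministic perturbation bound), precisely as you describe. Your observation that the real technical work sits inside those two lemmas is also accurate.
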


\subsubsection{Proof of Lemma \ref{opt_lem}}
    In the first step of the proof we show
\begin{align*}
\frac{1}{T} \sum_{t=0}^{T-1} F(\boldsymbol{x}_t, \boldsymbol{y}_t) \leq \frac{1}{T} \sum_{t=0}^{T-1} F(\boldsymbol{x}_t, \boldsymbol{y}^*)+\frac{\|\boldsymbol{y}^*-\boldsymbol{y}_0\|^2}{2}+\frac{1}{2 \mm T^2} \sum_{t=0}^{T-1}\|(\nabla_{\boldsymbol{y}} F)(\boldsymbol{x}_t, \boldsymbol{y}_t)\|^2\, .
\end{align*}
By convexity of $\boldsymbol{y} \mapsto F(\boldsymbol{x}_t, \boldsymbol{y})$ and because of $\boldsymbol{y}^* \in \mathrm{Y}$ we have
\begin{align*}
& F(\boldsymbol{x}_t, \boldsymbol{y}_t)-F(\boldsymbol{x}_t, \boldsymbol{y}^*) \leq 
\big \langle(\nabla_{\boldsymbol{y}} F)(\boldsymbol{x}_t, \boldsymbol{y}_t), \boldsymbol{y}_t-\boldsymbol{y}^* \big \rangle \\
&~~~~=\frac{1}{2 \mkern 2mu \lambda} \cdot 2 \cdot \big \langle \lambda \mkern 2mu (\nabla_{\boldsymbol{y}} F)(\boldsymbol{x}_t, \boldsymbol{y}_t), \boldsymbol{y}_t-\boldsymbol{y}^*\big \rangle \\
& ~~~~ =\frac{1}{2 \mkern 2mu \lambda}  \Big[ -\big\|\boldsymbol{y}_t-\boldsymbol{y}^*-\lambda \mm (\nabla_{\boldsymbol{y}} F)(\boldsymbol{x}_t, \boldsymbol{y}_t)\big\|_2^2+\|\boldsymbol{y}_t-\boldsymbol{y}^*\|_2^2+ \big \|\lambda \mm (\nabla_{\boldsymbol{y}} F)(\boldsymbol{x}_t, \boldsymbol{y}_t) \big \|_2^2 \Big] \\
& ~~~~ \leq \frac{1}{2 \mkern 2mu \lambda}  \Big[ -\big \|\operatorname{Proj}_{\, \rm Y}\big\{\boldsymbol{y}_t-\lambda \mm (\nabla_{\boldsymbol{y}} F)(\boldsymbol{x}_t, \boldsymbol{y}_t)\big\}-\boldsymbol{y}^*\big \|_2^2+\|\boldsymbol{y}_t-\boldsymbol{y}^*\|_2^2+\lambda^2  \big\|(\nabla_{\boldsymbol{y}} F)(\boldsymbol{x}_t, \boldsymbol{y}_t)\big\|_2^2 \Big] \\
& ~~~~ =\frac{1}{2 \mkern 2mu \lambda}  \Big[ \|\boldsymbol{y}_t-\boldsymbol{y}^*\|^2-\|\boldsymbol{y}_{t+1}-\boldsymbol{y}^*\|_2^2+\lambda^2  \big\|(\nabla_{\boldsymbol{y}} F)(\boldsymbol{x}_t, \boldsymbol{y}_t)\big\|_2^2 \Big] \, .
\end{align*}
This implies
\begin{align*}
& \frac{1}{T} \sum_{t=0}^{T-1} F(\boldsymbol{x}_t, \boldsymbol{y}_t)-\frac{1}{T} \sum_{t=0}^{T-1} F(\boldsymbol{x}_t, \boldsymbol{y}^*) =\frac{1}{T} \sum_{t=0}^{T-1}\big[F(\boldsymbol{x}_t, \boldsymbol{y}_t)-F(\boldsymbol{x}_t, \boldsymbol{y}^*)\big] \\
& ~~~~\leq \frac{1}{T} \sum_{t=0}^{T-1} \frac{1}{2 \mm \lambda}  \big(\|\boldsymbol{y}_t-\boldsymbol{y}^*\|_2^2-\|\boldsymbol{y}_{t+1}-\boldsymbol{y}^*\|_2^2 \big)+\frac{1}{T} \sum_{t=0}^{T-1} \frac{\lambda}{2}  \big \|(\nabla_{\boldsymbol{y}} F)(\boldsymbol{x}_t, \boldsymbol{y}_t) \big \|_2^2 \\
& ~~~~ =\frac{1}{2}  \sum_{t=0}^{T-1}\big(\|\boldsymbol{y}_t-\boldsymbol{y}^*\|_2^2-\|\boldsymbol{y}_{t+1}-\boldsymbol{y}^*\|_2^2 \big)+\frac{1}{2 \mm T^2} \sum_{t=0}^{T-1} \big \|(\nabla_{\boldsymbol{y}} F)(\boldsymbol{x}_t, \boldsymbol{y}_t)\big\|_2^2 \\
& ~~~~ \leq \frac{\|\boldsymbol{y}_0-\boldsymbol{y}^*\|^2}{2}+\frac{1}{2 \mm T^2} \sum_{t=0}^{T-1} \big \|(\nabla_{\boldsymbol{y}} F)(\boldsymbol{x}_t, \boldsymbol{y}_t)\big\|_2^2 \, .
\end{align*}
In the second step of the proof we show the assertion. Using the result of the first step we get

\begin{align*}
&\min _{t=0, \ldots, T} F(\boldsymbol{x}_t, \boldsymbol{y}_t)  \leq \frac{1}{T} \sum_{t=0}^{T-1} F(\boldsymbol{x}_t, \boldsymbol{y}_t)\\
&~~~~~~\leq  \frac{1}{T} \sum_{t=0}^{T-1} F(\boldsymbol{x}_t, \boldsymbol{y}^*)+\frac{\|\boldsymbol{y}^*-\boldsymbol{y}_0\|^2}{2}+\frac{1}{2 \mm T^2} \sum_{t=0}^{T-1} \big \|(\nabla_{\boldsymbol{y}} F)(\boldsymbol{x}_t, \boldsymbol{y}_t)\big \|_2^2 \\
&~~~~~~\leq  F(\boldsymbol{x}_0, \boldsymbol{y}^*)+\frac{1}{T} \sum_{t=0}^{T-1}|F(\boldsymbol{x}_t, \boldsymbol{y}^*)-F(\boldsymbol{x}_0, \boldsymbol{y}^*)|+\frac{\|\boldsymbol{y}^*-\boldsymbol{y}_0\|_2^2}{2} \\
&~~~~~~~~~  +\frac{1}{2 \mm T^2} \sum_{t=0}^{T-1}\big\|(\nabla_{\boldsymbol{y}} F)(\boldsymbol{x}_t, \boldsymbol{y}_t) \big\|_2^2 \, .
\end{align*}
By (\ref{opteq4}) we get
\begin{align*}
\frac{1}{T} \sum_{t=0}^{T-1}|F(\boldsymbol{x}_t, \boldsymbol{y}^*)-F(\boldsymbol{x}_0, \boldsymbol{y}^*)| & \leq \frac{1}{T} \sum_{t=0}^{T-1} U \cdot \|\boldsymbol{y}^*\|_2 \cdot\|\boldsymbol{x}_t-\boldsymbol{x}_0\|_2 \\
& \leq U \cdot\|\boldsymbol{y}^*\|_2 \cdot \operatorname{diam}(\mathrm{X}) \, .
\end{align*}
And by (\ref{opteq1}) we get
$$
\frac{1}{2 \mm T^2} \sum_{t=0}^{T-1} \big \|(\nabla_{\boldsymbol{y}} F)(\boldsymbol{x}_t, \boldsymbol{y}_t) \big\|_2^2 \leq \frac{1}{2 \mm T^2} \sum_{t=0}^{T-1} V^2=\frac{V^2}{2 \mm T} \, .
$$
Then,
$$
\min _{t=0, \ldots, T} F(\boldsymbol{x}_t, \boldsymbol{y}_t) \leq  F(\boldsymbol{x}_0, \boldsymbol{y}^*)+U \cdot\|\boldsymbol{y}^*\|_2 \cdot \operatorname{diam}({\rm X})+\frac{\|\boldsymbol{y}^*-\boldsymbol{y}_0\|_2^2}{2}+\frac{V^2}{2 \mm T} \, .
$$
Denote by $\boldsymbol{z} = (\boldsymbol{x}, \boldsymbol{y})$. Since ${\rm X} \times {\rm Y}$ is a convex set, we have
\begin{align*}
F(\boldsymbol{z}_{t+1}) &= F(\boldsymbol{z}_{t}) + \int_{0}^{1} \frac{\partial F\big(\boldsymbol{z}_{t} + w(\boldsymbol{z}_{t+1}-\boldsymbol{z}_{t})\big)}{\partial w} \, {\rm d}w \\
&=F(\boldsymbol{z}_{t}) + \int_{0}^{1} \nabla F\big(\boldsymbol{z}_{t} + w(\boldsymbol{z}_{t+1}-\boldsymbol{z}_{t})\big)^{\rm T}(\boldsymbol{z}_{t+1}-\boldsymbol{z}_{t}) \, {\rm d}w \, .
\end{align*}
Since $\nabla F(\boldsymbol{x}, \boldsymbol{y})$ satisfies (\ref{opteq2}), it holds that
\begin{align*}
    &\int_{0}^{1} \big[\nabla F\big(\boldsymbol{z}_{t} + w(\boldsymbol{z}_{t+1}-\boldsymbol{z}_{t})\big) - \nabla F(\boldsymbol{z}_{t}) \big]^{\rm T}(\boldsymbol{z}_{t+1}-\boldsymbol{z}_{t}) \, {\rm d}w \\
    &~~~~~~\leq \int_{0}^{1} K\|w(\boldsymbol{z}_{t+1}- \boldsymbol{z}_{t})\|_2 \,\|\boldsymbol{z}_{t+1}- \boldsymbol{z}_{t}\|_2\, {\rm d}w = \frac{K}{2} \|\boldsymbol{z}_{t+1}- \boldsymbol{z}_{t}\|^2 \, .
\end{align*}
Recall the algorithm update in  (\ref{opteq3}) 
\begin{equation*}
{\boldsymbol{z}}_{t+1}=\operatorname{Proj}_{\mkern 3mu \mathrm{X} \times \mathrm{Y}} \! \big(\boldsymbol{z}_t -\lambda \cdot \nabla F(\boldsymbol{z}_t)\big) \, ,
\end{equation*}
we know that ${\boldsymbol{z}}_{t+1}$ is the projection of ${\boldsymbol{z}}_{t}-\lambda\cdot \nabla F(\boldsymbol{z}_t)$ onto ${\rm X} \times {\rm Y}$. Then
by the optimality  condition of  projection, we have
$$
\big(({\boldsymbol{z}}_{t}-\lambda\cdot \nabla F(\boldsymbol{z}_t)) - {\boldsymbol{z}}_{t+1}\big)\big(\boldsymbol{u} - {\boldsymbol{z}}_{t+1}\big) \leq 0\, \qquad \forall \, \boldsymbol{u}\in {\rm X} \times {\rm Y}\, .
$$
Let $\boldsymbol{u} = {\boldsymbol{z}}_{t}$, then $\nabla F(\boldsymbol{z}_{t})^{\rm T}(\boldsymbol{z}_{t+1} - \boldsymbol{z}_{t}) \leq - \frac{1}{\lambda}\|\boldsymbol{z}_{t+1}- \boldsymbol{z}_{t}\|^2$. 
Thus, we get
\begin{align*}
F(z_{t+1})&\leq F(\boldsymbol{z}_{t}) + \nabla F(\boldsymbol{z}_{t})^{\rm T}(\boldsymbol{z}_{t+1} - \boldsymbol{z}_{t}) + \frac{K}{2} \|\boldsymbol{z}_{t+1}- \boldsymbol{z}_{t}\|^2\\
&\leq F(\boldsymbol{z}_t) - \frac{1}{\lambda}\|\boldsymbol{z}_{t+1}- \boldsymbol{z}_{t}\|^2 + \frac{K}{2} \|\boldsymbol{z}_{t+1}- \boldsymbol{z}_{t}\|^2\\
&\leq F(\boldsymbol{z}_{t})- \bigg(\frac{1}{\lambda}-\frac{K}{2}\bigg) \|\boldsymbol{z}_{t+1}- \boldsymbol{z}_{t}\|^2 \, .
\end{align*}
Therefore, when $\lambda= T^{-1} \wedge 2K^{-1}$, 
\begin{align*}
F(\boldsymbol{x}_{\scriptscriptstyle T}, \boldsymbol{y}_{\scriptscriptstyle T}) &= \min _{t=0, \ldots, T} F(\boldsymbol{x}_t, \boldsymbol{y}_t)\\
&\leq F(\boldsymbol{x}_0, \boldsymbol{y}^*)+U \cdot\|\boldsymbol{y}^*\| \cdot \operatorname{diam}({\rm X})+\frac{\|\boldsymbol{y}^*-\boldsymbol{y}_0\|^2}{2}+\frac{V^2}{2 \mm T} \, .
\end{align*}

\subsubsection{Proof of Lemma \ref{bound_lem}}

By (\ref{eq12}) and  $u_{\mathfrak{m}, \boldsymbol{\theta}}(\boldsymbol{x})=\sum_{k=1}^{\mathfrak{m}} c_{k} \phi_{\boldsymbol{\theta}}^{k}(\boldsymbol{x})$, it holds that
\begin{align*}
& \big \|\nabla_{\boldsymbol{\theta}^{\mathfrak{m}}_{\rm out}} \widehat{F}\big(\boldsymbol{\theta}^{\mathfrak{m}}_{ \rm in}, \boldsymbol{\theta}^{\mathfrak{m}}_{ \rm out}\big)\big\|_2^{2} = \bigg \|\nabla_{(c_k)_{k=1}^{\mathfrak{m}}} \widehat{\mathcal{L}}\bigg(\sum_{k=1}^{\mathfrak{m}} c_{k} \phi_{\boldsymbol{\theta}}^{k}\bigg) \bigg\|_2^{2}\\
&~~= \sum_{j=1}^{\mathfrak{m}} \bigg\{\frac{|\Omega|}{N_{\rm in}} \sum_{p=1}^{N_{\rm in}} \bigg[ \sum_{k=1}^{\mathfrak{m}} c_{k} \m \nabla_{\! \boldsymbol{x}} \m \phi_{\boldsymbol{\theta}}^{k}(X_p) \nabla_{\! \boldsymbol{x}} {\phi_{\boldsymbol{\theta}}^{j}(X_p)}^{\rm T} + \omega(X_p) \sum_{k=1}^{\mathfrak{m}} c_{k} \mm \phi_{\boldsymbol{\theta}}^{k}(X_p) \mm \phi_{\boldsymbol{\theta}}^{j}(X_p) \\
&~~~~~~~~~~~~ -\phi_{\boldsymbol{\theta}}^{j}(X_p)h(X_p)\bigg]  - \frac{|\partial \Omega|}{N_b} \sum_{p=1}^{N_b}\big[\phi_{\boldsymbol{\theta}}^{j}(Y_p)g(Y_p)\big]\bigg\}^2\\
&~~ \leq \sum_{j=1}^{\mathfrak{m}} 4\bigg\{\bigg\{\frac{|\Omega|}{N_{\rm in}} \sum_{p=1}^{N_{\rm in}} \bigg[ \sum_{k=1}^{\mathfrak{m}}c_{k}   \nabla \phi_{\boldsymbol{\theta}}^{k}(X_p) {\nabla \phi_{\boldsymbol{\theta}}^{j}(X_p)}^{\rm T}\bigg]\bigg\}^2 \\
&~~~~~~~~~~~+ \bigg\{\frac{|\Omega|}{N_{\rm in}} \sum_{p=1}^{N_{\rm in}} \bigg[ \omega(X_p) \sum_{k=1}^{\mathfrak{m}} c_{k}   \phi_{\boldsymbol{\theta}}^{k}(X_p) \phi_{\boldsymbol{\theta}}^{j}(X_p)\bigg] \bigg\}^2 \\
&~~~~~~~~~~~+ \bigg\{\frac{|\Omega|}{N_{\rm in}} \sum_{p=1}^{N_{\rm in}} \big[ \phi_{\boldsymbol{\theta}}^{j}(X_p)h(X_p)\big]\bigg\}^2 + \bigg\{\frac{|\partial \Omega|}{N_b} \sum_{p=1}^{N_b}[\phi_{\boldsymbol{\theta}}^{j}(Y_p)g(Y_p)]\bigg\}^2\bigg\}\\
& \leq \sum_{j=1}^{\mathfrak{m}} 4\bigg\{\bigg\{|\Omega|^2 \frac{1}{N_{\rm in}} \sum_{p=1}^{N_{\rm in}} \Big\|\sum_{k=1}^{\mathfrak{m}} c_{k}  \nabla \phi_{\boldsymbol{\theta}}^{k}(X_p)\Big\|_2^2 \mm \mm \frac{1}{N_{\rm in}} \sum_{p=1}^{N_{\rm in}} \big\|\nabla \phi_{\boldsymbol{\theta}}^{j}(X_p)\big\|_2^2 \bigg\}\\
&~~~~~~~~~~~ + \bigg\{|\Omega|^2 \frac{1}{N_{\rm in}} \sum_{p=1}^{N_{\rm in}} \Big| \omega(X_p) \sum_{k=1}^{\mathfrak{m}} c_{k} \phi_{\boldsymbol{\theta}}^{k}(X_p)\Big|^2 \cdot \frac{1}{N_{\rm in}} \sum_{p=1}^{N_{\rm in}} \big|\phi_{\boldsymbol{\theta}}^{j}(X_p)\big|^2 \bigg\}\\
&~~~~~~~~~~~ + \bigg\{|\Omega|^2 \frac{1}{N_{\rm in}} \sum_{p=1}^{N_{\rm in}} \big| \phi_{\boldsymbol{\theta}}^{j}(X_p)\big|^2 \cdot \frac{1}{N_{\rm in}} \sum_{p=1}^{N_{\rm in}} \big|h(X_p)\big|^2\bigg\} \\
&~~~~~~~~~~~ + \bigg\{|\partial \Omega|^2 \frac{1}{N_b} \sum_{p=1}^{N_b}\big|\phi_{\boldsymbol{\theta}}^{j}(Y_p)\big|^2\cdot \frac{1}{N_b} \sum_{p=1}^{N_b} \big|g(Y_p)\big|^2\bigg\}\bigg\}\\
&\leq \sum_{j=1}^{\mathfrak{m}} 4\bigg\{\bigg\{|\Omega|^2 \frac{1}{N_{\rm in}} \sum_{p=1}^{N_{\rm in}} \Big|\sum_{k=1}^{\mathfrak{m}} c_{k}\Big|^2 \cdot \max_{k} \big\|\nabla \phi_{\boldsymbol{\theta}}^{k}(X_p)\big\|_2^2 \cdot \frac{1}{N_{\rm in}} \sum_{p=1}^{N_{\rm in}} \big\|\nabla \phi_{\boldsymbol{\theta}}^{j}(X_p)\big\|_2^2 \bigg\}\\
&~~~~~~~~~~~+ \bigg\{|\Omega|^2 \frac{1}{N_{\rm in}} \sum_{p=1}^{N_{\rm in}} \big| \omega(X_p) \big| \cdot \Big|\sum_{k=1}^{\mathfrak{m}} c_{k}\Big|^2 \cdot \max_{k}\big|\phi_{\boldsymbol{\theta}}^{k}(X_p)\big|^2 \cdot \frac{1}{N_{\rm in}} \sum_{p=1}^{N_{\rm in}} \big|\phi_{\boldsymbol{\theta}}^{j}(X_p)\big|^2 \bigg\}\\
&~~~~~~~~~~~ + \bigg\{|\Omega|^2 \frac{1}{N_{\rm in}} \sum_{p=1}^{N_{\rm in}} \big| \phi_{\boldsymbol{\theta}}^{j}(X_p)\big|^2 \cdot \frac{1}{N_{\rm in}} \sum_{p=1}^{N_{\rm in}} \big|h(X_p)\big|^2\bigg\} \\
&~~~~~~~~~~~+ \bigg\{|\partial \Omega|^2 \frac{1}{N_b} \sum_{p=1}^{N_b}\big|\phi_{\boldsymbol{\theta}}^{j}(Y_p)\big|^2\cdot \frac{1}{N_b} \sum_{p=1}^{N_b} \big|g(Y_p)\big|^2\bigg\}\bigg\}\, ,
\end{align*}
where the Cauchy-Schwarz inequality is used in the second inequality. By Lemma \ref{Lip of Fi}, it holds that
$$
|\phi_{\boldsymbol{\theta}}(\boldsymbol{x})|\leq (W+1)B_{\boldsymbol{\theta}}\, , \qquad |\partial_{x_m}\phi_{\boldsymbol{\theta}}(\boldsymbol{x})|\leq W^{L-1}B_{\boldsymbol{\theta}}^L \, .
$$
Then, we have
\begin{align*}
\max_{k}\|\nabla \phi_{\boldsymbol{\theta}}^{k}(X_p)\|_2^2 = \max_{k} \sum_{m=1}^{d} |\partial_{x_m} \phi_{\boldsymbol{\theta}}^{k}|^2\leq d\big(W^{L-1}B_{\boldsymbol{\theta}}^{L}\big)^2\, .
\end{align*}
Therefore, we get
\begin{align*}
&\big\|\nabla_{\boldsymbol{\theta}^{\mathfrak{m}}_{\rm out}} \widehat{F}\big(\boldsymbol{\theta}^{\mathfrak{m}}_{ \rm in}, \boldsymbol{\theta}^{\mathfrak{m}}_{ \rm out}\big)\big\|_2^{2}\\
&~~~~~~~~~~\leq 4 \mathfrak{m} \cdot \max \{|\Omega|,|\partial\Omega|\}\cdot 
\big(d^2 M^2(W^{L-1}B_{\boldsymbol{\theta}}^L)^4 + B_0^2 M^2(W+1)^4B_{\boldsymbol{\theta}}^4+ 2B_0^2(W+1)^2B_{\boldsymbol{\theta}}^2\big) \, .
\end{align*}

\subsubsection{Proof of Lemma \ref{hessian_lem}}
Write $\boldsymbol{x}=\sum_{j=1}^n a_j \boldsymbol{e}_j$, for coefficients $c_1, \ldots, c_n$. Suppose that $\|\boldsymbol{x}\|_2=1$, i.e. $\sum_j|a_j|^2=1$. Then
\begin{align*}
\|\boldsymbol{A}\boldsymbol{x}\|_2^2 & =\|\sum_{j=1}^{n} a_j \boldsymbol{A} \boldsymbol{e}_j\|_2^2 \leq\big(\sum_j|a_j|\|\boldsymbol{A} \boldsymbol{e}_j\|_2\big)^2 \\
& \leq\big(\sum_{j=1}^{n} |a_j|^2\big) \sum_{j=1}^{n}\|A \boldsymbol{e}_j\|_2^2=\sum_{j=1}^{n}\|\boldsymbol{A} \boldsymbol{e}_j\|_2^2=\|\boldsymbol{A}\|_{\rm F}^2\, ,
\end{align*}
where the triangle inequality is used in the first inequality and Cauchy-Schwarz in the second.
As $\boldsymbol{x}$ was arbitrary, we get $\|\boldsymbol{A}\|_2 \leq\|\boldsymbol{A}\|_{\rm F}$. Thus, if we have
$$ 
\|\nabla^2 f(\boldsymbol{x})\|_{2, 2} \leq \|\nabla^2 f(\boldsymbol{x})\|_{\rm F} \le K \, ,
$$
it further holds that
\begin{align*}
\|\nabla f(\boldsymbol{x})-\nabla f(\boldsymbol{y})\|_2 \leq \|\nabla^{2}f(\boldsymbol{\xi})\|_{2,2}\|\boldsymbol{x}-\boldsymbol{y}\|_2\leq K\|\boldsymbol{x}-\boldsymbol{y}\|_2 \, ,
\end{align*}
where $\xi = t\boldsymbol{x} + (1-t)\boldsymbol{y}$, $t \in (0, 1)$.
% From the definition of the Hessian of a twice differentiable function $f(\mathbf{x})$, we know that for any vector $\mathbf{v} \in \mathbb{R}^n$
% $$
% \begin{aligned}
% &\nabla^2 f(\mathbf{x}) \mathbf{v}=\lim _{h \rightarrow 0} \frac{\nabla f(\mathbf{x}+h \mathbf{v})-\nabla f(\mathbf{x})}{h} \\
% & \Longrightarrow\big\|\nabla^2 f(\mathbf{x}) \mathbf{v}\big\|_2 \leq \lim _{h \rightarrow 0} \frac{\|\nabla f(\mathbf{x}+h \mathbf{v})-\nabla f(\mathbf{x})\|_2}{|h|} \\
% & \Longrightarrow\big\|\nabla^2 f(\mathbf{x}) \mathbf{v}\big\|_2 \leq \lim _{h \rightarrow 0} L \frac{|h|\|\mathbf{v}\|_2}{|h|} \\
% & \Longrightarrow\big\|\nabla^2 f(\mathbf{x}) \mathbf{v}\big\|_2 \leq L\|\mathbf{v}\|_2
% \end{aligned}
% $$

% Since this is true for any $\mathbf{v}$, it is also true for the eigenvectors for matrix $\nabla^2 f(\mathbf{x})$. If $\mathbf{v}$ is such an eigenvector
% $$
% \begin{aligned}
% & \left\|\nabla^2 f(\mathbf{x}) \mathbf{v}\right\|=\|\lambda \mathbf{v}\| \leq L\|\mathbf{v}\| \\
% & \Longrightarrow|\lambda| \leq L \\
% &
% \end{aligned}
% $$

% So all eigenvalues are upper bounded by $L$

\subsubsection{Proof of Lemma \ref{hessian}}

The goal is to estimate the Frobenius norm of the Hessian matrix of $\widehat{F}(\boldsymbol{\theta}^{\mathfrak{m}}_{\rm total})$, which is equivalent to estimate the second derivative of $\widehat{\mathcal{L}}(u_{\mathfrak{m}, \boldsymbol{\theta}})$ w.r.t. the weight parameter. For simplicity, we will demonstrate the proof process using the derivative w.r.t. $a^{\scriptscriptstyle (0)}_{\scriptscriptstyle 1,1,1}$, the innermost weight of the first sub-network in $u_{\mathfrak{m}, \boldsymbol{\theta}}$. Now, we have
\begin{align*}
&\Big|\partial_{a_{\scalebox{0.52}{$1,1,1$}}^{\scalebox{0.5}{$(0)$}}}\partial_{a_{\scalebox{0.52}{$1,1,1$}}^{\scalebox{0.5}{$(0)$}}} \widehat{\mathcal{L}}(u_{\mathfrak{m}, \boldsymbol{\theta}}) \Big|\\
&~~~= \bigg|\partial_{a_{\scalebox{0.52}{$1,1,1$}}^{\scalebox{0.5}{$(0)$}}}\partial_{a_{\scalebox{0.52}{$1,1,1$}}^{\scalebox{0.5}{$(0)$}}}\bigg[\frac{1}{2} \sum_{i=1}^{d} |\partial_{x_i} u_{\mathfrak{m}, \boldsymbol{\theta}}(X)|^2 + \frac{1}{2} \omega(X) u_{\mathfrak{m}, \boldsymbol{\theta}}^2(X) - u_{\mathfrak{m}, \boldsymbol{\theta}}(X)h(X) - u_{\mathfrak{m}, \boldsymbol{\theta}}(Y)g(Y)\bigg]\bigg|\\
&~~~\leq \bigg|\sum_{k=1}^{d} \big[{\partial_{a_{\scalebox{0.52}{$1,1,1$}}^{\scalebox{0.5}{$(0)$}}}} {\partial_{x_k}} u_{\mathfrak{m}, \boldsymbol{\theta}}(X) \cdot {\partial_{a_{\scalebox{0.52}{$1,1,1$}}^{\scalebox{0.5}{$(0)$}}}}{\partial_{x_k}} u_{\mathfrak{m}, \boldsymbol{\theta}}(X) + {\partial_{a_{\scalebox{0.52}{$1,1,1$}}^{\scalebox{0.5}{$(0)$}}} \partial_{a_{\scalebox{0.52}{$1,1,1$}}^{\scalebox{0.5}{$(0)$}}}} {\partial_{x_k}} u_{\mathfrak{m}, \boldsymbol{\theta}}(X) \cdot {\partial_{x_k}} u_{\mathfrak{m}, \boldsymbol{\theta}}(X)\big]\bigg|\\
&~~~~~~ + \big|\omega(X) \big[\partial_{a_{\scalebox{0.52}{$1,1,1$}}^{\scalebox{0.5}{$(0)$}}} u_{\mathfrak{m}, \boldsymbol{\theta}}(X) \cdot \partial_{a_{\scalebox{0.52}{$1,1,1$}}^{\scalebox{0.5}{$(0)$}}} u_{\mathfrak{m}, \boldsymbol{\theta}}(X) + u_{\mathfrak{m}, \boldsymbol{\theta}}(X) \partial_{a_{\scalebox{0.52}{$1,1,1$}}^{\scalebox{0.5}{$(0)$}}} \partial_{a_{\scalebox{0.52}{$1,1,1$}}^{\scalebox{0.5}{$(0)$}}} u_{\mathfrak{m}, \boldsymbol{\theta}}(X) \big]\big|\\
&~~~~~~+ \big|2B_0 \mm \partial_{ a_{\scalebox{0.52}{$1,1,1$}}^{\scalebox{0.5}{$(0)$}}} \partial_{ a_{\scalebox{0.52}{$1,1,1$}}^{\scalebox{0.5}{$(0)$}}} u_{\mathfrak{m}, \boldsymbol{\theta}}(X)\big| \, ,
\end{align*}
where we have utilized $\|\nabla_{\boldsymbol{x}} u_{\mathfrak{m}, \boldsymbol{\theta}}(X)\|^2 = \sum_{i=1}^{d} |{\partial_{x_i}} u_{\mathfrak{m}, \boldsymbol{\theta}}(X)|^2$. Therefore, the task reduces to estimating the following partial derivatives:
\begin{itemize}
    \item \textbf{First Order Derivatives: } $\partial_{x_1} u_{\mathfrak{m}, \boldsymbol{\theta}}(\boldsymbol{x}) \, , \ \partial_{ a_{\scalebox{0.52}{$1,1,1$}}^{\scalebox{0.5}{$(0)$}}} u_{\mathfrak{m}, \boldsymbol{\theta}}(\boldsymbol{x}) \, .$

    \item \textbf{Second Order Derivatives: } $\partial_{ a_{\scalebox{0.52}{$1,1,1$}}^{\scalebox{0.5}{$(0)$}}} \partial_{x_1} u_{\mathfrak{m}, \boldsymbol{\theta}}(\boldsymbol{x}) \, , \ \partial_{ a_{\scalebox{0.52}{$1,1,1$}}^{\scalebox{0.5}{$(0)$}}} \partial_{ a_{\scalebox{0.52}{$1,1,1$}}^{\scalebox{0.5}{$(0)$}}} u_{\mathfrak{m}, \boldsymbol{\theta}}(\boldsymbol{x}) \, .$

     \item \textbf{Third Order Derivative: } $\partial_{ a_{\scalebox{0.52}{$1,1,1$}}^{\scalebox{0.5}{$(0)$}}} \partial_{ a_{\scalebox{0.52}{$1,1,1$}}^{\scalebox{0.5}{$(0)$}}} \partial_{x_1} u_{\mathfrak{m}, \boldsymbol{\theta}}(\boldsymbol{x}) \, .$
\end{itemize}
Note that we will use \(\phi_{\scriptscriptstyle k,j}^{\scriptscriptstyle (\ell)}(\boldsymbol{x})\) to denote the \(j\)-th output of the \(k\)-th sub-network at layer \(\ell\) in the rest of this subsection. Also, we will assume all neural network weights are non-negative when estimating the upper bounds of above partial derivatives (if not, the usual triangle inequality would lead to the same conclusion).

We first focus on the first order derivatives. It holds that
$${\partial_{x_1}} u_{\mathfrak{m}, \boldsymbol{\theta}}(\boldsymbol{x}) = \sum_{k=1}^{\mathfrak{m} } c_{k} \cdot {\partial_{x_1} \phi_{\boldsymbol{\theta}}^{k}(\boldsymbol{x})} \, .$$
Also, we have
\begin{align*}
&{\partial_{x_1} \phi_{\boldsymbol{\theta}}^{k}(\boldsymbol{x})}  ={\partial_{x_1} \phi_{ k, 1}^{\scalebox{0.65}{$(L)$}}(\boldsymbol{x})}  = \sum_{s_{L-1} = 1}^{\scriptscriptstyle N_{L-1}} \!\! a_{k, \m 1, \m s_{\raisebox{-0.1ex}{$\scalebox{0.6}{$L\mmn-\mmn1$}$}}}^{\scalebox{0.65}{$(L\mn -\mn 1\m)$}} {\partial_{x_1} \phi_{k, \m s_{\raisebox{-0.1ex}{$\scalebox{0.6}{$L\mmn-\mmn1$}$}}}^{\scalebox{0.65}{$(L\mn -\mn 1\m)$}}(\boldsymbol{x})} \\
& = \sum_{s_{L-1} = 1}^{\scriptscriptstyle N_{L-1}} \!\! a_{k, \m 1, \m s_{\raisebox{-0.1ex}{$\scalebox{0.6}{$L\mmn-\mmn1$}$}}}^{\scalebox{0.65}{$(L\mn -\mn 1\m)$}} \rho^{\prime}\bigg(\sum_{s_{L-2} = 1}^{\scriptscriptstyle N_{L-2}} \!\! a_{k, \m 1, \m s_{\raisebox{-0.1ex}{$\scalebox{0.6}{$L\mmn-\mmn2$}$}}}^{\scalebox{0.65}{$(L\mn -\mn 2\m)$}}\cdot \phi_{k, \m s_{\raisebox{-0.1ex}{$\scalebox{0.6}{$L\mmn-\mmn2$}$}}}^{\scalebox{0.65}{$(L\mn -\mn 2\m)$}}(\boldsymbol{x})+ b_{k, \m s_{\raisebox{-0.1ex}{$\scalebox{0.6}{$L\mmn-\mmn1$}$}}}^{\scalebox{0.65}{$(L\mn -\mn 2\m)$}}\bigg) \cdot \bigg[\sum_{s_{L-2} = 1}^{\scriptscriptstyle N_{L-2}} \!\! a_{k, \m s_{\raisebox{-0.1ex}{$\scalebox{0.6}{$L\mmn-\mmn1$}$}}, \m s_{\raisebox{-0.1ex}{$\scalebox{0.6}{$L\mmn-\mmn2$}$}}}^{\scalebox{0.65}{$(L\mn -\mn 2\m)$}} \cdot {\partial_{x_1} \phi_{k, \m s_{\raisebox{-0.1ex}{$\scalebox{0.6}{$L\mmn-\mmn2$}$}}}^{\scalebox{0.65}{$(L\mn -\mn 2\m)$}}(\boldsymbol{x})}\bigg]\\
&\leq \sum_{s_{L-1} = 1}^{\scriptscriptstyle N_{L-1}} \!\! a_{k, \m 1, \m s_{\raisebox{-0.1ex}{$\scalebox{0.6}{$L\mmn-\mmn1$}$}}}^{\scalebox{0.65}{$(L\mn -\mn 1\m)$}} \cdot 1 \cdot \bigg[\sum_{s_{L-2} = 1}^{\scriptscriptstyle N_{L-2}} \!\! a_{k, \m s_{\raisebox{-0.1ex}{$\scalebox{0.6}{$L\mmn-\mmn1$}$}}, \m s_{\raisebox{-0.1ex}{$\scalebox{0.6}{$L\mmn-\mmn2$}$}}}^{\scalebox{0.65}{$(L\mn -\mn 2\m)$}} \cdot 1 \cdot \bigg[\sum_{s_{L-3} = 1}^{\scriptscriptstyle N_{L-3}} \!\! a_{k, \m s_{\raisebox{-0.1ex}{$\scalebox{0.6}{$L\mmn-\mmn2$}$}}, \m s_{\raisebox{-0.1ex}{$\scalebox{0.6}{$L\mmn-\mmn3$}$}}}^{\scalebox{0.65}{$(L\mn -\mn 3\m)$}} \cdots \sum_{s_{1} = 1}^{\scriptscriptstyle N_{1}}a_{k, s_{2}, s_{1}}^{\scalebox{0.6}{$(1\m)$}}\cdot a_{k, s_{2}, 1}^{\scalebox{0.6}{$(0\m)$}}\bigg]\bigg]\\
&\leq \sum_{s_{L-1} = 1}^{\scriptscriptstyle N_{L-1}}   \sum_{s_{L-2} = 1}^{\scriptscriptstyle N_{L-2}} \!\!  \cdots \sum_{s_{1} = 1}^{\scriptscriptstyle N_{1}} a_{k, \m 1, \m s_{\raisebox{-0.1ex}{$\scalebox{0.6}{$L\mmn-\mmn1$}$}}}^{\scalebox{0.65}{$(L\mn -\mn 1\m)$}} a_{k, \m s_{\raisebox{-0.1ex}{$\scalebox{0.6}{$L\mmn-\mmn1$}$}}, \m s_{\raisebox{-0.1ex}{$\scalebox{0.6}{$L\mmn-\mmn2$}$}}}^{\scalebox{0.65}{$(L\mn -\mn 2\m)$}} \cdots a_{k, s_{2}, s_{1}}^{\scalebox{0.6}{$(1\m)$}}\cdot a_{k, s_{2}, 1}^{\scalebox{0.6}{$(0\m)$}} \leq \bigg(\prod_{i = 1}^{L-1} N_i\bigg) (B_{\boldsymbol{\theta}})^{L}\, .
\end{align*}
Therefore, it holds that $$\partial_{x_1} u_{\mathfrak{m}, \boldsymbol{\theta}}(\boldsymbol{x}) \leq M \bigg(\prod_{i = 1}^{L-1} N_i\bigg) (B_{\boldsymbol{\theta}})^{L}\, .$$
As for the partial derivative w.r.t. $a^{\scriptscriptstyle (0)}_{\scriptscriptstyle 1,1,1}$, we have
\begin{align} \label{eq:par_a111}
\partial_{a_{\scalebox{0.52}{$1,1,1$}}^{\scalebox{0.5}{$(0)$}}}  u_{\mathfrak{m}, \boldsymbol{\theta}}(\boldsymbol{x}) = c_1 \cdot {\partial_{a_{\scalebox{0.52}{$1,1,1$}}^{\scalebox{0.5}{$(0)$}}} \phi_{\boldsymbol{\theta}}^{1}(\boldsymbol{x})} \, .
\end{align}
Also, it holds that
\begin{align} \label{eq:c_par_a111}
&~~~~{\partial_{a_{\scalebox{0.52}{$1,1,1$}}^{\scalebox{0.5}{$(0)$}}} \phi_{\boldsymbol{\theta}}^{1}(\boldsymbol{x})}   ={\partial_{a_{\scalebox{0.52}{$1,1,1$}}^{\scalebox{0.5}{$(0)$}}} \phi_{\scriptscriptstyle k, 1}^{\scalebox{0.65}{$(L)$}}(\boldsymbol{x})} = \sum_{s_{L-1} = 1}^{\scriptscriptstyle N_{L-1}} \!\! a_{1, \m 1, \m s_{\raisebox{-0.1ex}{$\scalebox{0.6}{$L\mmn-\mmn1$}$}}}^{\scalebox{0.65}{$(L\mn -\mn 1\m)$}} {\partial_{a_{\scalebox{0.52}{$1,1,1$}}^{\scalebox{0.5}{$(0)$}}} \phi_{1, \m s_{\raisebox{-0.1ex}{$\scalebox{0.6}{$L\mmn-\mmn1$}$}}}^{\scalebox{0.65}{$(L\mn -\mn 1\m)$}}(\boldsymbol{x})} \notag \\
&~~~~ = \sum_{s_{L-1} = 1}^{\scriptscriptstyle N_{L-1}} \!\! a_{1, \m 1, \m s_{\raisebox{-0.1ex}{$\scalebox{0.6}{$L\mmn-\mmn1$}$}}}^{\scalebox{0.65}{$(L\mn -\mn 1\m)$}} \rho^{\prime}\bigg(\sum_{s_{L-2} = 1}^{\scriptscriptstyle N_{L-2}} \!\! a_{1, \m s_{\raisebox{-0.1ex}{$\scalebox{0.6}{$L\mmn-\mmn1$}$}}, \m s_{\raisebox{-0.1ex}{$\scalebox{0.6}{$L\mmn-\mmn2$}$}}}^{\scalebox{0.65}{$(L\mn -\mn 2\m)$}}\cdot \phi_{1, \m s_{\raisebox{-0.1ex}{$\scalebox{0.6}{$L\mmn-\mmn2$}$}}}^{\scalebox{0.65}{$(L\mn -\mn 2\m)$}}(\boldsymbol{x})+ b_{1, \m s_{\raisebox{-0.1ex}{$\scalebox{0.6}{$L\mmn-\mmn1$}$}}}^{\scalebox{0.65}{$(L\mn -\mn 2\m)$}}\bigg)  \cdot \notag \\
&~~~~~~~~~~~~~~~~~~~~\bigg[\sum_{s_{L-2} = 1}^{\scriptscriptstyle N_{L-2}} \!\! a_{1, \m s_{\raisebox{-0.1ex}{$\scalebox{0.6}{$L\mmn-\mmn1$}$}}, \m s_{\raisebox{-0.1ex}{$\scalebox{0.6}{$L\mmn-\mmn2$}$}}}^{\scalebox{0.65}{$(L\mn -\mn 2\m)$}} \mmn \cdot {\partial_{a_{\scalebox{0.52}{$1,1,1$}}^{\scalebox{0.5}{$(0)$}}} \phi_{1, \m s_{\raisebox{-0.1ex}{$\scalebox{0.6}{$L\mmn-\mmn2$}$}}}^{\scalebox{0.65}{$(L\mn -\mn 2\m)$}}(\boldsymbol{x})}\bigg] \notag \\
&~~~~ = \sum_{s_{L-1} = 1}^{\scriptscriptstyle N_{L-1}} \!\! a_{1, \m 1, \m s_{\raisebox{-0.1ex}{$\scalebox{0.6}{$L\mmn-\mmn1$}$}}}^{\scalebox{0.65}{$(L\mn -\mn 1\m)$}} \rho^{\prime}\bigg(\sum_{s_{L-2} = 1}^{\scriptscriptstyle N_{L-2}} \!\! a_{1, \m s_{\raisebox{-0.1ex}{$\scalebox{0.6}{$L\mmn-\mmn1$}$}}, \m s_{\raisebox{-0.1ex}{$\scalebox{0.6}{$L\mmn-\mmn2$}$}}}^{\scalebox{0.65}{$(L\mn -\mn 2\m)$}}\cdot \phi_{1, \m s_{\raisebox{-0.1ex}{$\scalebox{0.6}{$L\mmn-\mmn2$}$}}}^{\scalebox{0.65}{$(L\mn -\mn 2\m)$}}(\boldsymbol{x})+ b_{1, \m s_{\raisebox{-0.1ex}{$\scalebox{0.6}{$L\mmn-\mmn1$}$}}}^{\scalebox{0.65}{$(L\mn -\mn 2\m)$}}\bigg)  \cdot \notag \\
&~~~~~~~~~~~~~~~~\bigg[\sum_{s_{L-2} = 1}^{\scriptscriptstyle N_{L-2}} \!\! a_{1, \m s_{\raisebox{-0.1ex}{$\scalebox{0.6}{$L\mmn-\mmn1$}$}}, \m s_{\raisebox{-0.1ex}{$\scalebox{0.6}{$L\mmn-\mmn2$}$}}}^{\scalebox{0.65}{$(L\mn -\mn 2\m)$}} \cdot  \bigg[\cdots \sum_{s_1 = 1}^{\scriptscriptstyle N_{1}} a_{1, s_2, s_1}^{\scalebox{0.6}{$(1\m)$}} \cdot  x_1 \rho^{\prime}\bigg(\sum_{s_{0} = 1}^{d}a_{1, s_1, s_0}^{\scalebox{0.6}{$(0\m)$}}\cdot x_{s_0} + b_{1, s_1}^{\scalebox{0.6}{$(0\m)$}}\bigg)\bigg]\bigg] \notag \\
&~~~~= \sum_{s_{L-1} = 1}^{\scriptscriptstyle N_{L-1}} \!\! a_{1, \m 1, \m s_{\raisebox{-0.1ex}{$\scalebox{0.6}{$L\mmn-\mmn1$}$}}}^{\scalebox{0.65}{$(L\mn -\mn 1\m)$}} \! \cdot \! \bigg[\sum_{s_{L-2} = 1}^{\scriptscriptstyle N_{L-2}} \!\! a_{1, \m s_{\raisebox{-0.1ex}{$\scalebox{0.6}{$L\mmn-\mmn1$}$}}, \m s_{\raisebox{-0.1ex}{$\scalebox{0.6}{$L\mmn-\mmn2$}$}}}^{\scalebox{0.65}{$(L\mn -\mn 2\m)$}} \! \cdot \! \bigg[\sum_{s_{L-3} = 1}^{\scriptscriptstyle N_{L-3}} \!\! a_{1, \m s_{\raisebox{-0.1ex}{$\scalebox{0.6}{$L\mmn-\mmn2$}$}}, \m s_{\raisebox{-0.1ex}{$\scalebox{0.6}{$L\mmn-\mmn3$}$}}}^{\scalebox{0.65}{$(L\mn -\mn 3\m)$}} \cdots \sum_{s_{1} = 1}^{\scriptscriptstyle N_{1}}a_{1, s_2, s_1}^{\scalebox{0.6}{$(1\m)$}}\cdot x_1\bigg]\bigg] \cdot \notag \\
&~~~~~~~~~~~~~~~~\rho^{\prime}\bigg(\sum_{s_{L-2} = 1}^{\scriptscriptstyle N_{L-2}} \!\! a_{1, \m s_{\raisebox{-0.1ex}{$\scalebox{0.6}{$L\mmn-\mmn1$}$}}, \m s_{\raisebox{-0.1ex}{$\scalebox{0.6}{$L\mmn-\mmn2$}$}}}^{\scalebox{0.65}{$(L\mn -\mn 2\m)$}}\cdot \phi_{1, \m s_{\raisebox{-0.1ex}{$\scalebox{0.6}{$L\mmn-\mmn2$}$}}}^{\scalebox{0.65}{$(L\mn -\mn 2\m)$}}(\boldsymbol{x})+ b_{1, \m s_{\raisebox{-0.1ex}{$\scalebox{0.6}{$L\mmn-\mmn1$}$}}}^{\scalebox{0.65}{$(L\mn -\mn 2\m)$}}\bigg)\cdots \rho^{\prime}\bigg(\sum_{s_{1} = 1}^{\scriptscriptstyle N_{1}}a_{1, s_2, s_1}^{\scalebox{0.6}{$(1\m)$}}\cdot \phi_{1, s_1}^{\scalebox{0.6}{$(1\m)$}}(\boldsymbol{x}) + b_{1, s_{2}}^{\scalebox{0.6}{$(1\m)$}}\bigg) \cdot \notag \\
&~~~~~~~~~~~~~~~~\rho^{\prime}\bigg(\sum_{s_{0} = 1}^{d}a_{1, s_1, s_0}^{\scalebox{0.6}{$(0\m)$}}\cdot x_{s_0} + b_{1, s_1}^{\scalebox{0.6}{$(0\m)$}}\bigg)\leq \bigg(\prod_{i = 1}^{L -1} N_i\bigg) (B_{\boldsymbol{\theta}})^{L-1} \, .
\end{align}
 Therefore, we have $$\partial_{a_{\scalebox{0.52}{$1,1,1$}}^{\scalebox{0.5}{$(0)$}}} u_{\mathfrak{m}, \boldsymbol{\theta}}(\boldsymbol{x}) \leq M \bigg(\prod_{i = 1}^{L -1} N_i\bigg) (B_{\boldsymbol{\theta}})^{L-1} \, .$$

 Next, we estimate the second order partial derivatives. By (\ref{eq:par_a111}), we have
 \begin{align} \label{eq:par_x1_a111}
\partial_{a_{\scalebox{0.52}{$1,1,1$}}^{\scalebox{0.5}{$(0)$}}} \partial_{x_1} u_{\mathfrak{m}, \boldsymbol{\theta}}(\boldsymbol{x}) = c_1 \cdot \partial_{x_1} {\partial_{a_{\scalebox{0.52}{$1,1,1$}}^{\scalebox{0.5}{$(0)$}}} \phi_{\boldsymbol{\theta}}^{1}(\boldsymbol{x})} \, .
\end{align}
Meanwhile, by (\ref{eq:c_par_a111}), it holds that
\begin{align} \label{eq:c_par_x1_a111}
&\partial_{x_1}{\partial_{a_{\scalebox{0.52}{$1,1,1$}}^{\scalebox{0.5}{$(0)$}}} \phi_{\boldsymbol{\theta}}^{1}(\boldsymbol{x})} \notag \\
&~~~ = \bigg[\sum_{s_{L-1} = 1}^{\scriptscriptstyle N_{L-1}} \!\! a_{1, \m 1, \m s_{\raisebox{-0.1ex}{$\scalebox{0.6}{$L\mmn-\mmn1$}$}}}^{\scalebox{0.65}{$(L\mn -\mn 1\m)$}} \cdot \Big[\sum_{s_{L-2} = 1}^{\scriptscriptstyle N_{L-2}} \!\! a_{1, \m s_{\raisebox{-0.1ex}{$\scalebox{0.6}{$L\mmn-\mmn1$}$}}, \m s_{\raisebox{-0.1ex}{$\scalebox{0.6}{$L\mmn-\mmn2$}$}}}^{\scalebox{0.65}{$(L\mn -\mn 2\m)$}} \cdot \bigg[\sum_{s_{L-3} = 1}^{\scriptscriptstyle N_{L-3}} \!\! a_{1, \m s_{\raisebox{-0.1ex}{$\scalebox{0.6}{$L\mmn-\mmn2$}$}}, \m s_{\raisebox{-0.1ex}{$\scalebox{0.6}{$L\mmn-\mmn3$}$}}}^{\scalebox{0.65}{$(L\mn -\mn 3\m)$}} \cdots \sum_{s_{1} = 1}^{\scriptscriptstyle N_{1}}a_{1, s_2, s_1}^{\scalebox{0.6}{$(1\m)$}}\bigg]\bigg]\bigg] \cdot \notag \\
& ~~~~~~~ \rho^{\prime}\bigg(\sum_{s_{L-2} = 1}^{\scriptscriptstyle N_{L-2}} \!\! a_{1, \m s_{\raisebox{-0.1ex}{$\scalebox{0.6}{$L\mmn-\mmn1$}$}}, \m s_{\raisebox{-0.1ex}{$\scalebox{0.6}{$L\mmn-\mmn2$}$}}}^{\scalebox{0.65}{$(L\mn -\mn 2\m)$}}\cdot \phi_{1, \m s_{\raisebox{-0.1ex}{$\scalebox{0.6}{$L\mmn-\mmn2$}$}}}^{\scalebox{0.65}{$(L\mn -\mn 2\m)$}}(\boldsymbol{x})+ b_{1, \m s_{\raisebox{-0.1ex}{$\scalebox{0.6}{$L\mmn-\mmn1$}$}}}^{\scalebox{0.65}{$(L\mn -\mn 2\m)$}}\bigg)\cdots \notag \\
& ~~~~~~~ \rho^{\prime}\bigg(\sum_{s_{1} = 1}^{\scriptscriptstyle N_{1}}a_{1, s_2, s_1}^{\scalebox{0.6}{$(1\m)$}}\cdot \phi_{1, s_1}^{\scalebox{0.6}{$(1\m)$}}(\boldsymbol{x}) + b_{1, s_{2}}^{\scalebox{0.6}{$(1\m)$}}\bigg) \cdot \rho^{\prime}\bigg(\sum_{s_{0} = 1}^{d}a_{1, s_1, s_0}^{\scalebox{0.6}{$(0\m)$}}\cdot x_{s_0} + b_{1, s_1}^{\scalebox{0.6}{$(0\m)$}}\bigg)  \notag \\
& ~~~~~~~+ \bigg[\sum_{s_{L-1} = 1}^{\scriptscriptstyle N_{L-1}} \!\! a_{1, \m 1, \m s_{\raisebox{-0.1ex}{$\scalebox{0.6}{$L\mmn-\mmn1$}$}}}^{\scalebox{0.65}{$(L\mn -\mn 1\m)$}} \cdot \bigg[\sum_{s_{L-2} = 1}^{\scriptscriptstyle N_{L-2}} \!\! a_{1, \m s_{\raisebox{-0.1ex}{$\scalebox{0.6}{$L\mmn-\mmn1$}$}}, \m s_{\raisebox{-0.1ex}{$\scalebox{0.6}{$L\mmn-\mmn2$}$}}}^{\scalebox{0.65}{$(L\mn -\mn 2\m)$}} \cdot \bigg[\sum_{s_{L-3} = 1}^{\scriptscriptstyle N_{L-3}} \!\! a_{1, \m s_{\raisebox{-0.1ex}{$\scalebox{0.6}{$L\mmn-\mmn2$}$}}, \m s_{\raisebox{-0.1ex}{$\scalebox{0.6}{$L\mmn-\mmn3$}$}}}^{\scalebox{0.65}{$(L\mn -\mn 3\m)$}} \cdots \sum_{s_{1} = 1}^{\scriptscriptstyle N_{1}}a_{1, s_2, s_1}^{\scalebox{0.6}{$(1\m)$}}\cdot x_1\bigg]\bigg]\bigg] \cdot \notag \\
& ~~~~~~~ \partial_{x_1}\bigg[ \rho^{\prime}\bigg(\sum_{s_{L-2} = 1}^{\scriptscriptstyle N_{L-2}} \!\! a_{1, \m s_{\raisebox{-0.1ex}{$\scalebox{0.6}{$L\mmn-\mmn1$}$}}, \m s_{\raisebox{-0.1ex}{$\scalebox{0.6}{$L\mmn-\mmn2$}$}}}^{\scalebox{0.65}{$(L\mn -\mn 2\m)$}}\cdot \phi_{1, \m s_{\raisebox{-0.1ex}{$\scalebox{0.6}{$L\mmn-\mmn2$}$}}}^{\scalebox{0.65}{$(L\mn -\mn 2\m)$}}(\boldsymbol{x})+ b_{1, \m s_{\raisebox{-0.1ex}{$\scalebox{0.6}{$L\mmn-\mmn1$}$}}}^{\scalebox{0.65}{$(L\mn -\mn 2\m)$}}\bigg) \cdot  \notag \\
& ~~~~~~~ \cdots \rho^{\prime}\bigg(\sum_{s_{1} = 1}^{\scriptscriptstyle N_{1}}a_{1, s_2, s_1}^{\scalebox{0.6}{$(1\m)$}}\cdot \phi_{1, s_1}^{\scalebox{0.6}{$(1\m)$}}(\boldsymbol{x}) + b_{1, s_{2}}^{\scalebox{0.6}{$(1\m)$}}\bigg) \cdot \rho^{\prime}\bigg(\sum_{s_{0} = 1}^{d}a_{1, s_1, s_0}^{\scalebox{0.6}{$(0\m)$}}\cdot x_{s_0} + b_{1, s_1}^{\scalebox{0.6}{$(0\m)$}}\bigg)\bigg] \notag \\
& ~~~\leq \bigg(\prod_{i = 1}^{L-1} N_i\bigg) (B_{\boldsymbol{\theta}})^{L}  \bigg[\rho^{\prime}\bigg(\sum_{s_{L-2} = 1}^{\scriptscriptstyle N_{L-2}} \!\! a_{1, \m s_{\raisebox{-0.1ex}{$\scalebox{0.6}{$L\mmn-\mmn1$}$}}, \m s_{\raisebox{-0.1ex}{$\scalebox{0.6}{$L\mmn-\mmn2$}$}}}^{\scalebox{0.65}{$(L\mn -\mn 2\m)$}}\cdot \phi_{1, \m s_{\raisebox{-0.1ex}{$\scalebox{0.6}{$L\mmn-\mmn2$}$}}}^{\scalebox{0.65}{$(L\mn -\mn 2\m)$}}(\boldsymbol{x})+ b_{1, \m s_{\raisebox{-0.1ex}{$\scalebox{0.6}{$L\mmn-\mmn1$}$}}}^{\scalebox{0.65}{$(L\mn -\mn 2\m)$}}\bigg)\cdots \notag \\
& ~~~~~~~ \rho^{\prime}\bigg(\sum_{s_{1} = 1}^{\scriptscriptstyle N_{1}}a_{1, s_2, s_1}^{\scalebox{0.6}{$(1\m)$}}\cdot \phi_{1, s_1}^{\scalebox{0.6}{$(1\m)$}}(\boldsymbol{x}) + b_{1, s_{2}}^{\scalebox{0.6}{$(1\m)$}}\bigg) \cdot \rho^{\prime}\bigg(\sum_{s_{0} = 1}^{d}a_{1, s_1, s_0}^{\scalebox{0.6}{$(0\m)$}}\cdot x_{s_0} + b_{1, s_1}^{\scalebox{0.6}{$(0\m)$}}\bigg)  \notag \\
& ~~~~~~~ + x_1  \partial_{x_1}\rho^{\prime}\bigg(\sum_{s_{L-2} = 1}^{\scriptscriptstyle N_{L-2}} \!\! a_{1, \m s_{\raisebox{-0.1ex}{$\scalebox{0.6}{$L\mmn-\mmn1$}$}}, \m s_{\raisebox{-0.1ex}{$\scalebox{0.6}{$L\mmn-\mmn2$}$}}}^{\scalebox{0.65}{$(L\mn -\mn 2\m)$}}\cdot \phi_{1, \m s_{\raisebox{-0.1ex}{$\scalebox{0.6}{$L\mmn-\mmn2$}$}}}^{\scalebox{0.65}{$(L\mn -\mn 2\m)$}}(\boldsymbol{x})+ b_{1, \m s_{\raisebox{-0.1ex}{$\scalebox{0.6}{$L\mmn-\mmn1$}$}}}^{\scalebox{0.65}{$(L\mn -\mn 2\m)$}}\bigg) \notag \\
& ~~~~~~~ \cdots \rho^{\prime}\bigg(\sum_{s_{1} = 1}^{\scriptscriptstyle N_{1}}a_{1, s_2, s_1}^{\scalebox{0.6}{$(1\m)$}}\cdot \phi_{1, s_1}^{\scalebox{0.6}{$(1\m)$}}(\boldsymbol{x}) + b_{1, s_{2}}^{\scalebox{0.6}{$(1\m)$}}\bigg) \cdot \rho^{\prime}\bigg(\sum_{s_{0} = 1}^{d}a_{1, s_1, s_0}^{\scalebox{0.6}{$(0\m)$}}\cdot x_{s_0} + b_{1, s_1}^{\scalebox{0.6}{$(0\m)$}}\bigg)\cdot  \notag \\
& ~~~~~~~ +  \cdots  + x_1  \rho^{\prime}\bigg(\sum_{s_{L-2} = 1}^{\scriptscriptstyle N_{L-2}} \!\! a_{1, \m s_{\raisebox{-0.1ex}{$\scalebox{0.6}{$L\mmn-\mmn1$}$}}, \m s_{\raisebox{-0.1ex}{$\scalebox{0.6}{$L\mmn-\mmn2$}$}}}^{\scalebox{0.65}{$(L\mn -\mn 2\m)$}}\cdot \phi_{1, \m s_{\raisebox{-0.1ex}{$\scalebox{0.6}{$L\mmn-\mmn2$}$}}}^{\scalebox{0.65}{$(L\mn -\mn 2\m)$}}(\boldsymbol{x})+ b_{1, \m s_{\raisebox{-0.1ex}{$\scalebox{0.6}{$L\mmn-\mmn1$}$}}}^{\scalebox{0.65}{$(L\mn -\mn 2\m)$}}\bigg) \cdot \notag \\
& ~~~~~~~ \cdots \rho^{\prime}\bigg(\sum_{s_{1} = 1}^{\scriptscriptstyle N_{1}}a_{1, s_2, s_1}^{\scalebox{0.6}{$(1\m)$}}\cdot \phi_{1, s_1}^{\scalebox{0.6}{$(1\m)$}}(\boldsymbol{x}) + b_{1, s_{2}}^{\scalebox{0.6}{$(1\m)$}}\bigg) \cdot \partial_{x_1}\rho^{\prime}\bigg(\sum_{s_{0} = 1}^{d}a_{1, s_1, s_0}^{\scalebox{0.6}{$(0\m)$}}\cdot x_{s_0} + b_{1, s_1}^{\scalebox{0.6}{$(0\m)$}}\bigg)\bigg] \, .
\end{align}
Since we have 
\begin{align*}
&\partial_{x_1}\rho^{\prime}\bigg(\sum_{s_{L-2} = 1}^{\scriptscriptstyle N_{L-2}} \!\! a_{1, \m s_{\raisebox{-0.1ex}{$\scalebox{0.6}{$L\mmn-\mmn1$}$}}, \m s_{\raisebox{-0.1ex}{$\scalebox{0.6}{$L\mmn-\mmn2$}$}}}^{\scalebox{0.65}{$(L\mn -\mn 2\m)$}}\cdot \phi_{1, \m s_{\raisebox{-0.1ex}{$\scalebox{0.6}{$L\mmn-\mmn2$}$}}}^{\scalebox{0.65}{$(L\mn -\mn 2\m)$}}(\boldsymbol{x})+ b_{1, \m s_{\raisebox{-0.1ex}{$\scalebox{0.6}{$L\mmn-\mmn1$}$}}}^{\scalebox{0.65}{$(L\mn -\mn 2\m)$}}\bigg)  \\
&~~~~~~= \rho^{\prime \prime}\bigg(\sum_{s_{L-2} = 1}^{\scriptscriptstyle N_{L-2}} \!\! a_{1, \m 1, \m s_{\raisebox{-0.1ex}{$\scalebox{0.6}{$L\mmn-\mmn2$}$}}}^{\scalebox{0.65}{$(L\mn -\mn 2\m)$}}\cdot \phi_{1, \m s_{\raisebox{-0.1ex}{$\scalebox{0.6}{$L\mmn-\mmn2$}$}}}^{\scalebox{0.65}{$(L\mn -\mn 2\m)$}}(\boldsymbol{x})+ b_{1, \m s_{\raisebox{-0.1ex}{$\scalebox{0.6}{$L\mmn-\mmn1$}$}}}^{\scalebox{0.65}{$(L\mn -\mn 2\m)$}}\bigg) \sum_{s_{L-2} = 1}^{\scriptscriptstyle N_{L-2}} \!\! a_{1, \m s_{\raisebox{-0.1ex}{$\scalebox{0.6}{$L\mmn-\mmn1$}$}}, \m s_{\raisebox{-0.1ex}{$\scalebox{0.6}{$L\mmn-\mmn2$}$}}}^{\scalebox{0.65}{$(L\mn -\mn 2\m)$}}\cdot \partial_{x_1}\phi_{1, \m s_{\raisebox{-0.1ex}{$\scalebox{0.6}{$L\mmn-\mmn2$}$}}}^{\scalebox{0.65}{$(L\mn -\mn 2\m)$}}(\boldsymbol{x})\\
&~~~~~~ \leq \sum_{s_{L-2} = 1}^{\scriptscriptstyle N_{L-2}} \!\! a_{1, \m s_{\raisebox{-0.1ex}{$\scalebox{0.6}{$L\mmn-\mmn1$}$}}, \m s_{\raisebox{-0.1ex}{$\scalebox{0.6}{$L\mmn-\mmn2$}$}}}^{\scalebox{0.65}{$(L\mn -\mn 2\m)$}}\cdot \bigg(\prod_{i = 1}^{L-3} N_i\bigg) (B_{\boldsymbol{\theta}})^{L - 2} \leq \bigg(\prod_{i = 1}^{L-2} N_i\bigg) (B_{\boldsymbol{\theta}})^{L-1} \, .
\end{align*}
Then, above estimations lead to
\begin{align*}
\partial_{a_{\scalebox{0.52}{$1,1,1$}}^{\scalebox{0.5}{$(0)$}}}{\partial_{x_1}  \phi_{\boldsymbol{\theta}}^{1}(\boldsymbol{x})} \leq \bigg(\prod_{i = 1}^{L-1} N_i\bigg) (B_{\boldsymbol{\theta}})^{L} \cdot L \cdot \bigg(\prod_{i = 1}^{L-1} N_i\bigg) (B_{\boldsymbol{\theta}})^{L} = L \cdot  \bigg(\prod_{i = 1}^{L-1} N_i\bigg)^2 \cdot (B_{\boldsymbol{\theta}})^{2L} \, .
\end{align*}
Therefore, it holds that
\begin{align*}
\partial_{a_{\scalebox{0.52}{$1,1,1$}}^{\scalebox{0.5}{$(0)$}}}\partial_{x_1}  u_{\mathfrak{m}, \boldsymbol{\theta}}(\boldsymbol{x}) \leq  M\cdot L \cdot  \bigg(\prod_{i = 1}^{L-1} N_i\bigg)^2 \cdot (B_{\boldsymbol{\theta}})^{2L}  \, .
\end{align*}
Besides, w.r.t $a_{\scriptscriptstyle 1,1,1}^{\scriptscriptstyle (0)}$, we have
\begin{align*}
&\partial_{a_{\scalebox{0.52}{$1,1,1$}}^{\scalebox{0.5}{$(0)$}}}\rho^{\prime}\bigg(\sum_{s_{L-2} = 1}^{\scriptscriptstyle N_{L-2}} \!\! a_{1, \m s_{\raisebox{-0.1ex}{$\scalebox{0.6}{$L\mmn-\mmn1$}$}}, \m s_{\raisebox{-0.1ex}{$\scalebox{0.6}{$L\mmn-\mmn2$}$}}}^{\scalebox{0.65}{$(L\mn -\mn 2\m)$}}\cdot \phi_{1, \m s_{\raisebox{-0.1ex}{$\scalebox{0.6}{$L\mmn-\mmn2$}$}}}^{\scalebox{0.65}{$(L\mn -\mn 2\m)$}}(\boldsymbol{x})+ b_{1, \m s_{\raisebox{-0.1ex}{$\scalebox{0.6}{$L\mmn-\mmn1$}$}}}^{\scalebox{0.65}{$(L\mn -\mn 2\m)$}}\bigg)  \\
&= \rho^{\prime \prime}\bigg(\sum_{s_{L-2} = 1}^{\scriptscriptstyle N_{L-2}} \!\! a_{1, \m s_{\raisebox{-0.1ex}{$\scalebox{0.6}{$L\mmn-\mmn1$}$}}, \m s_{\raisebox{-0.1ex}{$\scalebox{0.6}{$L\mmn-\mmn2$}$}}}^{\scalebox{0.65}{$(L\mn -\mn 2\m)$}}\cdot \phi_{1, \m s_{\raisebox{-0.1ex}{$\scalebox{0.6}{$L\mmn-\mmn2$}$}}}^{\scalebox{0.65}{$(L\mn -\mn 2\m)$}}(\boldsymbol{x})+ b_{1, \m s_{\raisebox{-0.1ex}{$\scalebox{0.6}{$L\mmn-\mmn1$}$}}}^{\scalebox{0.65}{$(L\mn -\mn 1\m)$}}\bigg) \sum_{s_{L-2} = 1}^{\scriptscriptstyle N_{L-2}} \!\! a_{1, \m s_{\raisebox{-0.1ex}{$\scalebox{0.6}{$L\mmn-\mmn1$}$}}, \m s_{\raisebox{-0.1ex}{$\scalebox{0.6}{$L\mmn-\mmn2$}$}}}^{\scalebox{0.65}{$(L\mn -\mn 2\m)$}}\cdot \partial_{a_{\scalebox{0.52}{$1,1,1$}}^{\scalebox{0.5}{$(0)$}}}(\phi_{1, \m s_{\raisebox{-0.1ex}{$\scalebox{0.6}{$L\mmn-\mmn2$}$}}}^{\scalebox{0.65}{$(L\mn -\mn 2\m)$}}(\boldsymbol{x}))\\
& \leq \sum_{s_{L-2} = 1}^{\scriptscriptstyle N_{L-2}} \!\! a_{1, \m s_{\raisebox{-0.1ex}{$\scalebox{0.6}{$L\mmn-\mmn1$}$}}, \m s_{\raisebox{-0.1ex}{$\scalebox{0.6}{$L\mmn-\mmn2$}$}}}^{\scalebox{0.65}{$(L\mn -\mn 2\m)$}}\cdot \bigg(\prod_{i = 1}^{L-3} N_i\bigg) (B_{\boldsymbol{\theta}})^{L - 3} \leq \bigg(\prod_{i = 1}^{L-2} N_i\bigg) (B_{\boldsymbol{\theta}})^{L-2} \, .
\end{align*}
Then, it holds that
\begin{align*}
\partial_{a_{\scalebox{0.52}{$1,1,1$}}^{\scalebox{0.5}{$(0)$}}}{\partial_{a_{\scalebox{0.52}{$1,1,1$}}^{\scalebox{0.5}{$(0)$}}}  u_{\mathfrak{m}, \boldsymbol{\theta}}(\boldsymbol{x})} \leq M \cdot L \cdot  \bigg(\prod_{i = 1}^{L-1} N_i\bigg)^2 \cdot (B_{\boldsymbol{\theta}})^{2L-1} \, .
\end{align*}

Finally, we turn to the third order derivative. Initially, by (\ref{eq:par_x1_a111}), we have
$$\partial_{a_{\scalebox{0.52}{$1,1,1$}}^{\scalebox{0.5}{$(0)$}}}\partial_{a_{\scalebox{0.52}{$1,1,1$}}^{\scalebox{0.5}{$(0)$}}}\partial_{x_1}  u_{\mathfrak{m}, \boldsymbol{\theta}}(\boldsymbol{x}) = c_1 \cdot \partial_{a_{\scalebox{0.52}{$1,1,1$}}^{\scalebox{0.5}{$(0)$}}}  \partial_{x_1}{\partial_{a_{\scalebox{0.52}{$1,1,1$}}^{\scalebox{0.5}{$(0)$}}} \phi_{\boldsymbol{\theta}}^{1}(\boldsymbol{x})} \, .$$
Then, by fully expanding (\ref{eq:c_par_x1_a111}), it holds that
\begin{align*}
\partial_{x_1}&{\partial_{a_{\scalebox{0.52}{$1,1,1$}}^{\scalebox{0.5}{$(0)$}}} \phi_{\boldsymbol{\theta}}^{1}(\boldsymbol{x})} \\
& = \bigg[\sum_{s_{L-1} = 1}^{\scriptscriptstyle N_{L-1}} \!\! a_{1, \m 1, \m s_{\raisebox{-0.1ex}{$\scalebox{0.6}{$L\mmn-\mmn1$}$}}}^{\scalebox{0.65}{$(L\mn -\mn 1\m)$}} \cdot \bigg[\sum_{s_{L-2} = 1}^{\scriptscriptstyle N_{L-2}} \!\! a_{1, \m s_{\raisebox{-0.1ex}{$\scalebox{0.6}{$L\mmn-\mmn1$}$}}, \m s_{\raisebox{-0.1ex}{$\scalebox{0.6}{$L\mmn-\mmn2$}$}}}^{\scalebox{0.65}{$(L\mn -\mn 2\m)$}} \cdot \bigg[\sum_{s_{L-3} = 1}^{\scriptscriptstyle N_{L-3}} \!\! a_{1, \m s_{\raisebox{-0.1ex}{$\scalebox{0.6}{$L\mmn-\mmn2$}$}}, \m s_{\raisebox{-0.1ex}{$\scalebox{0.6}{$L\mmn-\mmn3$}$}}}^{\scalebox{0.65}{$(L\mn -\mn 3\m)$}} \cdots \sum_{s_{1} = 1}^{\scriptscriptstyle N_{1}}a_{1, s_2, s_1}^{\scalebox{0.6}{$(1\m)$}}\bigg]\bigg]\bigg] \cdot\\
& \qquad \bigg[\rho^{\prime}\bigg(\sum_{s_{L-2} = 1}^{\scriptscriptstyle N_{L-2}} \!\! a_{1, \m s_{\raisebox{-0.1ex}{$\scalebox{0.6}{$L\mmn-\mmn1$}$}}, \m s_{\raisebox{-0.1ex}{$\scalebox{0.6}{$L\mmn-\mmn2$}$}}}^{\scalebox{0.65}{$(L\mn -\mn 2\m)$}}\cdot \phi_{1, \m s_{\raisebox{-0.1ex}{$\scalebox{0.6}{$L\mmn-\mmn2$}$}}}^{\scalebox{0.65}{$(L\mn -\mn 2\m)$}}(\boldsymbol{x})+ b_{1, \m s_{\raisebox{-0.1ex}{$\scalebox{0.6}{$L\mmn-\mmn1$}$}}}^{\scalebox{0.65}{$(L\mn -\mn 2\m)$}}\bigg)\cdots\\
& \qquad \rho^{\prime}\bigg(\sum_{s_{1} = 1}^{\scriptscriptstyle N_{1}}a_{1, s_2, s_1}^{\scalebox{0.6}{$(1\m)$}}\cdot \phi_{1, s_1}^{\scalebox{0.6}{$(1\m)$}}(\boldsymbol{x}) + b_{1, s_{2}}^{\scalebox{0.6}{$(1\m)$}}\bigg) \cdot \rho^{\prime}\bigg(\sum_{s_{0} = 1}^{d}a_{1, s_1, s_0}^{\scalebox{0.6}{$(0\m)$}}\cdot x_{s_0} + b_{1, s_1}^{\scalebox{0.6}{$(0\m)$}}\bigg) \\
&\qquad + x_1  \partial_{x_1}\rho^{\prime}\bigg(\sum_{s_{L-2} = 1}^{\scriptscriptstyle N_{L-2}} \!\! a_{1, \m s_{\raisebox{-0.1ex}{$\scalebox{0.6}{$L\mmn-\mmn1$}$}}, \m s_{\raisebox{-0.1ex}{$\scalebox{0.6}{$L\mmn-\mmn2$}$}}}^{\scalebox{0.65}{$(L\mn -\mn 2\m)$}}\cdot \phi_{1, \m s_{\raisebox{-0.1ex}{$\scalebox{0.6}{$L\mmn-\mmn2$}$}}}^{\scalebox{0.65}{$(L\mn -\mn 2\m)$}}(\boldsymbol{x})+ b_{1, \m s_{\raisebox{-0.1ex}{$\scalebox{0.6}{$L\mmn-\mmn1$}$}}}^{\scalebox{0.65}{$(L\mn -\mn 2\m)$}}\bigg)\cdot\\
& \qquad \cdots \rho^{\prime}\bigg(\sum_{s_{1} = 1}^{\scriptscriptstyle N_{1}}a_{1, s_2, s_1}^{\scalebox{0.6}{$(1\m)$}}\cdot \phi_{1, s_1}^{\scalebox{0.6}{$(1\m)$}}(\boldsymbol{x}) + b_{1, s_{2}}^{\scalebox{0.6}{$(1\m)$}}\bigg) \cdot \rho^{\prime}\bigg(\sum_{s_{0} = 1}^{d}a_{1, s_1, s_0}^{\scalebox{0.6}{$(0\m)$}}\cdot x_{s_0} + b_{1, s_1}^{\scalebox{0.6}{$(0\m)$}}\bigg)\\
& \qquad +  \cdots  + x_1 \rho^{\prime}\bigg(\sum_{s_{L-2} = 1}^{\scriptscriptstyle N_{L-2}} \!\! a_{1, \m s_{\raisebox{-0.1ex}{$\scalebox{0.6}{$L\mmn-\mmn1$}$}}, \m s_{\raisebox{-0.1ex}{$\scalebox{0.6}{$L\mmn-\mmn2$}$}}}^{\scalebox{0.65}{$(L\mn -\mn 2\m)$}}\cdot \phi_{1, \m s_{\raisebox{-0.1ex}{$\scalebox{0.6}{$L\mmn-\mmn2$}$}}}^{\scalebox{0.65}{$(L\mn -\mn 2\m)$}}(\boldsymbol{x})+ b_{1, \m s_{\raisebox{-0.1ex}{$\scalebox{0.6}{$L\mmn-\mmn1$}$}}}^{\scalebox{0.65}{$(L\mn -\mn 2\m)$}}\bigg) \cdot\\
& \qquad \cdots \rho^{\prime}\bigg(\sum_{s_{1} = 1}^{\scriptscriptstyle N_{1}}a_{1, s_2, s_1}^{\scalebox{0.6}{$(1\m)$}}\cdot \phi_{1, s_1}^{\scalebox{0.6}{$(1\m)$}}(\boldsymbol{x}) + b_{1, s_{2}}^{\scalebox{0.6}{$(1\m)$}}\bigg) \cdot \partial_{x_1}\rho^{\prime}\bigg(\sum_{s_{0} = 1}^{d}a_{1, s_1, s_0}^{\scalebox{0.6}{$(0\m)$}}\cdot x_{s_0} + b_{1, s_1}^{\scalebox{0.6}{$(0\m)$}}\bigg)\bigg]\\
& = \bigg[\sum_{s_{L-1} = 1}^{\scriptscriptstyle N_{L-1}} \!\! a_{1, \m 1, \m s_{\raisebox{-0.1ex}{$\scalebox{0.6}{$L\mmn-\mmn1$}$}}}^{\scalebox{0.65}{$(L\mn -\mn 1\m)$}} \cdot \bigg[\sum_{s_{L-2} = 1}^{\scriptscriptstyle N_{L-2}} \!\! a_{1, \m s_{\raisebox{-0.1ex}{$\scalebox{0.6}{$L\mmn-\mmn1$}$}}, \m s_{\raisebox{-0.1ex}{$\scalebox{0.6}{$L\mmn-\mmn2$}$}}}^{\scalebox{0.65}{$(L\mn -\mn 2\m)$}} \cdot \bigg[\sum_{s_{L-3} = 1}^{\scriptscriptstyle N_{L-3}} \!\! a_{1, \m s_{\raisebox{-0.1ex}{$\scalebox{0.6}{$L\mmn-\mmn2$}$}}, \m s_{\raisebox{-0.1ex}{$\scalebox{0.6}{$L\mmn-\mmn3$}$}}}^{\scalebox{0.65}{$(L\mn -\mn 3\m)$}} \cdots \sum_{s_{1} = 1}^{\scriptscriptstyle N_{1}}a_{1, s_2, s_1}^{\scalebox{0.6}{$(1\m)$}}\bigg]\bigg]\bigg] \cdot \\
& \qquad \bigg[\rho^{\prime}\bigg(\sum_{s_{L-2} = 1}^{\scriptscriptstyle N_{L-2}} \!\! a_{1, \m s_{\raisebox{-0.1ex}{$\scalebox{0.6}{$L\mmn-\mmn1$}$}}, \m s_{\raisebox{-0.1ex}{$\scalebox{0.6}{$L\mmn-\mmn2$}$}}}^{\scalebox{0.65}{$(L\mn -\mn 2\m)$}}\cdot \phi_{1, \m s_{\raisebox{-0.1ex}{$\scalebox{0.6}{$L\mmn-\mmn2$}$}}}^{\scalebox{0.65}{$(L\mn -\mn 2\m)$}}(\boldsymbol{x})+ b_{1, \m s_{\raisebox{-0.1ex}{$\scalebox{0.6}{$L\mmn-\mmn1$}$}}}^{\scalebox{0.65}{$(L\mn -\mn 2\m)$}}\bigg)\cdots\\
& \qquad \rho^{\prime}\bigg(\sum_{s_{1} = 1}^{\scriptscriptstyle N_{1}}a_{1, s_2, s_1}^{\scalebox{0.6}{$(1\m)$}}\cdot \phi_{1, s_1}^{\scalebox{0.6}{$(1\m)$}}(\boldsymbol{x}) + b_{1, s_{2}}^{\scalebox{0.6}{$(1\m)$}}\bigg) \cdot \rho^{\prime}\bigg(\sum_{s_{0} = 1}^{d}a_{1, s_1, s_0}^{\scalebox{0.6}{$(0\m)$}}\cdot x_{s_0} + b_{1, s_1}^{\scalebox{0.6}{$(0\m)$}}\bigg) \\
&\qquad + x_1  \rho^{\prime \prime}\bigg(\sum_{s_{L-2} = 1}^{\scriptscriptstyle N_{L-2}} \!\! a_{1, \m s_{\raisebox{-0.1ex}{$\scalebox{0.6}{$L\mmn-\mmn1$}$}}, \m s_{\raisebox{-0.1ex}{$\scalebox{0.6}{$L\mmn-\mmn2$}$}}}^{\scalebox{0.65}{$(L\mn -\mn 2\m)$}}\cdot \phi_{1, \m s_{\raisebox{-0.1ex}{$\scalebox{0.6}{$L\mmn-\mmn2$}$}}}^{\scalebox{0.65}{$(L\mn -\mn 2\m)$}}(\boldsymbol{x})+ b_{1, \m s_{\raisebox{-0.1ex}{$\scalebox{0.6}{$L\mmn-\mmn1$}$}}}^{\scalebox{0.65}{$(L\mn -\mn 2\m)$}}\bigg) \sum_{s_{L-2} = 1}^{\scriptscriptstyle N_{L-2}} \!\! a_{1, \m s_{\raisebox{-0.1ex}{$\scalebox{0.6}{$L\mmn-\mmn1$}$}}, \m s_{\raisebox{-0.1ex}{$\scalebox{0.6}{$L\mmn-\mmn2$}$}}}^{\scalebox{0.65}{$(L\mn -\mn 2\m)$}}\cdot \partial_{x_1}\phi_{1, \m s_{\raisebox{-0.1ex}{$\scalebox{0.6}{$L\mmn-\mmn2$}$}}}^{\scalebox{0.65}{$(L\mn -\mn 2\m)$}}(\boldsymbol{x}) \\
&\qquad \cdot \rho^{\prime}\bigg(\sum_{s_{L-3} = 1}^{\scriptscriptstyle N_{L-3}} \!\! a_{1, \m s_{\raisebox{-0.1ex}{$\scalebox{0.6}{$L\mmn-\mmn2$}$}}, \m s_{\raisebox{-0.1ex}{$\scalebox{0.6}{$L\mmn-\mmn3$}$}}}^{\scalebox{0.65}{$(L\mn -\mn 3\m)$}}\cdot \phi_{1, \m s_{\raisebox{-0.1ex}{$\scalebox{0.6}{$L\mmn-\mmn3$}$}}}^{\scalebox{0.65}{$(L\mn -\mn 3\m)$}}(\boldsymbol{x})+ b_{1, \m s_{\raisebox{-0.1ex}{$\scalebox{0.6}{$L\mmn-\mmn2$}$}}}^{\scalebox{0.65}{$(L\mn -\mn 3\m)$}}\bigg) \cdots \rho^{\prime}\bigg(\sum_{s_{0} = 1}^{d}a_{1, s_1, s_0}^{\scalebox{0.6}{$(0\m)$}}\cdot x_{s_0} + b_{1, s_1}^{\scalebox{0.6}{$(0\m)$}}\bigg)\\
& \qquad +  \cdots + x_1  \rho^{\prime}\bigg(\sum_{s_{L-2} = 1}^{\scriptscriptstyle N_{L-2}} \!\! a_{1, \m s_{\raisebox{-0.1ex}{$\scalebox{0.6}{$L\mmn-\mmn1$}$}}, \m s_{\raisebox{-0.1ex}{$\scalebox{0.6}{$L\mmn-\mmn2$}$}}}^{\scalebox{0.65}{$(L\mn -\mn 2\m)$}}\cdot \phi_{1, \m s_{\raisebox{-0.1ex}{$\scalebox{0.6}{$L\mmn-\mmn2$}$}}}^{\scalebox{0.65}{$(L\mn -\mn 2\m)$}}(\boldsymbol{x})+ b_{1, \m s_{\raisebox{-0.1ex}{$\scalebox{0.6}{$L\mmn-\mmn1$}$}}}^{\scalebox{0.65}{$(L\mn -\mn 2\m)$}}\bigg) \cdot \\
& \qquad \cdots \rho^{\prime}\bigg(\sum_{s_{1} = 1}^{\scriptscriptstyle N_{1}}a_{1, s_2, s_1}^{\scalebox{0.6}{$(1\m)$}}\cdot \phi_{1, s_1}^{\scalebox{0.6}{$(1\m)$}}(\boldsymbol{x}) + b_{1, s_{2}}^{\scalebox{0.6}{$(1\m)$}}\bigg) \cdot \partial_{x_1}\rho^{\prime}\bigg(\sum_{s_{0} = 1}^{d}a_{1, s_1, s_0}^{\scalebox{0.6}{$(0\m)$}}\cdot x_{s_0} + b_{1, s_1}^{\scalebox{0.6}{$(0\m)$}}\bigg)\bigg] \, .
\end{align*}
Further taking derivative w.r.t. $a_{\scriptscriptstyle 1,1,1}^{\scriptscriptstyle(0)}$, we have
\begin{align*}
&\partial_{a_{\scalebox{0.52}{$1,1,1$}}^{\scalebox{0.5}{$(0)$}}}   \partial_{x_1}{\partial_{a_{\scalebox{0.52}{$1,1,1$}}^{\scalebox{0.5}{$(0)$}}} \phi_{\boldsymbol{\theta}}^{1}(\boldsymbol{x})} \\
&~~ = \bigg[\sum_{s_{L-1} = 1}^{\scriptscriptstyle N_{L-1}} \!\! a_{1, \m 1, \m s_{\raisebox{-0.1ex}{$\scalebox{0.6}{$L\mmn-\mmn1$}$}}}^{\scalebox{0.65}{$(L\mn -\mn 1\m)$}} \cdot \bigg[\sum_{s_{L-2} = 1}^{\scriptscriptstyle N_{L-2}} \!\! a_{1, \m s_{\raisebox{-0.1ex}{$\scalebox{0.6}{$L\mmn-\mmn1$}$}}, \m s_{\raisebox{-0.1ex}{$\scalebox{0.6}{$L\mmn-\mmn2$}$}}}^{\scalebox{0.65}{$(L\mn -\mn 2\m)$}} \cdot \bigg[\sum_{s_{L-3} = 1}^{\scriptscriptstyle N_{L-3}} \!\! a_{1, \m s_{\raisebox{-0.1ex}{$\scalebox{0.6}{$L\mmn-\mmn2$}$}}, \m s_{\raisebox{-0.1ex}{$\scalebox{0.6}{$L\mmn-\mmn3$}$}}}^{\scalebox{0.65}{$(L\mn -\mn 3\m)$}} \cdots \sum_{s_{1} = 1}^{\scriptscriptstyle N_{1}}a_{1, s_2, s_1}^{\scalebox{0.6}{$(1\m)$}}\bigg]\bigg]\bigg] \cdot \\
& \qquad \bigg[\partial_{a_{\scalebox{0.52}{$1,1,1$}}^{\scalebox{0.5}{$(0)$}}}\bigg[\rho^{\prime}\bigg(\sum_{s_{L-2} = 1}^{\scriptscriptstyle N_{L-2}} \!\! a_{1, \m s_{\raisebox{-0.1ex}{$\scalebox{0.6}{$L\mmn-\mmn1$}$}}, \m s_{\raisebox{-0.1ex}{$\scalebox{0.6}{$L\mmn-\mmn2$}$}}}^{\scalebox{0.65}{$(L\mn -\mn 2\m)$}}\cdot \phi_{1, \m s_{\raisebox{-0.1ex}{$\scalebox{0.6}{$L\mmn-\mmn2$}$}}}^{\scalebox{0.65}{$(L\mn -\mn 2\m)$}}(\boldsymbol{x})+ b_{1, \m s_{\raisebox{-0.1ex}{$\scalebox{0.6}{$L\mmn-\mmn1$}$}}}^{\scalebox{0.65}{$(L\mn -\mn 2\m)$}}\bigg)\cdots\\
& \qquad \rho^{\prime}\bigg(\sum_{s_{1} = 1}^{\scriptscriptstyle N_{1}}a_{1, s_2, s_1}^{\scalebox{0.6}{$(1\m)$}}\cdot \phi_{1, s_1}^{\scalebox{0.6}{$(1\m)$}}(\boldsymbol{x}) + b_{1, s_{2}}^{\scalebox{0.6}{$(1\m)$}}\bigg) \cdot \rho^{\prime}\bigg(\sum_{s_{0} = 1}^{d}a_{1, s_1, s_0}^{\scalebox{0.6}{$(0\m)$}}\cdot x_{s_0} + b_{1, s_1}^{\scalebox{0.6}{$(0\m)$}}\bigg)\bigg] \\
&\qquad + x_1 \partial_{a_{\scalebox{0.52}{$1,1,1$}}^{\scalebox{0.5}{$(0)$}}}\bigg[ \rho^{\prime \prime}\bigg(\sum_{s_{L-2} = 1}^{\scriptscriptstyle N_{L-2}} \!\! a_{1, \m s_{\raisebox{-0.1ex}{$\scalebox{0.6}{$L\mmn-\mmn1$}$}}, \m s_{\raisebox{-0.1ex}{$\scalebox{0.6}{$L\mmn-\mmn2$}$}}}^{\scalebox{0.65}{$(L\mn -\mn 2\m)$}}\cdot \phi_{1, \m s_{\raisebox{-0.1ex}{$\scalebox{0.6}{$L\mmn-\mmn2$}$}}}^{\scalebox{0.65}{$(L\mn -\mn 2\m)$}}(\boldsymbol{x})+ b_{1, \m s_{\raisebox{-0.1ex}{$\scalebox{0.6}{$L\mmn-\mmn1$}$}}}^{\scalebox{0.65}{$(L\mn -\mn 2\m)$}}\bigg) \! \cdot \!\! \sum_{s_{L-2} = 1}^{\scriptscriptstyle N_{L-2}} \!\! a_{1, \m s_{\raisebox{-0.1ex}{$\scalebox{0.6}{$L\mmn-\mmn1$}$}}, \m s_{\raisebox{-0.1ex}{$\scalebox{0.6}{$L\mmn-\mmn2$}$}}}^{\scalebox{0.65}{$(L\mn -\mn 2\m)$}}\cdot \partial_{x_1}\phi_{1, \m s_{\raisebox{-0.1ex}{$\scalebox{0.6}{$L\mmn-\mmn2$}$}}}^{\scalebox{0.65}{$(L\mn -\mn 2\m)$}}(\boldsymbol{x}) \\
&\qquad \cdot \rho^{\prime}\bigg(\sum_{s_{L-3} = 1}^{\scriptscriptstyle N_{L-3}} \!\! a_{1, \m s_{\raisebox{-0.1ex}{$\scalebox{0.6}{$L\mmn-\mmn2$}$}}, \m s_{\raisebox{-0.1ex}{$\scalebox{0.6}{$L\mmn-\mmn3$}$}}}^{\scalebox{0.65}{$(L\mn -\mn 3\m)$}}\cdot \phi_{1, \m s_{\raisebox{-0.1ex}{$\scalebox{0.6}{$L\mmn-\mmn3$}$}}}^{\scalebox{0.65}{$(L\mn -\mn 3\m)$}}(\boldsymbol{x})+ b_{1, \m s_{\raisebox{-0.1ex}{$\scalebox{0.6}{$L\mmn-\mmn2$}$}}}^{\scalebox{0.65}{$(L\mn -\mn 3\m)$}}\bigg) \cdots \rho^{\prime}\bigg(\sum_{s_{0} = 1}^{d}a_{1, s_1, s_0}^{\scalebox{0.6}{$(0\m)$}}\cdot x_{s_0} + b_{1, s_1}^{\scalebox{0.6}{$(0\m)$}}\bigg)\bigg]\\
& \qquad +  \cdots + x_1  \partial_{a_{\scalebox{0.52}{$1,1,1$}}^{\scalebox{0.5}{$(0)$}}} \bigg[\rho^{\prime}\bigg(\sum_{s_{L-2} = 1}^{\scriptscriptstyle N_{L-2}} \!\! a_{1, \m s_{\raisebox{-0.1ex}{$\scalebox{0.6}{$L\mmn-\mmn1$}$}}, \m s_{\raisebox{-0.1ex}{$\scalebox{0.6}{$L\mmn-\mmn2$}$}}}^{\scalebox{0.65}{$(L\mn -\mn 2\m)$}}\cdot \phi_{1, \m s_{\raisebox{-0.1ex}{$\scalebox{0.6}{$L\mmn-\mmn2$}$}}}^{\scalebox{0.65}{$(L\mn -\mn 2\m)$}}(\boldsymbol{x})+ b_{1, \m s_{\raisebox{-0.1ex}{$\scalebox{0.6}{$L\mmn-\mmn1$}$}}}^{\scalebox{0.65}{$(L\mn -\mn 2\m)$}}\bigg)\\
& \qquad \cdots \rho^{\prime}\bigg(\sum_{s_{1} = 1}^{\scriptscriptstyle N_{1}}a_{1, s_2, s_1}^{\scalebox{0.6}{$(1\m)$}}\cdot \phi_{1, s_1}^{\scalebox{0.6}{$(1\m)$}}(\boldsymbol{x}) + b_{1, s_{2}}^{\scalebox{0.6}{$(1\m)$}}\bigg) \cdot \partial_{x_1}\rho^{\prime}\bigg(\sum_{s_{0} = 1}^{d}a_{1, s_1, s_0}^{\scalebox{0.6}{$(0\m)$}}\cdot x_{s_0} + b_{1, s_1}^{\scalebox{0.6}{$(0\m)$}}\bigg)\bigg]\bigg] \, .
& 
\end{align*}
The calculation of the typical item tells us
\begin{align*}
&\partial_{a_{\scalebox{0.52}{$1,1,1$}}^{\scalebox{0.5}{$(0)$}}}\bigg[\rho^{\prime \prime}\bigg(\sum_{s_{L-2} = 1}^{\scriptscriptstyle N_{L-2}} \!\! a_{1, \m s_{\raisebox{-0.1ex}{$\scalebox{0.6}{$L\mmn-\mmn1$}$}}, \m s_{\raisebox{-0.1ex}{$\scalebox{0.6}{$L\mmn-\mmn2$}$}}}^{\scalebox{0.65}{$(L\mn -\mn 2\m)$}}\cdot \phi_{1, \m s_{\raisebox{-0.1ex}{$\scalebox{0.6}{$L\mmn-\mmn2$}$}}}^{\scalebox{0.65}{$(L\mn -\mn 2\m)$}}(\boldsymbol{x})+ b_{1, \m s_{\raisebox{-0.1ex}{$\scalebox{0.6}{$L\mmn-\mmn1$}$}}}^{\scalebox{0.65}{$(L\mn -\mn 2\m)$}}\bigg) \sum_{s_{L-2} = 1}^{\scriptscriptstyle N_{L-2}} \!\! a_{1, \m s_{\raisebox{-0.1ex}{$\scalebox{0.6}{$L\mmn-\mmn1$}$}}, \m s_{\raisebox{-0.1ex}{$\scalebox{0.6}{$L\mmn-\mmn2$}$}}}^{\scalebox{0.65}{$(L\mn -\mn 2\m)$}}\cdot \partial_{x_1}\phi_{1, \m s_{\raisebox{-0.1ex}{$\scalebox{0.6}{$L\mmn-\mmn2$}$}}}^{\scalebox{0.65}{$(L\mn -\mn 2\m)$}}(\boldsymbol{x}) \\
&\qquad \quad \cdot \rho^{\prime}\bigg(\sum_{s_{L-3} = 1}^{\scriptscriptstyle N_{L-3}} \!\! a_{1, \m s_{\raisebox{-0.1ex}{$\scalebox{0.6}{$L\mmn-\mmn2$}$}}, \m s_{\raisebox{-0.1ex}{$\scalebox{0.6}{$L\mmn-\mmn3$}$}}}^{\scriptscriptstyle (L - 3)}\cdot \phi_{1, \m s_{\raisebox{-0.1ex}{$\scalebox{0.6}{$L\mmn-\mmn3$}$}}}^{\scalebox{0.65}{$(L\mn -\mn 3\m)$}}(\boldsymbol{x})+ b_{1, \m s_{\raisebox{-0.1ex}{$\scalebox{0.6}{$L\mmn-\mmn2$}$}}}^{\scalebox{0.65}{$(L\mn -\mn 3\m)$}}\bigg) \cdots \rho^{\prime}\bigg(\sum_{s_{0} = 1}^{d}a_{1, s_1, s_0}^{\scalebox{0.6}{$(0\m)$}}\cdot x_{s_0} + b_{1, s_1}^{\scalebox{0.6}{$(0\m)$}}\bigg)\bigg]\\
& = \rho^{\prime \prime \prime}\bigg(\sum_{s_{L-2} = 1}^{\scriptscriptstyle N_{L-2}} \!\! a_{1, \m s_{\raisebox{-0.1ex}{$\scalebox{0.6}{$L\mmn-\mmn1$}$}}, \m s_{\raisebox{-0.1ex}{$\scalebox{0.6}{$L\mmn-\mmn2$}$}}}^{\scalebox{0.65}{$(L\mn -\mn 2\m)$}}\cdot \phi_{1, \m s_{\raisebox{-0.1ex}{$\scalebox{0.6}{$L\mmn-\mmn2$}$}}}^{\scalebox{0.65}{$(L\mn -\mn 2\m)$}}(\boldsymbol{x})+ b_{1, \m s_{\raisebox{-0.1ex}{$\scalebox{0.6}{$L\mmn-\mmn1$}$}}}^{\scalebox{0.65}{$(L\mn -\mn 2\m)$}}\bigg)\\
&\quad \cdot\sum_{s_{L-2} = 1}^{\scriptscriptstyle N_{L-2}} \!\! a_{1, \m s_{\raisebox{-0.1ex}{$\scalebox{0.6}{$L\mmn-\mmn1$}$}}, \m s_{\raisebox{-0.1ex}{$\scalebox{0.6}{$L\mmn-\mmn2$}$}}}^{\scalebox{0.65}{$(L\mn -\mn 2\m)$}}\cdot \partial_{a_{\scalebox{0.52}{$1,1,1$}}^{\scalebox{0.5}{$(0)$}}}\phi_{1, \m s_{\raisebox{-0.1ex}{$\scalebox{0.6}{$L\mmn-\mmn2$}$}}}^{\scalebox{0.65}{$(L\mn -\mn 2\m)$}}(\boldsymbol{x}) \sum_{s_{L-2} = 1}^{\scriptscriptstyle N_{L-2}} \!\! a_{1, \m s_{\raisebox{-0.1ex}{$\scalebox{0.6}{$L\mmn-\mmn1$}$}}, \m s_{\raisebox{-0.1ex}{$\scalebox{0.6}{$L\mmn-\mmn2$}$}}}^{\scalebox{0.65}{$(L\mn -\mn 2\m)$}}\cdot \partial_{x_1}\phi_{1, \m s_{\raisebox{-0.1ex}{$\scalebox{0.6}{$L\mmn-\mmn2$}$}}}^{\scalebox{0.65}{$(L\mn -\mn 2\m)$}}(\boldsymbol{x})\\
&\quad\cdot \rho^{\prime}\bigg(\sum_{s_{L-3} = 1}^{\scriptscriptstyle N_{L-3}} \!\! a_{1, \m s_{\raisebox{-0.1ex}{$\scalebox{0.6}{$L\mmn-\mmn2$}$}}, \m s_{\raisebox{-0.1ex}{$\scalebox{0.6}{$L\mmn-\mmn3$}$}}}^{\scalebox{0.65}{$(L\mn -\mn 3\m)$}}\cdot \phi_{1, \m s_{\raisebox{-0.1ex}{$\scalebox{0.6}{$L\mmn-\mmn3$}$}}}^{\scalebox{0.65}{$(L\mn -\mn 3\m)$}}(\boldsymbol{x})+ b_{1, \m s_{\raisebox{-0.1ex}{$\scalebox{0.6}{$L\mmn-\mmn2$}$}}}^{\scalebox{0.65}{$(L\mn -\mn 3\m)$}}\bigg) \cdots \rho^{\prime}\bigg(\sum_{s_{0} = 1}^{d}a_{1, s_1, s_0}^{\scalebox{0.6}{$(0\m)$}}\cdot x_{s_0} + b_{1, s_1}^{\scalebox{0.6}{$(0\m)$}}\bigg)\\
&\quad + \rho^{\prime \prime}\bigg(\sum_{s_{L-2} = 1}^{\scriptscriptstyle N_{L-2}} \!\! a_{1, \m s_{\raisebox{-0.1ex}{$\scalebox{0.6}{$L\mmn-\mmn1$}$}}, \m s_{\raisebox{-0.1ex}{$\scalebox{0.6}{$L\mmn-\mmn2$}$}}}^{\scalebox{0.65}{$(L\mn -\mn 2\m)$}}\cdot \phi_{1, \m s_{\raisebox{-0.1ex}{$\scalebox{0.6}{$L\mmn-\mmn2$}$}}}^{\scalebox{0.65}{$(L\mn -\mn 2\m)$}}(\boldsymbol{x})+ b_{1, \m s_{\raisebox{-0.1ex}{$\scalebox{0.6}{$L\mmn-\mmn1$}$}}}^{\scalebox{0.65}{$(L\mn -\mn 2\m)$}}\bigg) \sum_{s_{L-2} = 1}^{\scriptscriptstyle N_{L-2}} \!\! a_{1, \m s_{\raisebox{-0.1ex}{$\scalebox{0.6}{$L\mmn-\mmn1$}$}}, \m s_{\raisebox{-0.1ex}{$\scalebox{0.6}{$L\mmn-\mmn2$}$}}}^{\scalebox{0.65}{$(L\mn -\mn 2\m)$}}\cdot \partial_{a_{\scalebox{0.52}{$1,1,1$}}^{\scalebox{0.5}{$(0)$}}}\partial_{x_1}\phi_{1, \m s_{\raisebox{-0.1ex}{$\scalebox{0.6}{$L\mmn-\mmn2$}$}}}^{\scalebox{0.65}{$(L\mn -\mn 2\m)$}}(\boldsymbol{x}) \\
&\quad \cdot \rho^{\prime}\bigg(\sum_{s_{L-3} = 1}^{\scriptscriptstyle N_{L-3}} \!\! a_{1, \m s_{\raisebox{-0.1ex}{$\scalebox{0.6}{$L\mmn-\mmn2$}$}}, \m s_{\raisebox{-0.1ex}{$\scalebox{0.6}{$L\mmn-\mmn3$}$}}}^{\scalebox{0.65}{$(L\mn -\mn 3\m)$}}\cdot \phi_{1, \m s_{\raisebox{-0.1ex}{$\scalebox{0.6}{$L\mmn-\mmn3$}$}}}^{\scalebox{0.65}{$(L\mn -\mn 3\m)$}}(\boldsymbol{x})+ b_{1, \m s_{\raisebox{-0.1ex}{$\scalebox{0.6}{$L\mmn-\mmn2$}$}}}^{\scalebox{0.65}{$(L\mn -\mn 3\m)$}}\bigg) \cdots \rho^{\prime}\bigg(\sum_{s_{0} = 1}^{d}a_{1, s_1, s_0}^{\scalebox{0.6}{$(0\m)$}}\cdot x_{s_0} + b_{1, s_1}^{\scalebox{0.6}{$(0\m)$}}\bigg)\\
& \quad +  \cdots  + \rho^{\prime \prime}\bigg(\sum_{s_{L-2} = 1}^{\scriptscriptstyle N_{L-2}} \!\! a_{1, \m s_{\raisebox{-0.1ex}{$\scalebox{0.6}{$L\mmn-\mmn1$}$}}, \m s_{\raisebox{-0.1ex}{$\scalebox{0.6}{$L\mmn-\mmn2$}$}}}^{\scalebox{0.65}{$(L\mn -\mn 2\m)$}}\cdot \phi_{1, \m s_{\raisebox{-0.1ex}{$\scalebox{0.6}{$L\mmn-\mmn2$}$}}}^{\scalebox{0.65}{$(L\mn -\mn 2\m)$}}(\boldsymbol{x})+ b_{1, \m s_{\raisebox{-0.1ex}{$\scalebox{0.6}{$L\mmn-\mmn1$}$}}}^{\scalebox{0.65}{$(L\mn -\mn 2\m)$}}\bigg) \sum_{s_{L-2} = 1}^{\scriptscriptstyle N_{L-2}} \!\! a_{1, \m s_{\raisebox{-0.1ex}{$\scalebox{0.6}{$L\mmn-\mmn1$}$}}, \m s_{\raisebox{-0.1ex}{$\scalebox{0.6}{$L\mmn-\mmn2$}$}}}^{\scalebox{0.65}{$(L\mn -\mn 2\m)$}}\cdot \partial_{x_1}\phi_{1, \m s_{\raisebox{-0.1ex}{$\scalebox{0.6}{$L\mmn-\mmn2$}$}}}^{\scalebox{0.65}{$(L\mn -\mn 2\m)$}}(\boldsymbol{x}) \\
&\quad \cdot \rho^{\prime}\bigg(\sum_{s_{L-3} = 1}^{\scriptscriptstyle N_{L-3}} \!\! a_{1, \m s_{\raisebox{-0.1ex}{$\scalebox{0.6}{$L\mmn-\mmn2$}$}}, \m s_{\raisebox{-0.1ex}{$\scalebox{0.6}{$L\mmn-\mmn3$}$}}}^{\scalebox{0.65}{$(L\mn -\mn 3\m)$}}\cdot \phi_{1, \m s_{\raisebox{-0.1ex}{$\scalebox{0.6}{$L\mmn-\mmn3$}$}}}^{\scalebox{0.65}{$(L\mn -\mn 3\m)$}}(\boldsymbol{x})+ b_{1, \m s_{\raisebox{-0.1ex}{$\scalebox{0.6}{$L\mmn-\mmn2$}$}}}^{\scalebox{0.65}{$(L\mn -\mn 3\m)$}}\bigg) \cdots \rho^{\prime\prime}\bigg(\sum_{s_{0} = 1}^{d}a_{1, s_1, s_0}^{\scalebox{0.6}{$(0\m)$}}\cdot x_{s_0} + b_{1, s_1}^{\scalebox{0.6}{$(0\m)$}}\bigg)\cdot x_1\\
% &\leq 2 (N_{L-2} B_{\boldsymbol{\theta}})^2 \bigg(\prod_{i = 1}^{L-3} N_i\bigg)^2 (B_{\boldsymbol{\theta}})^{2L - 2} + N_{L-2} B_{\boldsymbol{\theta}}  L  \bigg(\prod_{i = 1}^{L-3} N_i\bigg)^2 \cdot (B_{\boldsymbol{\theta}})^{2L-2} +  L \cdot (N_{L-2} B_{\boldsymbol{\theta}})^2 \bigg(\prod_{i = 1}^{L-3} N_i\bigg)^2 (B_{\boldsymbol{\theta}})^{2L-2}\\
&\leq (2L +2)\bigg(\prod_{i = 1}^{L-1} N_i\bigg)^2 (B_{\boldsymbol{\theta}})^{2L} \, .
\end{align*}
Therefore, we have
\begin{align*}
&\partial_{a_{\scalebox{0.52}{$1,1,1$}}^{\scalebox{0.5}{$(0)$}}}\partial_{a_{\scalebox{0.52}{$1,1,1$}}^{\scalebox{0.5}{$(0)$}}}\partial_{x_1}  u_{\mathfrak{m}, \boldsymbol{\theta}}(\boldsymbol{x}) \\
&~~\leq c_1 \cdot \sum_{s_{L-1} = 1}^{\scriptscriptstyle N_{L-1}}  \sum_{s_{L-2} = 1}^{\scriptscriptstyle N_{L-2}} \!\!  \cdots \sum_{s_{1} = 1}^{\scriptscriptstyle N_{1}} a_{k, \m 1, \m s_{\raisebox{-0.1ex}{$\scalebox{0.6}{$L\mmn-\mmn1$}$}}}^{\scalebox{0.65}{$(L\mn -\mn 1\m)$}} a_{k, \m s_{\raisebox{-0.1ex}{$\scalebox{0.6}{$L\mmn-\mmn1$}$}}, \m s_{\raisebox{-0.1ex}{$\scalebox{0.6}{$L\mmn-\mmn2$}$}}}^{\scalebox{0.65}{$(L\mn -\mn 2\m)$}} \cdots a_{k, s_{2}, s_{1}}^{\scalebox{0.6}{$(1\m)$}}\cdot L \cdot (2L +2)\bigg(\prod_{i = 1}^{L-1} N_i\bigg)^2 (B_{\boldsymbol{\theta}})^{2L}\\
&~~\leq M\cdot L \cdot (2L +2) \bigg(\prod_{i = 1}^{L-1} N_i\bigg)^3 (B_{\boldsymbol{\theta}})^{3L-1} \, .
\end{align*}
Combining above estimations together, we obtain the total upper bound
\begin{align*}
&\Big|\partial_{a_{\scalebox{0.52}{$1,1,1$}}^{\scalebox{0.5}{$(0)$}}}\partial_{a_{\scalebox{0.52}{$1,1,1$}}^{\scalebox{0.5}{$(0)$}}} \widehat{\mathcal{L}}(u_{\mathfrak{m}, \boldsymbol{\theta}}) \Big| \\
&~~~= \bigg|\partial_{a_{\scalebox{0.52}{$1,1,1$}}^{\scalebox{0.5}{$(0)$}}}\partial_{a_{\scalebox{0.52}{$1,1,1$}}^{\scalebox{0.5}{$(0)$}}}\bigg[\frac{1}{2} \sum_{i=1}^{d} |{\partial_{x_i}} u_{\mathfrak{m}, \boldsymbol{\theta}}(X)|^2 + \frac{1}{2} \omega(X) u_{\mathfrak{m}, \boldsymbol{\theta}}^2(X) - u_{\mathfrak{m}, \boldsymbol{\theta}}(X)h(X) - u_{\mathfrak{m}, \boldsymbol{\theta}}(Y)g(Y)\bigg]\bigg|\\
&~~~  \leq C(B_0, L)\cdot M^2 \cdot \bigg(\prod_{i = 1}^{L-1} N_i\bigg)^4 (B_{\boldsymbol{\theta}})^{4L}\, .
\end{align*}

It can be readily observed that the upper bound obtained by differentiating the neural network output w.r.t. $a^{\scriptscriptstyle (0)}_{ 1,1,1}$ also controls the upper bounds obtained by differentiating the output w.r.t. general $a^{\scriptscriptstyle (\ell)}_{ k,i,j}$. Since there are at most $(\mathfrak{n}_L + 1)\cdot \mathfrak{m}$ weights in $u_{\mathfrak{m}, \boldsymbol{\theta}}$, we have
$$\big \|\nabla^2_{\boldsymbol{\theta}^{\mathfrak{m}}_{\rm total}} \widehat{\mathcal{L}}(u_{\mathfrak{m}, \boldsymbol{\theta}})\big \|_{\rm F} \leq C(B_0, L)\cdot (\mathfrak{n}_L + 1)\cdot \mathfrak{m} \cdot M^2 \cdot \bigg(\prod_{i = 1}^{L-1} N_i\bigg)^4 (B_{\boldsymbol{\theta}})^{4L} \, .
$$

\subsubsection{Proof of Lemma \ref{lip_lem}}

For $i=1, 2$, when $u_{\mathfrak{m}, \boldsymbol{\theta}}$ is parameterized with $(\boldsymbol{\theta}^{\mathfrak{m}, i}_{ \rm in}, \boldsymbol{\theta}^{\mathfrak{m}}_{ \rm out})$, we denote it as $u_{\mathfrak{m}, i}=\sum_{k=1}^{\mathfrak{m}}c_k \cdot \phi^{k}_{\boldsymbol{\theta}, i}$. Then, it holds that
\begin{align*}
&\big|\widehat{F}\big(\boldsymbol{\theta}^{\mathfrak{m}, 1}_{ \rm in}, \boldsymbol{\theta}^{\mathfrak{m}}_{ \rm out})\big)-\widehat{F}\big(\boldsymbol{\theta}^{\mathfrak{m}, 2}_{ \rm in}, \boldsymbol{\theta}^{\mathfrak{m}}_{ \rm out})\big)\big| \\
&~~\leq C(\Omega)\bigg\{\Big|\frac{1}{N_{\rm in}}\sum_{p=1}^{N_{\rm in}}\frac{\|\nabla u_{\mathfrak{m}, 1}(X_p)\|_2^2-\|\nabla u_{\mathfrak{m}, 2}(X_p)\|_2^2}{2}\Big|+ \Big|\frac{1}{N_{\rm in}}\sum_{p=1}^{N_{\rm in}}\frac{ \omega(X_p) \big[u^2_{\mathfrak{m}, 1}(X_p)- u^2_{\mathfrak{m}, 2}(X_p)\big]}{2}\Big|\\
&~~~~~~+ \Big|\frac{1}{N_{\rm in}}\sum_{p=1}^{N_{\rm in}}[u_{\mathfrak{m}, 1}(X_p)h(X_p)- u_{\mathfrak{m}, 2}(X_p)h(X_p)]\Big|+ \Big|\frac{1}{N_{b}}\sum_{p=1}^{N_{b}}[u_{\mathfrak{m}, 1}(Y_p)g(Y_p)- u_{\mathfrak{m}, 2}(Y_p)g(Y_p)]\Big|\bigg\} \, ,
\end{align*}
where $C(\Omega)=\max\{|\Omega|, |\partial \Omega|\}$.
By Lemma \ref{Lip of Fi}, we have 
$$
|\phi_{\boldsymbol{\theta}}(\boldsymbol{x})|\leq (W+1)B_{\boldsymbol{\theta}} \, , \qquad
|\partial_{x_m}\phi_{\boldsymbol{\theta}}(\boldsymbol{x})|\leq W^{L-1}B_{\boldsymbol{\theta}}^L \, .
$$
Also, it holds that
\begin{align*}
	&~~~~~\big|\phi_{\boldsymbol{\theta}}(\boldsymbol{x})-\phi_{\tilde{\boldsymbol{\theta}}}(\boldsymbol{x})\big|
	\leq 2W^{L}\sqrt{L}\cdot B_{\boldsymbol{\theta}}^{L-1} \big\|\boldsymbol{\theta}-\tilde{\boldsymbol{\theta}}\big\|_2\, ,\quad \forall \boldsymbol{x}\in\Omega\,,  \\
	&\big|\partial_{x_m}\phi_{\boldsymbol{\theta}}(\boldsymbol{x})-\partial_{x_m}\phi_{\tilde{\boldsymbol{\theta}}}(\boldsymbol{x})\big|
	\leq 2W^{2L-1}\sqrt{L}(L+1)\cdot B_{\boldsymbol{\theta}}^{2L}\big\|\boldsymbol{\theta}-\tilde{\boldsymbol{\theta}}\big\|_2\, ,\quad \forall \boldsymbol{x}\in\Omega\, .
\end{align*}
Then, we have the following estimations. Firstly, it holds that

\begin{align*}
&\bigg|\frac{1}{N_{\rm in}}\sum_{p=1}^{N_{\rm in}}\frac{\|\nabla u_{\mathfrak{m}, 1}(X_p)\|_2^2-\|\nabla u_{\mathfrak{m}, 2}(X_p)\|_2^2}{2}\bigg| \\
&~~~= \frac{1}{2N_{\rm in}}\sum_{p=1}^{N_{\rm in}}\bigg(\Big\|\sum_{k=1}^{\mathfrak{m}} c_{k} \cdot \nabla \phi_{\boldsymbol{\theta}, 1}^{k} (X_p)\Big\|_{2}^2-\Big\|\sum_{k=1}^{\mathfrak{m}} c_{k}  \cdot \nabla \phi_{\boldsymbol{\theta}, 2}^{k}(X_p)\Big\|_{2}^2\bigg)\\
&~~~ \leq \frac{1}{2N_{\rm in}}\sum_{p=1}^{N_{\rm in}}\bigg\{\bigg|\sum_{m=1}^d \Big[\sum_{k=1}^{\mathfrak{m}} |c_{k}|  \cdot \Big(\partial_{x_m} \phi_{\boldsymbol{\theta}, 1}^{k} (X_p)+ \partial_{x_m} \phi_{\boldsymbol{\theta}, 2}^{k}(X_p) \Big)\Big]\\
&~~~~~~~~~~~~~~~~~~~~~  \Big[\sum_{k=1}^{\mathfrak{m}} |c_{k}|  \cdot \big(\partial_{x_m} \phi_{\boldsymbol{\theta}, 1}^{k} (X_p)-\partial_{x_m} \phi_{\boldsymbol{\theta}, 2}^{k}(X_p) \big)\Big]\bigg|\bigg\}\\
& ~~~\leq d M W^{L-1}B_{\boldsymbol{\theta}}^L \cdot \sum_{k=1}^{\mathfrak{m}} | c_{k} | \cdot \big| \partial_{x_m} \phi_{\boldsymbol{\theta}, 1}^{k} (X_p)-\partial_{x_m} \phi_{\boldsymbol{\theta}, 2}^{k}(X_p) \big|\\
& ~~~ \leq d M W^{L-1}B_{\boldsymbol{\theta}}^{L} \cdot \sqrt{\sum_{k=1}^{\mathfrak{m}} | c_{k} |^2} \cdot \sqrt{\sum_{k=1}^{\mathfrak{m}} \big| \partial_{x_m} \phi_{\boldsymbol{\theta}, 1}^{k} (X_p)-\partial_{x_m} \phi_{\boldsymbol{\theta}, 2}^{k}(X_p) \big|^2}\\
& ~~~ \leq  2d \mm W^{3L-2} \sqrt{L}(L+1) M B_{\boldsymbol{\theta}}^{3L} \cdot \|\boldsymbol{\theta}^{\mathfrak{m}}_{\rm out}\|_2 \cdot \big\|\boldsymbol{\theta}^{\mathfrak{m}, 1}_{ \rm in} - \boldsymbol{\theta}^{\mathfrak{m}, 2}_{ \rm in}\big\|_2 \, ,
\end{align*}
 where the third inequality utilizes the Cauchy-Schwarz inequality. Also, we have
\begin{align*}
&\bigg|\frac{1}{N_{\rm in}}\sum_{p=1}^{N_{\rm in}}\frac{ \omega(X_p)\big[u^2_{\mathfrak{m}, 1}(X_p)-u^2_{\mathfrak{m}, 2}(X_p)\big]}{2}\bigg|\\
&~~~= \bigg|\frac{1}{2N_{\rm in}}\sum_{p=1}^{N_{\rm in}}\omega(X_p)\Big[\Big(\sum_{k=1}^{\mathfrak{m} } c_{k} \cdot \phi_{\boldsymbol{\theta}, 1}^{k} (X_p)\Big)^2-\Big(\sum_{k=1}^{\mathfrak{m} } c_{k}  \cdot \phi_{\boldsymbol{\theta}, 2}^{k}(X_p)\Big)^2\Big]\bigg|\\
&~~~ \leq \frac{B_0}{2N_{\rm in}}\sum_{p=1}^{N_{\rm in}}\bigg\{\bigg|\Big[\sum_{k=1}^{\mathfrak{m} } |c_{k}|  \cdot \big(\phi_{\boldsymbol{\theta}, 1}^{k} (X_p)+ \phi_{\boldsymbol{\theta}, 2}^{k}(X_p) \big)\Big] \Big[\sum_{k=1}^{\mathfrak{m} } |c_{k}|  \cdot \big(\phi_{\boldsymbol{\theta}, 1}^{k} (X_p)-\phi_{\boldsymbol{\theta}, 2}^{k}(X_p) \big)\Big]\bigg|\bigg\}\\
&~~~\leq B_0 M (W+1)B_{\boldsymbol{\theta}} \bigg|\Big[\sum_{k=1}^{\mathfrak{m} } |c_{k}|  \cdot \big(\phi_{\boldsymbol{\theta}, 1}^{k} (X_p)-\phi_{\boldsymbol{\theta}, 2}^{k}(X_p) \big)\Big]\bigg|\\
&~~~\leq B_0 M(W+1)B_{\boldsymbol{\theta}} \sqrt{\sum_{k=1}^{\mathfrak{m} } | c_{k} |^2}\cdot \sqrt{\sum_{k=1}^{\mathfrak{m} } \big|\phi_{\boldsymbol{\theta}, 1}^{k} (X_p)- \phi_{\boldsymbol{\theta}, 2}^{k}(X_p) \big|^2}\\
&~~~\leq 2B_0 W^{L}(W+1)\sqrt{L}\cdot MB_{\boldsymbol{\theta}}^{L} \cdot \|\boldsymbol{\theta}_{\rm out}^{\mathfrak{m}}\|_2 \cdot \big\|\boldsymbol{\theta}^{\mathfrak{m}, 1}_{ \rm in} - \boldsymbol{\theta}^{\mathfrak{m}, 2}_{ \rm in}\big\|_2 \, .
\end{align*}
Finally, it holds that
\begin{align*}
&\Big|\frac{1}{N_{\rm in}}\sum_{p=1}^{N_{\rm in}}\Big[\big(u_{\mathfrak{m}, 1}(X_p)- u_{\mathfrak{m}, 2}(X_p)\big)h(X_p)\Big]\Big|+ \Big|\frac{1}{N_{b}}\sum_{p=1}^{N_{b}}\Big[\big(u_{\mathfrak{m}, 1}(Y_p)- u_{\mathfrak{m}, 2}(Y_p)\big)g(Y_p)\Big]\Big|\\
&~~~\leq 2B_0 \bigg|\Big[\sum_{k=1}^{\mathfrak{m}} c_{k}  \cdot \big(\phi_{\boldsymbol{\theta}, 1}^{k} (X_p)-\phi_{\boldsymbol{\theta}, 2}^{k}(X_p) \big)\Big]\bigg| \leq 2B_0 \sqrt{\sum_{k=1}^{\mathfrak{m}} | c_{k} |^2}\cdot \sqrt{\sum_{k=1}^{\mathfrak{m}} \big|\phi_{\boldsymbol{\theta}, 1}^{k} (X_p)- \phi_{\boldsymbol{\theta}, 2}^{k}(X_p) \big|^2}\\
&~~~\leq 4B_0 W^{L}\sqrt{L}\cdot B_{\boldsymbol{\theta}}^{L-1}  \cdot \|\boldsymbol{\theta}_{\rm out}^{\mathfrak{m}}\|_2 \cdot \big\|\boldsymbol{\theta}^{\mathfrak{m}, 1}_{ \rm in} - \boldsymbol{\theta}^{\mathfrak{m}, 2}_{ \rm in}\big\|_2 \, .
\end{align*}
Combining above estimations, we complete the proof of this lemma.

\subsubsection{Proof of Lemma \ref{lem:prob of G}}
 In the following, we will focus on the probability of $G_{\mathfrak{m}, \bar{\mathfrak{m}}, R, \delta}^{\mkern 2mu \raisebox{0.3ex}{$\scriptstyle \mathsf{c} $}}$. The proof will be divided into two steps: {\rm (i)} the case where $\bar{\mathfrak{m}}=1$ and $R=1$, i.e., $G_{\mathfrak{m},1,1,\delta}^{\mkern 2mu \raisebox{0.2ex}{$\scriptstyle \mathsf{c} $}}$;
 % {\rm (ii)} the case where $\mathfrak{m}=1$ and $R \in \mathbb{N}$, i.e., $G_{\mathfrak{m},1,R,\delta}^{\mkern 2mu \raisebox{0.2ex}{$\scriptstyle \mathsf{c} $}}$;
 {\rm (ii)} the general case $G_{\mathfrak{m}, \bar{\mathfrak{m}}, R, \delta}^{\mkern 2mu \raisebox{0.2ex}{$\scriptstyle \mathsf{c} $}}$ where $\mathfrak{m}, R \in \mathbb{N}$.
 
 {\bf Step 1.} For $\bar{\mathfrak{m}}=R=1$, when $\mathfrak{m}= Q$, $G_{\mathfrak{m},1,1,\delta}^{\mkern 2mu \raisebox{0.2ex}{$\scriptstyle \mathsf{c} $}}$ denotes the event where, each sub-network weight vector among $(\boldsymbol{\theta}_1)^{\scriptscriptstyle [0]}$ through $(\boldsymbol{\theta}_{\scriptscriptstyle Q})^{\scriptscriptstyle [0]}$ satisfies $\|(\boldsymbol{\theta}_{\cdot})^{\scriptscriptstyle [0]}-\bar{\boldsymbol{\theta}}_1\|_{\infty} > \delta$. Since $\bar{\boldsymbol{\theta}}_1$ can be treated as a fixed vector in $[-B_{\bar{\boldsymbol{\theta}}}, B_{\bar{\boldsymbol{\theta}}}]^{\mm \mathfrak{D}(\bar{W}, \bar{L}, d)}$, where $\mathfrak{D}({W}, {L}, d)$ is defined in (\ref{eq:align_dim}), and we initialize all the sub-network parameters independently from $U[-B_{\bar{\boldsymbol{\theta}}}, B_{\bar{\boldsymbol{\theta}}}]$, it then holds that
$$
\mathbb{P}\Big[\big\|(\boldsymbol{\theta}_i)^{\scriptscriptstyle[0]}-\bar{\boldsymbol{\theta}}_1\big\|_{\infty} \leq \delta\Big] \ge \bigg(\frac{\delta}{2B_{\bar{\boldsymbol{\theta}}}}\bigg)^{\mathfrak{D}(\bar{W}, \bar{L}, d)} \ge \bigg(\frac{\delta}{2B_{\bar{\boldsymbol{\theta}}}}\bigg)^{\bar{W}(\bar{W}+1)\bar{L}}
$$
for any $i \in \{1,\ldots, Q\}$. Thus, we have
$$
\mathbb{P}\big(G_{\mathfrak{m},1,1,\delta}^{\mkern 2mu \raisebox{0.2ex}{$\scriptstyle \mathsf{c} $}}\big) = \mathbb{P}\Big[ \mkern 1mu \forall i \in \{1,\ldots, Q\} : \big\|(\boldsymbol{\theta}_i)^{\scriptscriptstyle[0]}-\bar{\boldsymbol{\theta}}_1\big\|_{\infty} > \delta \Big] \leq \Big[1-\delta^{\bar{W}(\bar{W}+1)\bar{L}}(2B_{\bar{\boldsymbol{\theta}}})^{-\bar{W}(\bar{W}+1)\bar{L}}\Big]^Q \, .
$$

{\bf Step 2.}  For general $\bar{\mathfrak{m}}, R \in \mathbb{N}$, when $\mathfrak{m}= \bar{\mathfrak{m}} \cdot R \cdot Q$, $G_{\mathfrak{m},\bar{\mathfrak{m}},R,\delta}$ denotes the event where, for each target $\bar{\boldsymbol{\theta}}_k$ in $(\bar{\boldsymbol{\theta}}_1, \ldots, \bar{\boldsymbol{\theta}}_{\bar{\mathfrak{m}}})$, at least $R$ weight vectors among $(\boldsymbol{\theta}_1)^{\scriptscriptstyle [0]}$ through $(\boldsymbol{\theta}_{\scriptscriptstyle \bar{\mathfrak{m}} \cdot R \cdot Q})^{\scriptscriptstyle [0]}$ satisfy $\|(\boldsymbol{\theta}_{\cdot})^{\scriptscriptstyle [0]}-\bar{\boldsymbol{\theta}}_1\|_{\infty} \leq  \delta$. It can be readily observed that
\begin{align*}
G_{\mathfrak{m},\bar{\mathfrak{m}},R,\delta} \supseteq \bigcap_{j=1}^{R} \bigcap_{k=1}^{\bar{\mathfrak{m}}} \Big\{ \exists \mkern 2mu i \in \big \{[(j-1)\bar{\mathfrak{m}}+k-1] \mkern 1mu Q,\ldots,[(j-1)\bar{\mathfrak{m}}+k] \mkern 1mu Q \big \} : \big\|(\boldsymbol{\theta}_i)^{\scriptscriptstyle[0]}-\bar{\boldsymbol{\theta}}_k\big\|_{\infty} \leq \delta  \Big\} \, .
\end{align*}
Thus, it holds that
\begin{align*}
G_{\mathfrak{m},\bar{\mathfrak{m}},R,\delta}^{\mkern 2mu \raisebox{0.2ex}{$\scriptstyle \mathsf{c} $}} \subseteq \bigcup_{j=1}^{R} \bigcup_{k=1}^{\bar{\mathfrak{m}}} \Big\{ \forall \mkern 2mu i \in \big \{[(j-1)\bar{\mathfrak{m}}+k-1] \mkern 1mu Q,\ldots,[(j-1)\bar{\mathfrak{m}}+k] \mkern 1mu Q \big \} : \big\|(\boldsymbol{\theta}_i)^{\scriptscriptstyle[0]}-\bar{\boldsymbol{\theta}}_k\big\|_{\infty} > \delta  \Big\} \, .
\end{align*}
This implies
\begin{align*}
\mathbb{P}\big(G_{\mathfrak{m},\bar{\mathfrak{m}},R,\delta}^{\mkern 2mu \raisebox{0.2ex}{$\scriptstyle \mathsf{c} $}}\big) & \leq \sum_{j=1}^{R} \sum_{k=1}^{\bar{\mathfrak{m}}} \mathbb{P}\Big[ \mkern 1mu \forall \mkern 2mu i \in \big \{[(j-1)\bar{\mathfrak{m}}+k-1] \mkern 1mu Q,\ldots,[(j-1)\bar{\mathfrak{m}}+k] \mkern 1mu Q \big \} : \big\|(\boldsymbol{\theta}_i)^{\scriptscriptstyle[0]}-\bar{\boldsymbol{\theta}}_k\big\|_{\infty} > \delta \Big] \\
&= \bar{\mathfrak{m}} \mkern 2mu R \mkern 2mu \mathbb{P}\big(G_{\mathfrak{m},1,1,\delta}^{\mkern 2mu \raisebox{0.2ex}{$\scriptstyle \mathsf{c} $}}\big) \leq \bar{\mathfrak{m}} \mkern 2mu R \Big[1-\delta^{\bar{W}(\bar{W}+1)\bar{L}}(2B_{\bar{\boldsymbol{\theta}}})^{-\bar{W}(\bar{W}+1)\bar{L}}\Big]^Q \, .
\end{align*}

Therefore, we have 
\begin{align*}
\mathbb{P}\big(G_{\mathfrak{m},\bar{\mathfrak{m}},R,\delta}\big)=1-\mathbb{P}\big(G_{\mathfrak{m},\bar{\mathfrak{m}},R,\delta}^{\mkern 2mu \raisebox{0.2ex}{$\scriptstyle \mathsf{c} $}}\big) \ge 1 - \bar{\mathfrak{m}}R\Big[1-\delta^{\bar{W}(\bar{W}+1)\bar{L}}(2B_{\bar{\boldsymbol{\theta}}})^{-\bar{W}(\bar{W}+1)\bar{L}}\Big]^{Q} \, .
\end{align*}

\subsubsection{Proof of Lemma \ref{lip_lem2}}
Firstly, we have
\begin{align*}
&\big| \widehat{\mathcal{L}}\big(u_{\mathfrak{m}}^{*}\big) -\widehat{\mathcal{L}}(u_{\bar{\mathfrak{m}}, \bar{\boldsymbol{\theta}}})\big| \leq C(\Omega)\bigg\{\bigg|\frac{1}{N_{\rm in}}\sum_{p=1}^{N_{\rm in}}\frac{\|\nabla u_{\mathfrak{m}}^{*}(X_p)\|_2^2-\|\nabla u_{\bar{\mathfrak{m}}, \bar{\boldsymbol{\theta}}}(X_p)\|_2^2}{2}\big|\\
&~~~~~~~~~~~~~~~~~~~~~~~~~~~+ \bigg|\frac{1}{N_{\rm in}}\sum_{p=1}^{N_{\rm in}}\frac{ \omega(X_p)\big[u_{\mathfrak{m}}^{*}(X_p)\big]^2- \omega(X_p)u_{\bar{\mathfrak{m}}, \bar{\boldsymbol{\theta}}}^2(X_p)}{2}\bigg|\\
&~~~~~~~~~~~~~~~~~~~~~~~~~~~+ \bigg|\frac{1}{N_{\rm in}}\sum_{p=1}^{N_{\rm in}}[u_{\mathfrak{m}}^{*}(X_p)h(X_p)- u_{\bar{\mathfrak{m}}, \bar{\boldsymbol{\theta}}}(X_p)h(X_p)]\bigg|\\
&~~~~~~~~~~~~~~~~~~~~~~~~~~~+ \bigg|\frac{1}{N_{b}}\sum_{p=1}^{N_{b}}[u_{\mathfrak{m}}^{*}(Y_p)g(Y_p)- u_{\bar{\mathfrak{m}}, \bar{\boldsymbol{\theta}}}(Y_p)g(Y_p)]\bigg|\bigg\} \, ,
\end{align*}
where $C(\Omega)=\max\{|\Omega|, |\partial\Omega|\}$. By substituting equation (\ref{eq:u*}) and (\ref{eq:umbar}), we have  that
\begin{align*}
&\bigg|\frac{1}{N_{\rm in}}\sum_{p=1}^{N_{\rm in}}\frac{\|\nabla u_{\mathfrak{m}}^{*}(X_p)\|_2^2-\|\nabla u_{\bar{\mathfrak{m}}, \bar{\boldsymbol{\theta}}}(X_p)\|_2^2}{2}\bigg|  \\
&= \frac{1}{2N_{\rm in}}\sum_{p=1}^{N_{\rm in}}\bigg(\Big\|\sum_{k=1}^{ \bar{\mathfrak{m}}}\sum_{v=1}^{R} \frac{\bar{c}_k}{R}\cdot \nabla \big(\phi_{\boldsymbol{\theta}}^{s_{k,v}}\big)^{\scriptscriptstyle [0]} (X_p)\Big\|_{2}^2-\Big\|\sum_{k=1}^{\bar{\mathfrak{m}}} \bar{c}_{k} \cdot \nabla \phi_{\bar{\boldsymbol{\theta}}}^{k}(X_p)\Big\|_{2}^2\bigg)\\
&= \frac{1}{2N_{\rm in}}\sum_{p=1}^{N_{\rm in}}\bigg(\Big\|\sum_{k=1}^{ \bar{\mathfrak{m}}}\sum_{v=1}^{R} \frac{\bar{c}_k}{R}\cdot \nabla \big(\phi_{\boldsymbol{\theta}}^{s_{k,v}}\big)^{\scriptscriptstyle [0]} (X_p)\Big\|_{2}^2-\Big\|\sum_{k=1}^{\bar{\mathfrak{m}}} \sum_{v=1}^{R}\frac{\bar{c}_{k}}{R}\ \cdot \nabla \phi_{\bar{\boldsymbol{\theta}}}^{k}(X_p)\Big\|_{2}^2\bigg)\\
& \leq \frac{1}{2N_{\rm in}}\sum_{p=1}^{N_{\rm in}}\bigg\{\bigg|\sum_{m=1}^d \Big[\sum_{k=1}^{ \bar{\mathfrak{m}}}\sum_{v=1}^{R} \Big|\frac{\bar{c}_{k}}{R}\Big|  \cdot \big(\partial_{x_m} (\phi_{\boldsymbol{\theta}}^{s_{k,v}})^{\scriptscriptstyle [0]} (X_p)+ \partial_{x_m} \phi_{\bar{\boldsymbol{\theta}}}^{k}(X_p) \big)\Big]\\
&\qquad \qquad \qquad  \Big[\sum_{k=1}^{ \bar{\mathfrak{m}}}\sum_{v=1}^{R} \Big|\frac{\bar{c}_{k}}{R}\Big|  \cdot \big(\partial_{x_m} (\phi_{\boldsymbol{\theta}}^{s_{k,v}})^{\scriptscriptstyle [0]} (X_p)-\partial_{x_m} \phi_{\bar{\boldsymbol{\theta}}}^{k}(X_p) \big)\Big]\bigg|\bigg\}\\
& \leq d \bar{M} \bar{W}^{L-1} B^ {\raisebox{0.2ex}{$\scriptscriptstyle \bar{L}$}}_{\bar{\boldsymbol{\theta}}} \cdot \sum_{k=1}^{ \bar{\mathfrak{m}}}\sum_{v=1}^{R} \Big| \frac{\bar{c}_{k}}{R} \Big| \cdot \big| \partial_{x_m} (\phi_{\boldsymbol{\theta}}^{s_{k,v}})^{\scriptscriptstyle [0]} (X_p)-\partial_{x_m}\phi_{\bar{\boldsymbol{\theta}}}^{k}(X_p) \big|\\
& \leq d \m \bar{W}^{3\bar{L}-2} \sqrt{\bar{L}}(\bar{L}+1) \bar{M}^2  B^ {\raisebox{0.2ex}{$\scriptscriptstyle 3\bar{L}$}}_{\bar{\boldsymbol{\theta}}} \cdot \max_{k = 1, \ldots, \bar{\mathfrak{m}}} \max_{v=1,\ldots,R} \left\|(\boldsymbol{\theta}_{s_{k,v}})^{\scriptscriptstyle [0]} -\bar{\boldsymbol{\theta}}_k\right\|_{2}\\
&\leq  2d \m \bar{W}^{3\bar{L}-2} \sqrt{\bar{L}}(\bar{L}+1) \bar{M}^2  B^ {\raisebox{0.2ex}{$\scriptscriptstyle 3\bar{L}$}}_{\bar{\boldsymbol{\theta}}} \sqrt{\bar{W}(\bar{W}+1)\bar{L}} \cdot \max_{k = 1, \ldots, \bar{\mathfrak{m}}} \max_{v=1,\ldots,R}\big\|(\boldsymbol{\theta}_{s_{k,v}})^{\scriptscriptstyle [0]} -\bar{\boldsymbol{\theta}}_k\big\|_{\infty} \, .
\end{align*}
where the third inequality utilizes the property from Lemma \ref{df-lip}. Also, it holds that
\begin{align*}
&\bigg|\frac{1}{N_{\rm in}}\sum_{p=1}^{N_{\rm in}}\frac{ \omega(X_p)\big[u_{\mathfrak{m}}^{*}(X_p)\big]^2- \omega(X_p)u_{\bar{\mathfrak{m}}, \bar{\boldsymbol{\theta}}}^2(X_p)}{2}\bigg| \\
&= \bigg|\frac{1}{2N_{\rm in}}\sum_{p=1}^{N_{\rm in}}\omega(X_p)\bigg[\Big(\sum_{k=1}^{ \bar{\mathfrak{m}}}\sum_{v=1}^{R} \frac{\bar{c}_k}{R}\cdot(\phi_{\boldsymbol{\theta}}^{s_{k,v}})^{\scriptscriptstyle [0]} (X_p)\Big)^2-\Big(\sum_{k=1}^{\bar{\mathfrak{m}}} \bar{c}_{k} \cdot \phi_{\bar{\boldsymbol{\theta}}}^{k}(X_p)\Big)^2\bigg]\bigg|\\
&= \bigg|\frac{1}{2N_{\rm in}}\sum_{p=1}^{N_{\rm in}}\omega(X_p)\bigg[\Big(\sum_{k=1}^{ \bar{\mathfrak{m}}}\sum_{v=1}^{R} \frac{\bar{c}_k}{R}\cdot(\phi_{\boldsymbol{\theta}}^{s_{k,v}})^{\scriptscriptstyle [0]} (X_p)\Big)^2-\Big(\sum_{k=1}^{\bar{\mathfrak{m}}} \sum_{v=1}^{R} \frac{\bar{c}_k}{R} \cdot \phi_{\bar{\boldsymbol{\theta}}}^{k}(X_p)\Big)^2\bigg]\bigg|\\
& \leq \frac{B_0}{2N_{\rm in}}\sum_{p=1}^{N_{\rm in}}\bigg\{\bigg|\bigg[\sum_{k=1}^{ \bar{\mathfrak{m}}}\sum_{v=1}^{R} \Big|\frac{\bar{c}_{k}}{R}\Big|  \cdot \big((\phi_{\boldsymbol{\theta}}^{s_{k,v}})^{\scriptscriptstyle [0]}  (X_p)+ \phi_{\bar{\boldsymbol{\theta}}}^{k}(X_p) \big)\bigg]\\
&\qquad \qquad \qquad  \bigg[\sum_{k=1}^{ \bar{\mathfrak{m}}}\sum_{v=1}^{R}\Big|\frac{\bar{c}_{k}}{R}\Big|  \cdot \big((\phi_{\boldsymbol{\theta}}^{s_{k,v}})^{\scriptscriptstyle [0]}  (X_p)-\phi_{\bar{\boldsymbol{\theta}}}^{k}(X_p) \big)\bigg]\bigg|\bigg\}\\
&\leq B_0 \bar{M} (\bar{W}+1)B_{\bar{\boldsymbol{\theta}}} \bigg|\bigg[\sum_{k=1}^{ \bar{\mathfrak{m}}}\sum_{v=1}^{R} \Big|\frac{\bar{c}_{k}}{R}\Big|  \cdot \big((\phi_{\boldsymbol{\theta}}^{s_{k,v}})^{\scriptscriptstyle [0]} (X_p)-\phi_{\bar{\boldsymbol{\theta}}}^{k}(X_p) \big)\bigg]\bigg|\\
&\leq 2B_0 \bar{W}^{\bar{L}}(\bar{W}+1)\sqrt{\bar{L}}\cdot \bar{M}^2 B^ {\raisebox{0.2ex}{$\scriptscriptstyle \bar{L}$}}_{\bar{\boldsymbol{\theta}}} \cdot \sqrt{\bar{W}(\bar{W}+1)\bar{L}} \cdot \max_{k = 1, \ldots, \bar{\mathfrak{m}}} \max_{v=1,\ldots,R}\big\|(\boldsymbol{\theta}_{s_{k,v}})^{\scriptscriptstyle [0]} -\bar{\boldsymbol{\theta}}_k\big\|_{\infty} \, .
\end{align*}
Finally, we have
\begin{align*}
&\Big|\frac{1}{N_{\rm in}}\sum_{p=1}^{N_{\rm in}}[u_{\mathfrak{m}}^{*}(X_p)h(X_p)- u_{\bar{\mathfrak{m}}, \bar{\boldsymbol{\theta}}}(X_p)h(X_p)]\Big|  + \Big|\frac{1}{N_{b}}\sum_{p=1}^{N_{b}}[u_{\mathfrak{m}}^{*}(Y_p)g(Y_p)- u_{\bar{\mathfrak{m}}, \bar{\boldsymbol{\theta}}}(Y_p)g(Y_p)]\Big| \\
&\leq B_0 \bigg|\bigg[\sum_{k=1}^{ \bar{\mathfrak{m}}}\sum_{v=1}^{R} \Big|\frac{\bar{c}_{k}}{R}\Big|  \cdot \big((\phi_{\boldsymbol{\theta}}^{s_{k,v}})^{\scriptscriptstyle [0]} (X_p)-\phi_{\bar{\boldsymbol{\theta}}}^{k}(X_p) \big)\bigg]\bigg| +  B_0 \bigg|\bigg[\sum_{k=1}^{ \bar{\mathfrak{m}}}\sum_{v=1}^{R} \Big|\frac{\bar{c}_{k}}{R}\Big|  \cdot \big((\phi_{\boldsymbol{\theta}}^{s_{k,v}})^{\scriptscriptstyle [0]} (Y_p)-\phi_{\bar{\boldsymbol{\theta}}}^{k}(Y_p) \big)\bigg]\bigg|\\
&\leq 4B_0 \bar{W}^{\bar{L}}\sqrt{\bar{L}}\cdot B^ {\raisebox{0.2ex}{$\scriptscriptstyle \bar{L}-1$}}_{\bar{\boldsymbol{\theta}}} \cdot \bar{M} \cdot \sqrt{\bar{W}(\bar{W}+1)\bar{L}} \cdot \max_{k = 1, \ldots, \bar{\mathfrak{m}}} \big\|(\boldsymbol{\theta}_{s_{k,v}})^{\scriptscriptstyle [0]} -\bar{\boldsymbol{\theta}}_k\big\|_{\infty} \, .
\end{align*}
Combining above estimations, we can complete the proof of this lemma.

\section*{Acknowledgments}

This work is supported by the National Key Research and Development Program of China (No. 2020YFA0714200), by the National Nature Science Foundation of China (No. 12125103, No. 12071362, No. 12371441), and by the Fundamental Research Funds for the Central Universities.

\end{document}